\tikzset{arrow/.style={-stealth}}
\tikzset{arrowshorter/.style={-stealth, shorten <=2pt, shorten >=2pt}}
\tikzset{arrowmuchshorter/.style={-stealth, shorten <=7pt, shorten >=6pt}}
\tikzset{twoarrowlonger/.style={double,double distance=1.5pt,
shorten <=5pt,shorten >=6pt,
decoration={markings,mark=at position -4pt with {\arrow[scale=1.75]{>}}},
preaction={decorate}}} 
\tikzset{twoarrow/.style={double,double distance=1.5pt,
shorten <=6pt,shorten >=7pt, 
decoration={markings,mark=at position -4pt
with {\arrow[scale=1.75]{>}}},
preaction={decorate} 
}
}
\tikzset{%
    symbol/.style={%
        draw=none,
        every to/.append style={%
            edge node={node [sloped, allow upside down, auto=false]{$#1$}}}
    }
}
\tikzstyle{d}=[double distance=.3ex]
\tikzstyle{w}=[preaction={draw=white, -,line width=4pt}]
\tikzset{over/.style={auto=false,fill=white,inner sep=1.5pt, minimum size=0, outer sep=0}, 
pro/.style={postaction={decorate,decoration={markings,
      mark=at position .5 with {
        \draw[-]
        (0,{-1.25ex/2}) -- (0,{1.25ex/2});
      }},
      inner sep=.9ex,
      }},
      n/.style={double equal sign distance, -implies},t/.style={double distance=2.5pt, -implies, postaction={draw,-}},
  }
\tikzset{%
node distance=1.5cm, la/.style={scale=0.8}, rr/.style={xshift=1.5cm},
space/.style={xshift=.5cm},
    symbol/.style={%
        draw=none,
        every to/.append style={%
            edge node={node [sloped, allow upside down, auto=false]{$#1$}}},
            
    }
}
\def\cellslide{0.5}
\def\celllength{.2cm}
\newcommand{\varrow}{\mathrel{\begin{tikzpicture}
    [baseline=(current bounding box.south)]
    \draw[pro, ->] (0,0) -- (.4,0);
  \end{tikzpicture}}}
\NewDocumentCommand{\cell}{ O{} O{n} O{\cellslide} O{\celllength} m m m }{
  \coordinate (mid) at ($({#5})!{#3}!({#6})$);
  \coordinate (start) at ($(mid)!{#4}!({#5})$);
  \coordinate (end) at ($(mid)!{#4}!({#6})$);
  \draw[#2] (start) to node
  [inner sep=6pt,outer sep=0,minimum size=0,#1]{{#7}} (end);
}
\tikzset{mapstikz/.style={-stealth, 
decoration={markings,mark=at position 0pt with {\arrow[scale=0.5]{|}}}, preaction={decorate}}}
\NewDocumentCommand{\punctuation}{ m m O{5pt} }{\node at ($(#1.east)-(0,#3)$) {#2};}
\theoremstyle{plain}   
\newtheorem{thm}{Theorem}[subsection] 
\let\c@thm\c@thm\makeatother
\newtheorem{cor}{Corollary}[subsection]
\let\c@cor\c@thm\makeatother
\newtheorem{lem}{Lemma}[subsection]
\let\c@lem\c@thm\makeatother
\newtheorem{prop}{Proposition}[subsection]
\let\c@prop\c@thm\makeatother
\let\c@claim\c@thm\makeatother
\let\c@conjecture\c@thm\makeatother
\newtheorem*{unnumberedtheorem}{Theorem}
\newtheorem*{unnumberedproposition}{Proposition}
\newtheorem*{unnumbereddefinition}{Definition}
\newtheorem{thmalph}{Theorem}
\theoremstyle{definition}
\newtheorem{defn}{Definition}[subsection]
\let\c@defn\c@thm\makeatother
\newtheorem{const}{Construction}[subsection]
\let\c@const\c@thm\makeatother
\newtheorem{notn}{Notation}[subsection]
\let\c@notn\c@thm\makeatother
\let\c@convention\c@thm\makeatother
\theoremstyle{remark}
\newtheorem{rmk}{Remark}[subsection]
\let\c@rmk\c@thm\makeatother
\let\c@ex\c@thm\makeatother
\let\c@observation\c@thm\makeatother
\let\c@warning\c@thm\makeatother
\let\c@digression\c@thm\makeatother
\let\c@answ\c@thm\makeatother
\let\c@answ\c@thm\makeatother
\let\c@aside\c@thm\makeatother
\let\c@equation\c@thm
\numberwithin{equation}{section}
\crefname{lem}{Lemma}{Lemmas}
\crefname{thm}{Theorem}{Theorems}
\crefname{defn}{Definition}{Definitions}
\crefname{notn}{Notation}{Notations}
\crefname{const}{Construction}{Constructions}
\crefname{prop}{Proposition}{Propositions}
\crefname{rmk}{Remark}{Remarks}
\crefname{cor}{Corollary}{Corollaries}
\crefname{equation}{Display}{Displays}
\crefname{ex}{Example}{Examples}
\crefname{thmalph}{Theorem}{Theorems}
\crefname{digression}{Digression}{Digressions}
\crefname{answ}{Answer}{Answers}
\crefname{crzcond}{Crazy Condition}{Crazy Conditions}
\newcommand{\@bbify}[1]{
  \ifcsname b#1\endcsname
  \message{WARNING: Overwriting b#1 with blackboard letter!}
  \fi
  \expandafter\edef\csname b#1\endcsname
  {\noexpand\ensuremath{\noexpand\mathbb #1}\noexpand\xspace}}
\newcommand{\@calify}[1]{
  \ifcsname c#1\endcsname
  \message{WARNING: Overwriting c#1 with calligraphic letter!}
  \fi 
  \expandafter\edef\csname c#1\endcsname
  {\noexpand\ensuremath{\noexpand\mathcal #1}\noexpand\xspace}}
\newcommand{\@bfify}[1]{
  \ifcsname bf#1\endcsname
  \message{WARNING: Overwriting c#1 with bold letter!}
  \fi
  \expandafter\edef\csname bf#1\endcsname
  {\noexpand\ensuremath{\noexpand\mathbf #1}\noexpand\xspace}}
\newcounter{@letter}\stepcounter{@letter}
\loop\@bbify{\Alph{@letter}}\@calify{\Alph{@letter}}\@bfify{\Alph{@letter}}
\newcommand{\cat}{\cC\!\mathit{at}}
\newcommand{\set}{\cS\!\mathit{et}}
\newcommand{\sset}{\mathit{s}\set}
\DeclareMathOperator{\colim}{colim}
\DeclareMathOperator{\id}{id}
\DeclareMathOperator{\op}{op}
\DeclareMathOperator{\ev}{ev}
\newcommand{\changelocaltocdepth}[1]{%
  \addtocontents{toc}{\protect\setcounter{tocdepth}{#1}}%
  \setcounter{tocdepth}{#1}%
}
\title{$(\infty,n)$-Limits I: Definition and first consistency results}
\author{Lyne Moser}
\address{Fakultät für Mathematik, Universität Regensburg, Regensburg, Germany}
\email{lyne.moser@ur.de}
\author{Nima Rasekh}
\address{Institut f\"ur Mathematik und Informatik, Universität Greifswald, Greifswald, Germany}
\email{nima.rasekh@uni-greifswald.de}
\author{Martina Rovelli}
\address{Department of Mathematics and Statistics,
University of Massachusetts Amherst, 
Amherst,
USA
}
\email{mrovelli@umass.edu} 
 \address{Department of Mathematics and Statistics,
 University of Ottawa, 
 Ottawa,
 Canada
 }
 \email{mrovelli@uottawa.ca}
\begin{document}

\begin{abstract}
	We give a model-independent definition of limits for diagrams valued in an $(\infty,n)$-category. We show that this definition is compatible with the existing notion of homotopy $2$-limits for $2$-categories, with the existing notion of $(\infty,1)$-limits for $(\infty,1)$-categories, and with itself across different values of $n$.
\end{abstract}

\maketitle

\tableofcontents

\section*{Introduction}

\changelocaltocdepth{1}

\subsection*{The role of (co)limits in the literature}
Given a diagram $K\colon\cJ\to\cC$, the property for an object $\ell$ of $\cC$ to be a (weighted/(op)lax/pseudo/conical) limit of $K$ can be summarized by saying that $\ell $ comes with a (weighted/(op)lax/pseudo/conical) cone over the diagram $K$ and is universal with this property. The notion of limit has been considered in a variety of contexts: for diagrams between strict $n$-categories -- either in a strict or homotopical sense -- as well as between $(\infty,n)$-categories for all $n>0$, and the various variances succeed at capturing many concepts of relevance. In order to motivate the study of limits in all their flavors, we list here various situations where limits play a crucial role.

As a first series of examples, we recall several constructions of interest whose universal property is that of an appropriate (co)limit:
\begin{itemize}[leftmargin=*]
    \item[\guillemotright] Given a ring (spectrum) $R$ and an element $p$ of $R$, the \emph{localization $R[p^{-1}]$ of $R$ at $p$} is the colimit (resp.~$(\infty,1)$-colimit) of the diagram  $\bN\to\cR ng$ (resp.~$(\infty,1)$-diagram $\bN\to\cR ng(\cS p)$) given by
    \[
R\xrightarrow{-\cdot p} R\xrightarrow{-\cdot p}R\xrightarrow{-\cdot p}\dots .
    \]
    \item[\guillemotright] The \emph{$(\infty,1)$-category $\cS p$ of spectra} is the $(\infty,1)$-colimit of the $(\infty,1)$-diagram $\bN\to \infty\cat$ given by
     \[
\cS_*\xrightarrow{\Sigma} \cS_*\xrightarrow{\Sigma}\cS_*\xrightarrow{\Sigma}\dots,
    \]  
where $\cS_*$ denotes the $(\infty,1)$-category of pointed spaces and $\Sigma$ is the suspension $\infty$-functor.
    \item[\guillemotright] Given a prime number $p$, the \emph{ring of $p$-adic integers} $\bZ_{(p)}$ is the limit of the diagram $\bN^{\op}\to\cR ng$ given by
\[
\dots\to\bZ/p^n\to \bZ/p^{n-1}\to\dots\to\bZ/p.
\]
    \item[\guillemotright] Given a finite group $G$ and a $G$-space $X\curvearrowleft G$, the \emph{space of homotopy fixed points $X^{G}$ of $X$} (resp.~\emph{homotopy orbit space $X\sslash G$}) is the $(\infty,1)$-limit (resp.~$(\infty,1)$-colimit) of the $(\infty,1)$-diagram $BG\to\cS$
    encoding the action of $G$ on $X$.  
\item[\guillemotright] Let $\bT$ be a type theory and $\cC_\bT$ the category associated to the type theory $\bT$ (see~\cite[\textsection D4]{johnstone2002topos}). Given a morphism of types $f\colon Y \to X$ in $\bT$ and a proposition $x:X \vdash P[x]$ dependent on the type $X$, the \emph{substitution} $y:Y \vdash P[ f(y) / x] $ dependent on the type $Y$ is the limit of the diagram $[\bullet \rightarrow\bullet \leftarrow \bullet ]\to\cC_\bT$, i.e., the pullback, given by
\[ \llbracket Y \rrbracket \xrightarrow{ \ \llbracket f \rrbracket \ } \llbracket X \rrbracket \xleftarrow{ \ \pi \ } \llbracket \sum_{x:X} P[x] \rrbracket , \]
where $\llbracket X \rrbracket$ denotes the object of $\cC_\bT$ associated to a given type $X$. As a result, these kinds of limits are a fundamental building block of categorical logic. An example of categories arising from type theories are topoi, which as part of their definition assume the existence of pullbacks (see~\cite[Definition~A2.1.1]{johnstone2002topos}).
\end{itemize}

Further, there are several constructions whose universal property is that of an appropriate lax or oplax (co)limit, or more generally a (co)limit that is \emph{weighted} by a certain functor:
\begin{itemize}[leftmargin=*]   
    \item[\guillemotright] Given an ($\infty$-)monad $T$, the \emph{Eilenberg--Moore \textnormal{(}$\infty$-\textnormal{)}category $\cA lg(T)$ of \textnormal{(}$\infty$-\textnormal{)}algebras over $T$} is a lax $2$-limit (resp.~lax $(\infty,2)$-limit) of the $2$-diagram (resp.~$(\infty,2)$-diagram)
   $\widetilde T\colon\cM nd\to\cat$ (resp.~$\widetilde{T}\colon\cM nd\to\infty\cat$)
  represented by $T$, as described in~\cite[\textsection6.1]{RVmonads}. Similarly, the \emph{Kleisli \textnormal{(}$\infty$-\textnormal{)}category} $\cK l(T)$ of $T$ is an oplax $2$-colimit (resp.~oplax $(\infty,2)$-colimit) of $\widetilde T$.
    \item[\guillemotright] The $\infty$-category of global spaces \cite{schwede2018global}, a central object in equivariant homotopy theory, can be computed as the partially lax $(\infty,2)$-limit of an $(\infty,2)$-diagram $\cG lo^{\op} \to \infty\cat$, as described in \cite[Theorem 6.18]{SNPglobalhomotopytheory}.
    \item[\guillemotright] Given a presheaf $F\colon \cJ^{\op}\to \cat$ (resp.~$F\colon\cJ^{\op}\to\infty\cat$), the \emph{Grothendieck construction} $\int_\cJ F$ of $F$ is the lax $2$-colimit (resp.~lax $(\infty,2)$-colimit) of $F$, as described in~\cite[\textsection 5]{street1976limits} (resp.~\cite[Theorem~1.1]{GHNlax} or \cite[Theorem~4.4]{BermanLax}).
    \item[\guillemotright] Given a Reedy category $\Theta$ and a presheaf $F\colon\Theta^{\op}\to\cC$, the \emph{$m$-th latching object $L_nF$} (resp.~\emph{matching object} $M_nF$) of $F$ is a certain weighted (co)limit of the diagram $F\colon\Theta^{\op}\to\cC$ as described in \cite[Observation~3.15]{RVreedy}.
  \item[\guillemotright] Given a $2$-topos $\cE$ in the sense of \cite{weber2007twotopos} (which conjecturally encodes a directed type theory), one of its requirements is that, for any finite computad $\cG$ (in the sense of \cite[\textsection 2]{street1976limits}), the $2$-limit of a $2$-diagram $F\colon \cG \to \cE$ exists.
\end{itemize}
Finally, (co)limits are used in various contexts to express how global information can be recovered from local information conditions, as in the following examples:
\begin{itemize}[leftmargin=*]
    \item[\guillemotright] Let $(\cC,\cJ)$ be a site and $\cD$ a category (resp.~$(\infty,1)$-, $2$-, $(\infty,2)$-category).
    Given a \emph{sheaf} (resp. \emph{$\infty$-sheaf}, \emph{stack}, \emph{$\infty$-stack}) $F\colon\cC^{\op}\to\cD$,
    its defining property is that, given a covering sieve $S=\{U_i \to U\}$ in $\cJ(U)$, the value $F(U)$ of $F$ at $U$ is the limit (resp.~$(\infty,1)$-, $2$-, $(\infty,2)$-limit) of the diagram (resp.~$(\infty,1)$-, $2$-, $(\infty,2)$-diagram)
    \[(\cC^S_{/U})^{\op}\xrightarrow{\pi^{\op}}\cC^{\op}\xrightarrow{F}\cD;\]
    where $\cC^S_{/U}$ denotes the full subcategory of $\cC_{/U}$ spanned by those objects that are in the covering sieve $S$;
    see \cite[\textsection6.2.2]{LurieHTT}.
 \item[\guillemotright] Given a \emph{good $\infty$-functor} $F\colon\cM fld^{\,\op}\to\cC$, its defining property is that, given any increasing sequence
        $\{U_i\}_{i=0}^{\infty}$ in the $(\infty,1)$-category of manifolds, the value of $F$ at $\cup_{i=0}^{\infty}U_i$ is the $(\infty,1)$-limit of the $(\infty,1)$-diagram
    \[
   \bN\xrightarrow{U_{-}}\cM fld^{\,\op}\xrightarrow{F}\cC;
    \]
    see \cite[\textsection7]{BoavidaWeiss}.
\item[\guillemotright] Let $n\geq0$ and $\cA\colon\cD isk_n\to\cC$ be an $\bE_n$-algebra in an $(\infty,1)$-category $\cC$. Given an $n$-manifold $M$, the \emph{factorization homology $\int_{M}\cA$ of $M$ with coefficients in $\cA$} is the $(\infty,1)$-colimit of the $(\infty,1)$-diagram
    \[\cD isk_n\downarrow M\to\cD isk_n\xrightarrow{\cA}\cC; \]
    see \cite[\textsection3]{AFprimer}. More generally, for an arbitrary $(\infty,n)$-category $\cC$ and a framed $n$-manifold $M$, we can define its factorization homology as a left Kan extension, see \cite{ayalafrancisrozenblyum2018factorizationhomology} for more details.
\end{itemize}

The natural progression of these examples, as well as more recent conjectural advances, suggest the need for a proper theory of $(\infty,n)$-(co)limits and $(\infty,n)$-Kan extensions. For example, even the definition of $(\infty,n)$-sheaves requires a definition of $(\infty,n)$-limits. However, beyond that, in the case of $(\infty,1)$-categories, we can leverage a whole theory of $(\infty,1)$-limits to establish \emph{Giraud's theorem} to classify all $(\infty,1)$-categorical sheaves \cite{LurieHTT}. While recent advances in the theory of $(\infty,2)$-limits have made an $(\infty,2)$-categorical Giraud's theorem possible \cite{abellanmartini2024inftytwotopos}, generalization thereof are beyond our current capabilities, and require $(\infty,n)$-limits. 

Similarly, the generalization of factorization homology to $(\infty,n)$-categories \cite{ayalafrancisrozenblyum2018factorizationhomology} discussed above, by definition relies on an $(\infty,1)$-categorical Kan extension. However, $(\infty,n)$-categories naturally assemble into an $(\infty,n+1)$-category, which suggest the possibility of stronger functoriality in this setting, meaning the existence of an $(\infty,n+1)$-functors. Again, even analyzing such a question requires a proper theory of $(\infty,n+1)$-Kan extensions.

Beyond those examples, we are currently observing the growing importance of presentable $(\infty,n)$-categories in the Langlands program and the study of motives \cite{scholze2024motives}. Currently these methods rely on a recent development of presentable $(\infty,n)$-category \cite{stefanich2020presentable} as a theory of modules. While this comes with certain benefits (it permits an inductive definition), in the $(\infty,1)$-categorical case, we have many equivalent characterizations of presentability, each one of which plays their own role in the theory and applications \cite[Chapter 5]{LurieHTT}. These equivalent characterization fundamentally require a careful study of $(\infty,1)$-limits and $(\infty,1)$-Kan extensions, for example if we want to prove that presentable $(\infty,1)$-categories are $(\infty,1)$-cocomplete.  A similar advancement in the theory of presentable $(\infty,n)$-categories, for example trying to show that presentable $(\infty,n)$-categories in the sense of \cite{stefanich2020presentable} are $(\infty,n)$-cocomplete, requires a suitable theory of $(\infty,n)$-limits.

In addition to these cases, we have recently seen a growing interest in a theory of stability for $(\infty,2)$-categories \cite{bottmancarmeli2021ainftytwo}. While, as the references suggest, there has been some success studying stable $(\infty,2)$-categorical phenomena directly, it is well-established that in the $(\infty,1)$-categorical setting $(\infty,1)$-(co)limits are essential for the study of stability. For example, we can characterize stable $(\infty,1)$-categories via the property that finite limits and colimits commute. Hence, a proper advancement of stable $(\infty,n)$-categories similarly needs a functioning theory of $(\infty,n)$-(co)limits.

Finally, while we already have $(\infty,n)$-categories of spans \cite{haugseng2018spans} or $(\infty,n)$-categories of $\mathbb{E}_n$-algebras \cite{haugseng2017morita}, the construction and properties of their (co)limits have only been analyzed in the $(\infty,1)$-categorical setting \cite{lurie2017ha,harpaz2020spans}.

The goal of this paper is to define a notion of $(\infty,n)$-limit in the model of $(\infty,n)$-categories given by complete Segal objects in $(\infty,n-1)$-categories, which is well-behaved and usable in practice. We further show that this definition recovers the theory of homotopy $2$-limits for strict $2$-categories and the established notion of $(\infty,1)$-limit for $(\infty,1)$-categories. For ease of exposition, we will focus on the treatment of limits, and leave the dual treatment of \emph{co}limits to the interested reader.
In a follow-up paper \cite{MRR4} we compare this new definition to the definition of $(\infty,n)$-limits in the model of $(\infty,n)$-categories given by categories strictly enriched over $(\infty,n-1)$-categories, which ultimately validates the consistency of the perspectives across different models.

While we primarily focus on conical limits,
our framework can in fact accommodate weights. Indeed, in our follow-up paper \cite{MRR4} we generalize our work and introduce weighted limits in the context of complete Segal objects in $(\infty,n-1)$-categories and prove it coincides with the established notion of weighted limits in the context of categories enriched in $(\infty,n-1)$-categories.

\subsection*{Limits in terms of categories of cones}

Aiming at introducing a definition of $(\infty,n)$-limit, we first review the notion of limit in the strict context, emphasizing a fibrational viewpoint. Going back to the notion of limit in the context of ordinary categories, one can recognize a limit object of a diagram $K\colon \cJ\to \cC$ essentially as the terminal object in the category $\cC\mathrm{one}_{\cJ}K$ of cones over $K$. This can be expressed in terms of its slice $\cC\mathrm{one}_{\cJ}K\slash(\ell,\lambda)$ at a given cone $(\ell,\lambda)$ (see~\cite[Definition 3.1.6]{RiehlCT}, with ideas originated in \cite{1971sgai,street1974yoneda}), as follows:

\begin{unnumberedproposition}[folklore]
Given a diagram $K\colon\cJ\to\cC$ between categories, an object $\ell$ of $\cC$ is a limit object of $K$ if and only if there is an object $(\ell,\lambda)$ in $\cC\mathrm{one}_{\cJ}K$ such that the canonical projection induces an isomorphism of categories
\[\cC\mathrm{one}_{\cJ}K\slash(\ell,\lambda)\stackrel\cong\longrightarrow\cC\mathrm{one}_{\cJ}K.\]
\end{unnumberedproposition}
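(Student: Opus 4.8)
The plan is to prove the proposition by analyzing what the isomorphism $\cC\mathrm{one}_{\cJ}K\slash(\ell,\lambda)\xrightarrow{\cong}\cC\mathrm{one}_{\cJ}K$ encodes concretely. First I would recall the standard fact that, for any category $\cA$ and any object $a$ of $\cA$, the forgetful projection $\cA\slash a\to\cA$ is an isomorphism of categories if and only if $a$ is terminal in $\cA$. The ``if'' direction is immediate: when $a$ is terminal, each object $x$ of $\cA$ has a \emph{unique} map $x\to a$, so the fibre of $\cA\slash a\to\cA$ over $x$ is a point, and one checks that the functor is then bijective on objects and fully faithful, hence an isomorphism. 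Conversely, if the projection is an isomorphism then in particular the fibre over each $x$ is a singleton, which says precisely that there is a unique morphism $x\to a$, i.e.\ $a$ is terminal. I would state this as a short lemma (or cite \cite[Definition 3.1.6]{RiehlCT} and the surrounding discussion) and apply it with $\cA=\cC\mathrm{one}_{\cJ}K$ and $a=(\ell,\lambda)$.

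Granting that lemma, the proposition reduces to the classical identification of limit objects with terminal cones: $\ell$ is a limit object of $K$ if and only if there exists a cone $(\ell,\lambda)$ over $K$ that is terminal in $\cC\mathrm{one}_{\cJ}K$. I would include a brief reminder of why this holds --- unwinding the definition of $\cC\mathrm{one}_{\cJ}K$ as the category whose objects are pairs $(c,\mu)$ with $c\in\cC$ and $\mu$ a natural transformation from the constant diagram at $c$ to $K$, and whose morphisms are maps in $\cC$ compatible with the cone legs. A cone $(\ell,\lambda)$ being terminal then says exactly that for every object $c$ and every cone $\mu$ over $K$ with summit $c$ there is a unique morphism $c\to\ell$ in $\cC$ whose composite with $\lambda$ recovers $\mu$, which is the defining universal property of a limit. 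Chaining the two equivalences gives the statement.

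The only genuinely delicate point --- and the one I would be most careful about --- is the bookkeeping in the ``only if'' direction of the auxiliary lemma: one must verify that an isomorphism of categories $\cA\slash a\to\cA$ really does force each hom-set $\cA(x,a)$ to be a singleton and not merely nonempty, and that fullness/faithfulness of the projection together with this gives terminality with no further hypotheses. This is a routine diagram chase, but it is where the genuine content sits, since everything else is pure unwinding of definitions. I would present it compactly and then note that the remaining implications are formal.

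Since this is folklore, I would keep the write-up short: state the auxiliary lemma about slices and terminal objects, indicate its one-line proof in each direction, recall the terminal-cone characterization of limits, and assemble. No new machinery from later in the paper is needed --- the whole point of the proposition is to serve as the classical template that the subsequent $(\infty,n)$-categorical definition will emulate.
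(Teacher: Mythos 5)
Your proposal is correct and is exactly the standard argument this folklore statement rests on (the paper offers no proof of its own, citing it as classical): the slice projection $\cA\slash a\to\cA$ is an isomorphism precisely when $a$ is terminal, combined with the identification of limit cones with terminal objects of $\cC\mathrm{one}_{\cJ}K$. The ``delicate point'' you flag is in fact even simpler than you suggest, since bijectivity of the projection on objects alone already forces each hom-set $\cA(x,a)$ to be a singleton, with fullness and faithfulness not needed for that direction.
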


When trying to formulate an analogous statement for general $n$, and focusing first on the case $n=2$, clingman and the first author proved in \cite{cM1} that $2$-limits do not correspond to $2$-terminal objects in any $2$-category of cones. This negative answer implies that the world of $n$-categories is insufficient to formulate such a theorem, and instead one needs to pass to a larger framework. This is achieved by regarding each $n$-category as an internal category to $(n-1)$-categories, a.k.a.~a \emph{double $(n-1)$-category}, via the canonical embedding $\bH$, and make the appropriate adjustments: the $n$-category of cones should be replaced with an appropriate double $(n-1)$-category $\bC\mathrm{one}_{\bH \cJ}(\bH K)$ of cones over $\bH K$, and the slice $\bC\mathrm{one}_{\bH \cJ}(\bH K)\sslash (\ell,\lambda)$ should be computed in the category of double $(n-1)$-categories, as opposed to $n$-categories.

We review some of these constructions in \cref{SectionLimitsRevisited}. With these adjustments, the desired statement was proven in \cite[\textsection4.2 c)]{GPdoublelimits} for $n=2$ and in \cite[Corollary 8.22]{MSV2} for general $n$.

\begin{unnumberedtheorem}[Grandis--Par\'e,~Moser--Sarazola--Verdugo]
Given a diagram $K\colon\cJ\to\cC$ between $n$-ca\-te\-gories, an object $\ell$ of $\cC$ is an $n$-limit object of $K$ if and only if there is an object $(\ell,\lambda)$ in $\bC\mathrm{one}_{\bH\cJ}(\bH K)$ such that the canonical projection induces an isomorphism of double $(n-1)$-categories
\[\bC\mathrm{one}_{\bH\cJ}(\bH K)\sslash (\ell,\lambda)\stackrel\cong\longrightarrow\bC\mathrm{one}_{\bH\cJ}(\bH K).\]
\end{unnumberedtheorem}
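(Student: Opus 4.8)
The plan is to reduce the biconditional to a Yoneda-style representability statement and then to a formal fact about slices of double $(n-1)$-categories (this is essentially the route taken in \cite{GPdoublelimits,MSV2}). First I would recall the universal property defining an $n$-limit: $\ell$ is an $n$-limit of $K$ exactly when there is a cone $\lambda$ with summit $\ell$ such that, for every object $c$ of $\cC$, post-composition with $\lambda$ defines an isomorphism of $(n-1)$-categories $\lambda_*\colon\cC(c,\ell)\xrightarrow{\ \cong\ }\cC\mathrm{one}_\cJ(c,K)$ onto the $(n-1)$-category of cones over $K$ with summit $c$, naturally in $c$. The object of $\bC\mathrm{one}_{\bH\cJ}(\bH K)$ to single out is precisely this pair $(\ell,\lambda)$.

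Next I would isolate the one genuinely double-categorical input. For any double $(n-1)$-category $\bD$ and object $d$, unwinding the construction of the slice $\bD\sslash d$ and of its canonical projection $\bD\sslash d\to\bD$ (reviewed in \cref{SectionLimitsRevisited}), this projection is an isomorphism if and only if $d$ is terminal in the strong sense that also rigidifies everything lying over $d$: every object of $\bD$ has a unique horizontal morphism to $d$; every vertical morphism of $\bD$, equipped with the two resulting horizontal legs to $d$, has a unique filler square over $\id_d$; and the analogous uniqueness holds for the cells of the hom-$(n-1)$-categories of $\bD$ lying over $d$. Since the projection merely forgets the data of a leg to $d$, this should be a direct consequence of the definitions.

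It then remains to unpack what this strong terminality of $(\ell,\lambda)$ means for $\bD=\bC\mathrm{one}_{\bH\cJ}(\bH K)$. Using that $\bH$ is fully faithful and compatible with the (weighted) limits entering the construction of $\bC\mathrm{one}$, one identifies the relevant data: the objects of $\bC\mathrm{one}_{\bH\cJ}(\bH K)$ are the cones over $K$, and for a cone $(c,\mu)$ the horizontal morphisms, filler squares, and hom-$(n-1)$-category structure relating $(c,\mu)$ to $(\ell,\lambda)$ assemble into the $(n-1)$-category of morphisms of cones $(c,\mu)\to(\ell,\lambda)$, which maps to $\cC(c,\ell)$ by forgetting the cone-compatibility and whose objects are exactly the $f\colon c\to\ell$ with $\lambda_* f=\mu$. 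Through this dictionary, the strong terminality of $(\ell,\lambda)$ becomes: for every cone $(c,\mu)$ there is a unique $f\colon c\to\ell$ with $\lambda_* f=\mu$, with unique compatibilities on all higher cells, and the comparison $\cC(c,\ell)\to\cC\mathrm{one}_\cJ(c,K)$ is an isomorphism of $(n-1)$-categories; naturality in $c$ is read off from the functoriality of the slice. This is exactly the universal property from the first paragraph, so chaining the three steps proves the equivalence.

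The main obstacle is the bookkeeping in the third step: matching the vertical morphisms and filler squares of $\bC\mathrm{one}_{\bH\cJ}(\bH K)$ with the enriched hom data of $\cC$. This is precisely where the passage to double $(n-1)$-categories does work that no $n$-category of cones could -- the obstruction found in \cite{cM1} -- so the identification cannot be carried out naively one dimension at a time; it has to proceed through the explicit presentation of $\bC\mathrm{one}_{\bH\cJ}(\bH K)$ as a weighted limit, together with the fact that $\bH$ preserves that particular limit (not merely enough of it to match objects), after which one still must check functoriality in $\cJ$ and naturality in $c$ of the resulting comparison uniformly in $n$.
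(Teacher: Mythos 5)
This statement is not proved in the paper at all: it is recalled in the introduction as a known theorem, with the proof attributed to \cite[\textsection 4.2 c)]{GPdoublelimits} for $n=2$ and \cite[Corollary 8.22]{MSV2} for general $n$, so there is no in-paper argument to compare yours against. That said, your outline follows essentially the same strategy as those cited proofs: (i) the projection $\bD\sslash d\to\bD$ is an isomorphism exactly when $d$ is ``double terminal'' (unique horizontal morphism to $d$ from every object, unique filler square over the vertical identity of $d$ for every vertical morphism, and the analogous uniqueness for the higher cells), where, as you note, bijectivity on the horizontal morphisms and squares of the slice then comes for free from these uniqueness conditions; and (ii) unpacking double terminality of $(\ell,\lambda)$ in $\bC\mathrm{one}_{\bH\cJ}(\bH K)$ recovers the enriched universal property, with the vertical morphisms and squares of the cone double category encoding modifications of cones and the $2$-cell (and higher-cell) data of $\cC$, which is precisely what the $n$-categorical cone construction of \cite{cM1} cannot see. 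Your outline is correct at this level of granularity; the genuine content is, as you say, the dictionary in step (ii), which in practice is carried out via the adjunction $\bH\dashv\bfH$ and the compatibility of $\bH$ with the internal homs and pullbacks defining $\bC\mathrm{one}_{\bH\cJ}(\bH K)$, in the spirit of the paper's \cref{lem:Hofcone,lem:HVofCone} for the pseudo variant. One small simplification: naturality in $c$ of the comparison $\cC(c,\ell)\to\cC\mathrm{one}_{\cJ}(c,K)$ need not be checked separately, since the comparison is whiskering with the fixed cone $\lambda$ and is therefore natural by construction; only its invertibility at each $c$ is at stake.
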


This result was also proven as \cite[Corollary 7.23]{cM2} in the context of the homotopy theory of $2$-categories and homotopy $2$-limits in the following form (and in this paper we also give a variant using a more homotopical version of $\bH$ as \cref{Homotopy2LimitsUsingCommas}).

\begin{unnumberedtheorem}[clingman--Moser]
Given a diagram $K\colon\cJ\to\cC$ between $2$-categories, an object $\ell$ of $\cC$ is a homotopy $2$-limit object of $K$ if and only if there is an object $(\ell,\lambda)$ in $\bC\mathrm{one}^{\mathrm{ps}}_{\bH \cJ}(\bH K)$ such that the canonical projection is an equivalence of double categories
\[\bC\mathrm{one}^{\mathrm{ps}}_{\bH \cJ}(\bH K)\sslash^{\mathrm{ps}}(\ell,\lambda)\stackrel\simeq\longrightarrow\bC\mathrm{one}^{\mathrm{ps}}_{\bH \cJ}(\bH K).\]
\end{unnumberedtheorem}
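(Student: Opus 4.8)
The plan is to run the double-categorical analogue of the classical fact that an object is terminal if and only if its slice projection is an isomorphism, with ``isomorphism'' relaxed to ``equivalence of double categories'' and ``terminal'' replaced by the universal property of the homotopy $2$-limit, which via the fibrational viewpoint describes $\ell$ as an essentially terminal cone over $K$. Recall that $\ell$ is a homotopy $2$-limit of $K$ precisely when it carries a pseudo-cone $\lambda$ over $K$ for which, for every object $c$ of $\cC$, whiskering $\lambda$ by $1$-cells $c\to\ell$ defines an equivalence of categories
\[
\cC(c,\ell)\ \xrightarrow{\ \simeq\ }\ \mathrm{PsCone}_{\cJ}(c,K),
\]
where $\mathrm{PsCone}_{\cJ}(c,K)$ is the category of pseudo-cones over $K$ with summit $c$ and modifications between them; this is the bicategorical (conical, terminally weighted) universal property. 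I would fix this reformulation as the property to be recovered from the right-hand side of the theorem.

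The next step is to unwind the hom-categories of $\bC\mathrm{one}^{\mathrm{ps}}_{\bH\cJ}(\bH K)$. Its objects are the pseudo-cones $(c,\mu)$ over $K$, and for a fixed target $(\ell,\lambda)$ the category whose objects are the morphisms $(c,\mu)\to(\ell,\lambda)$ and whose morphisms are the $2$-cells over them has objects the pairs consisting of a $1$-cell $f\colon c\to\ell$ of $\cC$ together with an invertible comparison of pseudo-cones $\mu\simeq\lambda\cdot f$ (the coherence datum of a morphism of cones), and morphisms the $2$-cells $f\Rightarrow f'$ in $\cC$ compatible with the comparisons. In other words, this hom-category is the iso-comma category of the whiskering functor $\cC(c,\ell)\to\mathrm{PsCone}_{\cJ}(c,K)$ over the object $\mu$. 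Since a functor is an equivalence if and only if all of its iso-comma categories are equivalent to the terminal category, we obtain that $\cC(c,\ell)\to\mathrm{PsCone}_{\cJ}(c,K)$ is an equivalence for every $c$ if and only if every hom-category $\bC\mathrm{one}^{\mathrm{ps}}_{\bH\cJ}(\bH K)\big((c,\mu),(\ell,\lambda)\big)$ is contractible, i.e., if and only if $(\ell,\lambda)$ is a \emph{homotopy terminal object} of $\bC\mathrm{one}^{\mathrm{ps}}_{\bH\cJ}(\bH K)$.

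Finally I would connect homotopy terminality to the slice projection. The pseudo-slice $\bC\mathrm{one}^{\mathrm{ps}}_{\bH\cJ}(\bH K)\sslash^{\mathrm{ps}}(\ell,\lambda)$ has objects the morphisms into $(\ell,\lambda)$, and the canonical projection $\Pi$ forgets this structure; one checks that the homotopy fiber of $\Pi$ over a pseudo-cone $(c,\mu)$ is the hom-category $\bC\mathrm{one}^{\mathrm{ps}}_{\bH\cJ}(\bH K)\big((c,\mu),(\ell,\lambda)\big)$. Invoking the characterization of equivalences of double categories---essential surjectivity together with local equivalences, equivalently all homotopy fibers contractible---we conclude that $\Pi$ is an equivalence of double categories if and only if all these hom-categories are contractible, which by the previous step is equivalent to the universal property of the homotopy $2$-limit. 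This settles both implications at once: for ``$\Leftarrow$'' one reads the universal property off the contractibility of the fibers of $\Pi$; for ``$\Rightarrow$'' one takes $\lambda$ to be the universal cone exhibiting $\ell$ as the homotopy $2$-limit and deduces that $\Pi$ is an equivalence.

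The real work is double-categorical bookkeeping rather than a new idea, and I expect two points to be the main obstacle. First, pinning down the pseudo-slice construction $\sslash^{\mathrm{ps}}$ and the invertible squares it introduces, so that the claimed identification of the homotopy fiber of $\Pi$ with the hom-category $\bC\mathrm{one}^{\mathrm{ps}}_{\bH\cJ}(\bH K)\big((c,\mu),(\ell,\lambda)\big)$ is correct and natural in $(c,\mu)$. Second, the precise form of ``equivalence of double categories'' to be used and its detection by essential surjectivity plus local equivalences (equivalently, contractibility of homotopy fibers) in the relevant homotopy theory of double categories. Throughout, one must keep straight the coherence $2$-cells packaged into pseudo-cones, modifications, and the embedding $\bH$, so that the description of a morphism $(c,\mu)\to(\ell,\lambda)$ as ``a $1$-cell $f$ together with a coherent equivalence $\mu\simeq\lambda\cdot f$'' is genuinely functorial. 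It is exactly because $\bC\mathrm{one}^{\mathrm{ps}}_{\bH\cJ}(\bH K)$ is a double category, so that its hom-objects are categories rather than mere sets, that the equivalence condition on $\Pi$ can see the universal property of a genuinely $2$-dimensional limit, in contrast to the negative result of \cite{cM1} for $2$-categories of cones.
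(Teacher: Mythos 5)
First, note that the paper does not prove this statement at all: it is quoted (as \cref{cMResult}) from clingman--Moser, and the paper's own contribution is the $\widetilde{\bH}$-variant \cref{Homotopy2LimitsUsingCommas}, deduced from the quoted result by a zig-zag of double biequivalences. So there is no in-paper proof to match; judged on its own terms, your sketch has a genuine gap at its central reduction. The lemma you invoke, ``a functor is an equivalence if and only if all of its iso-comma categories are equivalent to the terminal category,'' is false: the inclusion of the discrete category on two objects into the walking arrow $[1]$ has all iso-commas isomorphic to the terminal category but is not full. Contractibility of the iso-fibers of $\lambda^*\colon \cC(c,\ell)\to[\cJ,\cC]^{\mathrm{ps}}(\Delta c,K)$ over all pseudo-cones $\mu$ controls only data up to \emph{invertible} comparison; it cannot detect fullness (or faithfulness) of $\lambda^*$ on non-invertible modifications, which is part of the universal property you are trying to recover.

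The same loss of information occurs on the double-categorical side. Identifying the ``homotopy fiber'' of the projection with the horizontal hom-category $\bC\mathrm{one}^{\mathrm{ps}}_{\bH\cJ}(\bH K)\bigl((c,\mu),(\ell,\lambda)\bigr)$ discards exactly the vertical morphisms of the cone double category (which are the not-necessarily-invertible modifications between cones with the same summit) and the squares of the pseudo-slice. The notion of equivalence in the statement is a double biequivalence, detected by applying \emph{both} $\bfH$ and $\bfH\llbracket\bV[1],-\rrbracket$, so the condition on the projection includes lifting/fullness conditions in the vertical direction and full faithfulness on squares; these are precisely what supply surjectivity of $\lambda^*$ on arbitrary modifications, and they are invisible to your ``all hom-categories contractible'' criterion. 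Collapsing to horizontal hom-contractibility amounts to asking that $(\ell,\lambda)$ be biterminal in a $2$-category of pseudo-cones, i.e.\ the very approach that the negative results of \cite{cM1} (which motivate the passage to double categories in this paper and in \cite{cM2}) show is inadequate. Consequently neither implication is established as sketched: a correct argument, as in \cite{cM2}, must check the horizontal-level conditions \emph{and} the vertical/square-level conditions of the projection against essential surjectivity, fullness and faithfulness of $\lambda^*$ separately, rather than reducing everything to contractible iso-fibers.
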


Here, the main adjustments consist of defining appropriate versions of the double category of cones $\bC\mathrm{one}^{\mathrm{ps}}_{\bH \cJ}(\bH  K)$ and its slice which are based on the pseudo Gray tensor product from \cite{Bohm} and the internal pseudo-hom for double categories, rather than the ordinary cartesian product and internal hom; see \cref{SectionLimitsRevisited} for more details.

Motivated by the evidence that we just described, we are led to introduce as \cref{MainDefinition} the following notion of $(\infty,n)$-limit for an $(\infty,n)$-diagram between $(\infty,n)$-categories. For the purpose of this paper, we define $(\infty,n)$-categories as complete Segal objects in $(\infty,n-1)$-categories. This definition is convenient because it admits a canonical inclusion into double $(\infty,n-1)$-categories, which are Segal objects in $(\infty,n-1)$-categories. In particular, it allows for a simple construction for the double $(\infty,n-1)$-category of cones $C\mathrm{one}^{(\infty,n)}_{J}K$ and its slice $C\mathrm{one}^{(\infty,n)}_{J}K\sslash^{(\infty,n)}(\ell,\lambda)$. Furthermore, when $n=1$, this definition retrieves the definition of $(\infty,1)$-limits in the context of $(\infty,1)$-categories regarded as complete Segal spaces, as in (the dual of) \cite[\textsection5.2]{rasekh2023yoneda}.

\begin{unnumbereddefinition}
Given a diagram $K\colon J\to C$ between $(\infty,n)$-categories, we say that an object $\ell$ of $C$ is an \emph{$(\infty,n)$-limit object} of $K$ if there is an object $(\ell,\lambda)$ in $C\mathrm{one}^{(\infty,n)}_{J}K$ such that the canonical projection is an equivalence of double $(\infty,n-1)$-categories
\[C\mathrm{one}^{(\infty,n)}_{J}K\sslash^{(\infty,n)}(\ell,\lambda)\stackrel\simeq\longrightarrow C\mathrm{one}^{(\infty,n)}_{J}K.\]
\end{unnumbereddefinition}

After introducing the definition, we prove consistency results and provide proofs of concept for the correctness of this notion.

\subsection*{Overview of paper's results}

In this paper we provide the first evidence that the proposed definition of $(\infty,n)$-limit is meaningful.

First, if $I_n$ denotes the embedding of $(\infty,n-1)$-categories into $(\infty,n)$-categories, and we prove as \cref{ConsistenceAcrossDimension} that the definition is self-consistent across different values of $n$. In particular, when $n>1$ this entails that our notion of $(\infty,n)$-limit is compatible with the established notions of $(\infty,1)$-limits for $(\infty,1)$-categories.

\begin{thmalph}
    An object $\ell$ of an $(\infty,n-1)$-category $C$ is an $(\infty,n-1)$-limit object of a diagram of $(\infty,n-1)$-categories $K\colon J\to C$ if and only if the object $\ell$ is an $(\infty,n)$-limit object of the diagram of $(\infty,n)$-categories $ I_nK\colon  I_n J\to  I_n C$.
\end{thmalph}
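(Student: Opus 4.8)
The plan is to reduce the statement to a comparison of the relevant double $(\infty,n-1)$-categories of cones and their slices, using the fact that $I_n$ is a fully faithful embedding compatible with the relevant internal-categorical structures. First I would recall that, by construction, the double $(\infty,n-1)$-category $C\mathrm{one}^{(\infty,n-1)}_{J}K$ is built from $J$, $C$, $K$ using cartesian products (or Gray-type tensors) and internal homs in $(\infty,n-1)$-categories, while $C\mathrm{one}^{(\infty,n)}_{I_nJ}(I_nK)$ is built the analogous way one level up, using the corresponding operations for $(\infty,n)$-categories. The key technical input is that $I_n$ preserves the products/tensors and the internal homs used in these constructions (or at least that the relevant mapping objects agree), so that one obtains a natural equivalence of double $(\infty,n-1)$-categories
\[
C\mathrm{one}^{(\infty,n)}_{I_nJ}(I_nK)\;\simeq\;C\mathrm{one}^{(\infty,n-1)}_{J}K,
\]
and, for any cone $(\ell,\lambda)$, a compatible equivalence on the slices
\[
C\mathrm{one}^{(\infty,n)}_{I_nJ}(I_nK)\sslash^{(\infty,n)}(\ell,\lambda)\;\simeq\;C\mathrm{one}^{(\infty,n-1)}_{J}K\sslash^{(\infty,n-1)}(\ell,\lambda)
\]
intertwining the two canonical projections.

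Granting these two equivalences, the proof is then formal: since equivalences of double $(\infty,n-1)$-categories satisfy two-out-of-three, the canonical projection
\[
C\mathrm{one}^{(\infty,n)}_{I_nJ}(I_nK)\sslash^{(\infty,n)}(\ell,\lambda)\longrightarrow C\mathrm{one}^{(\infty,n)}_{I_nJ}(I_nK)
\]
is an equivalence if and only if the projection
\[
C\mathrm{one}^{(\infty,n-1)}_{J}K\sslash^{(\infty,n-1)}(\ell,\lambda)\longrightarrow C\mathrm{one}^{(\infty,n-1)}_{J}K
\]
is. One also needs to check that the objects of $C\mathrm{one}^{(\infty,n)}_{I_nJ}(I_nK)$ lying over $\ell$ correspond, under the equivalence, to the objects of $C\mathrm{one}^{(\infty,n-1)}_{J}K$ lying over $\ell$; this follows from the naturality of the comparison equivalence together with the fact that $I_n$ is the identity on objects (or induces an equivalence on underlying $\infty$-groupoids of objects). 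Combining these, the existence of a witnessing cone $(\ell,\lambda)$ on one side is equivalent to its existence on the other, which is exactly the biconditional in the statement.

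The main obstacle will be establishing the comparison equivalence of cone objects, i.e.\ verifying that the embedding $I_n$ is suitably compatible with the tensor products and internal homs entering the definition of $C\mathrm{one}^{(\infty,n)}$. Concretely, this amounts to showing that for $(\infty,n-1)$-categories $A$ and an $(\infty,n)$-category built from them via $I_n$, the internal hom computed in double $(\infty,n-1)$-categories agrees with the one computed after applying $I_n$; since $I_n$ is constructed so as to be fully faithful and to commute with the relevant (co)limits and exponentials, this should follow from a diagram chase on the defining (co)limit diagrams of the cone construction, but the bookkeeping with the (possibly Gray-type) tensor products and the "double" directions is where the real content lies. A secondary, minor point is to ensure all the comparison maps are natural in $(\ell,\lambda)$ so that the slice equivalence is compatible with the projections; this should be immediate from the functoriality of the slice construction once the base equivalence is in place.
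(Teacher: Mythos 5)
Your plan matches the paper's proof essentially step for step: the paper establishes exactly your two comparison equivalences (as \cref{InclusionAndComma} and \cref{InclusionAndSlice}, with the level-$(n-1)$ side appearing as $I_n$ of the cone/slice), deduced from $I_n$ being a limit-preserving right adjoint whose left adjoint preserves products (hence compatible with the internal homs, \cref{InclusionAndCotensors}), and then concludes by the same formal two-out-of-three argument, using conservativity of $I_n$ (which your fully-faithfulness hypothesis supplies) to pass back down from level $n$ to level $n-1$. The only caveat is that the internal-hom compatibility you flag as the main obstacle is not a mere diagram chase in the paper: it rests on the non-formal input that the left adjoint of $\iota_{n-1}$ preserves products, proved via the complicial-sets presentation of $Cat_{(\infty,n-1)}$.
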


Second, we consider an appropriate embedding ${\bN}\widetilde{\bH} $ of the homotopy theory of strict $2$-categories into that of $(\infty,2)$-categories constructed in \cite[\textsection6]{MoserNerve}, and we prove as \cref{ConsistencyWithStrict} the consistency of the definition of limit between the contexts of $(\infty,2)$-categories and of strict $2$-categories:

\begin{thmalph}
    An object $\ell$ of a $2$-category $\cC$ is a homotopy $2$-limit object of a diagram of $2$-categories ${K\colon\cJ\to\cC}$ if and only if the object $\ell$ is an $(\infty,2)$-limit object of the diagram of $(\infty,2)$-categories ${\bN}\widetilde{\bH} K\colon {\bN}\widetilde\bH \cJ\to {\bN}\widetilde\bH \cC$.
\end{thmalph}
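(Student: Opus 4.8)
The plan is to reduce Theorem~B to Theorem~A together with the known correspondence between homotopy $2$-limits and $(\infty,1)$-limits, exploiting the compatibility of the two embeddings with each other and with the relevant comma constructions. Concretely, I would first unpack the embedding ${\bN}\widetilde{\bH}$ from \cite[\textsection6]{MoserNerve} and recall that it factors, up to equivalence, through the embedding $\bH$ of $2$-categories into double categories followed by a Rezk-type nerve. The key structural input is that ${\bN}\widetilde{\bH}$ carries the pseudo-Gray tensor product and the internal pseudo-hom of double categories to the cartesian product and internal hom of double $(\infty,1)$-categories, since these are precisely the ingredients used to build $\bC\mathrm{one}^{\mathrm{ps}}_{\bH \cJ}(\bH K)$ and $C\mathrm{one}^{(\infty,2)}_{J}K$ respectively. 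Granting this, one obtains a natural equivalence of double $(\infty,1)$-categories
\[
{\bN}\widetilde{\bH}\big(\bC\mathrm{one}^{\mathrm{ps}}_{\bH \cJ}(\bH K)\big)\;\simeq\;C\mathrm{one}^{(\infty,2)}_{{\bN}\widetilde{\bH} \cJ}({\bN}\widetilde{\bH} K),
\]
compatibly with the corresponding equivalence of slices at a cone $(\ell,\lambda)$ and with the canonical projections.

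With this dictionary in hand, the argument runs as follows. Suppose $\ell$ is a homotopy $2$-limit object of $K$. By the clingman--Moser theorem quoted in the excerpt, there is a cone $(\ell,\lambda)$ such that the projection $\bC\mathrm{one}^{\mathrm{ps}}_{\bH \cJ}(\bH K)\sslash^{\mathrm{ps}}(\ell,\lambda)\to\bC\mathrm{one}^{\mathrm{ps}}_{\bH \cJ}(\bH K)$ is an equivalence of double categories. Since ${\bN}\widetilde{\bH}$ is a (homotopically fully faithful) embedding sending equivalences of double categories to equivalences of double $(\infty,1)$-categories, applying it and transporting along the natural equivalence above yields that the canonical projection $C\mathrm{one}^{(\infty,2)}_{J}K\sslash^{(\infty,2)}(\ell,\lambda)\to C\mathrm{one}^{(\infty,2)}_{J}K$ is an equivalence, i.e.\ $\ell$ is an $(\infty,2)$-limit object of ${\bN}\widetilde{\bH} K$ by \cref{MainDefinition}. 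The converse direction uses that ${\bN}\widetilde{\bH}$ is homotopically fully faithful and that the class of equivalences of double $(\infty,1)$-categories is detected on the image: an equivalence between objects in the image of ${\bN}\widetilde{\bH}$ comes from an equivalence of double categories, so running the above equivalences backwards produces the required equivalence of slices in the strict world, hence $\ell$ is a homotopy $2$-limit by clingman--Moser. One should also check that the cone $(\ell,\lambda)$ witnessing the $(\infty,2)$-limit can be taken to be the image of a strict pseudo-cone, which follows from fully faithfulness of ${\bN}\widetilde{\bH}$ on objects of the relevant cone categories (equivalently, every object of $C\mathrm{one}^{(\infty,2)}_{J}K$ over $\ell$ is equivalent to one in the image).

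The main obstacle I anticipate is establishing the comparison equivalence ${\bN}\widetilde{\bH}(\bC\mathrm{one}^{\mathrm{ps}}_{\bH \cJ}(\bH K))\simeq C\mathrm{one}^{(\infty,2)}_{{\bN}\widetilde{\bH} \cJ}({\bN}\widetilde{\bH} K)$, together with the compatibility of ${\bN}\widetilde{\bH}$ with the pseudo-Gray tensor product, the internal pseudo-hom, and the double-categorical slice $\sslash^{\mathrm{ps}}$. This is where all the homotopical bookkeeping lives: one must know that the Rezk-type nerve underlying ${\bN}\widetilde{\bH}$ is (co)continuous enough to commute with the limit/colimit diagrams defining $\bC\mathrm{one}^{\mathrm{ps}}$ and its slice, and that it is lax/oplax monoidal in a way that sends the pseudo-Gray tensor to the cartesian product on the $(\infty,1)$-side. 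I would handle this by first treating the simpler building blocks — the arrow/square objects, the cotensors used in the cone construction — and citing the relevant comparison results from \cite{MoserNerve} and the constructions reviewed in \cref{SectionLimitsRevisited}, then assembling them. Once the comparison equivalence is in place, the rest of the proof is a formal two-out-of-three and fully-faithfulness argument as sketched above.
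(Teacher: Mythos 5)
Your top-level strategy is the same as the paper's: characterize both limits fibrationally, then transport the characterization through the nerve using its conservativity and its compatibility with the cone and slice constructions (the latter resting on the fact that the left adjoint of $\bN$ carries cartesian products to pseudo Gray tensor products, which is \cref{CpreservesProducts} in the paper, not a citation to \cite{MoserNerve}). However, there is a genuine gap at the point you yourself flag as the ``main obstacle'', and the justification you propose for it would not close it. You want the comparison $\bN\bigl(\bC\mathrm{one}^{\mathrm{ps}}_{\bH \cJ}(\bH K)\bigr)\simeq C\mathrm{one}^{(\infty,2)}_{\bN\widetilde{\bH} \cJ}(\bN\widetilde{\bH} K)$ (and its slice version) to follow from the nerve being limit-preserving and suitably monoidal. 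That reasoning only applies to constructions built from fibrant inputs: the strict pullback and internal pseudo-hom compute the $\infty$-categorical pullback and hom in $[DblCat]_\infty$ only when the relevant double categories are weakly horizontally invariant, and $\bH\cC$ --- hence $\llbracket\bH\cJ,\bH\cC\rrbracket^{\mathrm{ps}}$ and $\bC\mathrm{one}^{\mathrm{ps}}_{\bH\cJ}(\bH K)$ --- is not. So no amount of (co)continuity or monoidality of $\bN$ identifies the nerve of the strict $\bH$-based cone construction appearing in the clingman--Moser theorem (\cref{cMResult}) with the $(\infty,2)$-categorical cone construction; the problem lives entirely on the strict side, before the nerve is applied.

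The paper supplies exactly this missing step as \cref{Homotopy2LimitsUsingCommas}: homotopy $2$-limits are detected by the $\widetilde{\bH}$-based cones $\bC\mathrm{one}^{\mathrm{ps}}_{\widetilde{\bH}\cJ}(\widetilde{\bH}K)$ and their slices. Its proof is not formal: one compares $\bC\mathrm{one}^{\mathrm{ps}}_{\bH\cJ}(\bH K)$ and $\bC\mathrm{one}^{\mathrm{ps}}_{\widetilde{\bH}\cJ}(\widetilde{\bH}K)$ through the intermediate $\bC\mathrm{one}^{\mathrm{ps}}_{\bH\cJ}(I_\cC\circ\bH K)$ (\cref{prop:middlevsConeHtild,prop:ConeHKvsmiddle} and their slice analogues), and because the $\bH$-based cone is not fibrant, the second comparison has to be verified by hand by applying the biequivalence-detecting functors $\bfH$ and $\bfH\llbracket\bV[1],-\rrbracket$ and computing both sides as $2$-categorical cone constructions (\cref{lem:Hofcone,lem:HVofCone}), together with \cref{lem:homiswhi,lem:HtoHtildeJ}. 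Only after this reduction does the paper apply $\bN$, where \cref{NerveAndComma,NerveAndSlice} are indeed the formal consequences of \cref{NerveAndFibrancy} that you describe, and the iff-chain finishes the argument. In short: your sketch implicitly bundles the whole $\bH$-versus-$\widetilde{\bH}$ comparison into an equivalence asserted to be ``homotopical bookkeeping'' about the nerve, but that comparison is an independent, non-formal $2$-/double-categorical theorem, and without it (or an equivalent substitute) the proposed proof does not go through. A minor additional point: the expression $\bN\widetilde{\bH}\bigl(\bC\mathrm{one}^{\mathrm{ps}}_{\bH\cJ}(\bH K)\bigr)$ does not typecheck, since $\widetilde{\bH}$ takes $2$-categories, not double categories, as input; you mean $\bN$ alone, which again highlights that the fibrancy of its argument is what has to be addressed.
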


These results guarantee that the theory is strong enough to recover two rather orthogonal and non-trivial theories: that of homotopy $2$-limits and that of $(\infty,1)$-limits, and even more generally $(\infty,n-1)$-limits.

\changelocaltocdepth{2}

Given the importance of $(\infty,n)$-limits in a variety of settings, it is unsurprising that there have been other efforts in this direction. Notable examples include $(\infty,2)$-limits in \cite[\textsection5]{GHL2}, using an $(\infty,2)$-categorical Gray tensor product, and $(\infty,n)$-limits in \cite[\textsection6.2.3]{LoubatonThesis}, using a representable approach via the Yoneda lemma.

While every approach has their own merit, the crucial advantage of our result is the centrality of an elegant theory of fibrations in the study of $(\infty,n)$-limits, which distinguishes itself from the aforementioned advancements, but is much closer to the original development of $(\infty,1)$-limits. Indeed, in the $(\infty,1)$-categorical setting it is standard practice to construct explicit (right or Cartesian) fibrations that bundle coherence data of interest, this includes the study of $(\infty,1)$-limits, but also e.g.~$(\infty,1)$-operads, $(\infty,1)$-monoidal structure \cite{LurieHTT,lurie2017ha}. These computational settings benefit from the existence of a model structure for right fibrations, which makes certain proofs computationally accessible (an elegant example includes the analysis of $(\infty,1)$-cofinality \cite[\textsection4.1]{LurieHTT}). 

Double $(\infty,n-1)$-categories of cones can be realized as fibrant objects in a model structure \cite{RasekhD}, meaning we can use very analogous techniques already developed in the $(\infty,1)$-categorical framework. This is in stark contrast to other approaches, which rely on more complicated tools (Gray tensor product) or simply do not provide particular additional methods (the Yoneda approach).

\changelocaltocdepth{1}

\subsection*{ArXiv version}
For the convenience of the interested reader we note here that a previous version of this paper, containing the same results, but more detailed and explicit arguments, can be found on the arXiv \cite{MRR3arXiv}.

\subsection*{Acknowledgments}
The third author is grateful for support from the National Science Foundation under Grant No.~DMS-2203915. During the realization of this work, the first author was a member of the Collaborative Research Centre ``SFB 1085: Higher Invariants'' funded by the Deutsche Forschungsgemeinschaft (DFG). The second author is grateful to Max Planck Institute for Mathematics in Bonn for its hospitality and financial support. We are grateful to the referee for valuable feedback on this paper.

\changelocaltocdepth{2}

\section{Limits in an \texorpdfstring{$(\infty,n)$}{(infinity,n)}-category}

In this section we introduce the notion of $(\infty,n)$-limit in \cref{sec:deflimit} and prove that it is self-consistent across different values of $n$ in \cref{ConstistencyWRTn}.

\subsection{Limits in an \texorpdfstring{$(\infty,n)$}{(infinity,n)}-category} \label{sec:deflimit}

We start by introducing the definitions of \emph{double $(\infty,n-1)$-categories} and \emph{$(\infty,n)$-categories} that we will work with throughout the paper. For this, recall the following: 

\begin{notn}
    Let $Cat_{(\infty,n-1)}$ denote the $\infty$-category of $(\infty,n-1)$-categories in the sense of \cite[Definition 7.2]{BSP}.
\end{notn}

By \cite[Axiom C.3]{BSP}, the $\infty$-category $Cat_{(\infty,n-1)}$ is cartesian closed. Moreover, since $Cat_{(\infty,n-1)}$ is presentable, there is a canonical inclusion $\mathcal S\hookrightarrow Cat_{(\infty,n-1)}$ from the $\infty$-category of spaces, sending the point to the terminal $(\infty,n-1)$-category.

\begin{defn}
A \emph{simplicial $(\infty,n-1)$-category} is a functor $X\colon\Delta^{\op}\to Cat_{(\infty,n-1)}$, where $\Delta$ is the simplex category. We denote by $Fun(\Delta^{\op},Cat_{(\infty,n-1)})$ the $\infty$-category of simplicial $(\infty,n-1)$-categories and their maps.
\end{defn}

Since the $\infty$-category of $(\infty,n-1)$-categories is cartesian closed, the $\infty$-category of simplicial ${(\infty,n-1)}$-categories is also cartesian closed. Given simplicial $(\infty,n-1)$-categories $X$ and $X'$, we denote by $[X,X']$ the internal hom in $Fun(\Delta^{\op},Cat_{(\infty,n-1)})$.

\begin{defn}
A simplicial $(\infty,n-1)$-category $D$ is a
\emph{double $(\infty,n-1)$-category}
if it is a Segal object in ${Cat_{(\infty,n-1)}}$. 
That is, for all $m\geq 2$, the Segal map $D_m\to D_1\times_{D_0} \ldots \times_{D_0}D_1$ is an equivalence of $(\infty,n-1)$-categories. 
We denote by $DblCat_{(\infty,n-1)}$ the full reflective subcategory of $Fun(\Delta^{\op},Cat_{(\infty,n-1)})$ spanned by the double $(\infty,n-1)$-categories.
\end{defn}

\begin{rmk}\label{DoubleInfinityMSclosed}
    Given a simplicial $(\infty,n-1)$-category $J$ and a double $(\infty,n-1)$-category $D$, the internal hom $[J,D]$ in $Fun(\Delta^{\op},Cat_{(\infty,n-1)})$ is a double $(\infty,n-1)$-category. This follows from the argument, including the computation, in \cite[\textsection6]{RezkTheta}, there articulated using model categorical terminology for this fact.
\end{rmk}

\begin{defn}
A simplicial $(\infty,n-1)$-category $C$ is an
\emph{$(\infty,n)$-category} if it is a complete Segal object in $Cat_{(\infty,n-1)}$ and $C_0$ is a space. In other words, an $(\infty,n)$-category $C$ is a double $(\infty,n-1)$-category such that the completeness map $C_0\to C_0\times_{C_1} C_3\times_{C_1} C_0$ is an equivalence of $(\infty,n-1)$-categories and the $(\infty,n-1)$-category $C_0$ is in the essential image of the canonical inclusion ${\mathcal S\hookrightarrow Cat_{(\infty,n-1)}}$. 
\end{defn}

\begin{rmk}
Given a simplicial $(\infty,n-1)$-category $J$ and an $(\infty,n)$-category $C$, the internal hom $[J,C]$ in $Fun(\Delta^{\op},Cat_{(\infty,n-1)})$ is \emph{not} necessarily an $(\infty,n)$-category, since the condition that level $0$ is a space is not preserved. However, by \cref{DoubleInfinityMSclosed}, the internal hom $[J,C]$ is a double $(\infty,n-1)$-category.
\end{rmk}

We are now ready to introduce the main definition of this paper, namely the notion of limit in an $(\infty,n)$-category. With the following definition, we want to emphasize that the source of the diagram of which we want to take the limit does not need to be an $(\infty,n)$-category but can in fact be any simplicial $(\infty,n-1)$-category. This makes the theory of $(\infty,n)$-limits more general and will give a very natural way of formulating \emph{weighted} $(\infty,n)$-limits in the follow up paper \cite{MRR4}.

\begin{defn}
    An \emph{$(\infty,n)$-diagram} is a map of simplicial $(\infty,n-1)$-categories $K\colon J\to C$ with $C$ an $(\infty,n)$-category.
\end{defn}

\begin{notn}
    For $m\geq0$, we also denote by $[m]$ the image of the object $[m]\in \Delta$ under the canonical inclusion $\Delta\hookrightarrow Fun(\Delta^{\op},\mathcal S)\hookrightarrow Fun(\Delta^{\op},Cat_{(\infty,n-1)})$, where the first map is Yoneda. Heuristically, the object $[m]\in Fun(\Delta^{\op},Cat_{(\infty,n-1)})$ represents the $(\infty,n)$-category free on the diagram 
    \[ 0\to 1\to \ldots \to m-1\to m. \]
\end{notn}

\begin{notn}
Given a simplicial $(\infty,n-1)$-category $J$ and an $(\infty,n)$-category $C$, we denote by $\Delta_{C}^{J}\colon C\to[J,C]$ the diagonal map in $DblCat_{(\infty,n-1)}$ induced by the unique map $J\to [0]$.
\end{notn}

\begin{notn}
Given an $(\infty,n)$-diagram $K\colon J\to C$, the \emph{double $(\infty,n-1)$-category of cones} $C\mathrm{one}^{(\infty,n)}_{J}K$ is the following  pullback in the $\infty$-category $DblCat_{(\infty,n-1)}$:
    \[\begin{tikzcd}
C\mathrm{one}^{(\infty,n)}_{J}K\arrow[r]\arrow[d]\arrow[rd, phantom, "\lrcorner",very near start]&{[[1],[J,C]]}\arrow[d,"{(\mathrm{ev}_0,\mathrm{ev}_1)}"]\\
      C\times [0]\arrow[r,"\Delta_{C}^{J}\times K" swap]&{[J,C]\times[J,C]}
    \end{tikzcd}\]
\end{notn}

\begin{notn}
Given a double $(\infty,n-1)$-category $D$ and an object $d$ of $D$, the 
\emph{slice double $(\infty,n-1)$-category} $D\sslash^{(\infty,n)}d$ of $D$ over $d$ is the following pullback in the $\infty$-category $DblCat_{(\infty,n-1)}$:
    \[
    \begin{tikzcd}
D\sslash^{(\infty,n)}d\arrow[r]\arrow[d]\arrow[rd, phantom, "\lrcorner",very near start]&{[[1],D]}\arrow[d, "{(\mathrm{ev}_0,\mathrm{ev}_1)}"]\\
       D\times [0]\arrow[r,"D\times d" swap]&D\times D
    \end{tikzcd}
    \]
\end{notn}

The following definition generalizes the viewpoint for the case $n=1$ from \cite[\textsection4]{JoyalQcatsKan}, \cite[\textsection4]{LurieHTT}, \cite[\textsection5.2]{rasekh2023yoneda}.

\begin{defn}
\label{MainDefinition}
Given an $(\infty,n)$-diagram $K\colon J\to C$, an object $\ell$ of $C$ is an \emph{$(\infty,n)$-limit object} of $K$ if there is an object $(\ell,\lambda)$ in $C\mathrm{one}^{(\infty,n)}_{J}K$ such that the canonical projection is an equivalence of double $(\infty,n-1)$-categories
\[ C\mathrm{one}^{(\infty,n)}_{J}K \sslash^{(\infty,n)}(\ell,\lambda)\stackrel\simeq\longrightarrow C\mathrm{one}^{(\infty,n)}_{J}K.\]
\end{defn}

\subsection{Self-consistency with respect to varying \texorpdfstring{$n$}{n}} 

\label{ConstistencyWRTn}

We show that our definition is consistent with respect to the established notion of limits in the context of $(\infty,1)$-categories. More generally, we show as \cref{ConsistenceAcrossDimension} how, for $n>1$, it is consistent across increasing values of $n$.

\begin{rmk} \label{catnisCSS}
    By combining \cite[Theorem 13.15]{BSP} and \cite[Corollary 7.1]{BR2}, the $\infty$-category $Cat_{(\infty,n-1)}$ is equivalent to the full reflective subcategory of $Fun(\Delta^{\op},Cat_{(\infty,n-2)})$ spanned by the complete Segal objects in $Cat_{(\infty,n-2)}$ whose level $0$ is a space.
\end{rmk}

\begin{const} \label{constr:iota}
    We construct an inclusion $\iota_{n-1}\colon Cat_{(\infty,n-2)}\hookrightarrow Cat_{(\infty,n-1)}$ by induction that 
    \begin{enumerate}[leftmargin=*]
        \item commutes with the canonical inclusions from $\mathcal S$;
        \item is a right adjoint; in particular, it preserves limits; 
        \item is fully faithful. 
    \end{enumerate}
    \begin{itemize}[leftmargin=*]
        \item When $n=1$, we set $\iota_0\colon \cS\to Cat_{(\infty,1)}$ to be the canonical inclusion of the $\infty$-category of spaces into the $\infty$-category of $(\infty,1)$-categories, which satisfies (1), (2), and (3).
        \item When $n>0$, we define $\iota_{n-1}\colon Cat_{(\infty,n-2)}\hookrightarrow Cat_{(\infty,n-1)}$ to be the restriction to full reflective subcategories (using \cref{catnisCSS}) of the functor induced by postcomposing with $\iota_{n-2}$.
        \[\begin{tikzcd}
Cat_{(\infty,n-2)}\arrow[r,"\iota_{n-1}"]\arrow[d,hook]&Cat_{(\infty,n-1)}\arrow[d,hook]\\
       Fun(\Delta^{\op},Cat_{(\infty,n-3)})\arrow[r,"(\iota_{n-2})_*" swap]&Fun(\Delta^{\op},Cat_{(\infty,n-2)})
    \end{tikzcd}\]
    This can be seen to restrict appropriately using (1) and (2) for $\iota_{n-2}$. Moreover, (2) and (3) then hold for $\iota_{n-1}$ as they hold for $\iota_{n-2}$. The fact that (1) holds for $\iota_{n-1}$ follows from the fact that the following square of functors commutes and (2) for $\iota_{n-2}$.
    \[\begin{tikzcd}
\cS\arrow[r,hook]\arrow[d,hook]&Cat_{(\infty,n-1)}\arrow[d,hook]\\
       Fun(\Delta^{\op},\cS)\arrow[r,hook]&Fun(\Delta^{\op},Cat_{(\infty,n-2)})
    \end{tikzcd}\]
    \end{itemize}
\end{const}

\begin{prop} \label{leftadjointofiota}
    The left adjoint of $\iota_{n-1}\colon Cat_{(\infty,n-2)}\hookrightarrow Cat_{(\infty,n-1)}$ preserves products. 
\end{prop}

\begin{proof}
    We prove this by induction on $n\geq 1$. When $n=1$, this is clear. 

    So, let $n > 1$. Let $CS(Cat_{(\infty,n-2)})$ be the full reflective subcategory of $Fun(\Delta^{\op},Cat_{(\infty,n-2)})$ spanned by the complete Segal objects in $Cat_{(\infty,n-2)}$, and denote by~$L_{CS}$ the left adjoint of the inclusion. As a first step we show that, if $X$ is a simplicial object in $Cat_{(\infty,n-2)}$ such that $X_0$ is a space, then $(L_{CS}X)_0$ is a space as well.

    Define $P(Cat_{(\infty,n-2)})$ to be the full reflective subcategory of $Fun(\Delta^{\op},Cat_{(\infty,n-2)})$ spanned by those $X\colon \Delta^{\op}\to Cat_{(\infty,n-2)}$ such that $X_0$ is a space. We then have the following adjunctions.
        \[
\begin{tikzcd}[column sep= 2cm, row sep=1.5cm]
    Cat_{(\infty,n-1)} \arrow[r,hook,"\bot"{yshift=1pt}, "\bot"'{yshift=-1pt}] \arrow[d, hook', shift left=1.8, "\dashv"'] & CS(Cat_{(\infty,n-2)}) \arrow[d, shift left=1.8, hook', "\dashv"'] \arrow[l, shift right=4.5, "L'_P"'] \arrow[l, shift left=4.5, "R'"] \\ 
    P(Cat_{(\infty,n-2)}) \arrow[r,hook, "\bot"{yshift=1pt}, "\bot"'{yshift=-1pt}] \arrow[u, shift left=1.8, "L'_{CS}"] & Fun(\Delta^{\op},Cat_{(\infty,n-2)}) \arrow[u, shift left=1.8, "L_{CS}"] \arrow[l, shift right=4.5, "L_P"'] \arrow[l, shift left=4.5, "R"] 
\end{tikzcd}
\]
    The left adjoints of the inclusions are given by the corresponding localization functors, and the right adjoint $R\colon Fun(\Delta^{\op},Cat_{(\infty,n-2)})\to P(Cat_{(\infty,n-2)})$ of the inclusion sends a simplicial object $X\colon \Delta^{\op}\to Cat_{(\infty,n-2)}$ to the pullback  
    \[\begin{tikzcd}
RX\arrow[r]\arrow[d]\arrow[rd, phantom, "\lrcorner",very near start]&{\mathrm{cosk}((X_0)^\simeq)}\arrow[d]\\
      X\arrow[r]&{\mathrm{cosk} X_0}
    \end{tikzcd}\]
    where $\mathrm{cosk}\colon Cat_{(\infty,n-2)}\to Fun(\Delta^{\op},Cat_{(\infty,n-2)})$ is the right adjoint of the evaluation at $0$ functor ${(-)_0\colon Fun(\Delta^{\op},Cat_{(\infty,n-2)})\to Cat_{(\infty,n-2)}}$ and $(-)^\simeq\colon Cat_{(\infty,n-2)}\to \cS$ is the right adjoint of the canonical inclusion $\cS\hookrightarrow Cat_{(\infty,n-2)}$. In particular, since it preserves complete Segal objects and $Cat_{(\infty,n-1)}$ is the full subcategory of $P(Cat_{(\infty,n-2)})$ spanned by the complete Segal objects, it restricts to a right adjoint $R'\colon CS(Cat_{(\infty,n-2)})\to Cat_{(\infty,n-1)}$ of the inclusion. Since $R$ and $R'$ commute with the inclusions, the corresponding left adjoints commute. Hence, for $X\in P(Cat_{(\infty,n-2)})$, there is an equivalence $L_{CS}X\simeq L'_{CS}X$ in $CS(Cat_{(\infty,n-2)})$, showing that $(L_{CS} X)_0$ is a space. 

    Now consider the following diagram of adjunctions
\[
\begin{tikzcd}[column sep= 2cm, row sep=1.5cm]
    Cat_{(\infty,n-2)} \arrow[r, shift right=1.8, hook, "\bot"] \arrow[d, shift left=1.8, hook', "\dashv"',"\iota_{n-1}"] & CS(Cat_{(\infty,n-3)}) \arrow[r, shift right=1.8, hook, "\bot"] \arrow[d, shift left=1.8, hook', "\dashv"',"(\iota_{n-2})_*"] \arrow[l, shift right=1.8, "L'_{P}"'] & Fun(\Delta^{\op},Cat_{(\infty,n-3)}) \arrow[d, shift left=1.8, hook', "\dashv"',"(\iota_{n-2})_*"] \arrow[l, shift right=1.8, "L_{CS}"'] \\ 
    Cat_{(\infty,n-1)} \arrow[r, shift right=1.8, hook, "\bot"] \arrow[u, shift left=1.8, "L_{n-1}"] & CS(Cat_{(\infty,n-2)}) \arrow[u, shift left=1.8, "L_{CS}(L_{n-2})_*"] \arrow[l, shift right=1.8, "L'_{P}"'] \arrow[r, shift right=1.8, hook, "\bot"] & Fun(\Delta^{\op},Cat_{(\infty,n-2)}) \arrow[u, shift left=1.8, "(L_{n-2})_*"] \arrow[l, shift right=1.8, "L_{CS}"'] 
\end{tikzcd}
\] 
    By induction, the left adjoint $(L_{n-2})_*\colon Fun(\Delta^{\op},Cat_{(\infty,n-2)})\to Fun(\Delta^{\op},Cat_{(\infty,n-3)})$ preserves products. By \cite[Proposition 5.9]{BR2}, the localization functor 
    \[ L_{CS}\colon Fun(\Delta^{\op},Cat_{(\infty,n-2)})\to CS(Cat_{(\infty,n-2)}) \]
    preserves products, and so the left adjoint $L_{CS}(L_{n-2})_*\colon CS(Cat_{(\infty,n-2)})\to CS(Cat_{(\infty,n-3)})$ also preserves products. Finally, since $L_{CS}$ and $(L_{n-2})_*$ preserve the property that level $0$ is a space, the left adjoint $L_{n-1}\colon Cat_{(\infty,n-1)}\to Cat_{(\infty,n-2)}$ coincides with the restriction of $L_{CS}(L_{n-2})_*$ to full reflective subcategories, and so it preserves products. 
\end{proof}

The inclusion $\iota_{n-1}$ induces by postcomposition an inclusion 
\[ (\iota_{n-1})_*\colon Fun(\Delta^{\op},Cat_{(\infty,n-2)})\hookrightarrow Fun(\Delta^{\op},Cat_{(\infty,n-1)}). \]
Since $\iota_{n-1}$ preserves limits, it preserves Segal objects, and so $(\iota_{n-1})_*$ restricts to an inclusion
\[
I_n\colon DblCat_{(\infty,n-2)}\to  DblCat_{(\infty,n-1)}.
\]

We list the properties of this inclusion that we will need below. 

\begin{prop}
\label{InclusionAndFibrancy}
The inclusion $I_n\colon DblCat_{(\infty,n-2)}\to  DblCat_{(\infty,n-1)}$
\begin{enumerate}[leftmargin=*]
    \item sends $(\infty,n-1)$-categories to $(\infty,n)$-categories;
    \item \label{InclusionCreatesWE} is fully faithful; in particular, it is conservative; 
\item \label{InclusionAndPullbacks} is a right adjoint; in particular, it preserves limits. 
\end{enumerate}
Moreover, we also have that 
\begin{enumerate}[leftmargin=*,start=4]
    \item the left adjoint of $I_n$ preserves products.
\end{enumerate}
\end{prop}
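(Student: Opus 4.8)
The plan is to deduce each of the four claims for $I_n$ from the corresponding claim for $\iota_{n-1}$ established in \cref{prop:propertiesiota}, using the fact that $I_n$ is defined levelwise by postcomposition with $\iota_{n-1}$.

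\emph{Claim (1).} An $(\infty,n-1)$-category $C$ is a complete Segal object in $Cat_{(\infty,n-2)}$ with $C_0$ a space. Applying $\iota_{n-1}$ levelwise, the object $I_nC$ has $(I_nC)_m=\iota_{n-1}(C_m)$. Since $\iota_{n-1}$ preserves limits by \cref{prop:propertiesiota}(2), it carries the Segal maps and the completeness map of $C$ to the corresponding maps of $I_nC$, and these remain equivalences; this shows $I_nC$ is a complete Segal object in $Cat_{(\infty,n-1)}$. That $(I_nC)_0=\iota_{n-1}(C_0)$ lies in the essential image of $\mathcal S\hookrightarrow Cat_{(\infty,n-1)}$ follows from \cref{prop:propertiesiota}(1): $\iota_{n-1}$ commutes with the inclusions from $\mathcal S$, so it sends a space to a space. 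Hence $I_nC$ is an $(\infty,n)$-category.

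\emph{Claims (2) and (3).} A map $f$ in $DblCat_{(\infty,n-2)}$ is an equivalence iff each $f_m$ is an equivalence in $Cat_{(\infty,n-2)}$; since $\iota_{n-1}$ is conservative by \cref{prop:propertiesiota}(3), $I_nf$ is an equivalence iff each $\iota_{n-1}(f_m)$ is, iff each $f_m$ is, so $I_n$ is conservative. For (3), recall from the discussion preceding the proposition that $I_n$ is the restriction of $(\iota_{n-1})_*$ to the full subcategories of Segal objects, and that these are reflective subcategories. The functor $(\iota_{n-1})_*$ is a right adjoint because $\iota_{n-1}$ is (\cref{prop:propertiesiota}(2)), with left adjoint $(L_{n-1})_*$ given by postcomposition with the left adjoint $L_{n-1}$ of $\iota_{n-1}$. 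One then checks that $(L_{n-1})_*$ preserves Segal objects: this is where \cref{prop:propertiesiota}(4) enters, since the Segal condition is expressed via finite products (pullbacks over $D_0$, which after applying a product-preserving functor and using that $L_{n-1}$ preserves the relevant limits up to the fibrancy replacement are controlled), so composing with the reflector $DblCat_{(\infty,n-1)}\hookrightarrow Fun(\Delta^{\op},Cat_{(\infty,n-1)})$'s adjoint yields a left adjoint to $I_n$; preservation of limits by $I_n$ is then formal.

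\emph{Claim (4).} The left adjoint of $I_n$ is the composite of $(L_{n-1})_*$ with the reflection into $DblCat_{(\infty,n-1)}$. The functor $(L_{n-1})_*$ preserves products since $L_{n-1}$ does by \cref{prop:propertiesiota}(4) and products in functor categories are computed levelwise; the reflection onto the Segal subcategory is a left adjoint and hence preserves products automatically. So the composite preserves products. The main obstacle, and the step requiring the most care, is (3): one must verify that $(L_{n-1})_*$ actually lands in (or can be corrected to land in) double $(\infty,n-1)$-categories and that the resulting functor is genuinely left adjoint to $I_n$ — i.e. that the Segal condition interacts well with the non-product-preserving left adjoint $L_{n-1}$, which is precisely why \cref{prop:propertiesiota}(4) was isolated as a separate statement. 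Everything else is a routine levelwise transfer of properties along $\iota_{n-1}$.
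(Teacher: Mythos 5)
Your levelwise transfers for (1) and (2) are correct and are exactly what the paper's one-line proof intends. The genuine gap is in your treatment of (4), with a related confusion in (3). For (4) you justify product preservation of the reflector $R\colon Fun(\Delta^{\op},Cat_{(\infty,n-2)})\to DblCat_{(\infty,n-2)}$ by saying it ``is a left adjoint and hence preserves products automatically.'' That principle is false: left adjoints preserve colimits, not limits, and a reflective localization need not preserve products. What makes it true here is a special feature of the Segal localization, namely that it is cartesian: double $(\infty,n-2)$-categories are closed under internal homs $[J,D]$ from arbitrary simplicial $(\infty,n-2)$-categories (this is \cref{DoubleInfinityMSclosed}, resting on Rezk's computation), and a reflective subcategory closed under internal homs has a finite-product-preserving reflector. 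With that input, your decomposition of the left adjoint of $I_n$ as $R\circ (L_{n-1})_*\circ(\mathrm{inclusion})$ together with \cref{prop:propertiesiota}(4) applied levelwise does yield (4); without it, the step is unproved.

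For (3), the check you flag as ``the step requiring the most care'' --- that $(L_{n-1})_*$ lands in Segal objects --- is neither needed nor obtainable by the route you suggest: the Segal condition is about pullbacks over $D_0$, not products, so \cref{prop:propertiesiota}(4) has no bearing on it. The correct argument is purely formal: since $\iota_{n-1}$ preserves limits, $(\iota_{n-1})_*$ carries Segal objects to Segal objects, so for $D$ a double $(\infty,n-1)$-category and $E$ a double $(\infty,n-2)$-category one has
\[
\Map(R(L_{n-1})_*D,\,E)\simeq \Map((L_{n-1})_*D,\,E)\simeq \Map(D,\,(\iota_{n-1})_*E)=\Map(D,\,I_nE),
\]
using fullness of both reflective subcategories; hence $R\circ(L_{n-1})_*\circ(\mathrm{inclusion})$ is left adjoint to $I_n$ with no preservation claim about $(L_{n-1})_*$ required (alternatively, $I_n$ is an accessible limit-preserving functor between presentable $\infty$-categories, so the adjoint functor theorem applies). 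Apart from these two points your proof follows the same route as the paper, which deduces everything from \cref{prop:propertiesiota}.
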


\begin{proof}
    This follows directly from \cref{constr:iota,leftadjointofiota}.
\end{proof}

Moreover, the inclusion $I_n$ is compatible with the internal hom as follows.

\begin{prop}
\label{InclusionAndCotensors}
Given a simplicial $(\infty,n-2)$-category $J$ and a double $(\infty,n-2)$-category $D$, there is an equivalence of double $(\infty,n-1)$-categories
\[
 [I_n J,I_n D]\stackrel{\simeq}\longrightarrow I_n[J,D].
\]
natural in $J$ and $D$.
\end{prop}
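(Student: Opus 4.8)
The plan is to reduce the statement to the level of simplicial objects in $Cat_{(\infty,n-2)}$, where the internal hom is computed pointwise, and then invoke the product-preservation property of the left adjoint of $\iota_{n-1}$ recorded in \cref{prop:propertiesiota}(4).

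First I would recall that, by construction, $I_n$ is the restriction of $(\iota_{n-1})_*$ to double categories, and by \cref{DoubleInfinityMSclosed} both $[I_nJ,I_nD]$ (computed in $Fun(\Delta^{\op},Cat_{(\infty,n-1)})$) and $[J,D]$ (computed in $Fun(\Delta^{\op},Cat_{(\infty,n-2)})$) are again double categories, so all four objects in the statement live in the relevant $DblCat$'s and it suffices to produce the natural equivalence in $Fun(\Delta^{\op},Cat_{(\infty,n-1)})$ — equivalently, levelwise in $Cat_{(\infty,n-1)}$ — together with the canonical comparison map. The comparison map itself is the mate of the identity: since $(\iota_{n-1})_*$ is a right adjoint (by \cref{prop:propertiesiota}(2), applied levelwise), it is lax monoidal for the cartesian structures, hence for any $X$ it sends the internal-hom adjunction $[J,D]\times J \to D$ to a map $(\iota_{n-1})_*[J,D] \times (\iota_{n-1})_*J \to (\iota_{n-1})_*D$, which transposes across the $([I_nJ,-]\dashv -\times I_nJ)$ adjunction in $Fun(\Delta^{\op},Cat_{(\infty,n-1)})$ to the desired natural map $[I_nJ, I_nD] \to (\iota_{n-1})_*[J,D] = I_n[J,D]$.

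Next I would check this map is an equivalence. Because both sides are simplicial objects and equivalences in $Fun(\Delta^{\op},Cat_{(\infty,n-1)})$ are detected levelwise, and because $(\iota_{n-1})_*$ is computed levelwise, it is enough to show the corresponding statement one level at a time and, by a standard computation, the internal hom $[J,D]_m$ in the functor category is built as a limit (an end) of the values $D_k$ indexed by the simplices of $J$; concretely $[J,D]_m \simeq \lim_{[k]\to [m]\times J}$ of copies of $D_k$, a limit of products of the $D_k$'s. Now $\iota_{n-1}$ preserves limits by \cref{prop:propertiesiota}(2), and crucially it does not obviously preserve the products appearing here — but its \emph{left} adjoint does, by \cref{prop:propertiesiota}(4); dually one argues that $\iota_{n-1}$ itself is compatible with these formulas because the relevant cotensors $D_k^{S}$ by a space $S$ (which is what a product of copies of $D_k$ indexed by a discrete set amounts to, after taking into account that $J$'s simplices form a space) are preserved: $\iota_{n-1}(D_k^S) \simeq (\iota_{n-1}D_k)^S$ follows from \cref{prop:propertiesiota}(1) (compatibility with $\mathcal S \hookrightarrow Cat$) and (2) (preservation of limits), since $D^S = \lim_S D$. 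Assembling these levelwise compatibilities, naturally in $[m]\in\Delta$, yields the equivalence $[I_nJ,I_nD] \xrightarrow{\ \simeq\ } I_n[J,D]$ of simplicial $(\infty,n-1)$-categories, which by the first paragraph is the sought equivalence of double $(\infty,n-1)$-categories; naturality in $J$ and $D$ is inherited from the naturality of the mate construction and of the levelwise end formula.

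The main obstacle I anticipate is exactly the one flagged above: $\iota_{n-1}$ is only a \emph{right} adjoint, so it preserves limits but not a priori colimits or products-as-colimits, and the internal hom in a functor category, while a limit overall, has the indexing category's morphism data entering in a way that one must be careful is genuinely a limit-shaped (not colimit-shaped) diagram — this is why \cref{prop:propertiesiota}(4) and (1) are listed as separate needed inputs. I would handle this by writing the pointwise internal hom explicitly as the end $[J,D]_m = \int_{[k]\in\Delta} \mathrm{Map}(\Delta[k]\times J, \text{—})\text{-weighted limit of } D_k$, or equivalently using \cref{DoubleInfinityMSclosed}'s reference to \cite[\textsection6]{RezkTheta} where precisely this cotensor formula is spelled out, and then checking that every operation in that formula — ends, products indexed by spaces, cotensors by $\Delta[k]$ — is among those that \cref{prop:propertiesiota} guarantees $\iota_{n-1}$ (or the pair of adjoints) respects. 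Once the formula is in hand the verification is routine; the content is bookkeeping that no non-limit-preserving operation sneaks in.
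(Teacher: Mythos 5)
Your overall strategy (reduce to a levelwise statement and verify it via an explicit formula for the internal hom) could in principle work, but the verification step has a genuine gap. You describe $[J,D]_m$ as ``a limit of products of the $D_k$'s'' and later as built from ``cotensors $D_k^S$ by a space $S$ \ldots\ after taking into account that $J$'s simplices form a space.'' That description is wrong here: $J$ is a simplicial $(\infty,n-2)$-category, so its levels $J_k$ are arbitrary $(\infty,n-2)$-categories, not spaces or sets. The correct levelwise formula is an end of the shape $[J,D]_m\simeq\int_{[k]}[J_k,D_k]^{\Delta([k],[m])}$, in which the internal hom $[J_k,D_k]$ of $Cat_{(\infty,n-2)}$ itself appears. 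Since $\iota_{n-1}$ preserves limits, the end and the set-indexed powers are harmless; the entire content of the proposition is the compatibility $[\iota_{n-1}J_k,\iota_{n-1}D_k]\simeq\iota_{n-1}[J_k,D_k]$, i.e.\ the unsimplicial version of the very statement being proved, and your argument never establishes it. Items (1) and (2) of \cref{prop:propertiesiota} cannot do this job: limit preservation alone does not give compatibility with internal homs. This is exactly where item (4) (the product-preserving left adjoint $L$ of $\iota_{n-1}$, resp.\ of $I_n$) together with full faithfulness of the inclusion (so that $L\iota_{n-1}\simeq\id$) must enter, via the standard computation
\[
\Map(X,[\iota_{n-1}A,\iota_{n-1}B])\simeq\Map(X\times\iota_{n-1}A,\iota_{n-1}B)\simeq\Map(LX\times L\iota_{n-1}A,B)\simeq\Map(LX\times A,B)\simeq\Map(X,\iota_{n-1}[A,B]).
\]
The paper's proof is precisely this adjunction argument, run directly for $I_n$ at the level of simplicial objects (that is what ``follows directly from \cref{InclusionAndFibrancy}(4)'' means), with \cref{DoubleInfinityMSclosed} guaranteeing both sides are double categories; no end formula is needed at all.

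Two smaller points. First, the comparison map you build in your opening paragraph only uses that $(\iota_{n-1})_*$ preserves products, which is true of any right adjoint; so item (4) is never actually engaged in your write-up, which is a symptom of the missing step above. Second, the mate you describe transposes to a map $I_n[J,D]\to[I_nJ,I_nD]$, the opposite direction from the displayed arrow $[I_nJ,I_nD]\to I_n[J,D]$; this is harmless once the map is known to be an equivalence, but the map in the stated direction is the one produced by the left-adjoint argument (apply the unit, then use $L(I_nJ)\simeq J$ and product preservation of $L$), which again points to (4) and full faithfulness as the indispensable inputs.
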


\begin{proof}
    This follows directly from \cref{InclusionAndFibrancy}(4). 
\end{proof}

With these, we can show that the inclusion $I_n$ preserves cone and slice constructions:

\begin{lem}
\label{InclusionAndComma}
Given an $(\infty,n-1)$-diagram $K\colon J\to C$, there is an equivalence of double $(\infty,n-1)$-categories
\[ C\mathrm{one}^{(\infty,n)}_{ I_n J}( I_nK)\stackrel\simeq\longrightarrow I_n(C\mathrm{one}^{(\infty,n-1)}_{J}K).\]
\end{lem}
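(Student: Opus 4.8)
The plan is to reduce everything to the two pullback squares defining $C\mathrm{one}^{(\infty,n)}_{J}K$ and $C\mathrm{one}^{(\infty,n-1)}_{J}K$, and then observe that $I_n$ carries the first square to the second. Concretely, the $(\infty,n)$-cone construction is the pullback of $[[1],[J,C]]\xrightarrow{(\mathrm{ev}_0,\mathrm{ev}_1)}[J,C]\times[J,C]\xleftarrow{\Delta^J_C\times K} C\times[0]$, and the $(\infty,n-1)$-cone construction is the analogous pullback built from $J$, $C$, and $K$ one categorical level down. So the goal is to check that applying $I_n$ to the lower-dimensional cospan yields the higher-dimensional cospan (up to the natural equivalences coming from \cref{InclusionAndCotensors}), and then invoke that $I_n$ preserves limits, in particular pullbacks, by \cref{InclusionAndFibrancy}\eqref{InclusionAndPullbacks}.

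First I would unpack the comparison of the cospans. There are three ingredients to match up: the internal hom $[J,C]$, the internal hom $[[1],-]$, and the maps $\Delta^J_C$, $K$, and $(\mathrm{ev}_0,\mathrm{ev}_1)$. For the internal homs, \cref{InclusionAndCotensors} gives natural equivalences $[I_nJ,I_nC]\simeq I_n[J,C]$ and $[[1],I_n[J,C]]\simeq I_n[[1],[J,C]]$ — here one uses that $I_n[1]\simeq[1]$, which follows from \cref{InclusionAndFibrancy}(1) together with the fact that $[1]$ is (the image of) a gaunt category, or more simply from property (1) of \cref{prop:propertiesiota} identifying the inclusions from spaces/representables. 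For the maps: $I_n$ applied to the diagonal $\Delta^J_C\colon C\to[J,C]$ is the diagonal $\Delta^{I_nJ}_{I_nC}$ under these identifications, since both are induced by the unique map to $[0]$ and $I_n$ preserves the terminal object; $I_n(K)$ is $I_nK$ by definition; and $I_n$ applied to $(\mathrm{ev}_0,\mathrm{ev}_1)$ is $(\mathrm{ev}_0,\mathrm{ev}_1)$ since evaluation maps are restriction along object inclusions $[0]\to[1]$, which $I_n$ preserves. Finally $I_n(C\times[0])\simeq I_nC\times[0]$ since $I_n$ preserves products (indeed it preserves all limits). Assembling these naturally equivalent pieces identifies $I_n$ of the defining cospan for $C\mathrm{one}^{(\infty,n-1)}_JK$ with the defining cospan for $C\mathrm{one}^{(\infty,n)}_{I_nJ}(I_nK)$.

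Then I would conclude: since $I_n$ preserves pullbacks by \cref{InclusionAndFibrancy}\eqref{InclusionAndPullbacks}, applying $I_n$ to the defining pullback square of $C\mathrm{one}^{(\infty,n-1)}_JK$ produces a pullback square in $DblCat_{(\infty,n-1)}$ whose cospan is (naturally equivalent to) the one defining $C\mathrm{one}^{(\infty,n)}_{I_nJ}(I_nK)$; by uniqueness of pullbacks this yields the desired equivalence $C\mathrm{one}^{(\infty,n)}_{I_nJ}(I_nK)\xrightarrow{\simeq}I_n(C\mathrm{one}^{(\infty,n-1)}_JK)$. The main obstacle I anticipate is not any of the preservation statements individually — those are all cited — but the bookkeeping needed to make the chain of natural equivalences for the two cospans genuinely commute, i.e.\ checking that the equivalences from \cref{InclusionAndCotensors} are compatible with the evaluation and diagonal maps rather than merely existing objectwise. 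Making that naturality square precise (so that the induced map on pullbacks is the canonical projection, not just some equivalence) is the only place requiring care; everything else is formal from the properties of $I_n$ established above.
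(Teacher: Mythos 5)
Your argument is correct and is essentially the proof given in the paper: both apply $I_n$ to the defining pullback of $C\mathrm{one}^{(\infty,n-1)}_JK$ using \cref{InclusionAndFibrancy}(3), identify the resulting cospan with the one defining $C\mathrm{one}^{(\infty,n)}_{I_nJ}(I_nK)$ via \cref{InclusionAndCotensors} and $I_n[1]\simeq[1]$, and conclude by the induced equivalence of pullbacks. Your closing remark about checking compatibility of the equivalences with the evaluation and diagonal maps is exactly the naturality the paper also invokes (a ``natural comparison map between the two cospans''), so no further ideas are needed.
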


\begin{proof}
    By definition, we have a pullback in $ DblCat_{(\infty,n-2)}$
      \[\begin{tikzcd}
C\mathrm{one}^{(\infty,n-1)}_{J}K\arrow[r]\arrow[d]\arrow[rd, phantom, "\lrcorner",very near start]&{[[1],[J,C]]}\arrow[d, "{(\mathrm{ev}_0,\mathrm{ev}_1})"]\\
       C\times [0]\arrow[r,"\Delta_{C}^{J}\times K" swap]&{[J,C]\times[J,C]}
    \end{tikzcd}\]
Hence, by \cref{InclusionAndFibrancy}(3), we also have a  pullback in $  DblCat_{(\infty,n-1)}$
\[\begin{tikzcd}
 I_n(C\mathrm{one}^{(\infty,n-1)}_{J}K)\arrow[r]\arrow[d]\arrow[rd, phantom, "\lrcorner",very near start]& I_n{[[1],[J,C]]}\arrow[d,"{I_n(\mathrm{ev}_0,\mathrm{ev}_1)}"]\\
 I_n C\times [0]\arrow[r,"I_n\Delta^J_C\times K" swap]& I_n[J,C]\times I_n[J,C]
    \end{tikzcd}\]
On the other hand, we have, by definition, a  pullback in $DblCat_{(\infty,n-1)}$
\[\begin{tikzcd}
C\mathrm{one}^{(\infty,n)}_{ I_n J}( I_nK)\arrow[r]\arrow[d]\arrow[rd, phantom, "\lrcorner",very near start]&[10pt]{[[1],[I_n J,I_n C]]}\arrow[d,"{(\mathrm{ev}_0,\mathrm{ev}_1})"]\\
I_n C\times [0]\arrow[r,"\Delta_{ I_n C}^{ I_n J}\times  I_nK" swap]&{[I_n J,I_n C]\times[I_n J,I_n C]}
    \end{tikzcd}\]
    Using \cref{InclusionAndCotensors} and the fact that $I_n [1]\simeq [1]$, there is a natural comparison map between the two cospans defining the above  pullbacks in $DblCat_{(\infty,n-1)}$, which is pointwise an equivalence of double $(\infty,n-1)$-categories. Hence, this induces a unique map between the two double $(\infty,n-1)$-categories presenting the pullback
    \[ C\mathrm{one}^{(\infty,n)}_{ I_n J}( I_nK)\longrightarrow I_n(C\mathrm{one}^{(\infty,n-1)}_{J}K)
,\]
    and this map is an equivalence of double $(\infty,n-1)$-categories, as desired.
\end{proof}

\begin{lem}
\label{InclusionAndSlice}
Given a double $(\infty,n-2)$-category $D$ and an object $d$ of $D$, there is an equivalence of double $(\infty,n-1)$-categories
over $I_nD$
\[ (I_n D)\sslash^{(\infty,n)}d \stackrel\simeq\longrightarrow I_n(D\sslash^{(\infty,n-1)} d).\]
\end{lem}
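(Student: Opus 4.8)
The plan is to mimic the proof of \cref{InclusionAndComma} almost verbatim, since the slice construction $D\sslash^{(\infty,n)}d$ is defined by exactly the same kind of pullback as the cone construction. Concretely, by definition $D\sslash^{(\infty,n-1)}d$ is the pullback in $DblCat_{(\infty,n-2)}$ of the cospan $D\times[0]\xrightarrow{D\times d}D\times D\xleftarrow{(\mathrm{ev}_0,\mathrm{ev}_1)}[[1],D]$. Applying \cref{InclusionAndFibrancy}(3), which says $I_n$ is a right adjoint and hence preserves limits, we obtain that $I_n(D\sslash^{(\infty,n-1)}d)$ is the pullback in $DblCat_{(\infty,n-1)}$ of the image cospan $I_nD\times[0]\xrightarrow{I_n(D\times d)}I_n(D\times D)\xleftarrow{I_n(\mathrm{ev}_0,\mathrm{ev}_1)}I_n[[1],D]$.

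On the other hand, by definition $(I_nD)\sslash^{(\infty,n)}d$ is the pullback in $DblCat_{(\infty,n-1)}$ of the cospan $I_nD\times[0]\xrightarrow{I_nD\times d}I_nD\times I_nD\xleftarrow{(\mathrm{ev}_0,\mathrm{ev}_1)}[[1],I_nD]$. To compare the two pullback cospans, I would produce a pointwise equivalence between them: the left-hand objects $I_nD\times[0]$ agree on the nose (note $I_n$ of the terminal object is terminal, so $I_n[0]\simeq[0]$); the middle objects are identified via the equivalence $I_nD\times I_nD\simeq I_n(D\times D)$ coming from the fact that $I_n$ preserves products (a special case of it being a right adjoint, and compatible with the inclusions from $\mathcal S$); and the right-hand objects are identified via $[[1],I_nD]\simeq[I_n[1],I_nD]\simeq I_n[[1],D]$, using $I_n[1]\simeq[1]$ and \cref{InclusionAndCotensors}. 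One then checks these equivalences are compatible with the two legs of each cospan — the map $d\colon[0]\to D$ becomes $d\colon[0]\to I_nD$ after applying $I_n$ and identifying $I_n[0]\simeq[0]$, and the evaluation maps $(\mathrm{ev}_0,\mathrm{ev}_1)$ are natural, so they match up under the cotensor comparison of \cref{InclusionAndCotensors}. This gives an equivalence of cospans, hence an induced equivalence of pullbacks $(I_nD)\sslash^{(\infty,n)}d\xrightarrow{\simeq}I_n(D\sslash^{(\infty,n-1)}d)$, and by construction this map lies over $I_nD$.

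The main (and only mild) obstacle is bookkeeping the naturality: one needs to verify that the comparison equivalences on the three vertices of the cospan are strictly compatible with the structure maps, so that they genuinely assemble into a map of cospans in $DblCat_{(\infty,n-1)}$ rather than merely a levelwise equivalence. This is where \cref{InclusionAndCotensors}'s naturality statement (in both $J$ and $D$) does the real work, applied with $J=[1]$; the compatibility with the object $d\colon[0]\to D$ is then the statement that $I_n$ acts correctly on the map $[0]\to D$ picking out $d$, which is formal once $I_n[0]\simeq[0]$ is established as in the proof of \cref{InclusionAndComma}. Everything else is a direct transcription of that earlier argument with the cone cospan replaced by the slice cospan.
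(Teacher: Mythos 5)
Your proposal is correct and matches the paper exactly: the paper's proof of this lemma is literally ``the proof follows the same steps as the one of \cref{InclusionAndComma}'', and your argument is precisely that transcription — applying limit-preservation of $I_n$ to the defining pullback, comparing the two cospans via $I_n[0]\simeq[0]$, $I_n[1]\simeq[1]$, product preservation, and the natural equivalence of \cref{InclusionAndCotensors} with $J=[1]$.
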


\begin{proof}
    The proof follows the same steps as the one of \cref{InclusionAndComma}.
\end{proof}

We can now prove the main result:

\begin{thm}
\label{ConsistenceAcrossDimension}
Given an $(\infty,n-1)$-diagram $K\colon J\to C$,
an object $\ell$ of $C$ is an $(\infty,n-1)$-limit object of the $(\infty,n-1)$-diagram $K$ if and only if the object $\ell$ is an $(\infty,n)$-limit object of the $(\infty,n)$-diagram $ I_nK\colon  I_n J\to  I_n C$.
\end{thm}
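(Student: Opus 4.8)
The plan is to reduce the equivalence of the two limit notions to the conservativity of $I_n$ together with its compatibility with the cone and slice constructions established in \cref{InclusionAndComma} and \cref{InclusionAndSlice}. Unwinding \cref{MainDefinition}, the object $\ell$ is an $(\infty,n-1)$-limit object of $K$ if and only if there is a cone $(\ell,\lambda)$ in $C\mathrm{one}^{(\infty,n-1)}_{J}K$ such that the canonical projection
\[
C\mathrm{one}^{(\infty,n-1)}_{J}K\sslash^{(\infty,n-1)}(\ell,\lambda)\longrightarrow C\mathrm{one}^{(\infty,n-1)}_{J}K
\]
is an equivalence of double $(\infty,n-2)$-categories; and $\ell$ is an $(\infty,n)$-limit object of $I_nK$ if and only if there is a cone $(\ell',\lambda')$ in $C\mathrm{one}^{(\infty,n)}_{I_nJ}(I_nK)$ such that the analogous projection in $DblCat_{(\infty,n-1)}$ is an equivalence.

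First I would argue that the cones on the two sides correspond: by \cref{InclusionAndComma} there is an equivalence $C\mathrm{one}^{(\infty,n)}_{I_nJ}(I_nK)\simeq I_n(C\mathrm{one}^{(\infty,n-1)}_{J}K)$, and by \cref{InclusionAndFibrancy}(1) the latter is an $(\infty,n)$-category whose objects (its space at level $0$) agree with the objects of $C\mathrm{one}^{(\infty,n-1)}_{J}K$. In particular, picking a cone $(\ell,\lambda)$ on the $(\infty,n-1)$-side determines, via this equivalence, a cone $(\ell,\lambda)$ on the $(\infty,n)$-side, and vice versa, compatibly with the object $\ell$ of $C$ (resp.\ of $I_nC$). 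Second, under this identification, \cref{InclusionAndSlice} applied to $D=C\mathrm{one}^{(\infty,n-1)}_{J}K$ and $d=(\ell,\lambda)$ gives an equivalence
\[
\bigl(C\mathrm{one}^{(\infty,n)}_{I_nJ}(I_nK)\bigr)\sslash^{(\infty,n)}(\ell,\lambda)\;\simeq\;I_n\bigl(C\mathrm{one}^{(\infty,n-1)}_{J}K\sslash^{(\infty,n-1)}(\ell,\lambda)\bigr),
\]
and, chasing the pullback defining the slice (as in the proof of \cref{InclusionAndComma}), this equivalence sits over the one of \cref{InclusionAndComma} and intertwines the two canonical projections, so that the projection on the $(\infty,n)$-side is, up to equivalence, the image under $I_n$ of the projection on the $(\infty,n-1)$-side. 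Then the $(\infty,n)$-side projection is an equivalence if and only if its $I_n$-preimage is, by conservativity of $I_n$ (\cref{InclusionAndFibrancy}(2)) for the ``if'' direction and because equivalences are preserved by any functor for the ``only if'' direction; combining the two equivalences of cones with this gives the claimed biconditional.

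The step I expect to require the most care is the naturality needed to conclude that the equivalence of \cref{InclusionAndSlice} genuinely commutes with the canonical projections down to the (images of the) cone double categories, rather than merely existing abstractly; this is exactly the kind of compatibility that \cref{InclusionAndSlice} is phrased to provide (``over $I_nD$''), but one must check that the projection $D\sslash^{(\infty,n-1)}d\to D$ and its $(\infty,n)$-analogue fit into the same pullback square comparison used in \cref{InclusionAndComma}. A secondary subtlety is matching up the choice of cone: one should note that an object of the $(\infty,n)$-category $I_n(C\mathrm{one}^{(\infty,n-1)}_{J}K)$ is literally a point of the underlying space, which by \cref{prop:propertiesiota}(1) agrees with the underlying space of $C\mathrm{one}^{(\infty,n-1)}_{J}K$, so no information is lost or gained in transporting $(\ell,\lambda)$ across. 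Once these two points are in place, the proof is a short diagram chase; I would phrase it as ``the proof follows the same pattern as \cref{InclusionAndComma}'', assembling \cref{InclusionAndComma}, \cref{InclusionAndSlice}, and \cref{InclusionAndFibrancy}(2).
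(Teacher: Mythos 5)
Your proposal is correct and takes essentially the same route as the paper, whose proof is precisely the chain of equivalences you describe: unwind \cref{MainDefinition} on both sides, use conservativity of $I_n$ (\cref{InclusionAndFibrancy}(2)) to pass the equivalence of projections through $I_n$, and then identify the $I_n$-images of slice and cone with their $(\infty,n)$-counterparts via \cref{InclusionAndSlice} (which is stated over $I_nD$, giving exactly the compatibility with projections you flag as the delicate point) and \cref{InclusionAndComma}. One small caveat: $C\mathrm{one}^{(\infty,n-1)}_{J}K$ is in general only a double $(\infty,n-2)$-category (its level $0$ need not be a space), so invoking \cref{InclusionAndFibrancy}(1) to say $I_n(C\mathrm{one}^{(\infty,n-1)}_{J}K)$ is an $(\infty,n)$-category is slightly off, but the underlying point---that objects correspond under $I_n$ so $(\ell,\lambda)$ transports across---is harmless and is left implicit in the paper's proof as well.
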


\begin{proof}
We have that an object $\ell$ of $C$ is an $(\infty,n-1)$-limit object of the $(\infty,n-1)$-diagram $K\colon J\to C$
if and only if, by definition, there is an object $(\ell,\lambda)$ in $C\mathrm{one}^{(\infty,n-1)}_JK$ such that the canonical projection is an equivalence of double $(\infty,n-2)$-categories
\[C\mathrm{one}^{(\infty,n-1)}_JK\sslash^{(\infty,n-1)}(\ell,\lambda)\stackrel\simeq\longrightarrow C\mathrm{one}^{(\infty,n-1)}_JK\]
if and only if, by \cref{InclusionAndFibrancy}(2), its image under $I_n$ is an equivalence of double $(\infty,n-1)$-categories
\[ I_n(C\mathrm{one}^{(\infty,n-1)}_JK\sslash^{(\infty,n-1)}(\ell,\lambda))\stackrel\simeq\longrightarrow  I_n(C\mathrm{one}^{(\infty,n-1)}_JK)\]
if and only if, by \cref{InclusionAndSlice}, the canonical projection is an equivalence of double $(\infty,n-1)$-categories
\[ I_n(C\mathrm{one}^{(\infty,n-1)}_JK)\sslash^{(\infty,n)}(\ell,\lambda)\stackrel\simeq\longrightarrow  I_n(C\mathrm{one}^{(\infty,n-1)}_JK)\]
if and only if, by \cref{InclusionAndComma}, the canonical projection is an equivalence of double $(\infty,n-1)$-categories
\[C\mathrm{one}^{(\infty,n)}_{{ I_n}J}( I_nK)\sslash^{(\infty,n)}(\ell,\lambda)\stackrel\simeq\longrightarrow C\mathrm{one}^{(\infty,n)}_{ I_n J}( I_nK)\]
if and only if, by definition, the object $\ell$ of $C$ is an $(\infty,n)$-limit object of the induced $(\infty,n)$-diagram $ I_nK\colon  I_n J\to  I_n C$,
as desired.
\end{proof}

\section{Homotopy limits in a \texorpdfstring{$2$}{2}-category}

\label{SectionLimitsRevisited}

We revisit in \cref{revisit} the notion of homotopy $2$-limit presenting an alternative viewpoint that makes use of double categories as a richer and larger environment to study $2$-categories. Then, in \cref{ConsistencyWithStrict}, we prove that our notion of $(\infty,2)$-limit is compatible with the notion of homotopy $2$-limit.

\subsection{Homotopy limits in a \texorpdfstring{$2$}{2}-category}\label{revisit}

In this section, we state a characterization of homotopy $2$-limits in a double categorical language. This result will be proven in \cref{Sec:charhomotopy2limits}. We start by recalling the definition of a homotopy $2$-limit. 

\begin{defn}
    A \emph{$2$-diagram} is a $2$-functor $K\colon \cJ\to \cC$.
\end{defn}

Given two $2$-categories $\cJ$ and $\cC$, we denote by $[\cJ,\cC]^\mathrm{ps}$ the $2$-category of $2$-functors $\cJ\to \cC$, pseudo-natural transformations, and modifications. 

\begin{defn}
    Given a $2$-diagram $K\colon \cJ\to \cC$, an object $\ell$ of $\cC$ is a \emph{homotopy $2$-limit object} of $K$ if there is a pseudo-cone $\lambda\colon \Delta \ell\Rightarrow K$ that induces, for every object $c$ of $\cC$, an equivalence of categories
    \[ \lambda^*\colon \cC(c,\ell)\stackrel\simeq\longrightarrow [\cJ,\cC]^\mathrm{ps}(\Delta c,K).\]
\end{defn}

\begin{rmk}
    The notion of homotopy $2$-limit above is sometimes referred to as \emph{pseudo-bilimit}. It corresponds to the notion of homotopy limit in the canonical model structure on $2Cat$ from \cite{Lack1} using the definition from \cite{ShulmanHomotopyLimits}.
\end{rmk}

To state the desired characterization, we first recall the notion of a \emph{double category}.

\begin{defn}
A \emph{double category} is a category internal to -- i.e., a strict Segal object in -- the category $\cat$, and a \emph{double functor} is an internal functor. We denote by $Dbl\cat$ the category of double categories and double functors.
\end{defn}

\begin{notn} \label{notn:underlyingDblCat}
    We denote by $[DblCat]_\infty$ the underlying $\infty$-category (in the sense of \cite[Theorem $6.2$]{bergner2009underlying}) of the model structure on $DblCat$ from \cite[Theorem 3.26]{MSV1}; see \cref{MSfromMSV} for further details.
\end{notn}

Every $2$-category $\cC$ can be seen as a double category $\widetilde{\bH}\cC$ in $[DblCat]_\infty$ and this assignment extends to $2$-functors; see \cref{Htilde}.

The $\infty$-category $[DblCat]_\infty$ is cartesian closed, as we prove in \cref{dblcatcartclosed}. Given two double categories $\bD$ and $\bD'$ in $[DblCat]_\infty$, we denote by $\llbracket \bD,\bD'\rrbracket^h$ the internal hom in $[DblCat]_\infty$.

\begin{notn} \label{coneinDblCat}
Given a $2$-diagram $K\colon\cJ\to\cC$, the \emph{double category $\bC\mathrm{one}^h_{\widetilde{\bH}\cJ}(\widetilde{\bH} K)$ of cones} over $K$ is the following  pullback in $[Dbl\cat]_\infty$:
        \[\begin{tikzcd}
\bC\mathrm{one}^h_{\widetilde{\bH} \cJ}(\widetilde{\bH} K)\arrow[r]\arrow[d]\arrow[rd, phantom, "\lrcorner",very near start]&[30pt]{\llbracket\widetilde{\bH} [1],\llbracket\widetilde{\bH} \cJ,\widetilde{\bH} \cC\rrbracket^h\rrbracket^h}\arrow[d,"{(\mathrm{ev}_0,\mathrm{ev}_1)}"]\\
\widetilde{\bH} \cC\times\widetilde{\bH} [0]\arrow[r,"\Delta_{\widetilde{\bH} \cC}^{\widetilde{\bH} \cJ}\times\widetilde{\bH} K" swap]&\llbracket\widetilde{\bH} \cJ,\widetilde{\bH} \cC\rrbracket^h\times\llbracket\widetilde{\bH} \cJ,\widetilde{\bH} \cC\rrbracket^h
    \end{tikzcd}\]
\end{notn}

\begin{notn} \label{sliceinDblCat}
Given a double category $\bD$ in $[DblCat]_\infty$ and an object $d$ of $\bD$, the \emph{slice double category} $\bD\sslash^h d$ of $\bD$ over $d$ is the following  pullback in $[Dbl\cat]_\infty$:
\[\begin{tikzcd}
\bD\sslash^h d\arrow[r]\arrow[d]\arrow[rd, phantom, "\lrcorner",very near start]&{\llbracket\widetilde{\bH} [1],\bD\rrbracket^h}\arrow[d, "{(\mathrm{ev}_0,\mathrm{ev}_1)}"]\\
\bD\times\widetilde{\bH} [0]\arrow[r,"\bD\times d" swap]&\bD\times\bD
    \end{tikzcd}
    \]
\end{notn}

We prove the following as \cref{Homotopy2LimitsUsingCommas}.

\begin{thm} \label{charhtpy2limit}
Given a $2$-diagram $K\colon\cJ\to\cC$, an object $\ell$
of $\cC$ is a homotopy $2$-limit object of $K$ if and only if there is an object $(\ell,\lambda)$ in $\bC\mathrm{one}^h_{\widetilde{\bH} \cJ}(\widetilde{\bH} K)$ such that the canonical projection is an equivalence in $[DblCat]_\infty$
\[\bC\mathrm{one}^h_{\widetilde{\bH} \cJ}(\widetilde{\bH} K)\sslash(\ell,\lambda)\stackrel\simeq\longrightarrow\bC\mathrm{one}^h_{\widetilde{\bH} \cJ}(\widetilde{\bH} K).\]
\end{thm}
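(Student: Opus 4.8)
The plan is to prove \cref{charhtpy2limit} by reducing the statement to a known characterization of homotopy $2$-limits via categories of cones, in the spirit of \cite[Corollary 7.23]{cM2}. The key observation is that both sides of the claimed equivalence are built from $\widetilde{\bH}$-images of $2$-categorical data using pullbacks and internal homs in $[DblCat]_\infty$, so the work amounts to identifying these double-categorical constructions with their $2$-categorical counterparts. Specifically, I would first establish that $\widetilde{\bH}$ is compatible with internal homs in the appropriate sense: given $2$-categories $\cJ$ and $\cC$, the double category $\llbracket\widetilde{\bH}\cJ,\widetilde{\bH}\cC\rrbracket^h$ should be equivalent to $\widetilde{\bH}$ applied to (a suitable fibrant replacement of) the pseudo-functor $2$-category $[\cJ,\cC]^{\mathrm{ps}}$, viewed as a double category via the horizontal embedding. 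This is the technical heart and I expect it to require the cartesian closedness of $[DblCat]_\infty$ together with an explicit computation of $\llbracket\widetilde{\bH}[1],-\rrbracket^h$, recognizing it as a path-object-like construction that extracts squares.

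\textbf{Key steps.} First, I would unwind the definition of $\bC\mathrm{one}^h_{\widetilde{\bH}\cJ}(\widetilde{\bH}K)$ as the pullback in \cref{coneinDblCat} and, using the compatibility of $\widetilde{\bH}$ with internal homs and with $[1]$, identify it with $\widetilde{\bH}$ of the strict/pseudo double category of cones $\bC\mathrm{one}^{\mathrm{ps}}_{\bH\cJ}(\bH K)$ from the clingman--Moser framework recalled in the introduction; the evaluation maps $(\mathrm{ev}_0,\mathrm{ev}_1)$ correspond to the source-target double functor, and the pullback along $\Delta^{\widetilde{\bH}\cJ}_{\widetilde{\bH}\cC}\times\widetilde{\bH}K$ selects exactly the cones over $K$. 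Second, I would do the analogous identification for the slice: $\bD\sslash^h d$ from \cref{sliceinDblCat} matches the comma-type slice double category $\bC\mathrm{one}^{\mathrm{ps}}_{\bH\cJ}(\bH K)\sslash^{\mathrm{ps}}(\ell,\lambda)$ once $\bD = \bC\mathrm{one}^h_{\widetilde{\bH}\cJ}(\widetilde{\bH}K)$, again because $\llbracket\widetilde{\bH}[1],-\rrbracket^h$ computes the double category of squares and the pullback picks out those with target $(\ell,\lambda)$. Third, since $\widetilde{\bH}$ is (by construction in \cite[\textsection6]{MoserNerve}, as recalled around \cref{Htilde}) fully faithful and homotopically well-behaved --- in particular it reflects and detects equivalences between double categories in $[DblCat]_\infty$ --- the canonical projection $\bC\mathrm{one}^h\sslash(\ell,\lambda)\to\bC\mathrm{one}^h$ is an equivalence in $[DblCat]_\infty$ if and only if the corresponding projection $\bC\mathrm{one}^{\mathrm{ps}}\sslash^{\mathrm{ps}}(\ell,\lambda)\to\bC\mathrm{one}^{\mathrm{ps}}$ is an equivalence of double categories. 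Finally, invoking the clingman--Moser theorem (the third unnumbered theorem in the introduction), the latter condition is equivalent to $\ell$ being a homotopy $2$-limit object of $K$, which closes the chain of equivalences.

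\textbf{Main obstacle.} The principal difficulty is the first identification: showing that $\llbracket\widetilde{\bH}\cJ,\widetilde{\bH}\cC\rrbracket^h$ agrees, in $[DblCat]_\infty$, with the double category built from the pseudo-functor $2$-category $[\cJ,\cC]^{\mathrm{ps}}$. The subtlety is that $\widetilde{\bH}$ is not literally the naive horizontal embedding $\bH$ but a homotopically corrected version, so the internal hom computed in $[DblCat]_\infty$ must be compared against the pseudo-Gray-theoretic internal hom underlying the clingman--Moser construction. I would handle this by exhibiting an explicit zig-zag of weak equivalences in the model structure on $DblCat$ from \cite[Theorem 3.26]{MSV1}, using that $\widetilde{\bH}[1]$ is a cofibrant-ish resolution of the walking arrow adequate for detecting squares, and that the relevant mapping double categories are already fibrant. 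Everything downstream (the slice identification, the transfer of equivalences, the final appeal to the known theorem) is then routine. I would defer the full details to \cref{Sec:charhomotopy2limits}, where \cref{Homotopy2LimitsUsingCommas} is proven, and where the precise comparison between $\widetilde{\bH}$, $\bH$, and the pseudo-Gray tensor product can be spelled out.
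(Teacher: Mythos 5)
Your overall skeleton matches the paper's: reformulate the $h$-constructions via internal pseudo-homs, compare the $\widetilde{\bH}$-based cones and slices with the $\bH$-based ones, and conclude from the clingman--Moser characterization (\cref{cMResult}; note it is Corollary~7.22 of that paper, and $\widetilde{\bH}$ comes from \cite[Definition~2.13]{MSV1}, not from the nerve paper). However, there are two genuine gaps. First, your ``key observation'' that $\llbracket\widetilde{\bH}\cJ,\widetilde{\bH}\cC\rrbracket^h$ is $\widetilde{\bH}$ of (a fibrant replacement of) $[\cJ,\cC]^{\mathrm{ps}}$, and hence that $\bC\mathrm{one}^h_{\widetilde{\bH}\cJ}(\widetilde{\bH}K)$ is the $\widetilde{\bH}$-image of a $2$-category of cones, is not correct: the vertical morphisms of $\llbracket\widetilde{\bH}\cJ,\widetilde{\bH}\cC\rrbracket^{\mathrm{ps}}$ are vertical pseudo-natural transformations whose object components are adjoint equivalences but whose components at morphisms of $\cJ$ are arbitrary (not necessarily invertible) $2$-cells, and likewise the vertical morphisms of the cone double category are pairs $(u,\mu)$ with $\mu$ a not-necessarily-invertible modification. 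This is genuinely more data than the adjoint equivalences one gets by applying $\widetilde{\bH}$ to $[\cJ,\cC]^{\mathrm{ps}}$ or to its cone $2$-category, so the identification you build everything on fails on the nose, and establishing it even up to double biequivalence would require exactly the kind of comparison you are trying to avoid. The paper instead compares the two \emph{double-categorical} cone constructions directly, via the inclusions $I_\cJ\colon\bH\cJ\to\widetilde{\bH}\cJ$ and $I_\cC\colon\bH\cC\to\widetilde{\bH}\cC$ and the relation $\widetilde{\bH}K\circ I_\cJ=I_\cC\circ\bH K$ (\cref{remark3000}), producing a zig-zag through $\bC\mathrm{one}^{\mathrm{ps}}_{\bH\cJ}(I_\cC\circ\bH K)$ (\cref{prop:middlevsConeHtild,prop:ConeHKvsmiddle}) and its slices (\cref{prop:middlevsConeHtildslice,prop:ConeHKvsmiddleslice}).

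Second, the step you declare routine --- transferring the equivalence condition to the $\bH$-based cones appearing in \cref{cMResult} --- is precisely where formal model-categorical arguments break down: $\bH\cC$ and hence $\bC\mathrm{one}^{\mathrm{ps}}_{\bH\cJ}(\bH K)$ are \emph{not} weakly horizontally invariant, so they are not fibrant in $Dbl\cat_{\mathrm{whi}}$, the relevant strict pullbacks are not automatically homotopy pullbacks, and ``equivalence in $[DblCat]_\infty$'' does not formally agree with ``double biequivalence'' there (also, \cref{cMResult} is stated for double biequivalences of strict constructions, not $\infty$-categorical equivalences). Your fibrancy assertion (``the relevant mapping double categories are already fibrant'') holds only on the $\widetilde{\bH}$-side (\cref{lem:homiswhi}). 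The paper closes this gap non-formally: it checks that the comparison functors induced by $I_\cC$ are double biequivalences by applying $\bfH$ and $\bfH\llbracket\bV[1],-\rrbracket$ and identifying the results with $2$-categorical cone and slice constructions (\cref{lem:Hofcone,lem:HVofCone,cor:Hofslice}), using the pseudo Gray tensor compatibilities from \cite{MSV0}. Some idea of this kind is needed in your argument; without it, the chain from the $[DblCat]_\infty$-statement of \cref{charhtpy2limit} back to \cref{cMResult} does not close.
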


\subsection{Consistency with respect to strict context}\label{ConstistencyWRTstrict}

In this section we show as \cref{ConsistencyWithStrict} how, for $n=2$, the new notion of $(\infty,2)$-limit is consistent with the strict context. We focus here on the main result and postpone the technical details to \cref{PropertyofNerve}.

\begin{notn} \label{constr:nerve}
We denote by
\[\bN\colon [Dbl\cat]_\infty \to Dbl\cat_{(\infty,1)}\]
the functor induced by the nerve functor constructed in \cite[\textsection5.1]{MoserNerve}; see \cref{PropertyofNerve} for further details.
\end{notn}

We collect here the properties of $\bN$ needed for the main result.

\begin{prop}
\label{NerveAndFibrancy}
The functor $\bN\colon [DblCat]_\infty\to DblCat_{(\infty,1)}$
\begin{enumerate}[leftmargin=*]
    \item sends $2$-categories to $(\infty,2)$-categories, i.e., for every $2$-category $\mathcal C$, the double $(\infty,1)$-category $\bN\widetilde{\bH}\mathcal C$ is an $(\infty,2)$-category; 
    \item \label{NerveCreatesWE} is fully faithful; in particular, it is conservative; 
    \item \label{NerveAndPullbacks} is a right adjoint; in particular, it preserves limits. 
\end{enumerate}
Moreover, we have that 
\begin{enumerate}[leftmargin=*,start=4]
    \item the left adjoint of $\bN$ preserves products. 
\end{enumerate}
\end{prop}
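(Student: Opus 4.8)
The plan is to reduce everything to the analogous properties of the nerve functor $N\colon [DblCat]_\infty\to DblCat_{(\infty,1)}$ already established in \cite{MoserNerve}, in exactly the same way that \cref{InclusionAndFibrancy} was deduced from \cref{prop:propertiesiota}. First I would recall from \cref{PropertyofNerve} (referenced here but appearing later) that $\bN$ is induced by the nerve construction of \cite[\textsection5.1]{MoserNerve}, which on underlying $\infty$-categories is modeled by a right Quillen functor between the relevant model structures, and that it is part of a Quillen pair whose left adjoint is the relevant ``thickening''/realization functor. Statements (2) and (3) then follow immediately: a right adjoint preserves limits, and conservativity is recorded for the nerve in \cite[\textsection5.1]{MoserNerve} (or follows from the fact that the nerve detects pointwise equivalences of the underlying simplicial objects since $[DblCat]_\infty$-equivalences are detected levelwise and the nerve of categories is fully faithful on the relevant pieces).

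For (1), I would argue that $\bN\widetilde{\bH}\cC$ is an $(\infty,2)$-category by checking the two defining conditions: that it is a complete Segal object in $Cat_{(\infty,1)}$ and that its object of objects is a space. Since $\widetilde{\bH}\cC$ is a double category with a discrete category of objects and $\bN$ preserves the Segal and completeness conditions (these being expressed via limits, which $\bN$ preserves by (3)), and since $\bN$ of a discrete category is a discrete $(\infty,1)$-category, i.e.\ a space, both conditions hold; this is essentially the content of \cite[\textsection6]{MoserNerve} where the embedding ${\bN}\widetilde{\bH}$ of strict $2$-categories into $(\infty,2)$-categories is constructed, so I would simply cite that. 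For (4), I would invoke the same mechanism as in the proof of \cref{prop:propertiesiota}(4): present $DblCat_{(\infty,1)}$ as the underlying $\infty$-category of a model structure on a category of marked/stratified simplicial objects, observe that the left adjoint to $\bN$ is induced by a left Quillen functor coming from a functor on the underlying 1-categories that preserves finite products (the relevant left adjoint is a colimit-type construction that is cartesian on the generating objects), and conclude that the induced functor on underlying $\infty$-categories preserves products.

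Concretely, the proof body would read: ``This follows from the properties of the nerve functor established in \cref{PropertyofNerve}, namely \cite[\textsection5.1, \textsection6]{MoserNerve}, in the same way that \cref{InclusionAndFibrancy} follows from \cref{prop:propertiesiota}.'' — with a sentence for each of the four clauses pointing at the relevant input. The main obstacle I anticipate is clause (4): unlike the inclusions $\iota_{n-1}$, whose left adjoints preserve products essentially formally (they are localizations/left Kan extensions along product-preserving inclusions of gaunt objects), the left adjoint to a nerve functor is typically a more genuine ``fattening up'' construction, and verifying that it preserves binary products requires either a direct model-categorical computation (as sketched for $\iota_{n-1}$ via marked simplicial sets) or an argument that the relevant Quillen pair is compatible with the cartesian monoidal structures. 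Handling (4) carefully — and making sure the product-preservation is an equivalence and not merely a comparison map — is where the real work lies, and it is presumably discharged in \cref{PropertyofNerve}; the remaining clauses (1)--(3) are formal consequences of $\bN$ being a conservative right adjoint that preserves the Segal/completeness/discreteness conditions.
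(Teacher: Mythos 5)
Your overall skeleton --- treating (1)--(3) as formal consequences of the properties of the nerve recorded in \cref{PropertyofNerve} and deferring (4) to the product-preservation result proved there --- is indeed how the paper proceeds. But two of your justifications have genuine gaps. For (2) you offer no working argument: conservativity is not simply ``recorded'' in \cite[\S 5.1]{MoserNerve}, and your fallback that equivalences in $[DblCat]_\infty$ are ``detected levelwise'' is not true (weak equivalences between fibrant objects of $DblCat_\mathrm{whi}$ are double biequivalences, not levelwise conditions on underlying simplicial pieces). The missing ingredient is that $\bC\dashv\bN$ is a Quillen \emph{localization} pair, i.e.\ the derived counit is an equivalence (\cref{NerveRightAdjoint}(2)); this makes the induced right adjoint $\bN\colon[DblCat]_\infty\to DblCat_{(\infty,1)}$ fully faithful and hence conservative, which is exactly the paper's argument for (2)--(3). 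For (1), your ``direct check'' rests on a false premise: $\widetilde{\bH}\cC$ does \emph{not} have a discrete category of objects --- its vertical morphisms are the adjoint equivalences of $\cC$ (\cref{Htilde}), and this is precisely what makes the nerve complete; completeness is not a limit-shaped condition that $\bN$ preserves for free (it fails for $\bN\bH\cC$ in general). The statement you need is exactly \cite[Theorem 6.6.1]{MoserNerve}, packaged as \cref{NerveRightAdjoint}(3), so your citation fallback coincides with the paper, but the sketched direct verification would not.

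For (4), the mechanism you propose --- that the left adjoint is ``cartesian on the generating objects'' --- is not what happens and would not go through: $\bC(F[m,t]\times F[m',t'])$ is not, even up to isomorphism, the product $\bC F[m,t]\times\bC F[m',t']$. The actual argument (\cref{CpreservesProducts} and \cref{Cpresproducts}) constructs, via a projective-cofibrancy lifting argument, a natural comparison map $\bC(X\times Y)\to\bC X\boxtimes^{\mathrm{ps}}\bC Y$ into the pseudo Gray tensor product, proves it is a double biequivalence on representables using the counit equivalences, extends to all bisimplicial sets by a saturation argument, and only then identifies $\boxtimes^{\mathrm{ps}}$ with the cartesian product in $[DblCat]_\infty$ via \cref{grayvscart}. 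You correctly flag (4) as the real work and defer it to \cref{PropertyofNerve}, so this does not sink your plan, but the route you sketch for it is the wrong one.
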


\begin{proof}
    First, note that (1) is a direct consequence of \cref{NerveRightAdjoint}(3), while (2)-(3) follow from the fact that $\bN\colon [DblCat]_\infty\to DblCat_{(\infty,1)}$ has a left adjoint and the counit of this adjunction is an equivalence in $[DblCat]_\infty$ by \cref{NerveRightAdjoint}(1)-(2). Finally, (4) is proven in \cref{Cpresproducts}.
\end{proof}

\begin{prop} \label{NerveAndCotensors}
    Given a $2$-category $\cJ$ and a double category $\bD$ in $[DblCat]_\infty$, there is an equivalence of double $(\infty,1)$-categories
    \[ \bN\llbracket \widetilde{\bH}  \cJ,\bD\rrbracket^{\mathrm{ps}}
\stackrel{\simeq}{\longrightarrow}
{[\bN \widetilde{\bH}  \cJ,\bN\bD]}.\]
\end{prop}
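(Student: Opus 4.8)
The plan is to argue exactly as for \cref{InclusionAndCotensors}, with the inputs about $I_n$ replaced by their analogues for $\bN$. Write $\bC$ for the left adjoint of $\bN$. The facts I would invoke are: $\bN$ is a right adjoint (\cref{NerveAndFibrancy}(3)), hence preserves products; $\bC$ preserves products (\cref{NerveAndFibrancy}(4)); the counit $\bC\bN\Rightarrow\id$ is an equivalence in $[DblCat]_\infty$ — equivalently, $\bN$ is fully faithful — as recorded in the proof of \cref{NerveAndFibrancy} via \cref{NerveRightAdjoint}; and $[DblCat]_\infty$ is cartesian closed (\cref{dblcatcartclosed}), with $\llbracket\widetilde{\bH}\cJ,-\rrbracket^{\mathrm{ps}}$ right adjoint to $-\times\widetilde{\bH}\cJ$. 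On the target side, $Fun(\Delta^{\op},Cat_{(\infty,1)})$ is cartesian closed with internal hom $[-,-]$, and since $\bN\bD$ is a double $(\infty,1)$-category, so is $[\bN\widetilde{\bH}\cJ,\bN\bD]$ by \cref{DoubleInfinityMSclosed}; thus both sides of the claimed equivalence are indeed double $(\infty,1)$-categories.

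The heart of the argument is a Yoneda computation: for an arbitrary double $(\infty,1)$-category $\bE$ I would assemble the chain of equivalences, natural in $\bE$ (and in $\cJ$ and $\bD$), with the mapping spaces taken in the evident ambient $\infty$-category at each stage,
\begin{align*}
\Map(\bE,[\bN\widetilde{\bH}\cJ,\bN\bD])
&\simeq \Map(\bE\times\bN\widetilde{\bH}\cJ,\bN\bD)
\simeq \Map(\bC(\bE\times\bN\widetilde{\bH}\cJ),\bD)\\
&\simeq \Map(\bC\bE\times\bC\bN\widetilde{\bH}\cJ,\bD)
\simeq \Map(\bC\bE\times\widetilde{\bH}\cJ,\bD)\\
&\simeq \Map(\bC\bE,\llbracket\widetilde{\bH}\cJ,\bD\rrbracket^{\mathrm{ps}})
\simeq \Map(\bE,\bN\llbracket\widetilde{\bH}\cJ,\bD\rrbracket^{\mathrm{ps}}),
\end{align*}
using in turn the internal-hom adjunction in $Fun(\Delta^{\op},Cat_{(\infty,1)})$, the adjunction $\bC\dashv\bN$, product-preservation of $\bC$, the counit equivalence $\bC\bN\widetilde{\bH}\cJ\simeq\widetilde{\bH}\cJ$, the adjunction $-\times\widetilde{\bH}\cJ\dashv\llbracket\widetilde{\bH}\cJ,-\rrbracket^{\mathrm{ps}}$ in $[DblCat]_\infty$, and $\bC\dashv\bN$ once more. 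By the Yoneda lemma this produces the asserted natural equivalence $\bN\llbracket\widetilde{\bH}\cJ,\bD\rrbracket^{\mathrm{ps}}\simeq[\bN\widetilde{\bH}\cJ,\bN\bD]$; should a named map be wanted, unwinding the (co)units identifies it with the transpose of $\bN$ applied to the evaluation $\widetilde{\bH}\cJ\times\llbracket\widetilde{\bH}\cJ,\bD\rrbracket^{\mathrm{ps}}\to\bD$, composed with the canonical equivalence $\bN\widetilde{\bH}\cJ\times\bN\llbracket\widetilde{\bH}\cJ,\bD\rrbracket^{\mathrm{ps}}\simeq\bN(\widetilde{\bH}\cJ\times\llbracket\widetilde{\bH}\cJ,\bD\rrbracket^{\mathrm{ps}})$ arising from product-preservation of $\bN$.

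Once these preliminaries are in hand the computation is purely formal, so I do not expect any single hard step in the chain; the real content sits in the supporting facts. The one I would treat most carefully is that $\llbracket\widetilde{\bH}\cJ,-\rrbracket^{\mathrm{ps}}$ is right adjoint to $-\times\widetilde{\bH}\cJ$ in $[DblCat]_\infty$ — this is precisely where the pseudo Gray tensor product of \cite{Bohm} and the model structure of \cite{MSV1} enter, through \cref{dblcatcartclosed}, since the pseudo Gray tensor presents the cartesian product of $[DblCat]_\infty$. The remaining point — checking that the displayed natural equivalence is the one induced by the canonical comparison map — is notationally heavy but routine, and is exactly the verification suppressed in the one-line proofs of \cref{InclusionAndCotensors} and \cref{InclusionAndSlice}.
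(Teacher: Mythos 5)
Your proposal is correct and is essentially the paper's own argument: the paper proves this proposition with the one-line ``follows directly from \cref{NerveAndFibrancy}(4)'', and your adjunction/Yoneda chain is exactly the standard computation behind that line (right adjoint whose left adjoint preserves products is compatible with internal homs), precisely parallel to \cref{InclusionAndCotensors}. The only small caveat is that identifying $\llbracket\widetilde{\bH}\cJ,\bD\rrbracket^{\mathrm{ps}}$ with the $\infty$-categorical internal hom $\llbracket\widetilde{\bH}\cJ,\bD\rrbracket^{h}$ needs more than \cref{dblcatcartclosed} alone, since $\widetilde{\bH}\cJ$ need not be cofibrant: one also uses \cref{lem:correcthom} and a cofibrant replacement of $\cJ$, which is exactly the convention the paper fixes at the end of \cref{MSfromMSV}.
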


\begin{proof}
    This follows directly from \cref{NerveAndFibrancy}(4).
\end{proof}

With these, we can show that the nerve $\bN$ preserves cone and slice constructions:

\begin{lem}
\label{NerveAndComma}
Given a $2$-diagram $K\colon\cJ\to\cC$, there is an equivalence of double $(\infty,1)$-categories
\[\bN(\bC\mathrm{one}^{h}_{\widetilde{\bH} \cJ}(\widetilde{\bH} K))\stackrel{\simeq}{\longrightarrow}C\mathrm{one}^{(\infty,2)}_{\bN\widetilde{\bH} \cJ}(\bN\widetilde{\bH} K).\]
\end{lem}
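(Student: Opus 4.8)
The plan is to mimic exactly the proof of \cref{InclusionAndComma}, replacing the inclusion $I_n$ with the nerve functor $\bN$ and invoking the analogous properties collected in \cref{NerveAndFibrancy} and \cref{NerveAndCotensors}. The key structural observation is that both $C\mathrm{one}^{h}_{\widetilde{\bH}\cJ}(\widetilde{\bH} K)$ and $C\mathrm{one}^{(\infty,2)}_{\bN\widetilde{\bH}\cJ}(\bN\widetilde{\bH} K)$ are defined by pullbacks of exactly parallel shape (compare \cref{coneinDblCat} with the pullback defining $C\mathrm{one}^{(\infty,n)}_{J}K$), so it suffices to show that $\bN$ carries the first cospan to the second, up to equivalence, and then use that $\bN$ preserves pullbacks.

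First I would start from the defining pullback of $\bC\mathrm{one}^h_{\widetilde{\bH}\cJ}(\widetilde{\bH} K)$ in $[Dbl\cat]_\infty$ from \cref{coneinDblCat}. Applying $\bN$ and using \cref{NerveAndFibrancy}(3), which says $\bN$ preserves limits and in particular pullbacks, I obtain a pullback square in $DblCat_{(\infty,1)}$ with corners $\bN(\bC\mathrm{one}^h_{\widetilde{\bH}\cJ}(\widetilde{\bH} K))$, $\bN\llbracket\widetilde{\bH}[1],\llbracket\widetilde{\bH}\cJ,\widetilde{\bH}\cC\rrbracket^h\rrbracket^h$, $\bN\widetilde{\bH}\cC\times\bN\widetilde{\bH}[0]$, and $\bN\llbracket\widetilde{\bH}\cJ,\widetilde{\bH}\cC\rrbracket^h\times\bN\llbracket\widetilde{\bH}\cJ,\widetilde{\bH}\cC\rrbracket^h$, where I have used that $\bN$ preserves products (again \cref{NerveAndFibrancy}(3), or directly that its left adjoint preserves products as in item (4)). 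Next, I would repeatedly apply \cref{NerveAndCotensors} to rewrite each internal-hom term: $\bN\llbracket\widetilde{\bH}\cJ,\widetilde{\bH}\cC\rrbracket^{\mathrm{ps}}\simeq[\bN\widetilde{\bH}\cJ,\bN\widetilde{\bH}\cC]$, and then $\bN\llbracket\widetilde{\bH}[1],\llbracket\widetilde{\bH}\cJ,\widetilde{\bH}\cC\rrbracket^h\rrbracket^h\simeq[\bN\widetilde{\bH}[1],\bN\llbracket\widetilde{\bH}\cJ,\widetilde{\bH}\cC\rrbracket^h]\simeq[\bN\widetilde{\bH}[1],[\bN\widetilde{\bH}\cJ,\bN\widetilde{\bH}\cC]]$; one also needs $\bN\widetilde{\bH}[1]\simeq[1]$ and $\bN\widetilde{\bH}[0]\simeq[0]$, the analogue of the identity $I_n[1]\simeq[1]$ used in \cref{InclusionAndComma}, which should follow from the construction of $\bN\widetilde{\bH}$ recalled in \cref{PropertyofNerve} and \cref{Htilde}. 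All these equivalences are natural, so they assemble into a levelwise equivalence between the cospan obtained by applying $\bN$ and the cospan defining $C\mathrm{one}^{(\infty,2)}_{\bN\widetilde{\bH}\cJ}(\bN\widetilde{\bH} K)$ in \cref{coneinDblCat} (with $n=2$); comparing the two pullbacks then yields the desired equivalence of double $(\infty,1)$-categories.

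The one point requiring a little care, and the main obstacle, is the naturality and compatibility of the comparison maps: one must check that the equivalences supplied by \cref{NerveAndCotensors} are compatible with the structure maps $(\mathrm{ev}_0,\mathrm{ev}_1)$ and with $\Delta^{\widetilde{\bH}\cJ}_{\widetilde{\bH}\cC}\times\widetilde{\bH} K$, i.e.\ that the evaluation maps and diagonals on the $\bN$-side correspond under these equivalences to the evaluation maps and diagonals on the $DblCat_{(\infty,1)}$-side. This is exactly the analogue of the remark in \cref{InclusionAndComma} that ``there is a natural comparison map between the two cospans \dots\ which is pointwise an equivalence''; it is routine given that all the functors in sight ($\bN$, $\widetilde{\bH}$, internal hom) are functorial and the identifications $\bN\widetilde{\bH}[m]\simeq[m]$ are compatible with the coface and codegeneracy maps. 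Once this compatibility is in place, the induced map on pullbacks $\bN(\bC\mathrm{one}^h_{\widetilde{\bH}\cJ}(\widetilde{\bH} K))\to C\mathrm{one}^{(\infty,2)}_{\bN\widetilde{\bH}\cJ}(\bN\widetilde{\bH} K)$ is an equivalence because it is induced by a levelwise equivalence of cospans in the presentable $\infty$-category $DblCat_{(\infty,1)}$, completing the proof.
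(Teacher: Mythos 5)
Your proposal is correct and follows essentially the same route as the paper's proof: apply $\bN$ to the defining pullback, use that $\bN$ preserves limits (\cref{NerveAndFibrancy}(3)), rewrite the internal-hom terms via \cref{NerveAndCotensors} together with the identifications $\bN\widetilde{\bH}[0]\cong[0]$ and $\bN\widetilde{\bH}[1]\cong[1]$, and conclude from a pointwise equivalence of cospans. The naturality point you flag is handled in the paper exactly as you suggest, by the naturality of the comparison maps.
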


\begin{proof}
    By definition, we have a pullback in $[Dbl\cat]_{\infty}$
      \[\begin{tikzcd}
\bC\mathrm{one}^{h}_{\widetilde{\bH} \cJ}(\widetilde{\bH} K)\arrow[r]\arrow[d]\arrow[rd, phantom, "\lrcorner",very near start]& [20pt] \llbracket \widetilde{\bH}[1],\llbracket\widetilde{\bH} \cJ,\widetilde{\bH} \cC\rrbracket^{h}\rrbracket^{h}\arrow[d, "{(\mathrm{ev}_0,\mathrm{ev}_1})"]\\
       \widetilde{\bH} \cC\times \widetilde{\bH}[0]\arrow[r,"\Delta_{\widetilde{\bH} \cC}^{\widetilde{\bH} \cJ}\times\widetilde{\bH}  K" swap]&\llbracket\widetilde{\bH} \cJ,\widetilde{\bH} \cC\rrbracket^{h}\times\llbracket\widetilde{\bH} \cJ,\widetilde{\bH} \cC\rrbracket^{h}
    \end{tikzcd}\]
Hence, by \cref{NerveAndFibrancy}(3), we also have a  pullback in $DblCat_{(\infty,1)}$ 
\[\begin{tikzcd}
\bN(\bC\mathrm{one}^{h}_{\widetilde{\bH} \cJ}(\widetilde{\bH} K))\arrow[r]\arrow[d]\arrow[rd, phantom, "\lrcorner",very near start]& [30pt] \bN\llbracket \widetilde{\bH} [1],\llbracket\widetilde{\bH} \cJ,\widetilde{\bH} \cC\rrbracket^{h}\rrbracket^{h}\arrow[d,"{\bN(\mathrm{ev}_0,\mathrm{ev}_1})"]\\
       \bN\widetilde{\bH} \cC\times \bN\widetilde{\bH}[0]\arrow[r,"\bN\Delta_{\widetilde{\bH} \cC}^{\widetilde{\bH} \cJ}\times\bN\widetilde{\bH}  K" swap]&\bN\llbracket\widetilde{\bH} \cJ,\widetilde{\bH} \cC\rrbracket^{h}\times \bN\llbracket\widetilde{\bH} \cJ,\widetilde{\bH} \cC\rrbracket^{h}
    \end{tikzcd}\]
On the other hand, we have, by definition, a pullback in $  DblCat_{(\infty,1)}$
\[\begin{tikzcd}
C\mathrm{one}^{(\infty,2)}_{\bN\widetilde{\bH} \cJ}(\bN\widetilde{\bH} K)\arrow[r]\arrow[d]\arrow[rd, phantom, "\lrcorner",very near start]& [30pt] {[[1],[\bN\widetilde{\bH} \cJ,\bN\widetilde{\bH} \cC]]}\arrow[d,"{(\ev_0,\ev_1)}"]\\
      {\bN\widetilde{\bH} \cC\times [0]} \arrow[r,"\Delta_{\bN\widetilde{\bH} \cC}^{\bN\widetilde{\bH} \cJ}\times\bN\widetilde{\bH}  K" swap]& {[\bN\widetilde{\bH} \cJ,\bN\widetilde{\bH} \cC]\times [\bN\widetilde{\bH} \cJ,\bN\widetilde{\bH} \cC]}
    \end{tikzcd}\]
    Using \cref{NerveAndCotensors} and the fact that $\bN\widetilde{\bH} [0]\cong [0]$ and $\bN\widetilde{\bH} [1]\cong [1]$, there is a natural comparison map between the two cospans defining the above  pullbacks in $  DblCat_{(\infty,1)}$, which is pointwise an equivalence of double $(\infty,1)$-categories. Hence, this induces a unique map between the two double $(\infty,1)$-categories presenting the  pullback
    \[\bN(\bC\mathrm{one}^{h}_{\widetilde{\bH} \cJ}(\widetilde{\bH} K))\longrightarrow C\mathrm{one}^{(\infty,2)}_{\bN\widetilde{\bH} \cJ}(\bN\widetilde{\bH} K),\]
    and this map is an equivalence of double $(\infty,1)$-categories, as desired.
\end{proof}

\begin{lem}
\label{NerveAndSlice}
Given a double category $\bD$ in $[DblCat]_\infty$ and an object $d$ of $\bD$, there is an equivalence of double $(\infty,1)$-categories over $\bN\bD$
\[\bN(\bD\sslash^{h}d)\stackrel{\simeq}{\longrightarrow}(\bN\bD)\sslash^{(\infty,2)} d.\]
\end{lem}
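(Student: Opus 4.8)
The plan is to follow the exact same strategy as the proof of \cref{NerveAndComma}, replacing the cone pullback with the slice pullback. First I would recall that, by definition, the slice double category $\bD\sslash^h d$ in $[DblCat]_\infty$ is the pullback of the cospan
\[\bD\times\widetilde{\bH}[0]\xrightarrow{\bD\times d}\bD\times\bD\xleftarrow{(\mathrm{ev}_0,\mathrm{ev}_1)}\llbracket\widetilde{\bH}[1],\bD\rrbracket^h.\]
Since $\bN$ is a right adjoint by \cref{NerveAndFibrancy}(3), it preserves this pullback, so $\bN(\bD\sslash^h d)$ is the pullback in $DblCat_{(\infty,1)}$ of the cospan obtained by applying $\bN$ pointwise, namely
\[\bN\bD\times\bN\widetilde{\bH}[0]\xrightarrow{\bN(\bD\times d)}\bN\bD\times\bN\bD\xleftarrow{\bN(\mathrm{ev}_0,\mathrm{ev}_1)}\bN\llbracket\widetilde{\bH}[1],\bD\rrbracket^h,\]
where I use that $\bN$ preserves finite products — which again follows from \cref{NerveAndFibrancy}(3).

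Next I would write down the defining pullback for $(\bN\bD)\sslash^{(\infty,2)}d$ in $DblCat_{(\infty,1)}$, which by \cref{sliceinDblCat} (in the $(\infty,n)$ form of the slice from \cref{sec:deflimit}) is the pullback of
\[\bN\bD\times[0]\xrightarrow{\bN\bD\times d}\bN\bD\times\bN\bD\xleftarrow{(\mathrm{ev}_0,\mathrm{ev}_1)}[[1],\bN\bD].\]
To compare the two cospans I would use the identifications $\bN\widetilde{\bH}[0]\cong[0]$ and $\bN\widetilde{\bH}[1]\cong[1]$ (already invoked in the proof of \cref{NerveAndComma}), together with \cref{NerveAndCotensors} applied with $\cJ=[1]$ (more precisely with the $2$-category $\widetilde{\bH}[1]$), which gives a natural equivalence
\[\bN\llbracket\widetilde{\bH}[1],\bD\rrbracket^{\mathrm{ps}}\xrightarrow{\simeq}[\bN\widetilde{\bH}[1],\bN\bD]\simeq[[1],\bN\bD].\]
One checks that this equivalence intertwines the two evaluation maps $(\mathrm{ev}_0,\mathrm{ev}_1)$, so it assembles into a map of cospans that is a pointwise equivalence of double $(\infty,1)$-categories; passing to pullbacks yields the claimed equivalence $\bN(\bD\sslash^h d)\xrightarrow{\simeq}(\bN\bD)\sslash^{(\infty,2)}d$, and by construction it is a map over $\bN\bD$.

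The main obstacle is the verification that the comparison map of cospans is actually compatible with the evaluation maps, i.e.\ that the equivalence from \cref{NerveAndCotensors} is natural enough to identify the $(\mathrm{ev}_0,\mathrm{ev}_1)$ legs on the nose up to coherent equivalence; this is the same bookkeeping that already appears (and is only sketched) in \cref{NerveAndComma}, so I would simply invoke that the argument there applies verbatim. Everything else — preservation of pullbacks and products by $\bN$, and the computations $\bN\widetilde{\bH}[m]\cong[m]$ for $m=0,1$ — is already available from \cref{NerveAndFibrancy} and the discussion preceding \cref{charhtpy2limit}.

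\begin{proof}
The proof follows the same steps as the one of \cref{NerveAndComma}, using the slice pullback in place of the cone pullback, \cref{NerveAndFibrancy}(3) to commute $\bN$ past the pullback and the products, the identification $\bN\widetilde{\bH}[1]\cong[1]$, and \cref{NerveAndCotensors} with $\cJ=\widetilde{\bH}[1]$ to compare the cospans; the resulting comparison map is a pointwise equivalence of double $(\infty,1)$-categories over $\bN\bD$ and hence induces the desired equivalence.
\end{proof}
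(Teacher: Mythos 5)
Your proposal is correct and is essentially the paper's own proof: the paper simply states that the argument of \cref{NerveAndComma} carries over verbatim to the slice pullback, and your write-up spells out exactly those steps (preservation of the pullback by the right adjoint $\bN$, the identifications $\bN\widetilde{\bH}[0]\cong[0]$ and $\bN\widetilde{\bH}[1]\cong[1]$, and \cref{NerveAndCotensors} to compare the cospans). No gaps.
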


\begin{proof}
    The proof follows the same steps as the one of \cref{NerveAndComma}.
\end{proof}

We can now prove the main result:

\begin{thm}
\label{ConsistencyWithStrict}
Given a $2$-diagram $K\colon\cJ\to\cC$,
an object $\ell$ of $\cC$ is a homotopy $2$-limit object of the $2$-diagram $K\colon\cJ\to\cC$ if and only if the object $\ell$ is an $(\infty,2)$-limit object of the $(\infty,2)$-diagram $\bN\widetilde{\bH} K\colon \bN\widetilde{\bH} \cJ\to \bN\widetilde{\bH} \cC$.
\end{thm}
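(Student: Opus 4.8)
The plan is to mirror exactly the argument used for \cref{ConsistenceAcrossDimension}, replacing the inclusion $I_n$ with the nerve functor $\bN$ and invoking the package of lemmas already assembled: \cref{charhtpy2limit} to translate the strict notion into double-categorical language, \cref{NerveAndFibrancy}(2) (conservativity) to move the equivalence back and forth along $\bN$, and \cref{NerveAndSlice} and \cref{NerveAndComma} to identify the images of the slice and cone constructions. So the proof will be a chain of ``if and only if'' steps, and there is essentially no computation left to do — all the real work has been front-loaded into those lemmas.

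Concretely, first I would note that by \cref{charhtpy2limit}, $\ell$ is a homotopy $2$-limit object of $K\colon\cJ\to\cC$ if and only if there is an object $(\ell,\lambda)$ in $\bC\mathrm{one}^h_{\widetilde{\bH}\cJ}(\widetilde{\bH} K)$ such that the canonical projection
\[\bC\mathrm{one}^h_{\widetilde{\bH}\cJ}(\widetilde{\bH} K)\sslash(\ell,\lambda)\stackrel\simeq\longrightarrow\bC\mathrm{one}^h_{\widetilde{\bH}\cJ}(\widetilde{\bH} K)\]
is an equivalence in $[DblCat]_\infty$. Then, since $\bN$ is conservative by \cref{NerveAndFibrancy}(2), this holds if and only if the image under $\bN$,
\[\bN\bigl(\bC\mathrm{one}^h_{\widetilde{\bH}\cJ}(\widetilde{\bH} K)\sslash(\ell,\lambda)\bigr)\stackrel\simeq\longrightarrow\bN\bigl(\bC\mathrm{one}^h_{\widetilde{\bH}\cJ}(\widetilde{\bH} K)\bigr),\]
is an equivalence of double $(\infty,1)$-categories. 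Next, by \cref{NerveAndSlice} applied to $\bD=\bC\mathrm{one}^h_{\widetilde{\bH}\cJ}(\widetilde{\bH} K)$ and $d=(\ell,\lambda)$, the source is identified over $\bN\bD$ with $\bigl(\bN\bC\mathrm{one}^h_{\widetilde{\bH}\cJ}(\widetilde{\bH} K)\bigr)\sslash^{(\infty,2)}(\ell,\lambda)$, so the condition becomes that the canonical projection
\[\bigl(\bN\bC\mathrm{one}^h_{\widetilde{\bH}\cJ}(\widetilde{\bH} K)\bigr)\sslash^{(\infty,2)}(\ell,\lambda)\stackrel\simeq\longrightarrow\bN\bC\mathrm{one}^h_{\widetilde{\bH}\cJ}(\widetilde{\bH} K)\]
is an equivalence. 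Finally, applying the equivalence of \cref{NerveAndComma} to rewrite $\bN\bC\mathrm{one}^h_{\widetilde{\bH}\cJ}(\widetilde{\bH} K)\simeq C\mathrm{one}^{(\infty,2)}_{\bN\widetilde{\bH}\cJ}(\bN\widetilde{\bH} K)$ on both sides — compatibly, since \cref{NerveAndSlice} gives the equivalence over $\bN\bD$ — this says precisely that the canonical projection
\[C\mathrm{one}^{(\infty,2)}_{\bN\widetilde{\bH}\cJ}(\bN\widetilde{\bH} K)\sslash^{(\infty,2)}(\ell,\lambda)\stackrel\simeq\longrightarrow C\mathrm{one}^{(\infty,2)}_{\bN\widetilde{\bH}\cJ}(\bN\widetilde{\bH} K)\]
is an equivalence of double $(\infty,1)$-categories, which by \cref{MainDefinition} means exactly that $\ell$ is an $(\infty,2)$-limit object of the $(\infty,2)$-diagram $\bN\widetilde{\bH} K\colon\bN\widetilde{\bH}\cJ\to\bN\widetilde{\bH}\cC$.

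Since $\bN\widetilde{\bH}\cC$ is an $(\infty,2)$-category by \cref{NerveAndFibrancy}(1), the data $(\ell,\lambda)$ does determine an object $\ell$ of $\bN\widetilde{\bH}\cC$, so the statements line up on the nose. The only subtlety worth spelling out — the ``main obstacle'', though it is minor here — is the bookkeeping that the identifications of \cref{NerveAndComma} and \cref{NerveAndSlice} are compatible with the two projection maps, i.e.\ that the square relating $\bN$ of the slice projection to the $(\infty,2)$-slice projection commutes; this is exactly why \cref{NerveAndSlice} is stated as an equivalence \emph{over} $\bN\bD$, and it is the same point that made the analogous step in \cref{ConsistenceAcrossDimension} go through. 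Granting that, the chain of equivalences above is forced and the theorem follows.
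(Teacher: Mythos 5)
Your proposal is correct and follows essentially the same route as the paper's own proof: a chain of equivalences using \cref{Homotopy2LimitsUsingCommas} (the proven form of \cref{charhtpy2limit}), conservativity of $\bN$ from \cref{NerveAndFibrancy}(2), and the identifications of slices and cones from \cref{NerveAndSlice} and \cref{NerveAndComma}, with the same (correctly flagged) compatibility-over-$\bN\bD$ point that the paper handles via $2$-out-of-$3$. No gaps to report.
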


\begin{proof}
We have that an object $\ell$ of $\cC$ is a homotopy $2$-limit object of the $2$-diagram $K\colon\cJ\to\cC$
if and only if, by \cref{Homotopy2LimitsUsingCommas}, there is an object $(\ell,\lambda)$ in $\bC\mathrm{one}^{\mathrm{ps}}_{\widetilde{\bH} \cJ}(\widetilde{\bH} K)$ such that the canonical projection is an equivalence in $[DblCat]_\infty$
\[ \bC\mathrm{one}^{h}_{\widetilde{\bH} \cJ}(\widetilde{\bH} K)\sslash^{h}(\ell,\lambda)\stackrel\simeq\longrightarrow\bC\mathrm{one}^{h}_{\widetilde{\bH} \cJ}(\widetilde{\bH} K)\]
if and only if, by \cref{NerveAndFibrancy}(2), its image under $\bN$ is an equivalence of double $(\infty,1)$-categories
\[\bN(\bC\mathrm{one}^{h}_{\widetilde{\bH} \cJ}(\widetilde{\bH} K)\sslash^{h}(\ell,\lambda))\stackrel\simeq\longrightarrow \bN(\bC\mathrm{one}^{h}_{\widetilde{\bH} \cJ}(\widetilde{\bH} K))\]
if and only if, using $2$-out-of-$3$ and \cref{NerveAndSlice}, the canonical projection is an equivalence of double $(\infty,1)$-categories
\[ \bN(\bC\mathrm{one}^{h}_{\widetilde{\bH} \cJ}(\widetilde{\bH} K))\sslash^{(\infty,2)}(\ell,\lambda)\stackrel\simeq\longrightarrow \bN(\bC\mathrm{one}^{h}_{\widetilde{\bH} \cJ}(\widetilde{\bH} K))\]
if and only if, by \cref{NerveAndComma}, the canonical projection is an equivalence of double $(\infty,1)$-categories
\[ C\mathrm{one}^{(\infty,2)}_{\bN\widetilde{\bH} \cJ}(\bN\widetilde{\bH} K)\sslash^{(\infty,2)}(\ell,\lambda)\stackrel\simeq\longrightarrow C\mathrm{one}^{(\infty,2)}_{\bN\widetilde{\bH} \cJ}(\bN\widetilde{\bH} K)\]
if and only if, by definition, the object
$\ell$ of $\cC$ is an $(\infty,2)$-limit object of the induced $(\infty,2)$-diagram $\bN\widetilde{\bH} K\colon \bN\widetilde{\bH} \cJ\to \bN\widetilde{\bH} \cC$,
as desired.
\end{proof}

\section{Auxiliary results}

In this last section, we prove the technical details needed in \cref{SectionLimitsRevisited}. First, in \cref{MSfromMSV}, we recall the main features of the model structure on $DblCat$ for weakly horizontally invariant double categories from \cite{MSV1}, needed to define the $\infty$-category $[DblCat]_\infty$. Moreover, we prove that $[DblCat]_\infty$ is cartesian closed and provide tools to compute double categories of cones and slice double categories in $[DblCat]_\infty$. Then, in \cref{Sec:charhomotopy2limits}, we prove the double categorical characterization of homotopy $2$-limits stated in \cref{charhtpy2limit}. 

Finally, in \cref{PropertyofNerve}, we recall the nerve construction from double categories to double $(\infty,1)$-categories from \cite{MoserNerve} and prove the properties of the nerve stated in \cref{NerveAndFibrancy}.

\subsection{Model structure for weakly horizontally invariant double categories} \label{MSfromMSV}

In this section, we recall from \cite{MSV1} the main definitions and facts about the homotopy theory of \emph{weakly horizontally invariant} double categories up to \emph{double biequivalence}.

When unpacking the definition of a double category, one sees that a double category $\bD$ consists of a category $\bD_0$ of objects and a category $\bD_1$ of morphisms, together with appropriate domain and codomain maps, composition and identity maps. We refer to $\bD_0$ as the category of objects and \emph{vertical} morphisms and to $\bD_1$ as the category of \emph{horizontal} morphisms and \emph{squares}. A double functor is an assignment that preserves strictly the whole structure.

\begin{const}
    There is a canonical fully faithful functor 
    \[\bH\colon 2\cat\to Dbl\cat \]
    which assigns to a $2$-category $\cC$ the double category $\bH\cC$ whose objects are the objects of $\cC$, whose horizontal morphisms are the morphisms of $\cC$, whose vertical morphisms are all trivial, and whose squares are the $2$-morphisms in $\cC$. 
\end{const}

Recall that the functor $\bH \colon 2\cat\to Dbl\cat$ admits a right adjoint $\bfH \colon Dbl\cat\to 2\cat$, that sends a double category $\bD$ to its underlying horizontal $2$-category $\bfH \bD$, essentially obtained by forgetting the vertical morphisms of $\bD$. 

There is a canonical inclusion $\bV\colon \cat\to Dbl\cat$ which sends a category $\cA$ to the double category~$\bV\cA$ whose objects are the objects of $\cA$ and whose vertical morphisms are the morphisms of $\cA$; the horizontal morphisms and squares in $\bV \cA$ are all trivial. 

Moreover, the category $DblCat$ is cartesian closed. Given double categories $\bX$ and $\bX'$, we denote by $\llbracket \bX,\bX'\rrbracket$ the internal hom in $DblCat$.

\begin{defn}[{\cite[Definition 2.9]{MSV1},~\cite[Proposition 3.11]{MSV0}}]
A double functor $F\colon\bD\to\bD'$ is a \emph{double biequivalence} if $F$ induces biequivalences of $2$-categories
\[ \bfH F \colon \bfH \bD\to \bfH \bD' \quad \text{and} \quad \bfH \llbracket\bV[1], F\rrbracket\colon\bfH \llbracket\bV [1],\bD\rrbracket\to\bfH \llbracket\bV [1],\bD'\rrbracket. \]
\end{defn}

\begin{defn}
A \emph{companion pair} in a double category $\bD$ is a tuple $(f,u,\varphi,\psi)$ consisting of a horizontal morphism $f\colon c\to d$, a vertical morphism $u\colon c\varrow d$, and two squares \begin{center}
    \begin{tikzpicture}
        [node distance=1.2cm]
\node[](1) {$c$}; 
\node[below of=1](2) {$d$}; 
\node[right of=1](3) {$d$}; 
\node[right of=2](4) {$d$};

\draw[->,pro] (1) to node[left,la]{$u\;$} (2); 
\draw[->] (1) to node[above,la]{$f$} (3); 
\draw[d] (2) to (4); 
\draw[d,pro](3) to (4); 
 
\node[la] at ($(1)!0.5!(4)$) {$\varphi$};

\node[right of=3,xshift=2cm](1) {$c$}; 
\node[below of=1](2) {$c$}; 
\node[right of=1](3) {$c$}; 
\node[right of=2](4) {$d$};

\draw[->,pro] (3) to node[right,la]{$\;u$} (4); 
\draw[->] (2) to node[below,la]{$f$} (4); 
\draw[d] (1) to (3); 
\draw[d,pro](1) to (2); 
 
\node[la] at ($(1)!0.5!(4)$) {$\psi$};
\end{tikzpicture}
\end{center}
satisfying the following pasting equalities. 
\begin{center}
    \begin{tikzpicture}
        [node distance=1.2cm]
\node[](1') {$c$}; 
\node[below of=1'](2') {$c$};
\node[right of=1'](1) {$c$}; 
\node[below of=1](2) {$d$}; 
\node[right of=1](3) {$d$}; 
\node[right of=2](4) {$d$};

\draw[->,pro] (1) to node[left,la]{$u\;$} (2); 
\draw[->] (1) to node[above,la]{$f$} (3); 
\draw[d] (2) to (4); 
\draw[d,pro](3) to (4); 
 
\node[la] at ($(1)!0.5!(4)$) {$\varphi$};
\draw[->] (2') to node[below,la]{$f$} (2); 
\draw[d] (1') to (1); 
\draw[d,pro](1') to (2'); 
 
\node[la] at ($(1')!0.5!(2)$) {$\psi$};

\node[right of=3](1) {$c$}; 
\node at ($(1)!0.5!(4)$) {$=$};
\node[below of=1](2) {$c$}; 
\node[right of=1](3) {$d$}; 
\node[below of=3](4) {$d$}; 
\draw[d,pro] (1) to (2); 
\draw[d,pro] (3) to (4); 
\draw[->] (1) to node[above,la]{$f$} (3); 
\draw[->] (2) to node[below,la]{$f$} (4); 
\node[la] at ($(1)!0.5!(4)$) {\rotatebox{270}{$=$}};

\node[right of=3,xshift=1cm,yshift=.75cm](1) {$c$}; 
\node[below of=1](2) {$c$}; 
\node[right of=1](3) {$c$}; 
\node[right of=2](4) {$d$};
\node[below of=2](2') {$d$}; 
\node[right of=2'](4') {$d$};

\draw[->,pro] (2) to node[left,la]{$u\;$} (2'); 
\draw[d] (2') to (4'); 
\draw[d,pro](4) to (4'); 
 
\node[la] at ($(2)!0.5!(4')$) {$\varphi$};

\draw[->,pro] (3) to node[right,la]{$\;u$} (4); 
\draw[->] (2) to node[above,la]{$f$} (4); 
\draw[d] (1) to (3); 
\draw[d,pro](1) to (2); 
 
\node[la] at ($(1)!0.5!(4)$) {$\psi$};

\node[right of=3,yshift=-.75cm](1) {$c$}; 
\node[below of=1](2) {$B$}; 
\draw[->,pro] (1) to node(a)[left,la]{$u\;$} (2);
\node at ($(a)!0.5!(4)$) {$=$};
\node[right of=1](3) {$c$}; 
\node[below of=3](4) {$d$}; 
\draw[d] (1) to (3); 
\draw[d] (2) to (4); 
\draw[->,pro] (3) to node[right,la]{$\;u$} (4);
\node[la] at ($(1)!0.5!(4)$) {$=$};
\end{tikzpicture}
\end{center}
We say that a horizontal morphism $f$ (resp.~a vertical morphism $u$) \emph{has a vertical (resp.~horizontal) companion} if there is a companion pair $(f,u,\varphi,\psi)$.
\end{defn}

\begin{defn}[{\cite[Definition 2.10]{MSV1},~\cite[Proposition 5.6]{GMSV}}]
A double category $\bD$ is \emph{weakly horizontally invariant} if every horizontal equivalence in $\bD$, i.e., an equivalence in $\bfH\bD$, has a vertical companion.
\end{defn}

The homotopy theory of weakly horizontally invariant double categories and double biequivalences is supported by a model structure:

\begin{thm}[{\cite[Theorem 3.26]{MSV1},~\cite[Theorem 5.10]{GMSV}}]
\label{DoubleMS}
There is a model structure on the category $Dbl\cat$ in which
\begin{itemize}[leftmargin=*]
\item the fibrant objects are the weakly horizontally invariant double categories;
\item the trivial fibrations are the double functors that are surjective on objects, full on horizontal and vertical morphisms, and fully faithful on squares;
\item the weak equivalences between fibrant objects are precisely the double biequivalences.
\end{itemize}
We denote this model structure by $Dbl\cat_{\mathrm{whi}}$.
\end{thm}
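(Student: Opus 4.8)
The plan is to build $Dbl\cat_{\mathrm{whi}}$ by hand, using the recognition theorem for cofibrantly generated model structures (applicable since $Dbl\cat$, being internal categories in the locally finitely presentable category $\cat$, is itself locally finitely presentable), in a manner parallel to Lack's construction of the canonical model structure on $2\cat$ in \cite{Lack1}, transporting the relevant generating data along the functors $\bH\colon 2\cat\to Dbl\cat$ and $\bV\colon\cat\to Dbl\cat$.

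First I would pin down the (trivial) cofibrations and fibrations by explicit generating sets. For the trivial fibrations, take them to be the double functors described in the statement — surjective on objects, full on horizontal and on vertical morphisms, fully faithful on squares — and check that this class is closed under composition, retract and pushout and is the right class of a weak factorization system cofibrantly generated by a small set $I$ of boundary inclusions of the free double categories on an object, a horizontal arrow, a vertical arrow, a square, and on a parallel pair of squares; declare the cofibrations to be $I$-cofibrations. For the generating trivial cofibrations $J$, combine (i) the $\bH$-images of Lack's generating trivial cofibrations for $2\cat$, which impose "horizontal" invertibility data, (ii) their "vertical" analogues obtained via $\bV$, and (iii) one extra generator $c\colon\bD\to\bD'$ which freely adjoins a vertical companion, together with its two defining squares, to a free horizontal equivalence. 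Declare the fibrations to be $J$-injective maps. The key lemma here is the identification of the fibrant objects: a double category is $J$-injective if and only if it is \emph{weakly horizontally invariant}, where generator (iii) supplies exactly the clause "every horizontal equivalence has a vertical companion" and generators (i)--(ii) supply the ambient lifting of invertible cells.

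Next I would define the weak equivalences. Because $\llbracket\bV[1],\bD\rrbracket$ only has the expected homotopical meaning once $\bD$ is weakly horizontally invariant, I would first construct a fibrant replacement functor $R$ by the small object argument applied to $J$, and then set $\cW$ to be the class of double functors $F$ for which $\bfH(RF)$ and $\bfH\llbracket\bV[1],RF\rrbracket$ are biequivalences of $2$-categories. Using that biequivalences in $2\cat$ satisfy $2$-out-of-$3$ and are closed under retracts, that $\bfH$ and $\bfH\llbracket\bV[1],-\rrbracket$ preserve these, and that $R$ is functorial up to the relevant comparison, one checks that $\cW$ is a legitimate class of weak equivalences; and one shows that between weakly horizontally invariant double categories $\cW$ coincides with the double biequivalences, since there $R$ may be taken to be the identity. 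This is precisely what the statement records: double biequivalence is the intrinsic description of a weak equivalence only between fibrant objects.

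Finally I would verify the two conditions of the recognition theorem: that $J\text{-cell}\subseteq\cW\cap I\text{-cof}$ and that $I\text{-inj}=\cW\cap J\text{-inj}$. The inclusion $J\text{-cell}\subseteq I\text{-cof}$ is routine once generator (iii) is checked to be a cofibration, and the containment of trivial fibrations in $\cW$ follows by inspecting $\bfH$ and $\bfH\llbracket\bV[1],-\rrbracket$. The main obstacles are showing $J\text{-cell}\subseteq\cW$ — that pushouts of the generators, in particular of the companion-adjoining map $c$, are double biequivalences — and the converse inclusion, that a fibration lying in $\cW$ is a trivial fibration. For the latter I would run Lack's path-object argument in the double-categorical setting: build an interval double category $\cE$ encoding a horizontal and a vertical adjoint equivalence together with a companion pair, show that $\llbracket\cE,\bD\rrbracket\to\bD\times\bD$ is a fibration with the factorization properties of a path object on fibrant $\bD$, and deduce the desired retract characterization. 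The homotopical invisibility of generator (iii) — that adjoining a companion to an already-present horizontal equivalence changes neither $\bfH\bD$ nor $\bfH\llbracket\bV[1],\bD\rrbracket$ up to biequivalence — is the delicate point, and it is what makes "weakly horizontally invariant" the correct fibrancy condition rather than a naive transfer of Lack's structure through $\bH$ alone.
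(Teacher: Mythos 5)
You should first be aware that the paper does not prove \cref{DoubleMS} at all: it is imported verbatim from \cite[Theorem 3.26]{MSV1} and \cite[Theorem 5.10]{GMSV}, so the only meaningful comparison is with those cited constructions. Your outer shell is consistent with them: $Dbl\cat$ is locally presentable, the set $I$ you describe does generate precisely the trivial fibrations in the statement, fibrancy is indeed a lifting condition against a map freely adjoining a vertical companion to a horizontal (adjoint) equivalence, and the weak equivalences are indeed only pinned down through fibrant replacement, which is exactly why the theorem speaks of double biequivalences \emph{between fibrant objects} rather than globally.

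The genuine gap is that your sketch defers precisely the steps on which the theorem lives, and these do not follow by ``running Lack's argument'' from \cite{Lack1}. (a) For $\cW$ to be well defined, to satisfy $2$-out-of-$3$ and retract closure, and to restrict to double biequivalences on fibrant objects independently of the chosen replacement, you need that arbitrary transfinite composites of pushouts of your companion-adjoining generator, taken along arbitrary double functors into arbitrary (non-weakly-horizontally-invariant, non-cofibrant) double categories, are inverted by both $\bfH$ and $\bfH\llbracket\bV[1],-\rrbracket$. Your ``homotopical invisibility'' claim is stated only for adjoining a companion to an already-present horizontal equivalence; a pushout of the generator also freely adds squares interacting with all existing vertical morphisms, and controlling $\bfH\llbracket\bV[1],-\rrbracket$ of such a free construction over a non-fibrant base is the hard combinatorial content, with no analogue in Lack's purely $2$-categorical computation. (b) The recognition-theorem condition $I\text{-inj}=\cW\cap J\text{-inj}$ must be verified for maps with arbitrary codomain, where your proposed path object $\llbracket\cE,\bD\rrbracket$ carries no homotopical meaning: as the paper itself stresses (see \cref{lem:homiswhi} and the surrounding discussion), the pseudo-hom is only homotopically correct when the target is weakly horizontally invariant, whereas Lack's path-object argument leans on every $2$-category being fibrant. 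These are exactly the reasons the cited proofs do not proceed by a naive cofibrant-generation transfer: \cite{MSV1} carries out a bespoke verification, and \cite{GMSV} develops general machinery inducing a model structure from the cofibration/trivial-fibration weak factorization system together with homotopical data prescribed only on fibrant objects. You would also still need to check that the $\bH$- and $\bV$-images of Lack's generators are $I$-cofibrations and that $J$-injectivity reproduces the intended fibrations, but those are minor compared with (a) and (b); as it stands, the proposal correctly identifies what must be proved at the crucial points without proving it.
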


Recall that the $\infty$-category $[DblCat]_\infty$ from \cref{notn:underlyingDblCat} is, by definition, the underlying $\infty$-category of the model structure $DblCat_\mathrm{whi}$, in the sense of \cite[Theorem $6.2$]{bergner2009underlying}.

\begin{rmk}
    The weakly horizontal invariance is a sort of Reedy fibrancy condition, which naturally appears when trying to build a model structure on $DblCat$, compatible with the canonical model structure on $2Cat$ and with ``symmetric'' trivial fibrations as described in \cref{DoubleMS}. Indeed, by \cite[Proposition 5.6]{GMSV}, a double category $\bD$ is weakly horizontally invariant if and only if the $2$-functor $(\mathrm{ev}_0,\mathrm{ev}_1)\colon \bfH \llbracket \bV[1],\bD\rrbracket\to \bfH\bD\times \bfH\bD$ is a fibration of $2$-categories. Moreover, by \cite[Theorem~5.30]{MoserNerve}, this is a necessary and sufficient condition for the nerve $\bN\bD$ of a double category $\bD$ to be Reedy fibrant.
\end{rmk}

Given a $2$-category $\cC$, the double category $\bH \cC$ is generally \emph{not} weakly horizontally invariant. Hence, in order to recover $2$-categories as weakly horizontally invariant double categories, we shall consider the following refined construction.

\begin{const} \label{Htilde}
We denote by
\[
\widetilde{\bH} \colon 2\cat\to Dbl\cat,
\]
the functor from \cite[Definition~2.13]{MSV1}. The functor $\widetilde{\bH}$ assigns to a $2$-category $\cC$ the double category $\widetilde{\bH} \cC$ whose objects are the objects of $\cC$, whose horizontal morphisms are the morphisms of $\cC$, whose vertical morphisms are the adjoint equivalence data in $\cC$, and whose squares are the $2$-morphisms in $\cC$ of the form 
\begin{center}
    \begin{tikzpicture}
        [node distance=1.2cm]
\node[](1') {$c$}; 
    \node[right of=1'](2') {$d$}; 
    \node[below of=1'](3') {$c'$}; 
    \node[right of=3'](4') {$d'$}; 
    \draw[->] (1') to node[above,la]{} (2');
    \draw[->] (1') to node[left,la]{$\simeq$} (3');
    \draw[->] (3') to node[below,la]{} (4');
    \draw[->] (2') to node[right,la]{$\simeq$} (4');
    \punctuation{4'}{.};
    
    \cell[la,below,xshift=9pt]{2'}{3'}{};
    \end{tikzpicture}
\end{center}
\end{const}

\begin{rmk}
As every identity is an equivalence, for any $2$-category $\cC$, there is a canonical inclusion $I_\cC\colon \bH \cC\to \widetilde{\bH} \cC$. If $\cC$ has no non-identity equivalence, this inclusion is an isomorphism $\bH \cC\cong \widetilde{\bH} \cC$ in~$Dbl\cat$. In particular, we have isomorphisms in $Dbl\cat$
\[\bH [0]\cong \widetilde{\bH} [0]\quad\text{ and }\quad\bH [1]\cong\widetilde{\bH} [1].\]
\end{rmk}

\begin{prop}[{\cite[Theorem 6.5]{MSV1}}] \label{prop:thm65MSV}
Given a $2$-category $\cC$, the double category $\widetilde{\bH} \cC$ is weakly horizontally invariant, and the canonical inclusion $I_\cC\colon \bH \cC\to \widetilde{\bH} \cC$ is a double biequivalence.
\end{prop}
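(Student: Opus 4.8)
The plan is to check the two assertions by hand, relying throughout on one simple observation: by the explicit description of $\widetilde{\bH}\cC$ in \cref{Htilde}, a square of $\widetilde{\bH}\cC$ whose vertical boundary is trivial is exactly a $2$-morphism of $\cC$ between parallel $1$-morphisms, just as for $\bH\cC$. Consequently $\bfH\bH\cC\cong\cC\cong\bfH\widetilde{\bH}\cC$ (the functor $\bH$ being fully faithful for the first isomorphism), and under these isomorphisms $\bfH I_\cC$ is the identity $2$-functor of $\cC$.

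\textbf{Weak horizontal invariance.} By the observation above, a horizontal equivalence of $\widetilde{\bH}\cC$ is the same thing as an equivalence $f\colon c\to d$ in the $2$-category $\cC$. I would choose adjoint equivalence data $(f,f^{-1},\eta,\varepsilon)$ extending $f$; by construction of $\widetilde{\bH}\cC$ this datum \emph{is} a vertical morphism $u\colon c\varrow d$ with underlying $1$-morphism $f$. The two squares $\varphi$ and $\psi$ needed to make $(f,u,\varphi,\psi)$ a companion pair have boundaries built only out of $f$, $u$ and identities, so each is a $2$-morphism $f\Rightarrow f$ of $\cC$, and one takes $\varphi=\psi=\id_f$. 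It then remains to verify the two companion pasting axioms; unwinding the horizontal and vertical composition of squares in $\widetilde{\bH}\cC$ (the latter also composes the chosen adjoint equivalence data, but here only against an identity, hence trivially), both equalities reduce to identities of $2$-morphisms that hold in any $2$-category. Thus every horizontal equivalence of $\widetilde{\bH}\cC$ has a vertical companion, i.e.\ $\widetilde{\bH}\cC$ is weakly horizontally invariant.

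\textbf{$I_\cC$ is a double biequivalence.} By definition this means that $\bfH I_\cC$ and $\bfH\llbracket\bV[1],I_\cC\rrbracket$ are biequivalences of $2$-categories. The first is the identity $2$-functor of $\cC$ by the observation above, so there is nothing to prove. For the second, I would unwind the internal hom: the objects of $\bfH\llbracket\bV[1],\bD\rrbracket$ are the vertical morphisms of $\bD$, its $1$-morphisms are the squares of $\bD$ with prescribed vertical sides, and its $2$-morphisms are modifications, whose component squares in $\bD$ all have trivial vertical boundary. Hence for $\bD=\bH\cC$ the objects are the identity vertical morphisms, indexed by $\Ob\cC$, while for $\bD=\widetilde{\bH}\cC$ they are all adjoint equivalences of $\cC$; in particular $\bfH\llbracket\bV[1],I_\cC\rrbracket$ is injective on objects. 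Moreover, between two identity objects $\id_c$ and $\id_{c'}$, all the squares involved in the hom-category are globular, so they coincide in $\bH\cC$ and in $\widetilde{\bH}\cC$, and $\bfH\llbracket\bV[1],I_\cC\rrbracket$ restricts to an isomorphism on such hom-categories. It therefore remains only to prove essential surjectivity on objects, i.e.\ that every adjoint equivalence $u\colon c\varrow c'$ of $\widetilde{\bH}\cC$ is equivalent, inside the $2$-category $\bfH\llbracket\bV[1],\widetilde{\bH}\cC\rrbracket$, to an identity vertical morphism. I would build mutually inverse $1$-morphisms $\id_c\to u$ and $u\to\id_c$ out of the horizontal data $(\id_c,f,\id_f)$ and $(\id_c,f^{-1},\eta)$ respectively, and exhibit the invertible modifications witnessing the two composites as identities using the counit $\varepsilon$ together with the triangle identities.

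This essential-surjectivity step is the heart of the argument and the part I expect to be the main obstacle: it is precisely where the adjoint equivalence data recorded by the vertical morphisms of $\widetilde{\bH}\cC$, as opposed to the merely trivial vertical morphisms of $\bH\cC$, gets used, and it is what makes $I_\cC$ a ``fibrant replacement'' of $\bH\cC$. Granting it, $\bfH\llbracket\bV[1],I_\cC\rrbracket$ is essentially surjective on objects and an equivalence on all hom-categories of its source, hence a biequivalence, and therefore $I_\cC$ is a double biequivalence.
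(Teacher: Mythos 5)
Your argument is sound, but note that this paper does not actually prove the statement: it is imported wholesale as a citation of \cite[Theorem 6.5]{MSV1} (together with \cite[Proposition 3.11]{MSV0} for the characterization of double biequivalences you use), so there is no in-text proof to compare against. Your direct verification is essentially the argument of the cited result, and it is consistent with what the paper does record: the remark following \cref{prop:thm65MSV} asserts exactly your companion pair $(f,u,\id_f,\id_f)$ for an equivalence $f$, and the identification $\bfH\bH\cC=\cC=\bfH\widetilde{\bH}\cC$ that you use for the first half of the biequivalence condition is invoked verbatim in the proof of \cref{prop:ConeHKvsmiddle}. Your analysis of $\bfH\llbracket\bV[1],I_\cC\rrbracket$ is also correct as outlined: objects of $\bfH\llbracket\bV[1],\widetilde{\bH}\cC\rrbracket$ are the adjoint equivalence data, hom-categories between identity vertical morphisms agree on the nose with those of $\bfH\llbracket\bV[1],\bH\cC\rrbracket$, and the only substantive point is essential surjectivity, which goes through exactly as you predict: with your candidate horizontal morphisms $(\id_c,f,\id_f)\colon e_c\to u$ and $(\id_c,f^{-1},\eta)\colon u\to e_c$, one composite is witnessed as isomorphic to $\id_{e_c}$ by the (invertible, since $u$ is an adjoint equivalence) unit $\eta$ alone, while the other requires the modification with component $\varepsilon^{-1}$, whose compatibility condition is precisely one triangle identity. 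The only care needed in writing this out is fixing the direction convention for squares in $\widetilde{\bH}\cC$ (which determines whether $\eta$ or $\eta^{-1}$, and which triangle identity, appears), and checking that the modification inverses again satisfy the modification condition; both are routine. So I would accept your proposal as a correct self-contained proof of the cited result, rather than an alternative to anything in this paper.
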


\begin{rmk}
	Given a $2$-category $\cC$, for each equivalence $f\colon c\to d$ in $\cC$, the corresponding horizontal morphism $f\colon c\to d$ and vertical morphism $f\colon c\varrow d$ form a companion pair in $\widetilde{\bH}\cC$ with squares induced by the identity $2$-morphism at $f$. 
\end{rmk}

While the category $DblCat$ is cartesian closed, the model structure $DblCat_\mathrm{whi}$ is not compatible with the cartesian product; see \cite[Remark 5.1]{MSV0}. Instead, one needs to refine it and consider the double categorical analogue of the pseudo Gray tensor product $\boxtimes^{\mathrm{ps}}\colon Dbl\cat\times Dbl\cat\to Dbl\cat$ introduced in \cite[\S 3]{Bohm}.

\begin{thm}[{\cite[Theorem 7.8]{MSV1}}]
\label{DblCatClosed}
The model category $Dbl\cat_{\mathrm{whi}}$ is a closed monoidal model category with respect to the pseudo Gray tensor product $\boxtimes^{\mathrm{ps}}$.
\end{thm}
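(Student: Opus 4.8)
The proof amounts to checking the two compatibility axioms of a closed monoidal model category for the data $\bigl(Dbl\cat_{\mathrm{whi}},\boxtimes^{\mathrm{ps}},[0]\bigr)$; the closed structure $\llbracket-,-\rrbracket^{\mathrm{ps}}$ on the underlying category $Dbl\cat$ is already supplied by \cite{Bohm}, so only the \emph{unit axiom} and the \emph{pushout--product axiom} remain. The unit axiom is immediate: the monoidal unit is the terminal double category $[0]$, and the map $\emptyset\to[0]$ (freely adjoining an object) is a generating cofibration of $Dbl\cat_{\mathrm{whi}}$, so $[0]$ is cofibrant and no cofibrant replacement of the unit is needed. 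Everything thus reduces to the pushout--product axiom.

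For the pushout--product axiom I would invoke the standard reduction to generating (trivial) cofibrations: since $Dbl\cat_{\mathrm{whi}}$ is cofibrantly generated, with explicit generating cofibrations $I$ and a set $J$ of generating trivial cofibrations as in \cite{MSV1}, it suffices to show that $i\,\widehat{\boxtimes}^{\mathrm{ps}}\,i'$ is a cofibration for all $i,i'\in I$, and that $i\,\widehat{\boxtimes}^{\mathrm{ps}}\,j$ (and, should $\boxtimes^{\mathrm{ps}}$ fail to be symmetric, also $j\,\widehat{\boxtimes}^{\mathrm{ps}}\,i$) is a trivial cofibration for $i\in I$ and $j\in J$. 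Here $I$ consists of the boundary inclusions of the basic cells of a double category --- $\emptyset\to[0]$, the horizontal $\partial[1]\to[1]$, the vertical $\partial(\bV[1])\to\bV[1]$, the boundary inclusion of the free square, and the maps identifying parallel cells --- while $J$ comprises the images under $\bH$ and $\bV$ of the generating trivial cofibrations of the canonical model structure on $2\cat$ from \cite{Lack1}, together with the maps freely adjoining a vertical companion to a horizontal equivalence, which enforce weak horizontal invariance.

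The cofibration clause is a finite computation: one writes out $\boxtimes^{\mathrm{ps}}$ of each pair of basic cells --- the pseudo Gray tensor product being designed precisely so that, for instance, $[1]\boxtimes^{\mathrm{ps}}\bV[1]$ is the free ``commutative square equipped with companions'' --- and checks that in each case the resulting Leibniz map presents its target as a relative $I$-cell complex over its source, hence a cofibration. The trivial cofibration clause is the crux. For $j$ in the $\bH$- or $\bV$-image of a generating trivial cofibration of $2\cat$, I would use the compatibility of $\boxtimes^{\mathrm{ps}}$ with $\bH$ and $\bV$ to rewrite $i\,\widehat{\boxtimes}^{\mathrm{ps}}\,j$ as a pushout--product along the pseudo Gray tensor product of $2$-categories, and then reduce to the corresponding monoidality statement for the canonical model structure on $2\cat$. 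For $j$ a companion-adjoining map I would argue directly via the characterization of double biequivalences in terms of the $2$-functors $\bfH(-)$ and $\bfH\llbracket\bV[1],-\rrbracket$: one checks that $X\boxtimes^{\mathrm{ps}}(-)$ and $(-)\boxtimes^{\mathrm{ps}}X$ send horizontal equivalences to horizontal equivalences and carry the free companion cell to honest companion data, so that, together with the computation above, $i\,\widehat{\boxtimes}^{\mathrm{ps}}\,j$ is simultaneously a cofibration and a double biequivalence. A more uniform alternative is to dualize through the internal hom: it is equivalent to show that for every fibration $p\colon\bD\to\bD'$ between weakly horizontally invariant double categories and every $i\in I$ with domain $A$ and codomain $B$, the Leibniz map
\[\llbracket B,\bD\rrbracket^{\mathrm{ps}}\longrightarrow\llbracket A,\bD\rrbracket^{\mathrm{ps}}\times_{\llbracket A,\bD'\rrbracket^{\mathrm{ps}}}\llbracket B,\bD'\rrbracket^{\mathrm{ps}}\]
is a fibration, trivial when $p$ is; one then feeds in the description of fibrations with fibrant target from \cite[Proposition~5.6]{GMSV} --- namely that they are detected by $\bfH(-)$ and $\bfH\llbracket\bV[1],-\rrbracket$ --- and lands once more on the monoidality statement for $2\cat$.

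The main obstacle is precisely this trivial cofibration clause, and within it the interaction of $\boxtimes^{\mathrm{ps}}$ with the weak-horizontal-invariance generating trivial cofibrations: one must verify that pseudo-Gray-tensoring the free companion cells genuinely produces companion pairs and does not disturb the ambient horizontal-equivalence structure, so that the relevant Leibniz maps remain double biequivalences. By comparison, the cofibration clause --- once the pseudo Gray tensor products of the basic cells have been written out --- is a routine, if somewhat lengthy, diagram chase.
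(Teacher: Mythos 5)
This paper does not actually prove \cref{DblCatClosed}: the statement is imported verbatim from \cite[Theorem 7.8]{MSV1}, so there is no in-paper argument to compare against; the real proof lives in \cite[\textsection 7]{MSV1}, of which only \cref{grayvscart} is quoted here. Your outline --- check the unit is cofibrant, then reduce the pushout--product axiom to generating (trivial) cofibrations --- is the standard strategy and has broadly the right shape, but as written it is a plan rather than a proof: every load-bearing step is asserted instead of carried out.

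Concretely: (a) your whole reduction hinges on an explicit set $J$ of generating trivial cofibrations of $Dbl\cat_{\mathrm{whi}}$ of the form you describe ($\bH$- and $\bV$-images of Lack's generators plus companion-adjoining maps), and you offer no justification that such a set generates; nothing quoted in the present paper supplies one, and without it neither the direct reduction nor your dual route via Leibniz homs against fibrations gets off the ground (for the latter you would need a characterization of fibrations, whereas \cite[Proposition 5.6]{GMSV}, which you invoke, characterizes the fibrant objects, i.e.\ the weakly horizontally invariant double categories, not the fibrations). (b) The ``finite computation'' of pseudo Gray tensor products of basic cells is not performed, and the one sample you give is incorrect: by adjunction against the explicit description of horizontal morphisms in $\llbracket\bV[1],-\rrbracket^{\mathrm{ps}}$, the double category $\bH[1]\boxtimes^{\mathrm{ps}}\bV[1]$ is the free double category on a single square, not a ``commutative square equipped with companions''. (c) For what you correctly identify as the crux --- Leibniz-tensoring against the companion-adjoining maps --- you write ``one checks'' that the result is a cofibration and a double biequivalence; that verification is essentially the content of the theorem and is left open. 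Note also that even granting it, passing from ``double biequivalence'' to ``weak equivalence in $Dbl\cat_{\mathrm{whi}}$'' for the typically non-fibrant double categories arising in these pushout--products needs \cite[Proposition 3.25]{MSV1}, since the model structure only identifies its weak equivalences with double biequivalences between fibrant objects. So the proposal locates the difficulty accurately but does not close it, and it cannot be credited as a proof of \cref{DblCatClosed}.
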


\begin{lem}[{\cite[Lemma 7.3]{MSV1}}] \label{grayvscart}
    Given double categories $\bJ$ and $\bJ'$, there is a canonical trivial fibration 
    \[ \bJ\boxtimes^\mathrm{ps}\bJ'\to \bJ\times \bJ' \]
    natural in $\bJ$ and $\bJ'$.
\end{lem}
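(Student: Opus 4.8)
\emph{Proof strategy.} The plan is to construct the comparison double functor explicitly from the generators-and-relations presentation of the pseudo Gray tensor product in \cite[\textsection 3]{Bohm}, and then to verify the three conditions characterizing a trivial fibration of $Dbl\cat_{\mathrm{whi}}$ recalled in \cref{DoubleMS}. Recall that $\bJ\boxtimes^{\mathrm{ps}}\bJ'$ has the same objects as $\bJ\times\bJ'$, namely pairs $(j,j')$; its horizontal and vertical morphisms are generated by the ``pure'' cells obtained by fixing one coordinate, so a horizontal generator has the form $(h,j')$ with $h$ horizontal in $\bJ$ and $j'$ an object of $\bJ'$, or symmetrically $(j,h')$, and similarly for vertical generators; and its squares are generated by the pure squares coming from $\bJ$ and from $\bJ'$, together with the interchange squares comparing the two ways of assembling a mixed composite of pure generators, where in the \emph{pseudo} variant these interchange squares are horizontally and vertically invertible. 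I would define $\Phi_{\bJ,\bJ'}\colon\bJ\boxtimes^{\mathrm{ps}}\bJ'\to\bJ\times\bJ'$ to be the identity on objects, to send $(h,j')\mapsto(h,\id_{j'})$ and $(j,h')\mapsto(\id_j,h')$ and likewise on vertical generators, to send a pure $\bJ$-square to itself paired with an identity square of $\bJ'$ (and symmetrically), and to send every interchange square to the appropriate identity square of $\bJ\times\bJ'$; this last clause is legitimate precisely because the two mixed composites compared by an interchange square have \emph{equal} images in $\bJ\times\bJ'$, where interchange holds strictly. Checking that $\Phi_{\bJ,\bJ'}$ respects the remaining relations of \cite{Bohm} is routine, and naturality in $\bJ$ and $\bJ'$ is immediate since every clause of the definition is. (Alternatively, $\Phi$ can be obtained abstractly by applying the Yoneda lemma to the natural inclusion of the cartesian internal hom into the pseudo internal hom supplied by \cref{DblCatClosed}.)

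It then remains to verify the conditions of \cref{DoubleMS}. Surjectivity on objects is immediate, since $\Phi_{\bJ,\bJ'}$ is the identity on objects. For fullness on horizontal morphisms, a horizontal morphism of $\bJ\times\bJ'$ from $(j_1,j_1')$ to $(j_2,j_2')$ is a pair $(f,g)$ with $f$ horizontal in $\bJ$ and $g$ horizontal in $\bJ'$, and it is the image under $\Phi_{\bJ,\bJ'}$ of the horizontal composite $(f,j_2')\circ(j_1,g)$ of pure generators; fullness on vertical morphisms is the same argument with vertical composites of pure generators.

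The substantive step, and the one I expect to be the main obstacle, is full faithfulness on squares, since it requires controlling the squares of $\bJ\boxtimes^{\mathrm{ps}}\bJ'$ and not merely its morphisms. Fixing a square boundary in $\bJ\boxtimes^{\mathrm{ps}}\bJ'$ -- so its horizontal and vertical sides are prescribed composites of pure generators -- I would argue that, because the interchange squares are invertible, pasting with them lets one rewrite any square with that boundary into a normal form assembled from a single pure $\bJ$-square, a single pure $\bJ'$-square, and interchange squares, and that two normal forms represent the same square in $\bJ\boxtimes^{\mathrm{ps}}\bJ'$ if and only if their underlying pure $\bJ$-square and pure $\bJ'$-square coincide. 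This exhibits a bijection between the squares of $\bJ\boxtimes^{\mathrm{ps}}\bJ'$ over the chosen boundary and the squares of $\bJ\times\bJ'$ over its image -- the latter being exactly a pair consisting of a $\bJ$-square and a $\bJ'$-square -- and this bijection is implemented by $\Phi_{\bJ,\bJ'}$. The delicate point is making the normalization rigorous: one must check that Böhm's relations are precisely strong enough to identify all reorderings of pure squares against interchange squares, and no stronger, which amounts to a confluence-type analysis of the presentation. Granting this, \cref{DoubleMS} identifies $\Phi_{\bJ,\bJ'}$ as a trivial fibration of $Dbl\cat_{\mathrm{whi}}$, which proves the lemma.
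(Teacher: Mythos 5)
This lemma is not proved in the paper at all: it is imported verbatim as \cite[Lemma 7.3]{MSV1}, so there is no in-paper argument to measure your proposal against, and your attempt has to stand on its own as a proof of the cited result.

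As such, it has a genuine gap, and you flag it yourself. The construction of the comparison double functor, surjectivity on objects, and fullness on horizontal and vertical morphisms are fine but essentially content-free; the entire substance of the lemma is full faithfulness on squares, i.e.\ that for a fixed boundary in $\bJ\boxtimes^{\mathrm{ps}}\bJ'$ the squares with that boundary biject, under the comparison map, with pairs consisting of a square of $\bJ$ and a square of $\bJ'$ over the projected boundaries. Your argument for this is the assertion that every square can be rewritten, using invertibility of the interchange generators, into a normal form determined by one pure $\bJ$-square and one pure $\bJ'$-square, and that Böhm's relations identify exactly the reorderings and nothing more --- followed by ``Granting this''. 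That confluence-type analysis of the presentation in \cite[\textsection 3]{Bohm} is precisely the lemma; granting it is granting the statement. Until it is carried out (one must check both that every pasting of generators reduces to such a normal form and that the relations neither fail to identify two reduced forms with the same pure data nor identify forms with different pure data), the proposal is an outline rather than a proof. A complete argument would either perform this rewriting analysis carefully or, more efficiently, use the explicit description of the horizontal/vertical morphisms and squares of $\bJ\boxtimes^{\mathrm{ps}}\bJ'$ available in \cite{Bohm} (as exploited in \cite[Lemma 7.3]{MSV1}), from which the bijection on squares over a fixed boundary can be read off directly against the characterization of trivial fibrations recalled in \cref{DoubleMS}.
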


By putting the above results together, we get:

\begin{cor} \label{dblcatcartclosed}
    The $\infty$-category $[DblCat]_\infty$ is cartesian closed. 
\end{cor}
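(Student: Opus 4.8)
The plan is to bootstrap cartesian closedness of $[DblCat]_\infty$ from the two ingredients just assembled: the fact that $Dbl\cat_{\mathrm{whi}}$ is a monoidal model category for the pseudo Gray tensor product $\boxtimes^{\mathrm{ps}}$ (\cref{DblCatClosed}), and the fact that $\boxtimes^{\mathrm{ps}}$ agrees with the cartesian product up to trivial fibration, naturally (\cref{grayvscart}). First I would recall that the underlying $\infty$-category $[DblCat]_\infty$ of a monoidal model category inherits a monoidal structure, whose tensor product is obtained by deriving $\boxtimes^{\mathrm{ps}}$; since every object of $Dbl\cat_{\mathrm{whi}}$ is cofibrant, no cofibrant replacement is needed, so the derived tensor on fibrant objects is just $\bD \boxtimes^{\mathrm{ps}} \bD'$ followed by fibrant replacement. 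Moreover, because $Dbl\cat_{\mathrm{whi}}$ is a \emph{closed} monoidal model category, this monoidal structure on $[DblCat]_\infty$ is closed, i.e.\ each functor $-\boxtimes^{\mathrm{ps}}\bD'$ admits a right adjoint (the derived internal hom $\llbracket \bD',-\rrbracket^h$); this is the statement that passing to underlying $\infty$-categories sends closed monoidal model categories to closed monoidal $\infty$-categories, which I would cite from the relevant literature on underlying $\infty$-categories (e.g.\ the references already invoked around \cref{notn:underlyingDblCat}).

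Next I would upgrade this closed monoidal structure to the \emph{cartesian} one. The key point is that \cref{grayvscart} gives, for all $\bJ,\bJ'$, a natural trivial fibration $\bJ\boxtimes^{\mathrm{ps}}\bJ'\to \bJ\times\bJ'$; trivial fibrations are in particular weak equivalences, so upon localizing to $[DblCat]_\infty$ this natural transformation becomes a natural equivalence between the derived functor of $\boxtimes^{\mathrm{ps}}$ and the derived functor of $\times$. But the derived functor of the cartesian product of the model category is the cartesian product in $[DblCat]_\infty$ (the underlying $\infty$-category functor preserves finite products, since products in a model category are homotopy products when all objects are fibrant — and here fibrant replacement is applied throughout). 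Hence the monoidal product on $[DblCat]_\infty$ coming from $\boxtimes^{\mathrm{ps}}$ is naturally equivalent, as a (symmetric) monoidal structure, to the cartesian monoidal structure; transporting the closedness along this equivalence shows that the cartesian product on $[DblCat]_\infty$ is closed, i.e.\ $[DblCat]_\infty$ is cartesian closed, with internal hom $\llbracket-,-\rrbracket^h$ the derived internal pseudo-hom. I would also remark that one needs the comparison $\bJ\boxtimes^{\mathrm{ps}}\bJ'\to\bJ\times\bJ'$ to be compatible with the associativity and unit constraints (so that it is a monoidal natural transformation), which follows from the naturality in \cref{grayvscart} together with the fact that the unit of both structures is the terminal double category $[0]$.

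The main obstacle is the bookkeeping needed to make ``the underlying $\infty$-category of a closed monoidal model category is closed monoidal, and finite products are preserved'' precise enough to apply here. Concretely, one must know that the localization functor $Dbl\cat_{\mathrm{whi}}\to [DblCat]_\infty$ sends the Quillen bifunctor $\boxtimes^{\mathrm{ps}}$ to a functor that preserves colimits in each variable (so the adjoint functor theorem supplies the internal hom in $[DblCat]_\infty$), and that it carries the natural trivial fibration of \cref{grayvscart} to an equivalence of \emph{bifunctors} — not merely a pointwise equivalence — so that the two monoidal structures are identified coherently. Both facts are standard features of underlying $\infty$-categories of (monoidal) model categories, but I expect the cleanest writeup to cite them rather than reprove them; the remaining content of the corollary is then the observation that \cref{grayvscart} provides exactly the comparison needed to replace $\boxtimes^{\mathrm{ps}}$ by $\times$.
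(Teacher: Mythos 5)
Your argument is essentially the paper's own proof: it deduces a closed monoidal structure on $[DblCat]_\infty$ from the monoidal model structure of \cref{DblCatClosed} and then identifies it with the cartesian one via the natural trivial fibration of \cref{grayvscart}. The extra bookkeeping you flag (coherence of the comparison, preservation of finite products) is exactly what the paper leaves implicit, so your writeup is a correct, slightly more detailed version of the same route.
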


\begin{proof}
    By \cref{DblCatClosed}, the $\infty$-category $[DblCat]_\infty$ has a closed monoidal structure induced by the pseudo Gray tensor product $\boxtimes^\mathrm{ps}$, and by \cref{grayvscart}, this monoidal structure is equivalent to the one induced by the cartesian product. 
\end{proof}

Given double categories $\bJ$ and $\bD$, we denote by $\llbracket\bJ,\bD\rrbracket^{\mathrm{ps}}$ the internal pseudo-hom associated with the pseudo Gray tensor product on $DblCat$.

\begin{rmk}
	Given double categories $\bJ$ and $\bD$, we provide a short description of the double category given by the internal pseudo-hom $\llbracket\bJ,\bD\rrbracket^{\mathrm{ps}}$. An object in $\llbracket\bJ,\bD\rrbracket^{\mathrm{ps}}$ is a double functor $\bJ\to \bD$. A horizontal morphism in $\llbracket\bJ,\bD\rrbracket^{\mathrm{ps}}$ between double functors $F,G\colon \bJ\to \bD$ is a \emph{pseudo-natural horizontal transformation} $\varphi\colon F\Rightarrow G$, consisting of
	\begin{itemize}[leftmargin=*]
		\item for each object $j$ in $\bJ$, a horizontal morphism $\varphi_j\colon Fj\to Gj$ in $\bD$, 
		\item for each vertical morphism $u\colon j\varrow j'$ in $\bJ$, a square $\varphi_u$ in $\bD$ of the form	
		\begin{center}
			\begin{tikzpicture}
				[node distance=1.2cm]
				\node[](1') {$Fj$}; 
				\node[right of=1',xshift=.3cm](2') {$Gj$}; 
				\node[below of=1'](3') {$Fj'$}; 
				\node[below of=2'](4') {$Gj'$}; 
				\draw[->] (1') to node[above,la]{$\varphi_j$} (2');
				\draw[->,pro] (1') to node[left,la]{$Fu\,$} (3');
				\draw[->] (3') to node[below,la]{$\varphi_{j'}$} (4');
				\draw[->,pro] (2') to node[right,la]{$\,Gu$} (4');
				
				\node[la] at ($(1')!0.5!(4')$) {$\varphi_u$};
			\end{tikzpicture}
		\end{center}
		\item for each horizontal morphism $f\colon j\to k$ in $\bJ$, a vertically invertible square $\varphi_f$ in $\bD$ of the form
		 \begin{center}
			\begin{tikzpicture}
				[node distance=1.2cm]
				\node[](1') {$Fj$}; 
				\node[below of=1'](3') {$Fj$}; 
				\node[right of=3',xshift=.3cm](4') {$Gj$}; 
				\node[above of=4'](7') {$Fk$};
				\node[right of=4',xshift=.3cm](6') {$Gk$};
				\node[above of=6'](5') {$Gk$};
				\draw[->] (1') to node[above,la]{$Ff$} (7');
				\draw[->] (7') to node[above,la]{$\varphi_k$} (5');
				\draw[->] (3') to node[below,la]{$\varphi_j$} (4');
				\draw[->] (4') to node[below,la]{$Gf$} (6');
				\draw[d,pro] (5') to (6');
				\draw[d,pro] (1') to (3');
				
				\node[la] at ($(1')!0.5!(6')-(5pt,0)$) {$\varphi_f$};
				\node[la] at ($(1')!0.5!(6')+(5pt,0)$) {\rotatebox{270}{$\cong$}};
			\end{tikzpicture}
		\end{center} 
	\end{itemize}
	such that the components $\varphi_u$ are functorial in $u$, the components $\varphi_f$ are functorial in $f$, and together they are natural with respect to squares in $\bJ$. Similarly, the vertical morphisms in $\llbracket\bJ,\bD\rrbracket^{\mathrm{ps}}$ are the \emph{pseudo-natural vertical transformations} obtained by interchanging the roles of the horizontal and vertical morphisms. Finally, a square $\alpha$ in $\llbracket\bJ,\bD\rrbracket^{\mathrm{ps}}$ as depicted below left is a \emph{modification} consisting of, for each object $j$ in $\bJ$, a square $\alpha_j$ in $\bD$ as depicted below right, 
	\begin{center}
		\begin{tikzpicture}
			[node distance=1.2cm]
			\node[](1') {$F$}; 
			\node[right of=1',xshift=.3cm](2') {$G$}; 
			\node[below of=1'](3') {$F'$}; 
			\node[below of=2'](4') {$G'$}; 
			\draw[->,n] (1') to node[above,la]{$\varphi$} (2');
			\draw[->,pro,n] (1') to node[left,la]{$\mu\,$} (3');
			\draw[->,n] (3') to node[below,la]{$\varphi'$} (4');
			\draw[->,pro,n] (2') to node[right,la]{$\,\nu$} (4');
			
			\node[la] at ($(1')!0.5!(4')$) {$\alpha$};

			\node[right of=2',xshift=1cm](1') {$Fj$}; 
			\node[right of=1',xshift=.3cm](2') {$Gj$}; 
			\node[below of=1'](3') {$F'j$}; 
			\node[below of=2'](4') {$G'j$}; 
			\draw[->] (1') to node[above,la]{$\varphi_j$} (2');
			\draw[->,pro] (1') to node[left,la]{$\mu_j\,$} (3');
			\draw[->] (3') to node[below,la]{$\varphi'_j$} (4');
			\draw[->,pro] (2') to node[right,la]{$\,\nu_j$} (4');
			
			\node[la] at ($(1')!0.5!(4')$) {$\alpha_j$};
		\end{tikzpicture}
	\end{center}
	such that the components $\alpha_j$ are appropriately compatible with the data of the horizontal and vertical transformations.
\end{rmk}

Understanding these internal homs and fibrancy conditions is of utmost importance when studying the $\infty$-category $[DblCat]_\infty$. Indeed, in general, there is no easy way to compute the internal hom between two arbitrary objects in the underlying cartesian closed $\infty$-category of a model category, as even simple mapping spaces are generally described via \emph{Hammock localization}; see \cite[Definition~3.4]{bergner2009underlying}. However, in our case, as the model structure $Dbl\cat_{\mathrm{whi}}$ is closed monoidal with respect to the pseudo Gray tensor product, we get from the argument in \cite[Section 5.6]{hovey1999modelcats}, that, for a cofibrant double category $\bJ$ and a fibrant double category $\bD$, i.e., a weakly horizontally invariant one, the $\infty$-categorical internal hom $\llbracket \bJ,\bD \rrbracket^h$ is equivalent to the internal pseudo-hom $\llbracket \bJ,\bD \rrbracket^{\mathrm{ps}}$. Hence, for a cofibrant double category $\bJ$ and a weakly horizontally invariant double category $\bD$, the internal hom in the $\infty$-category $[DblCat]_\infty$ is given by $\llbracket \bJ,\bD \rrbracket^{\mathrm{ps}}$.

The next technical lemma gives us access to the $\infty$-categorical internal hom in a broader range of situations.

\begin{lem} \label{lem:correcthom}
    Let $\bD$ be a double category. Then the functor $\llbracket \widetilde{\bH} (-),\bD\rrbracket^{\mathrm{ps}}\colon 2\cat \to DblCat$ takes biequivalences to weak equivalences in the model structure $DblCat_\mathrm{whi}$.
\end{lem}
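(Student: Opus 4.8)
The plan is to exploit two facts: every biequivalence of $2$-categories admits a pseudo-inverse, and the construction $\widetilde{\bH}$ is set up precisely so that pseudo-natural equivalences of $2$-functors become equivalences inside the pseudo-hom double categories. So I would fix a biequivalence $F\colon\cJ\to\cJ'$ and a pseudo-inverse, i.e.\ a $2$-functor $G\colon\cJ'\to\cJ$ together with pseudo-natural equivalences $\eta\colon\id_\cJ\simeq GF$ and $\epsilon\colon FG\simeq\id_{\cJ'}$. Since the vertical morphisms of $\widetilde{\bH}\cC$ record adjoint equivalence data in $\cC$ and its squares the corresponding $2$-cells, a coherent choice of such data promotes $\eta$ and $\epsilon$ to equivalences $\widetilde{\bH}\eta\colon\id_{\widetilde{\bH}\cJ}\simeq\widetilde{\bH}G\circ\widetilde{\bH}F$ in $\llbracket\widetilde{\bH}\cJ,\widetilde{\bH}\cJ\rrbracket^{\mathrm{ps}}$ and $\widetilde{\bH}\epsilon\colon\widetilde{\bH}F\circ\widetilde{\bH}G\simeq\id_{\widetilde{\bH}\cJ'}$ in $\llbracket\widetilde{\bH}\cJ',\widetilde{\bH}\cJ'\rrbracket^{\mathrm{ps}}$, using functoriality of $\widetilde{\bH}$ to identify $\widetilde{\bH}(GF)=\widetilde{\bH}G\circ\widetilde{\bH}F$ and similarly for $FG$. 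This compatibility of $\widetilde{\bH}$ with pseudo-natural equivalences is the defining feature of $\widetilde{\bH}$; see \cite[Theorem~6.5]{MSV1} and \cite{MoserNerve}, and cf.\ the remark following \cref{prop:thm65MSV}.

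Next I would apply the internal pseudo-hom $\llbracket-,\bD\rrbracket^{\mathrm{ps}}\colon DblCat^{\op}\to DblCat$. Being the contravariant internal hom of the closed monoidal category $(DblCat,\boxtimes^{\mathrm{ps}})$ of \cref{DblCatClosed}, it carries an equivalence between parallel double functors $\bX\to\bY$ to an equivalence between the resulting double functors $\llbracket\bY,\bD\rrbracket^{\mathrm{ps}}\to\llbracket\bX,\bD\rrbracket^{\mathrm{ps}}$ — a formal statement that uses no fibrancy assumption on $\bD$. Applying this to $\widetilde{\bH}\eta$ and $\widetilde{\bH}\epsilon$ shows that $\llbracket\widetilde{\bH}F,\bD\rrbracket^{\mathrm{ps}}$ and $\llbracket\widetilde{\bH}G,\bD\rrbracket^{\mathrm{ps}}$ are mutually inverse up to such equivalences, i.e.\ $\llbracket\widetilde{\bH}F,\bD\rrbracket^{\mathrm{ps}}$ is a homotopy equivalence of double categories. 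It then remains to check that a homotopy equivalence of double categories is a weak equivalence in $DblCat_{\mathrm{whi}}$: the $2$-functors $\bfH$ and $\bfH\llbracket\bV[1],-\rrbracket$ each carry an equivalence between parallel double functors to a pseudo-natural equivalence between the corresponding $2$-functors, so a homotopy equivalence $f$ with pseudo-inverse $g$ is sent by them to $2$-functors $\bfH f$, $\bfH\llbracket\bV[1],f\rrbracket$ admitting pseudo-inverses $\bfH g$, $\bfH\llbracket\bV[1],g\rrbracket$; these are biequivalences of $2$-categories, so $f$ is a double biequivalence and hence a weak equivalence in $DblCat_{\mathrm{whi}}$ (\cref{DoubleMS}). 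Taking $f=\llbracket\widetilde{\bH}F,\bD\rrbracket^{\mathrm{ps}}$ concludes the argument.

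The step I expect to require real care is the first paragraph: one must verify that a coherent choice of adjoint equivalence data genuinely assembles the components and naturality $2$-cells of $\eta$ and $\epsilon$ into the data of an equivalence inside the internal pseudo-hom $\llbracket\widetilde{\bH}\cJ,\widetilde{\bH}\cJ\rrbracket^{\mathrm{ps}}$, respectively $\llbracket\widetilde{\bH}\cJ',\widetilde{\bH}\cJ'\rrbracket^{\mathrm{ps}}$ — this is exactly the place where replacing $\bH$ by $\widetilde{\bH}$ is essential. The remaining ingredients — the compatibility of $\llbracket-,\bD\rrbracket^{\mathrm{ps}}$, $\bfH$, and $\bfH\llbracket\bV[1],-\rrbracket$ with equivalences between parallel double functors, and the fact that double biequivalences are weak equivalences in $DblCat_{\mathrm{whi}}$ — are formal and, importantly, do not see whether $\bD$ is fibrant, which is the reason the statement holds for an arbitrary double category $\bD$.
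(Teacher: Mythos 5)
Your overall strategy is the same as the paper's (pseudo-inverse, apply $\widetilde{\bH}$, apply $\llbracket-,\bD\rrbracket^{\mathrm{ps}}$, deduce a double biequivalence via $\bfH$ and $\bfH\llbracket\bV[1],-\rrbracket$), but there is a genuine gap at the very first step: you take the pseudo-inverse $G$ of the biequivalence $F$ to be a strict $2$-functor. In general a biequivalence of $2$-categories admits only a \emph{pseudo-functor} pseudo-inverse (this is what the paper uses); a strict pseudo-inverse exists only under cofibrancy-type hypotheses that are not available for arbitrary $\cJ$, $\cJ'$. This is not cosmetic, because the rest of your argument leans on strictness: if $G$ is merely a pseudo-functor, then $\widetilde{\bH}G$ is only a double pseudo-functor, the composites $\widetilde{\bH}G\circ\widetilde{\bH}F$ and $\widetilde{\bH}F\circ\widetilde{\bH}G$ are not objects of $\llbracket\widetilde{\bH}\cJ,\widetilde{\bH}\cJ\rrbracket^{\mathrm{ps}}$, resp.\ $\llbracket\widetilde{\bH}\cJ',\widetilde{\bH}\cJ'\rrbracket^{\mathrm{ps}}$, so your ``equivalence inside the internal pseudo-hom'' does not typecheck, and $(\widetilde{\bH}G)^*$ is not a double functor, so the formal closed-monoidal functoriality of $\llbracket-,\bD\rrbracket^{\mathrm{ps}}$ you invoke does not apply as stated. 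The paper's proof is built exactly around this point: it keeps $G$ pseudo, observes that the data $((\widetilde{\bH}G)^*,(\widetilde{\bH}\eta)^*,(\widetilde{\bH}\varepsilon)^*)$ exhibits $(\widetilde{\bH}F)^*$ as a \emph{horizontal biequivalence} in the sense of \cite[Definition~8.8]{MSV1} (a notion designed to accept pseudo double functor inverse data), and then cites \cite[Proposition~8.11]{MSV1} to pass from that to a double biequivalence --- which is essentially the content of your third paragraph, except that the nontrivial verifications you treat as formal (e.g.\ that the relevant squares are weakly horizontally invertible, cf.\ the use of \cite[Lemma~A.2.3]{MoserNerve} in \cref{lem:homiswhi}) are done there once and for all in the pseudo setting.

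A second, smaller issue is your last step: you cite \cref{DoubleMS} for ``double biequivalence $\Rightarrow$ weak equivalence in $DblCat_{\mathrm{whi}}$'', but that theorem only identifies weak equivalences \emph{between fibrant objects} with double biequivalences, and for arbitrary $\bD$ the double categories $\llbracket\widetilde{\bH}\cJ,\bD\rrbracket^{\mathrm{ps}}$ need not be weakly horizontally invariant (\cref{lem:homiswhi} gives fibrancy only when the codomain has the form $\widetilde{\bH}\cC$). What is needed, and what the paper cites, is \cite[Proposition~3.25]{MSV1}: every double biequivalence is a weak equivalence in $DblCat_{\mathrm{whi}}$, with no fibrancy hypothesis. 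This is precisely the point that makes the lemma true for an arbitrary $\bD$, which you rightly anticipate in your closing remark but do not actually justify with the reference you give.
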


\begin{proof}
    Let $F\colon \cJ'\to \cJ$ be a biequivalence of $2$-categories. Then $F$ admits a pseudo-inverse, i.e., there is a pseudo-functor $G\colon \cJ\to \cJ'$ together with pseudo-natural equivalences $\eta\colon \id_\cJ\simeq F\circ G$ and $\varepsilon\colon G\circ F\simeq \id_{\cJ'}$. In particular, this induces a double biequivalence $\widetilde{\bH}F\colon \widetilde{\bH}\cJ'\to \widetilde{\bH}\cJ$ that admits a horizontal pseudo-inverse given by the data of the induced double pseudo-functor $\widetilde{\bH}G\colon \widetilde{\bH}\cJ\to \widetilde{\bH}\cJ'$ and the induced horizontal pseudo-natural equivalences $\widetilde{\bH}\eta\colon \id_{\widetilde{\bH}\cJ}\simeq \widetilde{\bH}F\circ \widetilde{\bH}G$ and $\widetilde{\bH}\varepsilon\colon  \widetilde{\bH}G\circ \widetilde{\bH}F\simeq \id_{\widetilde{\bH}\cJ'}$. This in turn implies that, for every double category $\bD$, the induced double functor $ (\widetilde{\bH}F)^*\colon \llbracket \widetilde{\bH}\cJ',\bD \rrbracket^\mathrm{ps}\to \llbracket \widetilde{\bH}\cJ,\bD \rrbracket^\mathrm{ps}$ has a horizontal pseudo-inverse given by the data $((\widetilde{\bH}G)^*,(\widetilde{\bH}\eta)^*,(\widetilde{\bH}\varepsilon)^*)$. Hence it is a horizontal biequivalence in the sense of \cite[Definition 8.8]{MSV1}. Then it is a double biequivalence by \cite[Proposition 8.11]{MSV1}, and so a weak equivalence in the model structure $DblCat_\mathrm{whi}$ by \cite[Proposition 3.25]{MSV1}.
\end{proof}

\begin{rmk}
    Let $\cJ$ be a $2$-category, and $\widehat{\cJ} \to \cJ$ be a cofibrant replacement. Notice that both cofibrant $2$-categories and double categories are those for which the underlying categories are free (for the case of $2$-categories see \cite[Theorem 4.8]{Lack1} and the case of double categories see \cite[Corollary~3.13]{MSV1}). By construction, this property is preserved by $\widetilde{\bH}$, meaning $\widetilde{\bH}\widehat{\cJ}$ is cofibrant. Moreover, for a weakly horizontally invariant double category $\bD$, we have the following chain of equivalences in $[DblCat]_\infty$
\[
\llbracket \widetilde{\bH}\cJ,\bD \rrbracket^h \simeq 
\llbracket \widetilde{\bH}\widehat{\cJ},\bD \rrbracket^h \simeq 
\llbracket \widetilde{\bH}\widehat{\cJ},\bD \rrbracket^\mathrm{ps} \simeq
\llbracket \widetilde{\bH}\cJ,\bD \rrbracket^\mathrm{ps},
\]
where the last equivalence follows from \cref{lem:correcthom}. Hence, for $\cJ$ a $2$-category and $\bD$ a weakly horizontally invariant double category, the $\infty$-categorical internal hom $\llbracket \widetilde{\bH}\cJ,\bD \rrbracket^h$ is already equivalent to the internal pseudo-hom $\llbracket \widetilde{\bH}\cJ,\bD \rrbracket^\mathrm{ps}$. 
\end{rmk}

Hence, throughout we always consider internal pseudo-homs with domain of the form $\widetilde{\bH}\cJ$, for $\cJ$ a $2$-category, and codomain a weakly horizontally invariant double category.

\begin{rmk}    
Let $K\colon \cJ\to \cC$ be a $2$-diagram. The double category $\bC\mathrm{one}^{h}_{\widetilde{\bH} \cJ}(\widetilde{\bH} K)$ in $[DblCat]_\infty$ from \cref{coneinDblCat} is equivalent to the \emph{double category $\bC\mathrm{one}^{\mathrm{ps}}_{\widetilde{\bH} \cJ}(\widetilde{\bH} K)$ of pseudo-cones} over $\widetilde{\bH} K$, given by the strict pullback in $DblCat$ (which is a homotopy pullback in $DblCat_\mathrm{whi}$)
\[\begin{tikzcd}
\bC\mathrm{one}^{\mathrm{ps}}_{\widetilde{\bH} \cJ}(\widetilde{\bH} K)\arrow[r]\arrow[d,two heads]\arrow[rd, phantom, "\lrcorner",very near start]&[30pt]{\llbracket \bH [1],\llbracket\widetilde{\bH} \cJ,\widetilde{\bH} \cC\rrbracket^{\mathrm{ps}}\rrbracket^{\mathrm{ps}}}\arrow[d,two heads,"{(\mathrm{ev}_0,\mathrm{ev}_1)}"]\\
\widetilde{\bH} \cC\times \bH[0]\arrow[r,"\Delta_{\widetilde{\bH} \cC}^{\widetilde{\bH} \cJ}\times\widetilde{\bH} K" swap]&\llbracket\widetilde{\bH} \cJ,\widetilde{\bH} \cC\rrbracket^{\mathrm{ps}}\times\llbracket\widetilde{\bH} \cJ,\widetilde{\bH} \cC\rrbracket^{\mathrm{ps}}
    \end{tikzcd}\]

In particular, if $\cJ$ is a $2$-category with no equivalences (which is usually the case of the diagrams we like to consider), the double category $\bC\mathrm{one}^{\mathrm{ps}}_{\widetilde{\bH} \cJ}(\widetilde{\bH} K)$ can be described as follows: 
\begin{itemize}[leftmargin=*]
    \item its objects are pairs $(c,\kappa)$ of an object $c\in \cC$ and a pseudo-cone $\kappa\colon \Delta c\Rightarrow K$, 
    \item its vertical morphisms $(c,\kappa)\to (c',\kappa')$ are pairs $(u,\mu)$ of an equivalence $u\colon c\xrightarrow{\simeq} c'$ in $\cC$ and a modification $\mu \colon \kappa\to \kappa' \circ \Delta u$, 
    \item its horizontal morphisms $(c,\kappa)\to (d,\lambda)$ are pairs $(f,\varphi)$ of a morphism $f\colon c\to d$ in $\cC$ and an invertible modification $\varphi\colon \kappa\xrightarrow{\cong} \lambda \circ \Delta f$, 
    \item its squares
    \begin{center}
    \begin{tikzpicture}
        [node distance=1.2cm]
\node[](1') {$(c,\kappa)$}; 
    \node[right of=1',xshift=1cm](2') {$(d,\lambda)$}; 
    \node[below of=1'](3') {$(c',\kappa')$}; 
    \node[below of=2'](4') {$(d',\lambda')$}; 
    \draw[->] (1') to node[above,la]{$(f,\varphi)$} (2');
    \draw[->,pro] (1') to node[left,la]{$(u,\mu)\,$} (3');
    \draw[->] (3') to node[below,la]{$(f',\varphi')$} (4');
    \draw[->,pro] (2') to node[right,la]{$\,(v,\nu)$} (4');
    
    \node at ($(1')!0.5!(4')$) {\rotatebox{270}{$\Rightarrow$}};
    \end{tikzpicture}
\end{center}
    are $2$-morphisms $\alpha\colon vf\Rightarrow f'u$ in $\cC$ making the following diagram of modifications commute
    \[\begin{tikzcd}
\kappa\arrow[rr,"{\mu}"]\arrow[d,"{\varphi}" swap]& &\kappa'\circ \Delta u\arrow[d,"{\varphi'\circ \Delta u}"]\\
\lambda \circ \Delta f\arrow[r,"{\nu\circ \Delta f}" swap]&\lambda'\circ \Delta v\circ \Delta f \arrow[r,"{\lambda'\circ \Delta \alpha}" swap]& \lambda' \circ \Delta f'\circ \Delta u
    \end{tikzcd}\]
\end{itemize}
\end{rmk}

\begin{rmk}
    Let $\bD$ be a weakly horizontally invariant double category and $d$ in $\bD$ be an object. The double category $\bD\sslash^{h} d$ in $[DblCat]_\infty$ from \cref{sliceinDblCat} is equivalent to the strict pullback in $DblCat$ (which is a homotopy pullback in $DblCat_\mathrm{whi}$)
    \[\begin{tikzcd}
\bD\sslash^{\mathrm{ps}} d\arrow[r]\arrow[d,two heads]\arrow[rd, phantom, "\lrcorner",very near start]&{\llbracket \bH[1],\bD\rrbracket^{\mathrm{ps}}}\arrow[d,two heads,"{(\mathrm{ev}_0,\mathrm{ev}_1)}"]\\
\bD\times \bH[0]\arrow[r,"\bD\times d" swap]&\bD\times\bD
    \end{tikzcd}
    \]
    
    In particular, the double category $\bD\sslash^{\mathrm{ps}} d$ can be described as follows: 
\begin{itemize}[leftmargin=*]
    \item its objects are pairs $(c,\kappa)$ of an object $c\in \bD$ and a horizontal morphism $\kappa\colon c\to d$, 
    \item its vertical morphisms $(c,\kappa)\to (c',\kappa')$ are pairs $(u,\mu)$ of a vertical morphism $u\colon c\to c'$ in $\bD$ and a square $\mu$ in $\bD$ of the form 
    \begin{center}
    \begin{tikzpicture}
        [node distance=1.2cm]
\node[](1') {$c$}; 
    \node[right of=1'](2') {$d$}; 
    \node[below of=1'](3') {$c'$}; 
    \node[right of=3'](4') {$d$}; 
    \draw[->] (1') to node[above,la]{$\kappa$} (2');
    \draw[->,pro] (1') to node[left,la]{$u\,$} (3');
    \draw[->] (3') to node[below,la]{$\kappa'$} (4');
    \draw[d,pro] (2') to (4');
    
    \node[la] at ($(1')!0.5!(4')$) {$\mu$};
    \end{tikzpicture}
\end{center}
    \item its horizontal morphisms $(c,\kappa)\to (e,\lambda)$ are pairs $(f,\varphi)$ of a horizontal morphism $f\colon c\to e$ in $\bD$ and a vertically invertible square $\varphi$ in $\bD$ of the form
     \begin{center}
    	\begin{tikzpicture}
    		[node distance=1.2cm]
    		\node[](1') {$c$}; 
    		\node[below of=1'](3') {$c$}; 
    		\node[right of=3'](4') {$e$}; 
    		\node[right of=4'](6') {$d$};
    		\node[above of=6'](5') {$d$};
    		\draw[->] (1') to node[above,la]{$\kappa$} (5');
    		\draw[->] (3') to node[below,la]{$f$} (4');
    		\draw[->] (4') to node[below,la]{$\lambda$} (6');
    		\draw[d,pro] (5') to (6');
    		\draw[d,pro] (1') to (3');
    		
    		\node[la] at ($(1')!0.5!(6')-(5pt,0)$) {$\varphi$};
    		\node[la] at ($(1')!0.5!(6')+(5pt,0)$) {\rotatebox{270}{$\cong$}};
   \end{tikzpicture}
   \end{center} 
    \item its squares as below right
    \begin{center}
    \begin{tikzpicture}
        [node distance=1.2cm]
\node[](1') {$(c,\kappa)$}; 
    \node[right of=1',xshift=1cm](2') {$(d,\lambda)$}; 
    \node[below of=1'](3') {$(c',\kappa')$}; 
    \node[below of=2'](4') {$(d',\lambda')$}; 
    \draw[->] (1') to node[above,la]{$(f,\varphi)$} (2');
    \draw[->,pro] (1') to node[left,la]{$(u,\mu)\,$} (3');
    \draw[->] (3') to node[below,la]{$(f',\varphi')$} (4');
    \draw[->,pro] (2') to node[right,la]{$\,(v,\nu)$} (4');
    
    \node at ($(1')!0.5!(4')$) {\rotatebox{270}{$\Rightarrow$}};

    \node[right of=2',xshift=1.5cm](1') {$c$}; 
    \node[right of=1'](2') {$e$}; 
    \node[below of=1'](3') {$c'$}; 
    \node[right of=3'](4') {$e'$}; 
    \draw[->] (1') to node[above,la]{$f$} (2');
    \draw[->,pro] (1') to node[left,la]{$u\,$} (3');
    \draw[->] (3') to node[below,la]{$f'$} (4');
    \draw[->,pro] (2') to node[right,la]{$\,v$} (4');
    
    \node[la] at ($(1')!0.5!(4')$) {$\alpha$};
    \end{tikzpicture}
\end{center}
    are squares $\alpha$ in $\bD$ as above right satisfying the following pasting equality.
    \begin{center}
    \begin{tikzpicture}
        [node distance=1.2cm]
         \node[](0) {$c$};
    \node[below of=0](1') {$c'$}; 
    \node[below of=1'](3') {$c'$}; 
    \node[right of=3'](4') {$e'$}; 
    \node[right of=4'](6') {$d$};
    \node[above of=6'](5') {$d$};
    \node[above of=5'](0') {$d$};
    \draw[->] (1') to node[above,la]{$\kappa'$} (5');
    \draw[->] (0) to node[above,la]{$\kappa$} (0');
    \draw[->,pro] (0) to node[left,la]{$u\,$} (1');
    \draw[->] (3') to node[below,la]{$f'$} (4');
    \draw[->] (4') to node[below,la]{$\lambda'$} (6');
    \draw[d,pro] (5') to (6');
    \draw[d,pro] (0') to (5');
    \draw[d,pro] (1') to (3');
    
  \node[la] at ($(0)!0.5!(5')$) {$\mu$};
    \node[la] at ($(1')!0.5!(6')-(5pt,0)$) {$\varphi'$};
    \node[la] at ($(1')!0.5!(6')+(5pt,0)$) {\rotatebox{270}{$\cong$}};

        \node[right of=0',xshift=1cm](0) {$c$};
    \node at ($(6')!0.5!(0)$) {$=$};
    \node[below of=0](1') {$c$}; 
    \node[right of=1'](2') {$e$}; 
    \node[below of=1'](3') {$c'$}; 
    \node[right of=3'](4') {$e'$}; 
    \node[right of=2'](5') {$d$};
    \node[below of=5'](6') {$d$};
    \node[above of=5'](0') {$d$};
    \draw[->] (1') to node[above,la]{$f$} (2');
    \draw[->] (2') to node[above,la]{$\lambda$} (5');
    \draw[->] (0) to node[above,la]{$\kappa$} (0');
    \draw[->,pro] (1') to node[left,la]{$u\,$} (3');
    \draw[->] (3') to node[below,la]{$f'$} (4');
    \draw[->] (4') to node[below,la]{$\lambda'$} (6');
    \draw[->,pro] (2') to node[right,la]{$\,v$} (4');
    \draw[d,pro] (5') to (6');
    \draw[d,pro] (0') to (5');
    \draw[d,pro] (0) to (1');
    
    \node[la] at ($(1')!0.5!(4')$) {$\alpha$};
    \node[la] at ($(2')!0.5!(6')$) {$\nu$};
    \node[la] at ($(0)!0.5!(5')-(5pt,0)$) {$\varphi$};
    \node[la] at ($(0)!0.5!(5')+(5pt,0)$) {\rotatebox{270}{$\cong$}};
    \end{tikzpicture}
\end{center}
\end{itemize}
\end{rmk}

\subsection{Characterization of homotopy \texorpdfstring{$2$}{2}-limits} \label{Sec:charhomotopy2limits}

The goal of this section is to prove the characterization of homotopy $2$-limits stated in \cref{charhtpy2limit}. For this, we first reformulate the statement in the model categorical language.

\begin{thm}
\label{Homotopy2LimitsUsingCommas}
Given a $2$-diagram $K\colon\cJ\to\cC$, an object $\ell$
of $\cC$ is a homotopy $2$-limit object of $K$ if and only if there is an object $(\ell,\lambda)$ in $\bC\mathrm{one}^{\mathrm{ps}}_{\widetilde{\bH} \cJ}(\widetilde{\bH} K)$ such that the canonical projection defines a double biequivalence
\[\bC\mathrm{one}^{\mathrm{ps}}_{\widetilde{\bH} \cJ}(\widetilde{\bH} K)\sslash^{\mathrm{ps}}(\ell,\lambda)\stackrel\simeq\longrightarrow\bC\mathrm{one}^{\mathrm{ps}}_{\widetilde{\bH} \cJ}(\widetilde{\bH} K).\]
\end{thm}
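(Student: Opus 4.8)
The plan is to unwind the definition of homotopy $2$-limit into a form that matches the double-categorical slice-projection criterion, working entirely in the strict model category $Dbl\cat_{\mathrm{whi}}$ rather than in $[DblCat]_\infty$. First I would recall, from the two unpacking remarks preceding the statement, the explicit descriptions of the double category $\bC\mathrm{one}^{\mathrm{ps}}_{\widetilde{\bH}\cJ}(\widetilde{\bH} K)$ of pseudo-cones and of a slice $\bD\sslash^{\mathrm{ps}} d$; in particular, an object of $\bC\mathrm{one}^{\mathrm{ps}}_{\widetilde{\bH}\cJ}(\widetilde{\bH} K)$ is a pair $(c,\kappa)$ with $\kappa\colon\Delta c\Rightarrow K$ a pseudo-cone. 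Fixing such an object $(\ell,\lambda)$, I would identify the canonical projection $P\colon\bC\mathrm{one}^{\mathrm{ps}}_{\widetilde{\bH}\cJ}(\widetilde{\bH} K)\sslash^{\mathrm{ps}}(\ell,\lambda)\to\bC\mathrm{one}^{\mathrm{ps}}_{\widetilde{\bH}\cJ}(\widetilde{\bH} K)$ as forgetting the data of the horizontal morphism to $(\ell,\lambda)$, and then use the characterization of double biequivalences via $\bfH$ and $\bfH\llbracket\bV[1],-\rrbracket$ from \cite[Definition 2.9]{MSV1} together with the reformulation of weak horizontal invariance from \cite[Proposition 5.6]{GMSV}.

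The key technical step is to compute the horizontal $2$-category $\bfH$ of the slice $\bC\mathrm{one}^{\mathrm{ps}}_{\widetilde{\bH}\cJ}(\widetilde{\bH} K)\sslash^{\mathrm{ps}}(\ell,\lambda)$ and to recognize the $2$-functor $\bfH P$ as (a model of) the comma-type $2$-functor whose fibre over an object $(c,\kappa)$ is the category $[\cJ,\cC]^{\mathrm{ps}}(\Delta c,K)$ sliced over $\lambda$, or rather the hom-category $\cC(c,\ell)$ via the universal cone. Concretely: a horizontal morphism in the slice out of $(c,\kappa)$ consists of a map $f\colon c\to\ell$ in $\cC$ together with an invertible modification $\varphi\colon\kappa\xrightarrow{\cong}\lambda\circ\Delta f$, and the content of ``$P$ is an equivalence'' is precisely that for each $(c,\kappa)$ the induced functor between the hom-categories $\bfH(\bC\mathrm{one}^{\mathrm{ps}}\sslash(\ell,\lambda))((c,\kappa),-)\to \bfH(\bC\mathrm{one}^{\mathrm{ps}})((c,\kappa),-)$ is an equivalence, which amounts to the assertion that $\lambda^*\colon\cC(c,\ell)\to[\cJ,\cC]^{\mathrm{ps}}(\Delta c,K)$ is an equivalence of categories — i.e. exactly the defining condition of a homotopy $2$-limit. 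The reverse direction runs the same identification backwards. I would also need to check the second half of the double-biequivalence condition, namely that $\bfH\llbracket\bV[1],P\rrbracket$ is a biequivalence; this should follow formally once the first half is established, because $\llbracket\bV[1],-\rrbracket$ applied to a slice projection is again a slice projection of the same shape (the square-level data of a cone introduces no new equivalences to track), so the same hom-category computation applies levelwise. For this I would invoke \cite[Proposition 8.11]{MSV1} to reduce a horizontal biequivalence to a double biequivalence, as in the proof of \cref{lem:correcthom}.

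The main obstacle I anticipate is the bookkeeping in identifying $\bfH P$ with the correct comma $2$-functor: one must match the modifications $\varphi\colon\kappa\cong\lambda\circ\Delta f$ appearing in horizontal morphisms of the slice with the morphisms in $[\cJ,\cC]^{\mathrm{ps}}(\Delta c,K)$, and verify that the $2$-cells (squares in the slice) correspond correctly under the pasting equality spelled out in the slice remark. A secondary subtlety is that $\widetilde{\bH}\cJ$ is used rather than $\bH\cJ$, so vertical morphisms of the cone double category are indexed by equivalences (and adjoint-equivalence data) in $\cC$; one should check that this does not affect the essential surjectivity/full-faithfulness computation, since vertical morphisms are exactly what get discarded when passing to $\bfH$, and the companion-pair structure on $\widetilde{\bH}\cC$ guarantees compatibility. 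Once these identifications are in place, the equivalence of the two conditions is immediate, and then \cref{charhtpy2limit} follows from \cref{Homotopy2LimitsUsingCommas} together with the remark identifying $\bC\mathrm{one}^{h}$ with $\bC\mathrm{one}^{\mathrm{ps}}$ and $\bD\sslash^h d$ with $\bD\sslash^{\mathrm{ps}} d$ in $[DblCat]_\infty$, plus the fact that weak equivalences between fibrant objects of $Dbl\cat_{\mathrm{whi}}$ are exactly the double biequivalences (\cref{DoubleMS}).
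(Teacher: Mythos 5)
Your proposal takes a genuinely different route from the paper, and as written it has a real gap. The paper never unwinds the double biequivalence condition for the projection directly: it reduces the $\widetilde{\bH}$-statement to the already established $\bH$-statement \cref{cMResult} (that is, \cite[Corollary 7.22]{cM2}) by producing a zig-zag of double biequivalences between the two slice projections (\cref{prop:middlevsConeHtild,prop:ConeHKvsmiddle,prop:middlevsConeHtildslice,prop:ConeHKvsmiddleslice}, whose non-formal half rests on the computations of $\bfH$ and $\bfH\llbracket\bV[1],-\rrbracket$ of the cone construction in \cref{lem:Hofcone,lem:HVofCone}) and then concludes by $2$-out-of-$3$. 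Your plan instead proposes to prove directly that the projection being a double biequivalence is equivalent to $\lambda^*\colon\cC(c,\ell)\to[\cJ,\cC]^{\mathrm{ps}}(\Delta c,K)$ being an equivalence for every $c$. But that equivalence \emph{is} the content of the clingman--Moser theorem the paper quotes rather than reproves; your sketch asserts it (``the content of `$P$ is an equivalence' is precisely\dots'') without the representability analysis that makes it true. Note in particular that a biequivalence also requires biessential surjectivity, and that the hom-categories of $\bfH\bC\mathrm{one}^{\mathrm{ps}}_{\widetilde{\bH}\cJ}(\widetilde{\bH}K)$ into $(\ell,\lambda)$ are comma-type categories of pairs $(f,\varphi)$, not the categories $[\cJ,\cC]^{\mathrm{ps}}(\Delta c,K)$ themselves, so passing between the two conditions is genuine work, not bookkeeping.

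The more serious flaw is your claim that the second half of the double biequivalence condition, namely that $\bfH\llbracket\bV[1],P\rrbracket$ is a biequivalence, ``follows formally'' from the first. It does not. The functor $\bfH\llbracket\bV[1],-\rrbracket$ sees exactly the data invisible to $\bfH$: the vertical morphisms of $\bC\mathrm{one}^{\mathrm{ps}}_{\widetilde{\bH}\cJ}(\widetilde{\bH}K)$ are pairs $(u,\mu)$ of an equivalence $u$ in $\cC$ and a \emph{not necessarily invertible} modification $\mu$, and its squares impose their own compatibilities. A double functor between weakly horizontally invariant double categories can induce an isomorphism on $\bfH$ without being a double biequivalence (for instance, the inclusion of the discrete double category on two objects into the double category obtained by adding one nontrivial vertical morphism and only identity squares: the new vertical morphism is not horizontally equivalent to anything in the image, so $\bfH\llbracket\bV[1],-\rrbracket$ is not biessentially surjective). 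Indeed, the vertical data is precisely what makes the double-categorical slice criterion succeed where $2$-terminality in the $2$-category of cones fails \cite{cM1}, so it cannot be dismissed as introducing ``no new equivalences to track''; the reduction via \cite[Proposition 8.11]{MSV1} that you invoke applies to maps admitting horizontal pseudo-inverses, which is not the situation of the slice projection. To make your route work you would have to carry out both checks in full, which amounts to reproving a $\widetilde{\bH}$-strengthening of \cite[Corollary 7.22]{cM2}; the paper's comparison zig-zag exists exactly to avoid this.
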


There is an analogous construction $\bC\mathrm{one}^{\mathrm{ps}}_{\bH \cJ}(\bH K)$ obtained by replacing all instances of $\widetilde\bH $ with $\bH $, and together with the construction $\bD\sslash^{\mathrm{ps}}d$, they are fruitful to detect homotopy $2$-limits:

\begin{thm}[{\cite[Corollary 7.22]{cM2}}]
\label{cMResult}
Given a $2$-diagram $K\colon\cJ\to\cC$, an object $\ell$ of $\cC$ is a homotopy $2$-limit object of $K$ if and only if there is an object $(\ell,\lambda)$ in $\bC\mathrm{one}^{\mathrm{ps}}_{{\bH}\cJ}({\bH}K)$ such that the canonical projection defines a double biequivalence
\[\bC\mathrm{one}^{\mathrm{ps}}_{{\bH}\cJ}({\bH}K)\sslash^{\mathrm{ps}}(\ell,\lambda)\stackrel\simeq\longrightarrow\bC\mathrm{one}^{\mathrm{ps}}_{{\bH}\cJ}({\bH}K).\]
\end{thm}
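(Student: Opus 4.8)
The plan is to reduce both halves of the biconditional to the single statement that the cone $(\ell,\lambda)$ is a \emph{$2$-terminal object} of the double category $\bC\mathrm{one}^{\mathrm{ps}}_{\bH\cJ}(\bH K)$, meaning that the horizontal hom-categories into it are contractible, and to identify this with the universal property defining a homotopy $2$-limit. Throughout I work in $Dbl\cat$ with double biequivalences as the weak equivalences, using the definition of a double biequivalence as a double functor inducing biequivalences both on $\bfH(-)$ and on $\bfH\llbracket\bV[1],-\rrbracket$.

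First I would unwind the homotopy $2$-limit condition. Write $\bA\coloneqq\bfH\,\bC\mathrm{one}^{\mathrm{ps}}_{\bH\cJ}(\bH K)$ for the horizontal $2$-category of the cone double category: its objects are pseudo-cones $(c,\kappa)$, its $1$-morphisms $(c,\kappa)\to(c',\kappa')$ are pairs $(f,\varphi)$ of a morphism $f\colon c\to c'$ in $\cC$ and an invertible modification $\varphi\colon\kappa\cong\kappa'\circ\Delta f$, and its $2$-cells are the evident modifications. For fixed $(\ell,\lambda)$, the hom-category $\bA((c,\kappa),(\ell,\lambda))$ is by construction the iso-comma category of the functor $\lambda^*\colon\cC(c,\ell)\to[\cJ,\cC]^{\mathrm{ps}}(\Delta c,K)$, $f\mapsto\lambda\circ\Delta f$, over the object $\kappa$. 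Invoking the standard fact that a functor is an equivalence exactly when all of its iso-comma categories are contractible, the defining requirement of a homotopy $2$-limit --- that $\lambda^*$ be an equivalence for every $c$ --- is equivalent to contractibility of $\bA((c,\kappa),(\ell,\lambda))$ for all $(c,\kappa)$, that is, to $(\ell,\lambda)$ being $2$-terminal in $\bA$.

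Next I would treat the double biequivalence condition on the slice projection $P\colon\bC\mathrm{one}^{\mathrm{ps}}_{\bH\cJ}(\bH K)\sslash^{\mathrm{ps}}(\ell,\lambda)\to\bC\mathrm{one}^{\mathrm{ps}}_{\bH\cJ}(\bH K)$ one part at a time. The explicit description of the slice identifies $\bfH$ of its source with the pseudo-slice $2$-category $\bA/\!\!/(\ell,\lambda)$ and $\bfH P$ with its projection to $\bA$, and I would prove the general fact that such a projection is a biequivalence iff the sliced object is $2$-terminal. The computation is short: for objects $(a,p),(a',p')$ over $d=(\ell,\lambda)$, the hom-functor of $\bfH P$ sends $(f,\varphi)\mapsto f$; given any $f\colon a\to a'$ the objects $p$ and $p'\circ f$ both lie in the contractible category $\bA(a,d)$, so a unique invertible comparison $\varphi\colon p\cong p'\circ f$ exists (essential surjectivity), and a $2$-cell $\alpha\colon f\Rightarrow f'$ lifts iff $p'\circ\alpha=\varphi'\cdot\varphi^{-1}$, a constraint that is vacuous since the relevant hom-set of $\bA(a,d)$ is a singleton (full faithfulness); essential surjectivity on objects holds because $\bA(a,d)$ is nonempty. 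Combined with the previous paragraph, $\bfH P$ is a biequivalence exactly when $\ell$ is a homotopy $2$-limit. In particular the \emph{if} direction of the theorem is immediate, since a double biequivalence restricts to a biequivalence on $\bfH$, whence $\bfH P$ a biequivalence yields $2$-terminality and hence the homotopy $2$-limit property.

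The remaining and hardest point is the \emph{only if} direction: assuming $(\ell,\lambda)$ is $2$-terminal, I must also show that $\bfH\llbracket\bV[1],P\rrbracket$ is a biequivalence. Here I would exploit that $\bC\mathrm{one}^{\mathrm{ps}}_{\bH\cJ}(\bH K)$ is built from $\bH$, so that its vertical morphisms lie over identity objects and encode only icon-like coherence data between cones. The plan is to show that the cotensor $\llbracket\bV[1],-\rrbracket$, being a right adjoint and acting in the direction orthogonal to the horizontal pseudo-hom $\llbracket\bH[1],-\rrbracket^{\mathrm{ps}}$, carries the pullback defining the slice to the pullback defining the slice of $\llbracket\bV[1],\bC\mathrm{one}^{\mathrm{ps}}_{\bH\cJ}(\bH K)\rrbracket$ over the constant vertical arrow at $(\ell,\lambda)$; the vertical condition then reduces, by the very criterion proved above, to $2$-terminality of that constant vertical arrow in $\bfH\llbracket\bV[1],\bC\mathrm{one}^{\mathrm{ps}}_{\bH\cJ}(\bH K)\rrbracket$. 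Finally I would deduce this from the homotopy $2$-limit hypothesis, using that the equivalences $\lambda^*$ are natural in $c$ and therefore remain equivalences after cotensoring the representables with the interval. The main obstacle is precisely this last step: making rigorous both the commutation of $\llbracket\bV[1],-\rrbracket$ with the slice and the interval-stability of the universal property requires a careful bookkeeping of the vertical morphisms, squares, and modifications of the $\bH$-cone double category --- the technical heart carried out in \cite{cM2}.
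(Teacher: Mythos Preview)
This theorem is not proven in the present paper: it is quoted from \cite[Corollary~7.22]{cM2} and used as an input to the proof of \cref{Homotopy2LimitsUsingCommas}. So your sketch is not competing against an argument here, but rather reconstructing the content of the cited reference.

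Your Steps~1 and~2 are correct and are the right reductions. The identification of $\bA((c,\kappa),(\ell,\lambda))$ with the iso-comma of $\lambda^*$ over $\kappa$ is exactly what makes the homotopy $2$-limit property equivalent to $2$-terminality of $(\ell,\lambda)$ in $\bfH\bC\mathrm{one}^{\mathrm{ps}}_{\bH\cJ}(\bH K)$, and the equivalence between $2$-terminality and $\bfH P$ being a biequivalence is standard. This already gives the ``if'' direction cleanly.

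For Step~3 your plan is sound, and in fact the commutation of $\bfH\llbracket\bV[1],-\rrbracket$ with the slice and cone constructions that you need is precisely what the paper proves (for a different purpose) as \cref{lem:HVofCone} and \cref{cor:HVofslice}. Granting those, the problem reduces, as you say, to showing that $e_{(\ell,\lambda)}$ is $2$-terminal in $\bfH\llbracket\bV[1],\bC\mathrm{one}^{\mathrm{ps}}_{\bH\cJ}(\bH K)\rrbracket\cong\cC\mathrm{one}^{\mathrm{ps}}_{\cJ}(\bfH E_{\bH\cC}\circ K)$. The residual gap is genuine, though: even though $\bH\cC$ has trivial vertical direction, the vertical morphisms of $\bC\mathrm{one}^{\mathrm{ps}}_{\bH\cJ}(\bH K)$ are \emph{not} trivial---they are modifications between pseudo-cones with the same summit---so $\bfH\llbracket\bV[1],\bH\cC\rrbracket$ is not equivalent to $\cC$ and the passage of the universal property along $\bfH E_{\bH\cC}$ does require an argument beyond ``naturality in $c$''. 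That argument is indeed the technical core handled in \cite{cM2}, and your sketch correctly isolates it as the one nonformal step.
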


To prove \cref{Homotopy2LimitsUsingCommas}, we will use the characterization from \cref{cMResult} and show that, given a $2$-diagram $K\colon \cJ\to \cC$, an object $\ell$ of $\cC$, and a pseudo-cone $\lambda$ with summit~$\ell$ over $K$, there is a zig-zag of double biequivalences connecting the canonical projection 
\[\bC\mathrm{one}^{\mathrm{ps}}_{{\bH}\cJ}({\bH}K)\sslash^{\mathrm{ps}}(\ell,\lambda)\longrightarrow\bC\mathrm{one}^{\mathrm{ps}}_{{\bH}\cJ}({\bH}K)\]
and the canonical projection
\[\bC\mathrm{one}^{\mathrm{ps}}_{\widetilde{\bH} \cJ}(\widetilde{\bH} K)\sslash^{\mathrm{ps}}(\ell,\lambda)\longrightarrow\bC\mathrm{one}^{\mathrm{ps}}_{\widetilde{\bH} \cJ}(\widetilde{\bH} K).\]

We first collect some homotopical facts about the pseudo-hom that are not formal.

\begin{lem} \label{lem:homiswhi}
    Given a double category $\bJ$ and a $2$-category $\cC$, the double category $\llbracket\bJ,\widetilde{\bH} \cC\rrbracket^{\mathrm{ps}}$ is weakly horizontally invariant. 
\end{lem}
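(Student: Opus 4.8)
The plan is to verify the defining property of weak horizontal invariance directly: every horizontal equivalence in $\llbracket\bJ,\widetilde{\bH}\cC\rrbracket^{\mathrm{ps}}$ admits a vertical companion. So fix a horizontal equivalence $\varphi\colon F\Rightarrow G$ in $\llbracket\bJ,\widetilde{\bH}\cC\rrbracket^{\mathrm{ps}}$; concretely, $\varphi$ is a pseudo-natural horizontal transformation equipped with a horizontal pseudo-inverse $\psi\colon G\Rightarrow F$ and the two horizontally invertible modifications witnessing $\psi\varphi\cong\id_F$ and $\varphi\psi\cong\id_G$. For each object $j$ of $\bJ$, evaluation at $j$ defines a double functor $\ev_j\colon\llbracket\bJ,\widetilde{\bH}\cC\rrbracket^{\mathrm{ps}}\to\widetilde{\bH}\cC$ (obtained by precomposing the evaluation $\bJ\boxtimes^{\mathrm{ps}}\llbracket\bJ,\widetilde{\bH}\cC\rrbracket^{\mathrm{ps}}\to\widetilde{\bH}\cC$ with the object $j$), and double functors preserve horizontal equivalences, so $\ev_j$ carries $\varphi$ to a horizontal equivalence $\varphi_j\colon Fj\to Gj$ in $\widetilde{\bH}\cC$, i.e.\ to an equivalence of $\cC$.

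First I record the objectwise companions. By the remark following \cref{prop:thm65MSV}, for each $j$ the equivalence $\varphi_j$ of $\cC$ and its associated vertical morphism $u_j\colon Fj\varrow Gj$ (an adjoint equivalence datum in $\cC$) form a companion pair $(\varphi_j,u_j,\alpha_j,\beta_j)$ in $\widetilde{\bH}\cC$, with $\alpha_j$ and $\beta_j$ the squares induced by the identity $2$-morphism at $\varphi_j$.

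Next I assemble the $u_j$ into a pseudo-natural vertical transformation $\mu\colon F\varrow G$ with $\mu_j=u_j$. For each horizontal morphism $f\colon j\to k$ and each vertical morphism $w\colon j\varrow j'$ of $\bJ$, the structure squares $\mu_f$ and $\mu_w$ required of a vertical transformation are produced by transposing the structure squares $\varphi_f$ and $\varphi_w$ of $\varphi$ along the companion data $\alpha_\bullet,\beta_\bullet$ --- the standard move of sliding a square between its horizontal and vertical incarnations past a companion pair. The functoriality of $f\mapsto\mu_f$ and $w\mapsto\mu_w$ and their naturality with respect to squares of $\bJ$ then follow from the corresponding coherences for $\varphi$ together with the pasting equalities defining a companion pair in $\widetilde{\bH}\cC$. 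Finally, the companion squares for the pair $(\varphi,\mu)$ in $\llbracket\bJ,\widetilde{\bH}\cC\rrbracket^{\mathrm{ps}}$ are the two families $(\alpha_j)_j$ and $(\beta_j)_j$: one checks that each family is a modification --- i.e.\ compatible with the transformation data of $\varphi$ and of $\mu$ --- and that the two families satisfy the companion pasting equalities, both of which reduce to the objectwise statements in $\widetilde{\bH}\cC$ recorded in the previous paragraph.

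The main obstacle is the assembly step. The passage from the objectwise adjoint-equivalence companions to a \emph{coherent} vertical transformation $\mu$ is exactly where the argument stops being formal: it uses the explicit description of $\widetilde{\bH}\cC$ (its vertical morphisms are adjoint equivalence data, its squares are $2$-cells of $\cC$ of a prescribed shape, and its companions arise from identity $2$-cells), and one must check that transposing along these particular companions turns the pseudo-naturality coherence of $\varphi$ into the pseudo-naturality coherence of $\mu$. I expect verifying the functoriality of $w\mapsto\mu_w$ and $f\mapsto\mu_f$ and their compatibility with the squares of $\bJ$ to be the most laborious point; once that is in place, the modification and companion axioms for $(\alpha_j)_j$ and $(\beta_j)_j$ are immediate from the $2$-categorical identities in $\cC$.
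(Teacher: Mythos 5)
Your proposal is correct and takes essentially the same route as the paper: there, too, one takes the components of the horizontal equivalence, regards each as a vertical morphism of $\widetilde{\bH}\cC$ (an adjoint equivalence datum), reuses the structure $2$-cells of $\varphi$ as the structure squares of the vertical transformation (since the companions in $\widetilde{\bH}\cC$ are induced by identity $2$-cells, your ``transposition along the companion data'' is essentially the identity reinterpretation), and then observes that the resulting pair is a companion pair in $\llbracket\bJ,\widetilde{\bH}\cC\rrbracket^{\mathrm{ps}}$. The one ingredient the paper makes explicit and you leave implicit is why the squares $\varphi_w$ at vertical morphisms of $\bJ$ have invertible underlying $2$-cells in $\cC$ --- needed so that the squares $\mu_w$ of your vertical transformation are horizontally invertible, as the definition of a pseudo-natural vertical transformation requires --- which the paper gets from the fact that these squares are weakly horizontally invertible (as $\varphi$ is a horizontal equivalence) together with \cite[Lemma A.2.3]{MoserNerve}.
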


\begin{proof}
    Given a horizontal pseudo-natural equivalence $\alpha\colon F\xRightarrow{\simeq} G\colon \bJ\to \widetilde{\bH} \cC$, we construct a vertical pseudo-natural transformation $\beta\colon F\Rightarrow G$ as follows:
    \begin{itemize}[leftmargin=*]
        \item for an object $i$ in $\bJ$, set $\beta_i\coloneqq \alpha_i\colon Fi\xrightarrow{\simeq} Gi$ to be the component of $\alpha$ at the object $i$, which is an equivalence and hence a vertical morphism in $\widetilde{\bH}  \cC$, 
        \item for a horizontal morphism $f\colon i\to j$ in $\bJ$, set $\beta_f\coloneqq \alpha_f\colon \alpha_j\circ Ff\Rightarrow Gf\circ \alpha_i$ to be the pseudo-naturality constraint of $\alpha$,
        \item for a vertical morphism $u\colon i\to j$ in $\bJ$, set $\beta_u\coloneqq \alpha_u\colon \alpha_j\circ Fu\Rightarrow Gu\circ \alpha_i$ to be the component on vertical morphism of $\alpha$, which is an invertible $2$-morphism by \cite[Lemma A.2.3]{MoserNerve} as $\alpha_u$ is a weakly horizontally invertible square.
    \end{itemize}
    The horizontal pseudo-naturality of $\alpha$ then implies the vertical pseudo-naturality of $\beta$. Moreover, it is not hard to see that $(\alpha,\beta)$ forms a companion pair. 
\end{proof}

\begin{lem} \label{lem:trivialfib}
    Let $F\colon \bJ\to \bJ'$ be a trivial fibration in $Dbl\cat_{\mathrm{whi}}$ and $\bD$ be a double category. Then the induced double functor
    \[F^*\colon \llbracket \bJ',\bD\rrbracket^{\mathrm{ps}}\to \llbracket\bJ,\bD\rrbracket^{\mathrm{ps}}\]
    is a double biequivalence.
\end{lem}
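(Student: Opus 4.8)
The plan is to verify directly, from the rigid explicit description of trivial fibrations, that the two $2$-functors $\bfH F^*\colon\bfH\llbracket\bJ',\bD\rrbracket^{\mathrm{ps}}\to\bfH\llbracket\bJ,\bD\rrbracket^{\mathrm{ps}}$ and $\bfH\llbracket\bV[1],F^*\rrbracket$ are biequivalences of $2$-categories, which is exactly what it means for $F^*$ to be a double biequivalence.

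First I would unpack the hypothesis: by \cref{DoubleMS}, $F$ is surjective on objects, full on horizontal and on vertical morphisms, and fully faithful on squares. From this I would extract, by repeatedly lifting \emph{identity} squares along $F$ and using full faithfulness on squares to conclude that the lifts are unique and invertible, that every horizontal (resp.\ vertical) morphism of $\bJ'$ admits a lift along $F$, that any two lifts of the same morphism are connected by a unique invertible square whose boundary legs are identities, and hence that any two objects of $\bJ$ lying over a common object of $\bJ'$ are connected by a horizontal and by a vertical equivalence --- in short, the fibers of $F$ are coherently contractible. Assembling chosen lifts and filling the resulting comparison cells with these canonical invertible squares produces a pseudo double functor $S\colon\bJ'\to\bJ$ together with invertible horizontal pseudo-natural transformations $FS\Rightarrow\mathrm{id}_{\bJ'}$ and $SF\Rightarrow\mathrm{id}_{\bJ}$, all coherence identities being equalities of squares that one checks by applying $F$ and invoking uniqueness of lifts.

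Given this data, I would then check the two biequivalence conditions for $\bfH F^*$, and verbatim for $\bfH\llbracket\bV[1],F^*\rrbracket$. For essential surjectivity up to horizontal equivalence: for a double functor $H\colon\bJ\to\bD$, the chosen lifts of the objects and morphisms of $\bJ'$ determine a double functor $G\colon\bJ'\to\bD$ such that $GF$ is horizontally equivalent to $H$, the equivalence coming from $H$ applied to $SF\Rightarrow\mathrm{id}_{\bJ}$ and to the contractibility of the fibers. For local full faithfulness: precomposition with $F$ on horizontal pseudo-natural transformations and on modifications is essentially surjective and fully faithful, again by lifting cell by cell along $F$, with uniqueness of lifts of squares guaranteeing both pseudo-naturality of the lifted transformation and bijectivity on modifications. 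Together these give the claim.

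The step I expect to be the main obstacle is producing from $H$ a \emph{strict} double functor $G\colon\bJ'\to\bD$: the objects of $\llbracket\bJ',\bD\rrbracket^{\mathrm{ps}}$ are strict double functors, while $F$ in general admits only a pseudo section, so one cannot simply set $G=H\circ S$ (that lands outside $\llbracket\bJ',\bD\rrbracket^{\mathrm{ps}}$). The point to exploit is that full faithfulness of $F$ on squares makes every comparison square canonical and uniquely invertible, so the incoherence of a system of lifts is rigidly controlled; I would use this to organize the choices of lifts so that $G$ is genuinely strict, invoking the pseudo-natural transformations of the second paragraph only to witness the equivalence $GF\simeq H$ (which stays inside $\llbracket\bJ,\bD\rrbracket^{\mathrm{ps}}$ because one side is strict). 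Should this bookkeeping prove delicate, the fallback is to reduce to the case of $\bJ$ and $\bJ'$ cofibrant via cofibrant replacements compatible with $F$, using that a trivial fibration with cofibrant target splits and is a deformation retract, a situation which $\llbracket-,\bD\rrbracket^{\mathrm{ps}}$ visibly respects.
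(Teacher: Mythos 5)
Your first half---extracting from the explicit description of trivial fibrations in \cref{DoubleMS} a pseudo double functor in the other direction together with horizontal pseudo-natural equivalences---is essentially how the paper begins: it constructs a normal pseudo double functor $G\colon \bJ'\to\bJ$ with $FG=\id_{\bJ'}$ on the nose and a horizontal pseudo-natural equivalence $\eta\colon\id_{\bJ}\simeq GF$. But from there the routes diverge, and the divergence is where your plan breaks. The paper never verifies the two biequivalence conditions for $F^*$ cell by cell; it applies $\llbracket-,\bD\rrbracket^{\mathrm{ps}}$ to the data $(G,\eta)$ and invokes the Whitehead-type machinery of \cite[Definition 8.8, Proposition 8.11]{MSV1} (a double functor admitting a normal pseudo inverse up to horizontal pseudo-natural equivalence is a double biequivalence), exactly as in the proof of \cref{lem:correcthom}. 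In particular, the delicate point you flag---producing, for a strict $H\colon\bJ\to\bD$, a \emph{strict} $G'\colon\bJ'\to\bD$ with $G'F\simeq H$---is never attacked directly in the paper.

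Your proposed mechanism for that point is a genuine gap. ``Organizing the choices of lifts so that $G$ is genuinely strict'' amounts, in the test case $\bD=\bJ$ and $H=\id_{\bJ}$, to choosing the lifts so that they assemble into a strict section of $F$; a strict section exhibits $\bJ'$ as a retract of $\bJ$, hence forces $\bJ'$ to have free underlying horizontal and vertical categories whenever $\bJ$ does (the paper's own description of cofibrancy), and a trivial fibration onto a non-cofibrant target does not split strictly. The comparison cells $H$ applied to the canonical invertible squares relating $S(f'g')$ and $S(f')S(g')$ are genuine non-identity invertible cells of $\bD$: full faithfulness on squares makes them canonical and unique, but no bookkeeping of lifts makes them identities. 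The existence of \emph{some} strict $G'$ with $G'F\simeq H$ is in effect part of what the lemma asserts, and the paper obtains it only as output of the cited results of \cite{MSV1}, not by hand. The fallback is not available either: comparing $\llbracket\bJ',\bD\rrbracket^{\mathrm{ps}}$ with $\llbracket\widehat{\bJ'},\bD\rrbracket^{\mathrm{ps}}$ for a cofibrant replacement $\widehat{\bJ'}\twoheadrightarrow\bJ'$ is precomposition along another trivial fibration, i.e., an instance of the very statement being proved (this invariance is exactly what the present lemma and \cref{lem:correcthom} are designed to supply), so the reduction is circular. A smaller point: the second condition, on $\bfH\llbracket\bV[1],F^*\rrbracket$, is not literally ``verbatim'' the first; it is painless in the paper only because global inverse data is transported through the hom rather than conditions being checked. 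To repair your write-up, keep your second paragraph, sharpen it so that $FS=\id_{\bJ'}$ holds strictly, and then conclude as the paper does, via the horizontal-biequivalence results of \cite{MSV1}, instead of a direct verification of essential surjectivity.
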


\begin{proof}
    Since $F$ is surjective on objects, full on horizontal and vertical morphisms, and fully faithful on squares, one can construct a normal pseudo double functor $G\colon \bJ'\to \bJ$ such that $FG=\id_{\bJ'}$ and a horizontal pseudo-natural equivalence $\eta\colon \id_\bJ\simeq GF$. Applying the functor $\llbracket-,\bD\rrbracket^{\mathrm{ps}}$, this induces a data $(F^*,G^*,\eta^*)$ with $F^*\colon \llbracket \bJ',\bD\rrbracket^{\mathrm{ps}}\to \llbracket\bJ,\bD\rrbracket^{\mathrm{ps}}$ a double functor, $G^*\colon \llbracket \bJ,\bD\rrbracket^{\mathrm{ps}}\to \llbracket\bJ',\bD\rrbracket^{\mathrm{ps}}$ a normal pseudo double functor such that $G^*F^*=\id_{\llbracket\bJ',\bD\rrbracket^{\mathrm{ps}}}$, and $\eta^*\colon \id_{\llbracket \bJ,\bD\rrbracket^{\mathrm{ps}}}\simeq F^*G^*$ a horizontal pseudo-natural transformation. This shows that $F^*$ is a horizontal biequivalence in the sense of \cite[Definition 8.8]{MSV1}, and so a double biequivalence by \cite[Proposition 8.11]{MSV1}.
\end{proof}

\begin{lem} \label{lem:HtoHtildeJ}
    Given $2$-categories $\cJ$ and $\cC$, the double functor 
    \[ I_\cJ^*\colon \llbracket\widetilde{\bH} \cJ,\widetilde{\bH} \cC\rrbracket^{\mathrm{ps}}\to \llbracket{\bH}\cJ,\widetilde{\bH} \cC\rrbracket^{\mathrm{ps}} \]
    induced by the canonical inclusion $I_\cJ\colon {\bH}\cJ\to \widetilde{\bH} \cJ$ is a double biequivalence.
\end{lem}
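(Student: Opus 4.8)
The plan is to deduce the statement from the model structure $DblCat_\mathrm{whi}$. By \cref{lem:homiswhi}, both $\llbracket\widetilde{\bH}\cJ,\widetilde{\bH}\cC\rrbracket^{\mathrm{ps}}$ and $\llbracket\bH\cJ,\widetilde{\bH}\cC\rrbracket^{\mathrm{ps}}$ are weakly horizontally invariant, hence fibrant in $DblCat_\mathrm{whi}$; so by \cref{DoubleMS} it is enough to show that the strict double functor $I_\cJ^*$ is a weak equivalence in $DblCat_\mathrm{whi}$. The subtle point is that one cannot simply invoke homotopy-invariance of the internal pseudo-hom in its first variable, since $\widetilde{\bH}\cJ$ need not be a cofibrant double category; I get around this by factoring $I_\cJ$ itself.

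First I recall that $I_\cJ\colon\bH\cJ\to\widetilde{\bH}\cJ$ is a double biequivalence by \cref{prop:thm65MSV}, hence a weak equivalence in $DblCat_\mathrm{whi}$ by \cite[Proposition~3.25]{MSV1}. Using the factorization axiom, write $I_\cJ$ as $\bH\cJ\xrightarrow{\,j\,}M\xrightarrow{\,p\,}\widetilde{\bH}\cJ$ with $j$ a cofibration and $p$ a trivial fibration; two-out-of-three then forces $j$ (already a cofibration) to be a trivial cofibration. Applying $\llbracket-,\widetilde{\bH}\cC\rrbracket^{\mathrm{ps}}$ yields $I_\cJ^*=j^*\circ p^*$. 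Since $p$ is a trivial fibration in $DblCat_\mathrm{whi}$, \cref{lem:trivialfib}, applied with $\bD=\widetilde{\bH}\cC$, shows that $p^*\colon\llbracket\widetilde{\bH}\cJ,\widetilde{\bH}\cC\rrbracket^{\mathrm{ps}}\to\llbracket M,\widetilde{\bH}\cC\rrbracket^{\mathrm{ps}}$ is a double biequivalence, in particular a weak equivalence. For $j^*$, I use that $DblCat_\mathrm{whi}$ is a closed monoidal model category with respect to the pseudo Gray tensor product $\boxtimes^{\mathrm{ps}}$ (\cref{DblCatClosed}) and that $\widetilde{\bH}\cC$ is fibrant (\cref{prop:thm65MSV}): the pullback-hom form of the pushout--product axiom, applied to the trivial cofibration $j$ and the fibrant object $\widetilde{\bH}\cC$, shows that $j^*\colon\llbracket M,\widetilde{\bH}\cC\rrbracket^{\mathrm{ps}}\to\llbracket\bH\cJ,\widetilde{\bH}\cC\rrbracket^{\mathrm{ps}}$ is a trivial fibration, hence again a weak equivalence.

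Composing, $I_\cJ^*=j^*\circ p^*$ is a weak equivalence in $DblCat_\mathrm{whi}$ between fibrant double categories, hence a double biequivalence, as claimed. The step I expect to take real thought is precisely the opening observation: the naive ``right Quillen functor preserves weak equivalences'' argument fails because $\widetilde{\bH}\cJ$ is not cofibrant, and the remedy is to replace $I_\cJ$ by its factorization into a trivial cofibration followed by a trivial fibration, which is exactly what lets \cref{lem:trivialfib} and the monoidal model structure do the work; everything else is routine bookkeeping.
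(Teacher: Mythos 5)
Your proposal is correct and is essentially the paper's own argument: the paper likewise factors $I_\cJ$ as a trivial cofibration followed by a trivial fibration, handles the trivial-fibration leg with \cref{lem:trivialfib}, handles the trivial-cofibration leg via the closed monoidal model structure of \cref{DblCatClosed} and fibrancy of $\widetilde{\bH}\cC$, and upgrades the resulting weak equivalence to a double biequivalence using \cref{lem:homiswhi}. The only cosmetic difference is that you produce the trivial cofibration by two-out-of-three from a (cofibration, trivial fibration) factorization, which amounts to the same thing.
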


\begin{proof}
    Recall from \cref{prop:thm65MSV} that the inclusion $I_\cJ\colon {\bH}\cJ\to \widetilde{\bH} \cJ$ is a double biequivalence. We factor $I_\cJ$ as a trivial cofibration followed by a trivial fibration in $Dbl\cat_{\mathrm{whi}}$
    \[ {\bH}\cJ\xhookrightarrow{\simeq}({\bH}\cJ)^\mathrm{fib}\stackrel{\simeq}{\twoheadrightarrow} \widetilde{\bH} \cJ. \]
    By applying the functor $\llbracket-,\widetilde{\bH} \cC\rrbracket^{\mathrm{ps}}$, we get a factorization of $I_\cJ^*$
    \[ \llbracket{\bH}\cJ,\widetilde{\bH} \cC\rrbracket^{\mathrm{ps}}\stackrel{\simeq}{\twoheadleftarrow}\llbracket({\bH}\cJ)^\mathrm{fib},\widetilde{\bH} \cC\rrbracket^{\mathrm{ps}}\xleftarrow{\simeq} \llbracket\widetilde{\bH} \cJ,\widetilde{\bH} \cC\rrbracket^{\mathrm{ps}}, \]
    where the left-hand double functor is a trivial fibration in $Dbl\cat_{\mathrm{whi}}$ by \cref{DblCatClosed}, and the right-hand one is a double biequivalence by \cref{lem:trivialfib}. This shows that $I_\cJ^*$ is a weak equivalence in $Dbl\cat_{\mathrm{whi}}$ and so a double biequivalence as all objects involved are weakly horizontally invariant by \cref{lem:homiswhi}.
\end{proof}

We now turn to the proof of \cref{Homotopy2LimitsUsingCommas}. For this, we start by studying the cone constructions $\bC\mathrm{one}^{\mathrm{ps}}_{\widetilde{\bH} \cJ}(\widetilde{\bH} K)$ and $\bC\mathrm{one}^{\mathrm{ps}}_{\bH \cJ}(\bH K)$ and see how they are related. First, we modify $\bC\mathrm{one}^{\mathrm{ps}}_{\widetilde{\bH} \cJ}(\widetilde{\bH} K)$ by precomposing the diagram $\widetilde{\bH} K$ with the canonical inclusion $I_\cJ\colon \bH  \cJ\to \widetilde{\bH} \cJ$.

\begin{notn}
\label{RewriteCones}
Given a double functor $K\colon\bJ\to\bD$, let $\bC\mathrm{one}^{\mathrm{ps}}_{\bJ}(K)$  denote the pullback in $Dbl\cat$
\[\begin{tikzcd}
\bC\mathrm{one}^{\mathrm{ps}}_{\bJ}(K)\arrow[r]\arrow[d]\arrow[rd, phantom, "\lrcorner",near start]&[20pt]{\llbracket{\bH}[1],\llbracket\bJ,\bD\rrbracket^{\mathrm{ps}}\rrbracket^{\mathrm{ps}}}\arrow[d,"{(\mathrm{ev}_0,\mathrm{ev}_1)}"]\\
       \bD\times{\bH}[0]\arrow[r,"\Delta_{\bD}^{\bJ}\times K" swap]&\llbracket\bJ,\bD\rrbracket^{\mathrm{ps}}\times\llbracket\bJ,\bD\rrbracket^{\mathrm{ps}}
    \end{tikzcd}\]
\end{notn}

\begin{prop} \label{prop:middlevsConeHtild}
    Given a $2$-diagram $K\colon \cJ\to \cC$, the double functor 
    \[ \bC\mathrm{one}^{\mathrm{ps}}_{\widetilde{\bH} \cJ}(\widetilde{\bH}  K)\longrightarrow \bC\mathrm{one}^{\mathrm{ps}}_{\bH \cJ}(\widetilde{\bH}  K\circ I_\cJ) \]
    induced by the canonical inclusion $I_\cJ\colon {\bH}\cJ\to \widetilde{\bH} \cJ$ is a double biequivalence between weakly horizontally invariant double categories.
\end{prop}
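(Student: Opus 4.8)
The plan is to realize the comparison double functor as the map of homotopy pullbacks induced by a levelwise double biequivalence of cospans, and then conclude by homotopy invariance of homotopy pullbacks in $Dbl\cat_{\mathrm{whi}}$.

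First I would observe that, by \cref{RewriteCones} (applied once with source $\widetilde{\bH}\cJ$ and diagram $\widetilde{\bH} K$, and once with source $\bH\cJ$ and diagram $\widetilde{\bH} K\circ I_\cJ$), both $\bC\mathrm{one}^{\mathrm{ps}}_{\widetilde{\bH}\cJ}(\widetilde{\bH} K)$ and $\bC\mathrm{one}^{\mathrm{ps}}_{\bH\cJ}(\widetilde{\bH} K\circ I_\cJ)$ are strict pullbacks in $Dbl\cat$ of cospans whose right leg is the double functor
\[(\mathrm{ev}_0,\mathrm{ev}_1)\colon \llbracket\bH[1],\bE\rrbracket^{\mathrm{ps}}\longrightarrow \bE\times\bE,\]
with $\bE=\llbracket\widetilde{\bH}\cJ,\widetilde{\bH}\cC\rrbracket^{\mathrm{ps}}$, respectively $\bE=\llbracket\bH\cJ,\widetilde{\bH}\cC\rrbracket^{\mathrm{ps}}$, both of which are weakly horizontally invariant by \cref{lem:homiswhi}. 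The crucial homotopical input is that this double functor is a fibration in $Dbl\cat_{\mathrm{whi}}$ whenever $\bE$ is weakly horizontally invariant: indeed it is obtained by applying $\llbracket-,\bE\rrbracket^{\mathrm{ps}}$ to the boundary inclusion $\bH[0]\sqcup\bH[0]\hookrightarrow\bH[1]$, which is a cofibration between cofibrant objects of $Dbl\cat_{\mathrm{whi}}$, so by the monoidal model structure of \cref{DblCatClosed} (the pushout-product axiom, against the map $\bE\to\bH[0]$) it is a fibration between fibrant objects. Consequently both cone double categories are strict pullbacks of the fibrant double category $\widetilde{\bH}\cC\times\bH[0]$ (fibrant by \cref{prop:thm65MSV}) along a fibration, hence are themselves fibrant, i.e.\ weakly horizontally invariant; this settles the last clause of the statement and shows that these strict pullbacks are homotopy pullbacks.

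Next I would identify the comparison map with the map of pullbacks induced by the map of cospans whose component over $\widetilde{\bH}\cC\times\bH[0]$ is the identity, whose middle component is $I_\cJ^*\times I_\cJ^*$, and whose right component is $\llbracket\bH[1],I_\cJ^*\rrbracket^{\mathrm{ps}}$; the cospans do map to one another because $I_\cJ^*\circ\Delta^{\widetilde{\bH}\cJ}_{\widetilde{\bH}\cC}=\Delta^{\bH\cJ}_{\widetilde{\bH}\cC}$ and $I_\cJ^*(\widetilde{\bH} K)=\widetilde{\bH} K\circ I_\cJ$, and because $(\mathrm{ev}_0,\mathrm{ev}_1)$ is natural. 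By \cref{lem:HtoHtildeJ} the double functor $I_\cJ^*\colon\llbracket\widetilde{\bH}\cJ,\widetilde{\bH}\cC\rrbracket^{\mathrm{ps}}\to\llbracket\bH\cJ,\widetilde{\bH}\cC\rrbracket^{\mathrm{ps}}$ is a double biequivalence between weakly horizontally invariant double categories; hence $I_\cJ^*\times I_\cJ^*$ is too, and, since $\llbracket\bH[1],-\rrbracket^{\mathrm{ps}}$ is right Quillen on $Dbl\cat_{\mathrm{whi}}$ ($\bH[1]$ being cofibrant, again by \cref{DblCatClosed}) and therefore preserves weak equivalences between fibrant objects, so is $\llbracket\bH[1],I_\cJ^*\rrbracket^{\mathrm{ps}}$. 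Thus we have a levelwise double biequivalence between two cospans of fibrant double categories whose right legs are fibrations.

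Finally, since a strict pullback along a fibration computes the homotopy pullback when all objects involved are fibrant, and homotopy pullbacks are invariant under levelwise weak equivalences of cospans, the induced comparison double functor $\bC\mathrm{one}^{\mathrm{ps}}_{\widetilde{\bH}\cJ}(\widetilde{\bH} K)\to\bC\mathrm{one}^{\mathrm{ps}}_{\bH\cJ}(\widetilde{\bH} K\circ I_\cJ)$ is a weak equivalence in $Dbl\cat_{\mathrm{whi}}$, hence a double biequivalence. I expect the main obstacle to be the verification that $(\mathrm{ev}_0,\mathrm{ev}_1)$ is a fibration in $Dbl\cat_{\mathrm{whi}}$ -- equivalently, that $\bH[0]\sqcup\bH[0]\hookrightarrow\bH[1]$ is a cofibration between cofibrant objects so that the pushout-product axiom of \cref{DblCatClosed} applies; everything else is a formal consequence of the monoidal model structure together with the already-established properties of the pseudo-hom in \cref{lem:homiswhi} and \cref{lem:HtoHtildeJ}. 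An alternative, more hands-on route would unfold the definition of double biequivalence and build a horizontal pseudo-inverse of the comparison double functor directly from a pseudo-inverse of $I_\cJ$, in the spirit of the proof of \cref{lem:HtoHtildeJ}, but the homotopy-pullback argument seems cleaner.
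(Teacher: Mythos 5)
Your proposal is correct and follows essentially the same route as the paper: both cone double categories are exhibited as (homotopy) pullbacks in $Dbl\cat_{\mathrm{whi}}$ along the fibration $(\mathrm{ev}_0,\mathrm{ev}_1)$ with fibrant (weakly horizontally invariant) corners via \cref{lem:homiswhi}, and the comparison map is induced by a levelwise double biequivalence of cospans coming from $I_\cJ^*$ as in \cref{lem:HtoHtildeJ}, whence the conclusion by invariance of homotopy pullbacks. The only difference is that you spell out details the paper leaves implicit, namely that $(\mathrm{ev}_0,\mathrm{ev}_1)$ is a fibration via the pushout-product axiom for $\boxtimes^{\mathrm{ps}}$ and that $\llbracket\bH[1],I_\cJ^*\rrbracket^{\mathrm{ps}}$ is a weak equivalence because $\llbracket\bH[1],-\rrbracket^{\mathrm{ps}}$ preserves weak equivalences between fibrant objects.
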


\begin{proof}
We have, by definition, a  pullback in the model category $Dbl\cat_{\mathrm{whi}}$
\[\begin{tikzcd}
\bC\mathrm{one}^{\mathrm{ps}}_{\widetilde{\bH} \cJ}(\widetilde{\bH}  K)\arrow[r]\arrow[d, two heads]\arrow[rd, phantom, "\lrcorner",near start]&[20pt]{\llbracket{\bH}[1],\llbracket \widetilde{\bH} \cJ,\widetilde{\bH} \cC\rrbracket^{\mathrm{ps}}\rrbracket^{\mathrm{ps}}}\arrow[d, two heads,"{(\mathrm{ev}_0,\mathrm{ev}_1)}"]\\
\widetilde{\bH} \cC\times{\bH}[0]\arrow[r,"\Delta_{\widetilde{\bH} \cC}^{\widetilde{\bH} \cJ}\times \widetilde{\bH}  K" swap]&\llbracket \widetilde{\bH} \cJ,\widetilde{\bH} \cC\rrbracket^{\mathrm{ps}}\times\llbracket\widetilde{\bH} \cJ,\widetilde{\bH} \cC\rrbracket^{\mathrm{ps}}
    \end{tikzcd}\]
On the other hand, using \cref{lem:homiswhi}, we have, by definition, a  pullback in the model category $Dbl\cat_{\mathrm{whi}}$
\[\begin{tikzcd}
\bC\mathrm{one}^{\mathrm{ps}}_{\bH \cJ}(\widetilde{\bH}  K\circ I_\cJ)\arrow[r]\arrow[d, two heads]\arrow[rd, phantom, "\lrcorner",near start]&[25pt]{\llbracket{\bH}[1],\llbracket \bH \cJ,\widetilde{\bH} \cC\rrbracket^{\mathrm{ps}}\rrbracket^{\mathrm{ps}}}\arrow[d, two heads,"{(\mathrm{ev}_0,\mathrm{ev}_1)}"]\\
\widetilde{\bH} \cC\times{\bH}[0]\arrow[r,"\Delta_{\widetilde{\bH} \cC}^{\bH \cJ}\times (\widetilde{\bH}  K\circ I_\cJ)" swap]&\llbracket \bH \cJ,\widetilde{\bH} \cC\rrbracket^{\mathrm{ps}}\times\llbracket\bH \cJ,\widetilde{\bH} \cC\rrbracket^{\mathrm{ps}}
    \end{tikzcd}\]
Using \cref{lem:HtoHtildeJ}, there is a natural comparison map between the two cospans defining the above  pullbacks in $Dbl\cat_{\mathrm{whi}}$, which is levelwise a double biequivalence. Hence, it induces a unique map between the two weakly horizontally invariant double categories presenting the pullback
\[ \bC\mathrm{one}^{\mathrm{ps}}_{\widetilde{\bH} \cJ}(\widetilde{\bH}  K)\longrightarrow \bC\mathrm{one}^{\mathrm{ps}}_{\bH \cJ}(\widetilde{\bH}  K\circ I_\cJ) \]
    and this double functor is a double biequivalence, as desired.
\end{proof}

Next, we modify $\bC\mathrm{one}^{\mathrm{ps}}_{\bH \cJ}({\bH} K)$ by postcomposing the diagram $\bH  K$ with the canonical inclusion $I_\cC\colon \bH \cC\to \widetilde{\bH} \cC$. First, notice the following link with the preceding result:

\begin{rmk} \label{remark3000}
 Given a $2$-diagram $K\colon \cJ\to \cC$, we have the relation
\[\widetilde{\bH}  K\circ I_\cJ=I_\cC\circ {\bH} K, \]
where $I_\cJ\colon \bH \cJ\to \widetilde{\bH} \cJ$ and $I_\cC\colon \bH \cC\to \widetilde{\bH} \cC$ denote the canonical inclusions. In particular, we have that
\[\bC\mathrm{one}^{\mathrm{ps}}_{\bH \cJ}(\widetilde{\bH}  K\circ I_\cJ)=\bC\mathrm{one}^{\mathrm{ps}}_{{\bH}\cJ}(I_\cC\circ {\bH} K). \]
\end{rmk}

\begin{prop} \label{prop:ConeHKvsmiddle}
    Given a $2$-diagram $K\colon \cJ\to \cC$, the double functor 
    \[ \bC\mathrm{one}^{\mathrm{ps}}_{\bH \cJ}({\bH} K)\longrightarrow \bC\mathrm{one}^{\mathrm{ps}}_{{\bH}\cJ}(I_\cC\circ {\bH} K) \]
    induced by the canonical inclusion $I_\cC\colon {\bH}\cC\to \widetilde{\bH} \cC$ is a double biequivalence.
\end{prop}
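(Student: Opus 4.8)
The plan is to follow the pattern of the proof of \cref{prop:middlevsConeHtild}. By \cref{RewriteCones}, both $\bC\mathrm{one}^{\mathrm{ps}}_{\bH \cJ}({\bH} K)$ and $\bC\mathrm{one}^{\mathrm{ps}}_{{\bH}\cJ}(I_\cC\circ {\bH} K)$ are pullbacks in $Dbl\cat$ of the cospan from \cref{RewriteCones} taken with $\bJ=\bH\cJ$ and with $\bD$ equal to $\bH\cC$, respectively $\widetilde{\bH}\cC$. Using naturality of the diagonal and of the evaluation maps, postcomposition with $I_\cC\colon\bH\cC\to\widetilde{\bH}\cC$ induces a map of cospans whose components are $I_\cC\times\id_{\bH[0]}$ on the corner $\bD\times\bH[0]$, the product $\llbracket\bH\cJ,I_\cC\rrbracket^{\mathrm{ps}}\times\llbracket\bH\cJ,I_\cC\rrbracket^{\mathrm{ps}}$ on the middle node, and $\llbracket\bH[1],\llbracket\bH\cJ,I_\cC\rrbracket^{\mathrm{ps}}\rrbracket^{\mathrm{ps}}$ on the remaining node; the induced map on pullbacks is the double functor in the statement.

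First I would show that each component of this map of cospans is a double biequivalence. Since $\bH[0]$ is terminal, the first component is $I_\cC$ itself, a double biequivalence by \cref{prop:thm65MSV}. As products of double biequivalences are double biequivalences and, by right-closedness of $\boxtimes^{\mathrm{ps}}$ together with associativity, $\llbracket\bH[1],\llbracket\bH\cJ,-\rrbracket^{\mathrm{ps}}\rrbracket^{\mathrm{ps}}\cong\llbracket\bH[1]\boxtimes^{\mathrm{ps}}\bH\cJ,-\rrbracket^{\mathrm{ps}}$, the other two components both reduce to the claim that, for a double category $\bA$, the double functor $\llbracket\bA,I_\cC\rrbracket^{\mathrm{ps}}\colon\llbracket\bA,\bH\cC\rrbracket^{\mathrm{ps}}\to\llbracket\bA,\widetilde{\bH}\cC\rrbracket^{\mathrm{ps}}$ is a double biequivalence. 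I would prove this exactly as in \cref{lem:trivialfib} and \cref{lem:correcthom}: choose a horizontal pseudo-inverse $(Q,\eta,\varepsilon)$ of $I_\cC$ in the sense of \cite[Definition 8.8]{MSV1} -- which exists because $I_\cC$ is a double biequivalence with weakly horizontally invariant target, $Q$ being built by collapsing equivalent objects of $\cC$ to chosen representatives so that the nontrivial vertical morphisms of $\widetilde{\bH}\cC$ can be sent to trivial ones in $\bH\cC$ -- and apply $\llbracket\bA,-\rrbracket^{\mathrm{ps}}$ to obtain a horizontal pseudo-inverse $(\llbracket\bA,Q\rrbracket^{\mathrm{ps}},\eta^*,\varepsilon^*)$ of $\llbracket\bA,I_\cC\rrbracket^{\mathrm{ps}}$; then \cite[Proposition 8.11]{MSV1} gives that $\llbracket\bA,I_\cC\rrbracket^{\mathrm{ps}}$ is a double biequivalence. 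The point of routing through horizontal biequivalences, rather than a model-categorical argument, is that $\bH\cC$ -- and hence $\llbracket\bA,\bH\cC\rrbracket^{\mathrm{ps}}$ -- is generally \emph{not} weakly horizontally invariant, so Ken Brown's lemma is unavailable.

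It then remains to conclude that the induced map on the pullbacks is a double biequivalence. Unlike in \cref{prop:middlevsConeHtild}, where the corner $\widetilde{\bH}\cC$ is fibrant and the pullbacks are homotopy pullbacks in $Dbl\cat_{\mathrm{whi}}$, here the corner $\bH\cC$ of the source cospan is not weakly horizontally invariant, so the strict pullback defining $\bC\mathrm{one}^{\mathrm{ps}}_{\bH\cJ}(\bH K)$ need not be a homotopy pullback and the usual gluing lemma does not apply verbatim. I would therefore invoke the tools for computing cone constructions in $[DblCat]_\infty$ developed in \cref{MSfromMSV} -- or, equivalently, use the explicit description of $\bC\mathrm{one}^{\mathrm{ps}}$ recalled above and verify the double-biequivalence conditions (essential surjectivity up to horizontal equivalence, fullness on horizontal morphisms, and the corresponding statement for squares) directly. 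The main obstacle is precisely this last gluing step together with the double-biequivalence claim for $\llbracket\bA,I_\cC\rrbracket^{\mathrm{ps}}$: both are routine when weak horizontal invariance is at hand but require the extra bookkeeping above because $\bH\cC$ lacks it.
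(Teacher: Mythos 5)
Your proposal breaks at its central step: the construction of a horizontal pseudo-inverse $(Q,\eta,\varepsilon)$ of $I_\cC\colon\bH\cC\to\widetilde{\bH}\cC$. Any double functor (even a normal pseudo one) $Q\colon\widetilde{\bH}\cC\to\bH\cC$ must send every vertical morphism of $\widetilde{\bH}\cC$ to an identity, and then its effect on squares forces it to identify morphisms that are conjugate along equivalences. Concretely, take $\cC=BG$ for a nonabelian group $G$ (one object, morphisms $G$, only identity $2$-cells): for $f,u\in G$ the identity $2$-cell gives a square in $\widetilde{\bH}BG$ with top $f$, bottom $ufu^{-1}$ and vertical boundaries $u$, whose image in $\bH BG$ can only exist if $Q(f)=Q(ufu^{-1})$; on the other hand any horizontal pseudo-natural equivalence relating $QI_\cC$ or $I_\cC Q$ to an identity forces $\bfH Q$ to be conjugation by a fixed element $a$, and $afa^{-1}=aufu^{-1}a^{-1}$ for all $u,f$ only when $G$ is abelian. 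So $I_\cC$ is a double biequivalence but not a horizontal biequivalence, and the route through \cite[Definition 8.8, Proposition 8.11]{MSV1} that works in \cref{lem:trivialfib,lem:correcthom} (where one has a trivial fibration, respectively $\widetilde{\bH}$ of a biequivalence into a weakly horizontally invariant target) is simply unavailable here; "collapsing equivalent objects to representatives" does not produce a double functor compatible with squares. Consequently your key intermediate claim, that $\llbracket\bA,I_\cC\rrbracket^{\mathrm{ps}}$ is a double biequivalence for $\bA=\bH\cJ$ and $\bA=\bH[1]\boxtimes^{\mathrm{ps}}\bH\cJ$, is left without proof (and is not something the paper ever establishes or needs).

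The second gap is the one you yourself flag: even granting levelwise equivalences of cospans, the strict pullback defining $\bC\mathrm{one}^{\mathrm{ps}}_{\bH\cJ}(\bH K)$ sits over the non-fibrant corner $\bH\cC$, so it need not be a homotopy pullback, and the tools of \cref{MSfromMSV} (cofibrant domains, weakly horizontally invariant codomains) do not cover this situation; "verify the double biequivalence conditions directly" is not carried out, so the proof plan ends exactly where the real difficulty begins. The paper's proof avoids both problems by a different decomposition: it checks the definition of double biequivalence directly, applying $\bfH$ and $\bfH\llbracket\bV[1],-\rrbracket$ to the comparison double functor and using \cref{lem:Hofcone,lem:HVofCone} to identify both images of the cone double categories with $2$-categorical pseudo-cone constructions $\cC\mathrm{one}^{\mathrm{ps}}_{\cJ}(-)$. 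There $\bfH$ of the comparison is even a $2$-isomorphism, since $(\bH K)^{\sharp}=K=(I_\cC\circ\bH K)^{\sharp}$, and $\bfH\llbracket\bV[1],-\rrbracket$ of it is induced by the biequivalence $\bfH\llbracket\bV[1],I_\cC\rrbracket$ coming from \cref{prop:thm65MSV}; the relevant pullbacks are then homotopy pullbacks in $2\cat$ because every $2$-category is fibrant, which is precisely the fibrancy your approach lacks on the double-categorical side. To repair your argument you would need either a genuinely different proof that the internal-hom comparison is a double biequivalence, plus a gluing argument valid over non-fibrant corners, or to switch to the paper's reduction to $2$-category theory.
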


This result is less formal as the double category $\bC\mathrm{one}^{\mathrm{ps}}_{\bH \cJ}({\bH} K)$ is not necessarily weakly horizontally invariant. Hence, to show this result, we first prove some auxiliary lemmas. We start by introducing a $2$-categorical analogue of the cone construction.

\begin{notn}
\label{RewriteCones2cat}
Given a $2$-functor $K\colon\cJ\to\cC$,
the \emph{$2$-category $\cC\mathrm{one}^{\mathrm{ps}}_{\cJ}(K)$ of pseudo-cones} over $K$  is the following  pullback in $2\cat$
\[\begin{tikzcd}
\cC\mathrm{one}^{\mathrm{ps}}_{\cJ}(K)\arrow[r]\arrow[d, twoheadrightarrow]\arrow[rd, phantom, "\lrcorner",near start]&[30pt]{[[1],[\cJ,\cC]^{\mathrm{ps}}]^{\mathrm{ps}}}\arrow[d,"{(\mathrm{ev}_0,\mathrm{ev}_1)}", twoheadrightarrow]\\
{\cC\times [0]}\arrow[r,"\Delta_{\cC}^{\cJ}\times K" swap]&{[\cJ,\cC]^{\mathrm{ps}}\times [\cJ,\cC]^{\mathrm{ps}}}
    \end{tikzcd}\]
\end{notn}

Next, we study the compatibility of the functors $\bfH ,\bfH \llbracket \bV[1],-\rrbracket\colon Dbl\cat\to 2\cat$ with the cone constructions.

\begin{lem} \label{lem:Hofcone}
    Given a $2$-category $\cJ$ and a double functor $K\colon \bH \cJ\to \bD$, 
    there is a $2$-isomorphism
    \[ \bfH \bC\mathrm{one}^{\mathrm{ps}}_{\bH \cJ} K\cong \cC\mathrm{one}^{\mathrm{ps}}_{\cJ}(K^\sharp), \]
    where $K^\sharp\colon \cJ\to \bfH \bD$ is the $2$-functor corresponding to $K$ under the adjunction $\bH \dashv \bfH $.
\end{lem}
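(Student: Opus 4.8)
The plan is to apply the functor $\bfH$ to the pullback square of \cref{RewriteCones} defining $\bC\mathrm{one}^{\mathrm{ps}}_{\bH \cJ}K$ and to identify the result with the pullback square of \cref{RewriteCones2cat} defining $\cC\mathrm{one}^{\mathrm{ps}}_{\cJ}(K^\sharp)$. Since $\bfH$ is a right adjoint (to $\bH$), it preserves all limits, so $\bfH\bC\mathrm{one}^{\mathrm{ps}}_{\bH \cJ}K$ is the pullback in $2\cat$ of the cospan obtained by applying $\bfH$ to
\[
\bD\times\bH[0]\xrightarrow{\Delta_{\bD}^{\bH\cJ}\times K}\llbracket\bH\cJ,\bD\rrbracket^{\mathrm{ps}}\times\llbracket\bH\cJ,\bD\rrbracket^{\mathrm{ps}}\xleftarrow{(\mathrm{ev}_0,\mathrm{ev}_1)}\llbracket\bH[1],\llbracket\bH\cJ,\bD\rrbracket^{\mathrm{ps}}\rrbracket^{\mathrm{ps}}.
\]
It therefore suffices to produce natural isomorphisms turning this cospan into the defining cospan of $\cC\mathrm{one}^{\mathrm{ps}}_{\cJ}(K^\sharp)$.

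The key input is the compatibility of $\bfH$ with internal pseudo-homs whose source has the form $\bH\cA$: for every $2$-category $\cA$ and double category $\bD'$ there is a natural isomorphism of $2$-categories $\bfH\llbracket\bH\cA,\bD'\rrbracket^{\mathrm{ps}}\cong[\cA,\bfH\bD']^{\mathrm{ps}}$. I would prove this by unwinding the explicit description of the pseudo-hom recalled above. Its objects are double functors $\bH\cA\to\bD'$, which correspond to $2$-functors $\cA\to\bfH\bD'$ under the adjunction $\bH\dashv\bfH$. Since $\bH\cA$ has only trivial vertical morphisms, a horizontal pseudo-natural transformation between two such double functors consists of a family of horizontal morphisms of $\bD'$ together with, for each morphism of $\cA$, a vertically invertible globular square of $\bD'$ (the components indexed by vertical morphisms being forced to be trivial); but a globular square is vertically invertible exactly when it is invertible as a $2$-cell of $\bfH\bD'$, so this is precisely the data of a pseudo-natural transformation of the corresponding $2$-functors, and the coherence conditions match. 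Finally, passing to $\bfH$ discards the vertical morphisms of the pseudo-hom, so its squares become the modifications with trivial vertical boundary, which are exactly the modifications of pseudo-natural transformations. All of this is visibly natural in $\cA$ and $\bD'$.

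Granting this, the rest is bookkeeping. Applying the compatibility with $\bD'=\bD$ gives $\bfH\llbracket\bH\cJ,\bD\rrbracket^{\mathrm{ps}}\cong[\cJ,\bfH\bD]^{\mathrm{ps}}$; applying it with $\cA=[1]$, $\bD'=\llbracket\bH\cJ,\bD\rrbracket^{\mathrm{ps}}$ and then functoriality of $[[1],-]^{\mathrm{ps}}$ gives $\bfH\llbracket\bH[1],\llbracket\bH\cJ,\bD\rrbracket^{\mathrm{ps}}\rrbracket^{\mathrm{ps}}\cong[[1],[\cJ,\bfH\bD]^{\mathrm{ps}}]^{\mathrm{ps}}$; and since $\bfH$ preserves products with $\bfH\bH[0]\cong[0]$ (as $\bH$ is fully faithful) we get $\bfH(\bD\times\bH[0])\cong\bfH\bD\times[0]$. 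Under these identifications $\bfH(\mathrm{ev}_0,\mathrm{ev}_1)$ becomes $(\mathrm{ev}_0,\mathrm{ev}_1)$ by naturality of evaluation, $\bfH\Delta_\bD^{\bH\cJ}$ becomes $\Delta_{\bfH\bD}^\cJ$ since both are induced by the unique map to the terminal object, and the component $\bH[0]\to\llbracket\bH\cJ,\bD\rrbracket^{\mathrm{ps}}$ picking out $K$ transposes, under $\bH\dashv\bfH$ together with the compatibility isomorphism, to the component $[0]\to[\cJ,\bfH\bD]^{\mathrm{ps}}$ picking out $K^\sharp$, by the very definition of $K^\sharp$. Hence $\bfH$ carries the cospan of \cref{RewriteCones} onto that of \cref{RewriteCones2cat}, and the induced isomorphism of pullbacks is the desired $2$-isomorphism. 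The only step that is not a formal consequence of the adjunction $\bH\dashv\bfH$ is the compatibility isomorphism of the second paragraph, so that is where the work lies.
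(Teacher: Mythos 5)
Your proposal is correct and follows the same skeleton as the paper's proof: apply the limit-preserving right adjoint $\bfH$ to the pullback of \cref{RewriteCones} and identify the resulting cospan with the one of \cref{RewriteCones2cat}. The one place where you genuinely diverge is the identification of the cotensor $\bfH\llbracket\bH[1],\llbracket\bH\cJ,\bD\rrbracket^{\mathrm{ps}}\rrbracket^{\mathrm{ps}}$: the paper rewrites it via the hom-tensor adjunction through the pseudo Gray tensor product, using the compatibility $\bH[1]\boxtimes^{\mathrm{ps}}\bH\cJ\cong\bH([1]\otimes^{\mathrm{ps}}\cJ)$ of \cite[Lemma 7.8]{MSV0} together with \cite[Lemma 2.14]{MSV0}, whereas you apply your compatibility isomorphism a second time (with $\cA=[1]$ and $\bD'=\llbracket\bH\cJ,\bD\rrbracket^{\mathrm{ps}}$) and then use functoriality of $[[1],-]^{\mathrm{ps}}$, avoiding the Gray tensor product altogether. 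Your key input, the natural $2$-isomorphism $\bfH\llbracket\bH\cA,\bD'\rrbracket^{\mathrm{ps}}\cong[\cA,\bfH\bD']^{\mathrm{ps}}$, is exactly \cite[Lemma 2.14]{MSV0}, which the paper cites rather than re-proves; your unwinding of it (trivial vertical morphisms in $\bH\cA$ force the vertical components to be identities, vertically invertible globular squares are invertible $2$-cells of $\bfH\bD'$, and $\bfH$ retains only the globular modifications) is sound, and naturality in both variables is what makes the evaluation maps, diagonals, and the $K$-component transpose correctly, as you note. So the two arguments buy slightly different things: the paper's leverages already-established monoidal compatibilities, while yours is more self-contained and needs only the single hom compatibility, applied twice.
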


\begin{proof}
    By applying the limit-preserving functor $\bfH $ to the pullback in $Dbl\cat$ from \cref{RewriteCones} applied to $K\colon \bH \cJ\to \bD$, we get the following pullback diagram in $2\cat$.
    \[\begin{tikzcd}
\bfH \bC\mathrm{one}^{\mathrm{ps}}_{\bH  \cJ}(K)\arrow[r]\arrow[d]\arrow[rd, phantom, "\lrcorner",near start]&[30pt]{\bfH \llbracket{\bH}[1],\llbracket\bH  \cJ,\bD\rrbracket^{\mathrm{ps}}\rrbracket^{\mathrm{ps}}}\arrow[d,"{\bfH (\mathrm{ev}_0,\mathrm{ev}_1)}"]\\
       \bfH \bD\times [0]\arrow[r,"\bfH \Delta_{\bD}^{\bH  \cJ}\times K" swap]&\bfH \llbracket\bH  \cJ,\bD\rrbracket^{\mathrm{ps}}\times\bfH \llbracket \bH  \cJ,\bD\rrbracket^{\mathrm{ps}}
    \end{tikzcd}\]
    We can then compute the top and bottom right-hand $2$-categories as follows. By \cite[Lemma 2.14]{MSV0}, there is a natural $2$-isomorphism
    \[ \bfH \llbracket{\bH}\cJ,\bD\rrbracket^{\mathrm{ps}} \cong [\cJ,\bfH \bD]^{\mathrm{ps}}. \]
    Next, there are also natural $2$-isomorphisms
    \begin{align*} 
    {\bfH}\llbracket{\bH} [1],\llbracket{\bH}\cJ,\bD\rrbracket^{\mathrm{ps}}\rrbracket^{\mathrm{ps}} & \cong {\bfH}\llbracket{\bH}[1]\boxtimes^{\mathrm{ps}}{\bH}\cJ,\bD\rrbracket^{\mathrm{ps}} & \text{internal pseudo-hom}\\
    & \cong {\bfH}\llbracket{\bH}([1]\otimes^{\mathrm{ps}} \cJ),\bD\rrbracket^{\mathrm{ps}} & \text{\cite[Lemma 7.8]{MSV0}}\\
    &\cong [[1]\otimes^{\mathrm{ps}} \cJ, \bfH \bD]^{\mathrm{ps}} & \text{\cite[Lemma 2.14]{MSV0}} \\
    &\cong [[1],[\cJ, \bfH \bD]^{\mathrm{ps}}]^{\mathrm{ps}} & \text{internal pseudo-hom} 
    \end{align*}
    Hence the above pullback square in $2\cat$ becomes
    \[\begin{tikzcd}
\bfH \bC\mathrm{one}^{\mathrm{ps}}_{\bH  \cJ}(K)\arrow[r]\arrow[d]\arrow[rd, phantom, "\lrcorner",near start]&[30pt]{[[1],[\cJ, \bfH \bD]^{\mathrm{ps}}]^{\mathrm{ps}}}\arrow[d,"{(\mathrm{ev}_0,\mathrm{ev}_1)}"]\\
       \bfH \bD\times [0]\arrow[r,"\Delta_{\bfH \bD}^{\cJ}\times K^\sharp" swap]&{[\cJ,\bfH \bD]^{\mathrm{ps}}\times[\cJ,\bfH \bD]^{\mathrm{ps}}}
    \end{tikzcd}\]
    But, by \cref{RewriteCones2cat}, this pullback is exactly $\cC\mathrm{one}^{\mathrm{ps}}_{\cJ}(K^\sharp)$.
\end{proof}

\begin{lem} \label{lem:HVofCone}
    Given a $2$-category $\cJ$ and a double functor $K\colon \bH \cJ\to \bD$, there is a $2$-isomorphism 
    \[ \bfH \llbracket\bV[1],\bC\mathrm{one}^{\mathrm{ps}}_{\bH \cJ}(K)\rrbracket\cong \cC\mathrm{one}^{\mathrm{ps}}_{\cJ}(\bfH  E_\bD\circ K^\sharp) \]
    where $E_\bD\colon \bD\to \llbracket\bV[1],\bD\rrbracket$ is the double functor induced by the unique map $\bV[1]\to [0]$.
\end{lem}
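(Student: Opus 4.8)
The plan is to mirror the proof of \cref{lem:Hofcone}, but applied to the composite functor
\[\bfH\llbracket\bV[1],-\rrbracket\colon Dbl\cat\longrightarrow 2\cat,\]
which is right adjoint to $\cE\mapsto\bH\cE\times\bV[1]$ and hence preserves all limits. The most economical route is to first move $\llbracket\bV[1],-\rrbracket$ past the cone construction, and only afterwards apply $\bfH$ and invoke \cref{lem:Hofcone}.

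\emph{Step 1.} Since $\llbracket\bV[1],-\rrbracket$ is a right adjoint, it carries the pullback of \cref{RewriteCones} defining $\bC\mathrm{one}^{\mathrm{ps}}_{\bH\cJ}(K)$ to a pullback, and I would identify the resulting cospan with the one defining $\bC\mathrm{one}^{\mathrm{ps}}_{\bH\cJ}(E_\bD\circ K)$. The bottom-left vertex is $\llbracket\bV[1],\bD\times\bH[0]\rrbracket\cong\llbracket\bV[1],\bD\rrbracket\times\bH[0]$ since $\bH[0]$ is terminal; the other two require the compatibility isomorphisms
\[\llbracket\bV[1],\llbracket\bH\cJ,\bD\rrbracket^{\mathrm{ps}}\rrbracket\cong\llbracket\bH\cJ,\llbracket\bV[1],\bD\rrbracket\rrbracket^{\mathrm{ps}}\quad\text{and}\quad\llbracket\bV[1],\llbracket\bH[1],\llbracket\bH\cJ,\bD\rrbracket^{\mathrm{ps}}\rrbracket^{\mathrm{ps}}\rrbracket\cong\llbracket\bH[1],\llbracket\bH\cJ,\llbracket\bV[1],\bD\rrbracket\rrbracket^{\mathrm{ps}}\rrbracket^{\mathrm{ps}}.\]
Under these, the diagonal $\Delta^{\bH\cJ}_\bD$ is sent to $\Delta^{\bH\cJ}_{\llbracket\bV[1],\bD\rrbracket}$, and the double functor selecting the object $K$ is sent to the one selecting $E_\bD\circ K$ — the point being that $E_\bD$ sends an object of $\bD$ to the identity vertical morphism on it. Hence the two cospans, and so their pullbacks, agree: $\llbracket\bV[1],\bC\mathrm{one}^{\mathrm{ps}}_{\bH\cJ}(K)\rrbracket\cong\bC\mathrm{one}^{\mathrm{ps}}_{\bH\cJ}(E_\bD\circ K)$.

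\emph{Step 2.} Applying $\bfH$ and then \cref{lem:Hofcone} with $\bD$ replaced by $\llbracket\bV[1],\bD\rrbracket$ and $K$ by $E_\bD\circ K$ yields
\[\bfH\llbracket\bV[1],\bC\mathrm{one}^{\mathrm{ps}}_{\bH\cJ}(K)\rrbracket\cong\bfH\bC\mathrm{one}^{\mathrm{ps}}_{\bH\cJ}(E_\bD\circ K)\cong\cC\mathrm{one}^{\mathrm{ps}}_\cJ\big((E_\bD\circ K)^\sharp\big).\]
Finally $(E_\bD\circ K)^\sharp=\bfH(E_\bD\circ K)\circ\eta_\cJ=\bfH E_\bD\circ\bfH K\circ\eta_\cJ=\bfH E_\bD\circ K^\sharp$ by naturality of the unit $\eta$ of $\bH\dashv\bfH$, giving the claimed $2$-isomorphism. (Alternatively one can skip the reduction and run the argument of \cref{lem:Hofcone} verbatim with $\bfH$ replaced throughout by $\bfH\llbracket\bV[1],-\rrbracket$, recomputing the two right-hand vertices through $\boxtimes^{\mathrm{ps}}$, $\otimes^{\mathrm{ps}}$, the pseudo-internal-hom adjunctions, and \cite[Lemmas 2.14 and 7.8]{MSV0}.)

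\emph{Main obstacle.} The only non-formal ingredient is the pair of compatibility isomorphisms of Step 1, i.e.\ the claim that the cartesian internal hom $\llbracket\bV[1],-\rrbracket$ commutes with the pseudo-Gray internal hom $\llbracket\bH\cX,-\rrbracket^{\mathrm{ps}}$; by the Yoneda lemma in $Dbl\cat$ this amounts to a natural isomorphism $(\bX\times\bV[1])\boxtimes^{\mathrm{ps}}\bH\cX\cong(\bX\boxtimes^{\mathrm{ps}}\bH\cX)\times\bV[1]$. Heuristically this holds because $\bV[1]$ has no non-identity horizontal morphisms or squares, so cartesian multiplication by $\bV[1]$ merely duplicates the vertical direction and does not interact with the pseudo-Gray perturbation; verifying it rigorously requires unwinding the explicit construction of $\boxtimes^{\mathrm{ps}}$ from \cite{Bohm} (or the formulas in \cite{MSV0}), and this is the combinatorial heart of the argument. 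Everything else is bookkeeping with limit-preserving right adjoints and the adjunction $\bH\dashv\bfH$.
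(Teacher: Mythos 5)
Your Step 2 is fine: with $\bD$ replaced by $\llbracket\bV[1],\bD\rrbracket$ and $K$ by $E_\bD\circ K$, \cref{lem:Hofcone} together with $(E_\bD\circ K)^\sharp=\bfH E_\bD\circ K^\sharp$ gives exactly the stated target. The gap is in Step 1, and you name it yourself: the two compatibility isomorphisms $\llbracket\bV[1],\llbracket\bH\cJ,\bD\rrbracket^{\mathrm{ps}}\rrbracket\cong\llbracket\bH\cJ,\llbracket\bV[1],\bD\rrbracket\rrbracket^{\mathrm{ps}}$ (and its variant with $\bH[1]\boxtimes^{\mathrm{ps}}\bH\cJ$ in place of $\bH\cJ$) --- equivalently, by adjunction, a natural isomorphism $(\bA\times\bV[1])\boxtimes^{\mathrm{ps}}\bH\cX\cong(\bA\boxtimes^{\mathrm{ps}}\bH\cX)\times\bV[1]$ of double categories --- are asserted on heuristic grounds and never verified, although they are the only non-formal input of your argument; as written, the proof defers precisely the step that carries the content. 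Moreover, beyond existence you would also need these isomorphisms to be natural in the $2$-category variable (to match the evaluation legs $(\ev_0,\ev_1)$ along the two inclusions $\bH[0]\to\bH[1]$) and in $\bD$ (to match the diagonal leg and the leg selecting $K$, which must become $E_\bD\circ K$); this is not addressed.

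The paper never needs this double-categorical interchange. It applies the single limit-preserving functor $\bfH\llbracket\bV[1],-\rrbracket$ to the defining pullback of \cref{RewriteCones} and identifies the resulting cospan in $2\cat$ using the already established $2$-categorical isomorphism $\bfH\llbracket\bV[1],\llbracket\bH\cJ,\bD\rrbracket^{\mathrm{ps}}\rrbracket\cong[\cJ,\bfH\llbracket\bV[1],\bD\rrbracket]^{\mathrm{ps}}$ of \cite[Lemma 2.14]{MSV0}, together with \cite[Lemma 7.8]{MSV0} and the internal pseudo-hom adjunctions, recognizing the result as $\cC\mathrm{one}^{\mathrm{ps}}_{\cJ}(\bfH E_\bD\circ K^\sharp)$ --- i.e., exactly your parenthetical ``alternative''. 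That route only uses the image under $\bfH$ of the compatibility you want, which is strictly weaker than your Step 1 claim and is available as a citation. So either carry out the combinatorics of the pseudo Gray tensor product to establish your Step 1 isomorphisms with the required naturality, or run the cited $2$-categorical argument; your write-up currently does neither.
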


\begin{proof}
    By applying the limit-preserving functor $\bfH \llbracket\bV[1],-\rrbracket$ to the pullback in $Dbl\cat$ from \cref{RewriteCones} applied to $K\colon \bH \cJ\to \bD$, we get the following pullback diagram in $2\cat$,
    \[\begin{tikzcd}
\bfH \llbracket\bV[1],\bC\mathrm{one}^{\mathrm{ps}}_{\bH  \cJ}(K)\rrbracket\arrow[r]\arrow[d]\arrow[rd, phantom, "\lrcorner",near start]&[30pt]{\bfH \llbracket\bV[1],\llbracket{\bH}[1],\llbracket\bH  \cJ,\bD\rrbracket^{\mathrm{ps}}\rrbracket^{\mathrm{ps}}\rrbracket}\arrow[d,"{\bfH \llbracket\bV[1],(\mathrm{ev}_0,\mathrm{ev}_1)\rrbracket}"]\\
       \bfH \llbracket\bV[1],\bD\rrbracket\times [0]\arrow[r,"{\bfH \llbracket\bV[1],\Delta_{\bD}^{\bH  \cJ}\rrbracket\times e_K}" swap]&{\bfH \llbracket\bV[1],\llbracket\bH  \cJ,\bD\rrbracket^{\mathrm{ps}}\rrbracket\times\bfH \llbracket\bV[1],\llbracket \bH  \cJ,\bD\rrbracket^{\mathrm{ps}}\rrbracket}
    \end{tikzcd}\]
    where $e_K$ denotes the vertical identity at $K$. We can then compute the top and bottom right-hand $2$-categories as follows. By \cite[Lemma~2.14]{MSV0}, there is a natural $2$-isomorphism 
    \[ \bfH \llbracket \bV[1],\llbracket{\bH}\cJ,\bD\rrbracket^{\mathrm{ps}}\rrbracket \cong [\cJ,\bfH \llbracket \bV[1],\bD\rrbracket]^{\mathrm{ps}} \]
    and we also have natural $2$-isomorphisms
    \begin{align*} 
    {\bfH}\llbracket \bV[1],\llbracket{\bH} [1],\llbracket{\bH}\cJ,\bD\rrbracket^{\mathrm{ps}}\rrbracket^{\mathrm{ps}}\rrbracket & \cong {\bfH}\llbracket \bV[1],\llbracket{\bH}[1]\boxtimes^{\mathrm{ps}}{\bH}\cJ,\bD\rrbracket^{\mathrm{ps}}\rrbracket & \text{internal pseudo-hom}\\
    & \cong {\bfH}\llbracket \bV[1],\llbracket{\bH}([1]\otimes^{\mathrm{ps}} \cJ),\bD\rrbracket^{\mathrm{ps}}\rrbracket & \text{\cite[Lemma 7.8]{MSV0}}\\
    &\cong [[1]\otimes^{\mathrm{ps}}\cJ,\bfH \llbracket \bV[1],\bD\rrbracket]^{\mathrm{ps}} & \text{\cite[Lemma 2.14]{MSV0}} \\
    &\cong [[1],[\cJ,\bfH \llbracket \bV[1],\bD\rrbracket]^{\mathrm{ps}}]^{\mathrm{ps}} & \text{internal pseudo-hom}
    \end{align*}
    Hence the above pullback square in $2\cat$ becomes
    \[\begin{tikzcd}
\bfH \llbracket\bV[1],\bC\mathrm{one}^{\mathrm{ps}}_{\bH  \cJ}(K)\rrbracket\arrow[r]\arrow[d]\arrow[rd, phantom, "\lrcorner",near start]&[37pt]{[[1],[\cJ,\bfH \llbracket \bV[1],\bD\rrbracket]^{\mathrm{ps}}]^{\mathrm{ps}}}\arrow[d,"{(\mathrm{ev}_0,\mathrm{ev}_1)}"]\\
       \bfH \llbracket\bV[1],\bD\rrbracket\times [0]\arrow[r,"{\Delta_{\bfH \llbracket\bV[1],\bD\rrbracket}^{\cJ}\times \bfH  E_\bD\circ K^\sharp}" swap]&{[\cJ,\bfH \llbracket \bV[1],\bD\rrbracket]^{\mathrm{ps}}\times[\cJ,\bfH \llbracket \bV[1],\bD\rrbracket]^{\mathrm{ps}}}
    \end{tikzcd}\]
    But, by \cref{RewriteCones2cat}, this pullback is exactly $\cC\mathrm{one}^{\mathrm{ps}}_{\cJ}(\bfH  E_\bD\circ K^\sharp)$.
\end{proof}

We can now prove the proposition:

\begin{proof}[Proof of \cref{prop:ConeHKvsmiddle}]
We need to show that applying the functors $\bfH $ and $\bfH  \llbracket\bV[1],-\rrbracket$ to the double functor
\[ \bC\mathrm{one}^{\mathrm{ps}}_{\bH \cJ}({\bH} K)\to \bC\mathrm{one}^{\mathrm{ps}}_{{\bH}\cJ}(I_\cC\circ {\bH} K) \]
yield two biequivalences. 

First, note that $\bfH \bH \cC=\cC=\bfH \widetilde{\bH} \cC$ so that we have \[(\bH  K)^\sharp=K=(I_\cC\circ \bH  K)^\sharp.\]
Hence, by \cref{lem:Hofcone} we have $2$-isomorphisms
\[ \bfH \bC\mathrm{one}^{\mathrm{ps}}_{\bH \cJ}({\bH} K)\cong \cC\mathrm{one}^{\mathrm{ps}}_{\cJ}(K)\cong \bfH \bC\mathrm{one}^{\mathrm{ps}}_{{\bH}\cJ}(I_\cC\circ {\bH} K) \]
and so the $2$-functor $\bfH \bC\mathrm{one}^{\mathrm{ps}}_{\bH \cJ}({\bH} K)\to \bfH \bC\mathrm{one}^{\mathrm{ps}}_{{\bH}\cJ}(I_\cC\circ {\bH} K)$ is a $2$-isomorphism and hence a biequivalence.

Now, by \cref{lem:HVofCone}, we have that the $2$-functor 
\[ \bfH \llbracket\bV[1],\bC\mathrm{one}^{\mathrm{ps}}_{\bH \cJ}({\bH} K)\rrbracket\to \bfH \llbracket\bV[1],\bC\mathrm{one}^{\mathrm{ps}}_{{\bH}\cJ}(I_\cC\circ {\bH} K))\rrbracket \]
is given by the $2$-functor
    \[ \cC\mathrm{one}^{\mathrm{ps}}_{\cJ}(\bfH  E_{\bH \cC} \circ K)\to \cC\mathrm{one}^{\mathrm{ps}}_{\cJ}(\bfH  E_{\widetilde{\bH} \cC}\circ K). \]
    Since $\bfH  E_{\widetilde{\bH} \cC}$ can be seen as the composite of $2$-functors 
    \[ \bfH  E_{\widetilde{\bH} \cC}\colon\cC=\bfH \bH \cC\xrightarrow{\bfH  E_{\bH \cC}}\bfH \llbracket\bV[1],\bH \cC\rrbracket\xrightarrow{\bfH \llbracket\bV[1],I_\cC\rrbracket} \bfH \llbracket\bV[1],\widetilde{\bH} \cC\rrbracket, \]
    the above $2$-functor corresponds to the $2$-functor
    \[ \cC\mathrm{one}^{\mathrm{ps}}_{\cJ}(\bfH  E_{\bH \cC} \circ K)\to \cC\mathrm{one}^{\mathrm{ps}}_{\cJ}(\bfH \llbracket\bV[1],I_\cC\rrbracket\circ \bfH  E_{\bH \cC} \circ K) \]
    induced by $\bfH \llbracket\bV[1],I_\cC\rrbracket$. As $I_\cC$ is a double biequivalence by \cref{prop:thm65MSV},  $\bfH \llbracket\bV[1],I_\cC\rrbracket$ is a biequivalence. Hence
    \[ \cC\mathrm{one}^{\mathrm{ps}}_{\cJ}(\bfH  E_{\bH \cC} \circ K)\to \cC\mathrm{one}^{\mathrm{ps}}_{\cJ}(\bfH \llbracket\bV[1],I_\cC\rrbracket\circ \bfH  E_{\bH \cC} \circ K) \]
    is a biequivalence since the two spans defining these pullbacks are levelwise equivalent, and the pullbacks are in fact homotopy pullbacks as every $2$-category is fibrant.  
\end{proof}

We now study the slice constructions of $\bC\mathrm{one}^{\mathrm{ps}}_{\widetilde{\bH} \cJ}(\widetilde{\bH} K)$ and $\bC\mathrm{one}^{\mathrm{ps}}_{\bH \cJ}(\bH K)$ over an object $(\ell,\lambda)$. We first modify $\bC\mathrm{one}^{\mathrm{ps}}_{\widetilde{\bH} \cJ}(\widetilde{\bH}  K)\sslash^{\mathrm{ps}} (\ell,\lambda)$ as before:

\begin{prop} \label{prop:middlevsConeHtildslice}
    Given a $2$-diagram $K\colon \cJ\to \cC$, the double functor between slices over $(\ell,\lambda)$ induced by the double functor from \cref{prop:middlevsConeHtild} is a double biequivalence, and fits into a commutative diagram of the following form:
\[
\begin{tikzcd}
\bC\mathrm{one}^{\mathrm{ps}}_{\widetilde{\bH} \cJ}(\widetilde{\bH}  K)\sslash^{\mathrm{ps}} (\ell,\lambda)\arrow[r,"\simeq"]\arrow[d]&\bC\mathrm{one}^{\mathrm{ps}}_{\bH \cJ}(\widetilde{\bH}  K\circ I_\cJ)\sslash^{\mathrm{ps}} (\ell,\lambda)\arrow[d]\\
\bC\mathrm{one}^{\mathrm{ps}}_{\widetilde{\bH} \cJ}(\widetilde{\bH}  K)\arrow[r,"\simeq"]&\bC\mathrm{one}^{\mathrm{ps}}_{\bH \cJ}(\widetilde{\bH}  K\circ I_\cJ)
\end{tikzcd}
\]
\end{prop}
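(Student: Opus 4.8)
The plan is to reuse verbatim the strategy of \cref{prop:middlevsConeHtild}: present both slices as homotopy pullbacks in $Dbl\cat_{\mathrm{whi}}$ and exhibit a comparison of the defining cospans that is levelwise a double biequivalence. Write $P\colon \bC\mathrm{one}^{\mathrm{ps}}_{\widetilde{\bH} \cJ}(\widetilde{\bH} K)\xrightarrow{\ \simeq\ }\bC\mathrm{one}^{\mathrm{ps}}_{\bH \cJ}(\widetilde{\bH} K\circ I_\cJ)$ for the double biequivalence produced in \cref{prop:middlevsConeHtild}, and recall that both its source $\bD$ and target $\bD'$ are weakly horizontally invariant and that $P$ carries the object $(\ell,\lambda)$ to the object we continue to call $(\ell,\lambda)$. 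As recalled in the remark following \cref{sliceinDblCat}, for any weakly horizontally invariant $\bD$ the double category $\bD\sslash^{\mathrm{ps}}(\ell,\lambda)$ is the strict pullback in $Dbl\cat$ of the cospan
\[ \bD\times\bH[0]\xrightarrow{\ \bD\times(\ell,\lambda)\ }\bD\times\bD\xleftarrow{\ (\mathrm{ev}_0,\mathrm{ev}_1)\ }\llbracket\bH[1],\bD\rrbracket^{\mathrm{ps}}, \]
in which the right-hand leg is a fibration and every object is fibrant, so that this pullback is a homotopy pullback in $Dbl\cat_{\mathrm{whi}}$; the same holds for $\bD'$.

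Applying $P$ levelwise to this cospan gives a comparison whose component on the corner $\bD\times\bH[0]$ is $P\times\id_{\bH[0]}$, on $\bD\times\bD$ is $P\times P$, and on $\llbracket\bH[1],\bD\rrbracket^{\mathrm{ps}}$ is $\llbracket\bH[1],P\rrbracket^{\mathrm{ps}}$; the induced map on pullbacks is precisely the double functor on slices whose properties we must establish. First, $P\times\id_{\bH[0]}\cong P$ since $\bH[0]$ is terminal. Next, $P\times P$ is a double biequivalence: the functors $\bfH$ and $\bfH\llbracket\bV[1],-\rrbracket\colon Dbl\cat\to 2\cat$ jointly detect double biequivalences and, being right adjoints, preserve products, and a finite product of biequivalences of $2$-categories is again a biequivalence. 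Finally, $\llbracket\bH[1],P\rrbracket^{\mathrm{ps}}$ is a double biequivalence: since $\bH[1]\cong\widetilde{\bH}[1]$ is cofibrant (being $\widetilde{\bH}$ of the cofibrant $2$-category $[1]$) and $Dbl\cat_{\mathrm{whi}}$ is monoidal with respect to $\boxtimes^{\mathrm{ps}}$ by \cref{DblCatClosed} --- a structure agreeing with the cartesian one up to trivial fibration by \cref{grayvscart} --- the functor $\llbracket\bH[1],-\rrbracket^{\mathrm{ps}}$ is right Quillen, hence preserves fibrant objects and, by Ken Brown's lemma, weak equivalences between them; as $P$ is a weak equivalence between the fibrant objects $\bD$ and $\bD'$, its image $\llbracket\bH[1],P\rrbracket^{\mathrm{ps}}$ is a weak equivalence between fibrant double categories, i.e.\ a double biequivalence. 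Thus the comparison of cospans is levelwise a double biequivalence between homotopy pullbacks, so the induced double functor $\bC\mathrm{one}^{\mathrm{ps}}_{\widetilde{\bH} \cJ}(\widetilde{\bH} K)\sslash^{\mathrm{ps}}(\ell,\lambda)\to\bC\mathrm{one}^{\mathrm{ps}}_{\bH \cJ}(\widetilde{\bH} K\circ I_\cJ)\sslash^{\mathrm{ps}}(\ell,\lambda)$ is a double biequivalence between weakly horizontally invariant double categories.

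For the commutative square, observe that each vertical projection $\bD\sslash^{\mathrm{ps}}(\ell,\lambda)\to\bD$ factors as the pullback leg $\bD\sslash^{\mathrm{ps}}(\ell,\lambda)\to\bD\times\bH[0]$ followed by the projection onto $\bD$; since the top horizontal map is the map induced on pullbacks by the comparison of cospans, whose $\bD\times\bH[0]$-component is $P\times\id_{\bH[0]}$, post-composing with the projection onto $\bD'$ returns $P$, the bottom horizontal map. Hence the square commutes by the universal property of the pullback. I expect the only genuinely non-formal inputs to be those concerning the right-hand legs: that $(\mathrm{ev}_0,\mathrm{ev}_1)\colon\llbracket\bH[1],\bD\rrbracket^{\mathrm{ps}}\to\bD\times\bD$ is a fibration and that $\llbracket\bH[1],-\rrbracket^{\mathrm{ps}}$ sends $P$ to a double biequivalence --- both ultimately resting on the monoidal model structure of \cref{DblCatClosed} together with the cofibrancy of $\bH[1]$ --- while the remainder is the same homotopy-pullback bookkeeping already carried out for \cref{prop:middlevsConeHtild}.
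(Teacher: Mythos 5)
Your argument is correct and is essentially the paper's intended proof: the paper disposes of this statement with ``This follows from \cref{prop:middlevsConeHtild}'', meaning exactly the formal comparison you spell out, namely presenting both slices as (strict = homotopy) pullbacks in $Dbl\cat_{\mathrm{whi}}$ of the cospans from the remark after \cref{sliceinDblCat} and applying the levelwise double biequivalences $P$, $P\times P$, and $\llbracket\bH[1],P\rrbracket^{\mathrm{ps}}$ (the latter via cofibrancy of $\bH[1]$ and \cref{DblCatClosed}). Your verification of the commutativity of the square via the $\bD\times\bH[0]$ corner likewise matches the intended bookkeeping, so no gap remains.
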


\begin{proof}
    This follows from \cref{prop:middlevsConeHtild}.
\end{proof}

Next, we modify $\bC\mathrm{one}^{\mathrm{ps}}_{\bH \cJ}(\bH  K)\sslash^{\mathrm{ps}} (\ell,\lambda)$ as before:

\begin{prop} \label{prop:ConeHKvsmiddleslice}
    Given a $2$-diagram $K\colon \cJ\to \cC$, the double functor between slices over $(\ell,\lambda)$ induced by the double functor from \cref{prop:ConeHKvsmiddle} is a double biequivalence, and fits into a commutative diagram of the following form:
    \[
\begin{tikzcd}
\bC\mathrm{one}^{\mathrm{ps}}_{\bH \cJ}({\bH} K)\sslash^{\mathrm{ps}} (\ell,\lambda)\arrow[r,"\simeq"]\arrow[d]&\bC\mathrm{one}^{\mathrm{ps}}_{{\bH}\cJ}(I_\cC\circ {\bH} K)\sslash^{\mathrm{ps}} (\ell,\lambda)\arrow[d]\\
\bC\mathrm{one}^{\mathrm{ps}}_{\bH \cJ}({\bH} K)\arrow[r,"\simeq"]&\bC\mathrm{one}^{\mathrm{ps}}_{{\bH}\cJ}(I_\cC\circ {\bH} K)
\end{tikzcd}
\]
\end{prop}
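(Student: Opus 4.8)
The plan is to mirror the proof of \cref{prop:ConeHKvsmiddle}, using the extra observation that a slice double category is itself a cone construction. Write $\bD\coloneqq\bC\mathrm{one}^{\mathrm{ps}}_{\bH \cJ}(\bH K)$ and $\bD'\coloneqq\bC\mathrm{one}^{\mathrm{ps}}_{\bH \cJ}(I_\cC\circ \bH K)$, and let $P\colon\bD\to\bD'$ be the double functor from \cref{prop:ConeHKvsmiddle}, which is a double biequivalence. Since $\bH[0]$ is the terminal double category, and hence the unit for the pseudo Gray tensor product, one has $\llbracket\bH[0],\bX\rrbracket^{\mathrm{ps}}\cong\bX$ with $\Delta^{\bH[0]}_\bX=\id_\bX$ for every double category $\bX$; comparing the defining pullbacks in \cref{sliceinDblCat,RewriteCones} then yields, for each object $d$ of $\bX$, a natural isomorphism $\bX\sslash^{\mathrm{ps}}d\cong\bC\mathrm{one}^{\mathrm{ps}}_{\bH[0]}(d)$ over $\bX$, where $d$ is regarded as a double functor $\bH[0]\to\bX$. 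Under this identification, the double functor between slices over $(\ell,\lambda)$ induced by $P$ is the comparison map $\bC\mathrm{one}^{\mathrm{ps}}_{\bH[0]}(\ell,\lambda)\to\bC\mathrm{one}^{\mathrm{ps}}_{\bH[0]}(P(\ell,\lambda))$ obtained by postcomposing the diagram with $P$, and it lies over $P$; so the commutative square in the statement is automatic from the compatibility of the slice projections with $P$.

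It then remains to show that this comparison map is a double biequivalence, i.e., that the functors $\bfH$ and $\bfH\llbracket\bV[1],-\rrbracket$ send it to biequivalences of $2$-categories. Applying \cref{lem:Hofcone} functorially with $\cJ=[0]$ identifies $\bfH$ of the comparison map with the $2$-functor $\cC\mathrm{one}^{\mathrm{ps}}_{[0]}(\ell,\lambda)\to\cC\mathrm{one}^{\mathrm{ps}}_{[0]}(\bfH P(\ell,\lambda))$ induced by $\bfH P\colon\bfH\bD\to\bfH\bD'$, where $\cC\mathrm{one}^{\mathrm{ps}}_{[0]}(-)$ is the relevant $2$-categorical pseudo-slice. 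Likewise, applying \cref{lem:HVofCone} with $\cJ=[0]$, together with the naturality of $E_{(-)}\colon\id\Rightarrow\llbracket\bV[1],-\rrbracket$, identifies $\bfH\llbracket\bV[1],-\rrbracket$ of the comparison map with the corresponding $2$-functor between $\cC\mathrm{one}^{\mathrm{ps}}_{[0]}$-constructions for $\bfH\llbracket\bV[1],\bD\rrbracket$ and $\bfH\llbracket\bV[1],\bD'\rrbracket$, induced by $\bfH\llbracket\bV[1],P\rrbracket$. Both $\bfH P$ and $\bfH\llbracket\bV[1],P\rrbracket$ are biequivalences, precisely because $P$ is a double biequivalence by \cref{prop:ConeHKvsmiddle}.

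The argument is then closed by the same homotopical observation used at the end of the proof of \cref{prop:ConeHKvsmiddle}: if $F\colon\cA\to\cA'$ is a biequivalence of $2$-categories and $x$ an object of $\cA$, then the induced $2$-functor $\cC\mathrm{one}^{\mathrm{ps}}_{[0]}(x)\to\cC\mathrm{one}^{\mathrm{ps}}_{[0]}(Fx)$ is a biequivalence. Indeed, $\cC\mathrm{one}^{\mathrm{ps}}_{[0]}(x)$ is the pullback of the fibration $(\ev_0,\ev_1)\colon[[1],\cA]^{\mathrm{ps}}\to\cA\times\cA$ along $\id_\cA\times x$, hence a homotopy pullback since every $2$-category is fibrant, and the induced comparison of cospans has components $F$, $F\times F$, and $[[1],F]^{\mathrm{ps}}$, each of which is a biequivalence — the last because $[[1],-]^{\mathrm{ps}}$ preserves biequivalences, being right Quillen for the Lack model structure (as $[1]$ is cofibrant). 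Taking $F=\bfH P$ and $F=\bfH\llbracket\bV[1],P\rrbracket$ then shows that the comparison map is a double biequivalence, as claimed.

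The main obstacle is the one flagged before the statement: $\bD=\bC\mathrm{one}^{\mathrm{ps}}_{\bH \cJ}(\bH K)$ need not be weakly horizontally invariant, so — unlike in \cref{prop:middlevsConeHtildslice} — one cannot simply invoke stability of weak equivalences under homotopy pullback in $Dbl\cat_{\mathrm{whi}}$. What makes the proof go through is the observation that a slice, exactly like a cone, is turned into a genuine $2$-categorical pseudo-slice by both $\bfH$ and $\bfH\llbracket\bV[1],-\rrbracket$ — which is precisely \cref{lem:Hofcone,lem:HVofCone} specialized to $\cJ=[0]$ — and, granting this, no input is required beyond \cref{prop:ConeHKvsmiddle}.
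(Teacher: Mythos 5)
Your argument is correct and essentially the same as the paper's: the paper packages your slice-as-cone observation ($\bD\sslash^{\mathrm{ps}}d\cong\bC\mathrm{one}^{\mathrm{ps}}_{\bH[0]}d$, then \cref{lem:Hofcone,lem:HVofCone}) into \cref{cor:Hofslice}, and then likewise concludes that $\bfH$ and $\bfH\llbracket\bV[1],-\rrbracket$ of the slice comparison are the induced $2$-functors between pseudo-slices, which are biequivalences because the ambient $2$-functors are (by \cref{prop:ConeHKvsmiddle}, equivalently because $P$ is a double biequivalence). Your final homotopy-pullback justification just makes explicit a step the paper states tersely, using the same reasoning it employs at the end of the proof of \cref{prop:ConeHKvsmiddle}.
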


This result is again less formal as the double category $\bC\mathrm{one}^{\mathrm{ps}}_{\bH \cJ}(\bH  K)$ is not necessarily weakly horizontally invariant. As before, we start by introducing a $2$-categorical analogue of the slice construction.

\begin{notn}
Given a $2$-category $\cC$ and an object $c$ of $\cC$, the \emph{pseudo-slice $2$-category} $\cC\slash^{\mathrm{ps}}c$ of $\cC$ over $c$ is the following  pullback in $2\cat$:
\[\begin{tikzcd}
\cC\slash^{\mathrm{ps}} c\arrow[r]\arrow[d, two heads]\arrow[rd, phantom, "\lrcorner",very near start]&{[[1],\cC]^{\mathrm{ps}}}\arrow[d, two heads,"{(\mathrm{ev}_0,\mathrm{ev}_1)}"]\\
\cC\times[0]\arrow[r,"\cC\times c" swap]&\cC\times\cC
    \end{tikzcd}
    \]
\end{notn}

Next, we study the compatibility of the functors $\bfH ,\bfH \llbracket\bV[1],-\rrbracket\colon Dbl\cat\to 2\cat$ with the slice constructions. 

\begin{lem} \label{cor:Hofslice}
\label{cor:HVofslice}
    Given a double category $\bD$ and an object $d$ of $\bD$, there are $2$-isomorphism 
    \[ \bfH (\bD \sslash^{\mathrm{ps}} d)\cong (\bfH \bD)\slash^{\mathrm{ps}} d\quad\text{ and }\quad\bfH \llbracket\bV[1],\bD \sslash^{\mathrm{ps}} d\rrbracket\cong (\bfH \llbracket\bV[1],\bD\rrbracket)\slash^{\mathrm{ps}} e_d, \]
    where $e_d$ denotes the vertical identity at $d$. 
\end{lem}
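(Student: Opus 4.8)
The plan is to run the same argument as in the proofs of \cref{lem:Hofcone} and \cref{lem:HVofCone}, now starting from the defining pullback square of the slice instead of of the cone. Both functors $\bfH\colon Dbl\cat\to 2\cat$ and $\bfH\llbracket\bV[1],-\rrbracket\colon Dbl\cat\to 2\cat$ preserve limits: the first since it is a right adjoint, and the second since it is the composite of the limit-preserving internal hom $\llbracket\bV[1],-\rrbracket$ with $\bfH$. Hence each of them sends the strict pullback in $Dbl\cat$ defining $\bD\sslash^{\mathrm{ps}}d$ (recalled just before \cref{Sec:charhomotopy2limits}, with upper-right corner $\llbracket\bH[1],\bD\rrbracket^{\mathrm{ps}}$, vertical map $(\mathrm{ev}_0,\mathrm{ev}_1)$, and bottom row $\bD\times\bH[0]\to\bD\times\bD$) to a pullback square in $2\cat$.

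For the first isomorphism I would apply $\bfH$. Using that $\bfH$ preserves products and $\bfH\bH[0]\cong[0]$, the bottom row becomes $\bfH\bD\times[0]\xrightarrow{\bfH\bD\times d}\bfH\bD\times\bfH\bD$, and by \cite[Lemma 2.14]{MSV0} applied with the $2$-category $[1]$ the upper-right corner becomes $[[1],\bfH\bD]^{\mathrm{ps}}$, compatibly with $(\mathrm{ev}_0,\mathrm{ev}_1)$. The resulting square is exactly the pullback defining $(\bfH\bD)\slash^{\mathrm{ps}}d$, which yields the $2$-isomorphism $\bfH(\bD\sslash^{\mathrm{ps}}d)\cong(\bfH\bD)\slash^{\mathrm{ps}}d$. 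For the second isomorphism I would instead apply $\bfH\llbracket\bV[1],-\rrbracket$: the bottom row becomes $\bfH\llbracket\bV[1],\bD\rrbracket\times[0]\to\bfH\llbracket\bV[1],\bD\rrbracket\times\bfH\llbracket\bV[1],\bD\rrbracket$, where the basepoint is the image of $d$ under the map $\bD\cong\llbracket\bH[0],\bD\rrbracket\to\llbracket\bV[1],\bD\rrbracket$ induced by $\bV[1]\to[0]$, i.e.\ precisely the vertical identity $e_d$. For the upper-right corner I would run the chain of natural $2$-isomorphisms appearing in \cref{lem:HVofCone} -- internal pseudo-hom, \cite[Lemma 7.8]{MSV0}, \cite[Lemma 2.14]{MSV0}, internal pseudo-hom again -- which in this case (there being no diagram $\cJ$ to carry along) collapses to identifying $\bfH\llbracket\bV[1],\llbracket\bH[1],\bD\rrbracket^{\mathrm{ps}}\rrbracket$ with $[[1],\bfH\llbracket\bV[1],\bD\rrbracket]^{\mathrm{ps}}$, again compatibly with the evaluation maps. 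Comparing with the defining pullback of the pseudo-slice $2$-category recognizes this as $(\bfH\llbracket\bV[1],\bD\rrbracket)\slash^{\mathrm{ps}}e_d$.

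The only genuinely non-formal point, and hence the step to be careful about, is the bookkeeping of the basepoint: checking that $\bfH$ sends the map $\bD\times d$ to $\bfH\bD\times d$, that $\bfH\llbracket\bV[1],-\rrbracket$ sends it to the map selecting $e_d$, and that all the identifications of the upper-right corners commute with $(\mathrm{ev}_0,\mathrm{ev}_1)$, so that the induced comparison of cospans is a levelwise $2$-isomorphism. Granting this, the two conclusions follow as a direct transcription of \cref{lem:Hofcone} and \cref{lem:HVofCone}.
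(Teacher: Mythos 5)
Your argument is correct and is essentially the computation the paper relies on: applying the limit-preserving functors $\bfH$ and $\bfH\llbracket\bV[1],-\rrbracket$ to the defining pullback of the slice and identifying the corners via \cite[Lemmas 2.14 and 7.8]{MSV0}, with the basepoint tracked to $d$, respectively $e_d$. The paper merely packages this more economically by observing that $\bD\sslash^{\mathrm{ps}}d\cong\bC\mathrm{one}^{\mathrm{ps}}_{\bH[0]}d$ and $\cC\slash^{\mathrm{ps}}c\cong\cC\mathrm{one}^{\mathrm{ps}}_{[0]}c$, so that \cref{lem:Hofcone,lem:HVofCone} apply verbatim with $K=d\colon\bH[0]\to\bD$, which is exactly the specialization you re-derive by hand.
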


\begin{proof}
    Note that, by considering an object $d$ of a double category $\bD$ as a diagram $d\colon \bH [0]\to \bD$, we have an isomorphism in $Dbl\cat$ 
    \[ \bD \sslash^{\mathrm{ps}} d\cong \bC\mathrm{one}^{\mathrm{ps}}_{\bH [0]} d \]
    and, by considering an object $c$ of $\cC$ as a $2$-diagram $c\colon [0]\to \cC$, we have a $2$-isomorphism \[  \cC \slash^{\mathrm{ps}} c\cong \cC\mathrm{one}^{\mathrm{ps}}_{[0]} c. \]
    The result is then obtained by specializing \cref{lem:Hofcone,lem:HVofCone} to $K=d\colon \bH [0]\to \bD$. 
\end{proof}

We can now prove the proposition:

\begin{proof}[Proof of \cref{prop:ConeHKvsmiddleslice}]
We need to show that applying the functors $\bfH $ and $\bfH  \llbracket\bV[1],-\rrbracket$ to the double functor
\[ \bC\mathrm{one}^{\mathrm{ps}}_{\bH \cJ}({\bH} K)\sslash^{\mathrm{ps}} (\ell,\lambda)\to \bC\mathrm{one}^{\mathrm{ps}}_{{\bH}\cJ}(I_\cC\circ {\bH} K)\sslash^{\mathrm{ps}} (\ell,\lambda) \]
yield two biequivalences. 

First, by \cref{cor:Hofslice}, we have that the $2$-functor 
\[ \bfH (\bC\mathrm{one}^{\mathrm{ps}}_{\bH \cJ}({\bH} K)\sslash^{\mathrm{ps}} (\ell,\lambda))\to \bfH (\bC\mathrm{one}^{\mathrm{ps}}_{{\bH}\cJ}(I_\cC\circ {\bH} K)\sslash^{\mathrm{ps}} (\ell,\lambda)) \]
is given by the $2$-functor 
    \[ (\bfH \bC\mathrm{one}^{\mathrm{ps}}_{\bH \cJ}({\bH} K))\slash^{\mathrm{ps}} (\ell,\lambda)\to (\bfH \bC\mathrm{one}^{\mathrm{ps}}_{{\bH}\cJ}(I_\cC\circ {\bH} K))\slash^{\mathrm{ps}} (\ell,\lambda). \]
    Since the $2$-functor $\bfH \bC\mathrm{one}^{\mathrm{ps}}_{\bH \cJ}({\bH} K)\to \bfH \bC\mathrm{one}^{\mathrm{ps}}_{{\bH}\cJ}(I_\cC\circ {\bH} K)$ is a biequivalence by \cref{prop:ConeHKvsmiddle}, so is the above $2$-functor between slices.
    
    Now, by \cref{cor:HVofslice}, we have that the $2$-functor 
    \[\bfH  \llbracket\bV[1],\bC\mathrm{one}^{\mathrm{ps}}_{\bH \cJ}({\bH} K)\sslash^{\mathrm{ps}} (\ell,\lambda)\rrbracket\to \bfH  \llbracket\bV[1],\bC\mathrm{one}^{\mathrm{ps}}_{{\bH}\cJ}(I_\cC\circ {\bH} K)\sslash^{\mathrm{ps}} (\ell,\lambda)\rrbracket \] is given by the $2$-functor
    \[ (\bfH  \llbracket\bV[1],\bC\mathrm{one}^{\mathrm{ps}}_{\bH \cJ}({\bH} K)\rrbracket)\slash^{\mathrm{ps}} (\ell,\id_\lambda)\to (\bfH  \llbracket\bV[1],\bC\mathrm{one}^{\mathrm{ps}}_{{\bH}\cJ}(I_\cC\circ {\bH} K)\rrbracket)\slash^{\mathrm{ps}} (\ell,\id_\lambda).  \]
    Since the $2$-functor $\bfH  \llbracket\bV[1],\bC\mathrm{one}^{\mathrm{ps}}_{\bH \cJ}({\bH} K)\rrbracket\to \bfH  \llbracket\bV[1],\bC\mathrm{one}^{\mathrm{ps}}_{{\bH}\cJ}(I_\cC\circ {\bH} K)\rrbracket$ is a biequi\-va\-lence by \cref{prop:ConeHKvsmiddle}, so is the above $2$-functor between slices.
\end{proof}

We can now prove the desired characterization of homotopy $2$-limits:

\begin{proof}[Proof of \cref{Homotopy2LimitsUsingCommas}]
    Combining \cref{prop:middlevsConeHtildslice,prop:ConeHKvsmiddleslice}, there are zig-zags of double bi\-equiv\-alences fitting into a commutative diagram of the following form:
\[
\begin{tikzcd}
\bC\mathrm{one}^{\mathrm{ps}}_{\bH \cJ}({\bH} K)\sslash^{\mathrm{ps}} (\ell,\lambda)\arrow[d]\arrow[r,"\simeq"]&\bC\mathrm{one}^{\mathrm{ps}}_{\bH \cJ}(F)\sslash^{\mathrm{ps}} (\ell,\lambda)\arrow[d]&\bC\mathrm{one}^{\mathrm{ps}}_{\widetilde{\bH} \cJ}(\widetilde{\bH}  K)\sslash^{\mathrm{ps}} (\ell,\lambda)\arrow[d]\arrow[l,"\simeq" swap]\\
\bC\mathrm{one}^{\mathrm{ps}}_{\bH \cJ}({\bH} K)\arrow[r,"\simeq" ]&\bC\mathrm{one}^{\mathrm{ps}}_{\bH \cJ}(F)&\bC\mathrm{one}^{\mathrm{ps}}_{\widetilde{\bH} \cJ}(\widetilde{\bH}  K)\arrow[l,"\simeq" swap]
\end{tikzcd}
\]
where $F=I_\cC\circ {\bH} K=\widetilde{\bH}  K\circ I_\cJ\colon \bH \cJ\to \widetilde{\bH} \cC$ using \cref{remark3000}. Hence, by $2$-out-of-$3$, the double functor 
\[\bC\mathrm{one}^{\mathrm{ps}}_{{\bH}\cJ}({\bH}K)\sslash^{\mathrm{ps}}(\ell,\lambda)\longrightarrow\bC\mathrm{one}^{\mathrm{ps}}_{{\bH}\cJ}({\bH}K)\]
is a double biequivalence if and only if the double functor
\[\bC\mathrm{one}^{\mathrm{ps}}_{\widetilde{\bH} \cJ}(\widetilde{\bH} K)\sslash^{\mathrm{ps}}(\ell,\lambda)\longrightarrow\bC\mathrm{one}^{\mathrm{ps}}_{\widetilde{\bH} \cJ}(\widetilde{\bH} K) \]
is a double biequivalence. Hence the desired result follows from \cref{cMResult}.
\end{proof}

\subsection{Properties of double \texorpdfstring{$(\infty,1)$}{(infinity,1)}-nerve of double categories}

\label{PropertyofNerve}

In this section we provide proofs for the properties of the nerve $\bN\colon [Dbl\cat]_\infty\to   DblCat_{(\infty,1)}$ stated in \cref{NerveAndFibrancy}. Consider the adjunction 
\[
\bC\colon  \sset^{\Delta^{\op}\times\Delta^{\op}} \rightleftarrows Dbl\cat \colon\bN\]
constructed in \cite[\textsection5.1]{MoserNerve}.

\begin{notn}
    We denote by $\sset^{\Delta^{\op}\times\Delta^{\op}}_{\mathrm{dbl}(\infty,1)}$ the model structure on the category $\sset^{\Delta^{\op}\times\Delta^{\op}}$ for Segal objects in complete Segal spaces. It follows from \cite[Theorem 13.15]{BSP} and the definition of $Dbl\cat_{(\infty,1)}$ as the $\infty$-category of Segal objects in $Cat_{(\infty,1)}$ that there is an equivalence between the underlying $\infty$-category of $\sset^{\Delta^{\op}\times\Delta^{\op}}_{\mathrm{dbl}(\infty,1)}$ and $Dbl\cat_{(\infty,1)}$.
\end{notn}

The adjunction $\bC\dashv \bN$ has the following homotopical properties. 

\begin{thm}[{\cite[Theorems 5.2.8, 5.3.1, and 6.6.1]{MoserNerve}}]
\label{NerveRightAdjoint}
The adjunction
  \[
\bC\colon  \sset^{\Delta^{\op}\times\Delta^{\op}}_{\mathrm{dbl}(\infty,1)}\rightleftarrows Dbl\cat_{\mathrm{whi}}\colon\bN\]
\begin{enumerate}[leftmargin=*]
\item is a Quillen pair;
\item is a Quillen localization pair, i.e., the derived counit is a weak equivalence in $Dbl\cat_\mathrm{whi}$.
\end{enumerate}
Moreover, we have that 
\begin{enumerate}[leftmargin=*,start=3]
\item for every $2$-category $\cC$, the nerve $\bN\widetilde{\bH}\cC$ is a $2$-fold complete Segal space, i.e., an $(\infty,2)$-category.
\end{enumerate}
\end{thm}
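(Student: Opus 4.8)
This is \cite[Theorems~5.2.8, 5.3.1, and 6.6.1]{MoserNerve}; I record here the shape of the argument. For (1), the plan is to check that $\bC$ preserves generating cofibrations and generating trivial cofibrations of $\sset^{\Delta^{\op}\times\Delta^{\op}}_{\mathrm{dbl}(\infty,1)}$. Recall that the latter is a left Bousfield localization of the Reedy model structure on $\sset^{\Delta^{\op}\times\Delta^{\op}}$ built fiberwise from the Kan--Quillen model structure on $\sset$, localized at the horizontal and vertical Segal maps, the two completeness maps, and the fiberwise horn inclusions; in particular all of its objects are cofibrant. By the universal property of Bousfield localization it then suffices to see, first, that $\bC$ is left Quillen before localizing — a direct check on boundary inclusions using the explicit formula for $\bC$ on representables together with the characterization of cofibrant double categories from \cite[Corollary~3.13]{MSV1} — and, second, that $\bC$ sends each localizing map to a weak equivalence in $Dbl\cat_{\mathrm{whi}}$. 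The second point is where weak horizontal invariance is essential: $\bC$ of a Segal map is a strict decomposition, hence an isomorphism of double categories, while $\bC$ of a completeness or horn inclusion becomes a double biequivalence only because weak horizontal invariance is built into the target model structure.

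For (2), since all objects of the source are cofibrant, the derived counit at a fibrant (equivalently weakly horizontally invariant) double category $\bD$ is simply the counit $\bC\bN\bD\to\bD$, so the task is to show this double functor is a double biequivalence. The key inputs are that, for $\bD$ weakly horizontally invariant, the nerve $\bN\bD$ is Reedy fibrant (by \cite[Theorem~5.30]{MoserNerve}, recalled above), a Segal object (because $\bD$ is a strict internal category, so the Segal maps of its nerve are isomorphisms), and complete; one then analyses the counit directly, showing it is bijective on objects and on horizontal and vertical morphisms and fully faithful on squares, with the surjectivity-up-to-equivalence coming from the companion-pair data that weak horizontal invariance provides. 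The characterization of weak equivalences between fibrant objects then closes the argument.

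For (3), the plan is to combine the above with the particular structure of $\widetilde{\bH}$. Since $\widetilde{\bH}\cC$ is weakly horizontally invariant by \cref{prop:thm65MSV}, the nerve $\bN\widetilde{\bH}\cC$ is Reedy fibrant, a Segal object in complete Segal spaces, and complete in the outer (``$2$-categorical'') direction; it remains only to see that level $0$ is a space, i.e., that $(\bN\widetilde{\bH}\cC)_0$ is an essentially constant complete Segal space. This holds because the vertical morphisms of $\widetilde{\bH}\cC$ are adjoint equivalence data in $\cC$, so the category of objects and vertical morphisms is a groupoid and its nerve is a Kan complex; hence $\bN\widetilde{\bH}\cC$ is a $2$-fold complete Segal space with level $0$ a space, i.e., an $(\infty,2)$-category in the sense used here.

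The main obstacle throughout is the non-formal combinatorics underlying parts (1) and (2): controlling how $\bC$ interacts with the pseudo Gray tensor product and, crucially, with companion pairs, so that weak horizontal invariance of the target can be used to upgrade the levelwise strict comparisons into genuine double biequivalences. This is carried out in detail in \cite[Sections~5--6]{MoserNerve}.
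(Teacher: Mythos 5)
The paper offers no internal proof of this statement: it is imported wholesale from \cite{MoserNerve} (Theorems 5.2.8, 5.3.1, and 6.6.1), so your deferral to that reference is exactly the move the paper itself makes, and in that sense the proposal is consistent with the paper's approach. Two cautions about the surrounding sketch, though, so that it is not mistaken for established detail. First, your description of the source model structure mentions ``the two completeness maps,'' but $\sset^{\Delta^{\op}\times\Delta^{\op}}_{\mathrm{dbl}(\infty,1)}$ is the model structure for \emph{Segal objects in complete Segal spaces}: completeness is imposed only in the inner direction, not in both, and getting this wrong would change which localizing maps $\bC$ must send to weak equivalences in part (1). Second, for part (3) your argument that level $0$ of $\bN\widetilde{\bH}\cC$ is a space because ``the category of objects and vertical morphisms is a groupoid'' is not literally correct: vertical morphisms of $\widetilde{\bH}\cC$ are adjoint equivalence data, and composing such data with its reverse does not give the identity on the nose, so $(\widetilde{\bH}\cC)_0$ is not a strict groupoid. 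What actually makes level $0$ essentially constant is that the vertical morphisms are (adjoint) equivalences together with the invertible squares relating them, which is the content of the completeness analysis in \cite{MoserNerve}; as stated, your shortcut would not compile into a proof. Neither point is a gap relative to this paper, since the paper proves nothing here beyond citing \cite{MoserNerve}, but both would need repair if you intended the sketch as an actual argument.
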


Recall that the nerve $\bN\colon [Dbl\cat]_\infty\to Dbl\cat_{(\infty,1)}$ from \cref{constr:nerve} is, by definition, the composite of the induced functor between underlying $\infty$-categories by $\bN\colon Dbl\cat_{\mathrm{whi}}\to \sset^{\Delta^{\op}\times\Delta^{\op}}_{\mathrm{dbl}(\infty,1)}$ followed by the equivalence of the underlying $\infty$-category of $\sset^{\Delta^{\op}\times\Delta^{\op}}_{\mathrm{dbl}(\infty,1)}$ and $Dbl\cat_{(\infty,1)}$. Hence, the results (1)-(3) of \cref{NerveAndFibrancy} are now formal consequences of the above result.

It remains to show (4) of \cref{NerveAndFibrancy}. For this, we first study the compatibility of the functor 
  \[
\bC\colon  \sset^{\Delta^{\op}\times\Delta^{\op}}_{\mathrm{dbl}(\infty,1)}\longrightarrow Dbl\cat_{\mathrm{whi}} \]
with the monoidal structures on both sides. We can restrict the verification to objects in the image of the canonical inclusion $\set^{\Delta^{\op}\times \Delta^{\op}}\hookrightarrow \sset^{\Delta^{\op}\times \Delta^{\op}}$ since the functor $\bC\colon \sset^{\Delta^{\op}\times\Delta^{\op}}\to Dbl\cat$ is a simplicial left Kan extension, where $Dbl\cat$ is tensored over $\sset$ via the functor $-\boxtimes^\mathrm{ps} \bC K$, for every $K\in \sset$ seen as a bisimplicial space through the canonical inclusion $\sset\hookrightarrow \sset^{\Delta^{\op}\times\Delta^{\op}}$.

 Hence, given bisimplicial sets $X$ and $Y$, i.e., objects in $\set^{\Delta^{\op}\times \Delta^{\op}}$ regarded as bisimplicial spaces through the canonical inclusion $\set^{\Delta^{\op}\times \Delta^{\op}}\hookrightarrow \sset^{\Delta^{\op}\times \Delta^{\op}}$, we construct in \cref{cor:comparisonmap} a double functor 
    \[ \bC( X\times  Y)\to \bC  X\boxtimes^{\mathrm{ps}} \bC  Y \]
natural in $X$ and $Y$, and then show as \cref{CpreservesProducts} that it is a weak equivalence in $Dbl\cat_{\mathrm{whi}}$.

\begin{notn}
    Given $m,t\geq 0$, we denote by $[m,t]$ the double category $\bH [m]\times \bV[t]$ and we denote by $F[m,t]$ the representable in $\sset^{\Delta^{\op}\times \Delta^{\op}}$ at the object $([m],[t])\in \Delta\times\Delta$. We have an identification $\bN [m,t]\cong F[m,t]$. 
\end{notn}

We start by constructing a double functor 
\[ \bC(F[m,t]\times  F[m',t'])\to \bC  F[m,t]\boxtimes^{\mathrm{ps}} \bC  F[m',t'] \]
natural in $m,t,m',t'\geq 0$.

By the universal property of the product, for all $m,t,m',t'\geq 0$, there are natural double functors
\[ \bC(F[m,t]\times  F[m',t'])\to \bC  F[m,t]\times \bC  F[m',t'] \]
that assemble into a natural transformation of functors $\Delta^{\times 4}\to Dbl\cat$ 
\[ \bC(F[-,-]\times  F[-,-])\Rightarrow \bC  F[-,-]\times \bC  F[-,-]. \]
Moreover, for all $m,t,m',t'\geq 0$, there are natural projection double functors 
\[ \bC  F[m,t]\boxtimes^{\mathrm{ps}} \bC  F[m',t']\to \bC  F[m,t]\times \bC  F[m',t'] \]
that assemble into a natural transformation of functors $\Delta^{\times 4}\to Dbl\cat$
\[ \bC  F[-,-]\boxtimes^{\mathrm{ps}} \bC  F[-,-]\Rightarrow \bC  F[-,-]\times \bC  F[-,-]. \]
Moreover, these projection double functors are all trivial fibrations in $Dbl\cat_\mathrm{whi}$ by \cref{grayvscart}, and so the resulting natural transformation is levelwise a trivial fibration in $Dbl\cat_\mathrm{whi}$. 

We want to find a lift in the following diagram in the projective model structure on $(Dbl\cat_\mathrm{whi})^{\Delta^{\times 4}}$
\begin{equation} \label{lift}
    \begin{tikzcd}
&[10pt]{\bC  F[-,-]\boxtimes^{\mathrm{ps}} \bC  F[-,-]}\arrow[d]\\
\bC(F[-,-]\times  F[-,-]) \arrow[r]\arrow[ru,dashed]&  \bC  F[-,-]\times \bC  F[-,-]
    \end{tikzcd}
\end{equation}
For this, it is sufficient to show that $\bC(F[-,-]\times  F[-,-])\colon \Delta^{\times 4}\to Dbl\cat_\mathrm{whi}$ is projectively cofibrant.

\begin{lem} \label{lem:projcof}
    The functor $F[-,-]\times F[-,-]\colon \Delta^{\times 4}\to \sset^{\Delta^{\op}\times \Delta^{\op}}_{\mathrm{dbl}(\infty,1)}$ given at $m,t,m',t'\geq 0$ by the product $F[m,t]\times F[m',t']$ is projectively cofibrant. 
\end{lem}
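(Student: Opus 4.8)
The plan is to reduce this four–variable statement to a one–variable one by a formal external–product argument, and then to recognise the resulting one–variable diagram as (the image of) the cosimplicial standard simplex. Throughout write $\mathcal{M}:=\sset^{\Delta^{\op}\times\Delta^{\op}}_{\mathrm{dbl}(\infty,1)}$; this is a presheaf category of trisimplicial sets whose cofibrations are exactly the monomorphisms, so in particular the cartesian product makes $\mathcal{M}$ into a symmetric monoidal model category (a pushout–product $i\mathbin{\square} j$ of monomorphisms is again a monomorphism). The first ingredient is the purely formal fact that the external product over a product category of projectively cofibrant diagrams valued in such an $\mathcal{M}$ is again projectively cofibrant: a generating projective cofibration $\mathcal{D}(d,-)\cdot i$ of $\mathcal{M}^{\mathcal{D}}$ and one $\mathcal{D}'(d',-)\cdot j$ of $\mathcal{M}^{\mathcal{D}'}$ have external product $(\mathcal{D}\times\mathcal{D}')((d,d'),-)\cdot(i\mathbin{\square} j)$, which is a projective cofibration of $\mathcal{M}^{\mathcal{D}\times\mathcal{D}'}$ since $i\mathbin{\square} j$ is a cofibration of $\mathcal{M}$; and external products carry relative cell complexes to relative cell complexes and retracts to retracts.

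Next I would observe that in $\mathcal{M}$ one has $F[m,t]\cong F[m,0]\times F[0,t]$ — both are the trisimplicial set which in tridegree $(p,q,r)$ is $\Delta([p],[m])\times\Delta([q],[t])$, the remaining factors $\Delta([p],[0])$ and $\Delta([r],[0])$ contributed by the right-hand presentation being terminal. Since products in $\mathcal{M}$ are computed degreewise and commute, the diagram $F[-,-]\times F[-,-]\colon\Delta^{\times4}\to\mathcal{M}$ is coordinatewise the external product over $\Delta\times\Delta\times\Delta\times\Delta$ of the four one–variable diagrams $F[-,0],\, F[0,-],\, F[-,0],\, F[0,-]\colon\Delta\to\mathcal{M}$. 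By the first paragraph it therefore suffices to prove that $F[-,0]\colon\Delta\to\mathcal{M}$ is projectively cofibrant (the case of $F[0,-]$ being symmetric, after permuting the two simplicial directions of $\mathcal{M}$).

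For the remaining one–variable statement I would let $\iota\colon\Delta\hookrightarrow\Delta\times\Delta\times\Delta$, $[m]\mapsto([m],[0],[0])$, and let $\iota_!\colon\sset\to\mathcal{M}$ denote left Kan extension, so that $F[-,0]\cong\iota_!\circ\Delta^{\bullet}$ with $\Delta^{\bullet}\colon\Delta\to\sset$ the cosimplicial standard simplex. Because $\iota$ has a left adjoint (the projection onto the first factor), $\iota_!$ coincides with the corresponding restriction functor, and in particular preserves monomorphisms; hence it sends generating projective cofibrations of $\sset^{\Delta}$ to projective cofibrations of $\mathcal{M}^{\Delta}$, so the induced functor $\sset^{\Delta}\to\mathcal{M}^{\Delta}$ preserves projectively cofibrant diagrams. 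The proof is then completed by the statement that $\Delta^{\bullet}$ is projectively cofibrant in $\sset^{\Delta}$, which one sees by exhibiting $\emptyset\to\Delta^{\bullet}$ as a transfinite (in fact countable) composite of pushouts of coproducts of the generating projective cofibrations $\Delta([n],-)\cdot(\partial\Delta^{n}\hookrightarrow\Delta^{n})$, organised along the skeletal filtration $[m]\mapsto \mathrm{sk}_{n}\Delta^{m}$. I expect this last verification — checking that at stage $n$ the pushout is exactly $[m]\mapsto\mathrm{sk}_{n}\Delta^{m}$, i.e.\ correctly accounting for the way the non-injective maps $[n]\to[m]$ interact with the degeneracies in $\Delta^{n}$ — to be the only non-formal point; all the preceding reductions are soft, resting only on the monoidality of $\mathcal{M}$ and the adjoint description of $\iota_!$.
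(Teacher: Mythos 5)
Your formal reductions are fine: the pushout--product argument showing that external products of projectively cofibrant diagrams are projectively cofibrant, the splitting $F[m,t]\cong F[m,0]\times F[0,t]$, and the identification of $\iota_!$ with restriction along the projection (so that postcomposition with it sends the generating projective cofibrations $\Delta([n],-)\cdot(\partial\Delta^n\hookrightarrow\Delta^n)$ to projective cofibrations) are all correct. The argument breaks at exactly the point you flagged as ``the only non-formal point'': the skeletal filtration of $\Delta^\bullet$ is \emph{not} a cellular filtration by free cells $\Delta([n],-)\cdot(\partial\Delta^n\hookrightarrow\Delta^n)$. Indeed, whatever set of such cells you attach at stage $n\geq 1$, evaluation at the object $[0]$ turns each cell into a copy of $\partial\Delta^n\hookrightarrow\Delta^n$ (since $\Delta([n],[0])$ is a point), and a pushout along $\partial\Delta^n\hookrightarrow\Delta^n$ always creates a new nondegenerate $n$-simplex; so the pushout cannot have value $\Delta^0=\mathrm{sk}_n\Delta^0$ at $[0]$ unless no cells are attached, which is impossible since $\mathrm{sk}_{n-1}\Delta^m\neq\mathrm{sk}_n\Delta^m$ for $m\geq n$. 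The structural reason is that the cells of $\Delta([n],-)\cdot\Delta^n$ at level $[m]$ are indexed by \emph{all} monotone maps $[n]\to[m]$, whereas $\mathrm{sk}_n\Delta^m$ only acquires cells for the injective ones, and the injections are not functorial in $[m]$. Equivalently: $\Delta^\bullet$ is levelwise a free diagram of sets (generated by $\iota_n$), but the degeneracies do not take generators to generators ($s_i\iota_n$ is not a generator), so the standard ``free diagram $\Rightarrow$ projectively cofibrant'' criterion does not apply, and no rearrangement of the same cells will fix this.

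Moreover, this is not a repairable bookkeeping issue but a genuine obstruction to your endpoint: if $\Delta^\bullet$ were projectively cofibrant in $\sset^{\Delta}$, then by SM7 the functor $\mathrm{Tot}=\underline{\mathrm{Hom}}(\Delta^\bullet,-)$ would carry levelwise trivial fibrations to trivial fibrations, hence $\mathrm{Tot}$ would compute the homotopy limit of \emph{every} levelwise fibrant cosimplicial space; this is precisely the statement that Reedy fibrancy is invoked to guarantee and it is not true in general --- the classical facts give only Reedy cofibrancy of $\Delta^\bullet$. So the final claim of your plan is at best unproven and most likely false. Note that the paper's proof never goes near such a cell structure: it takes as known the projective cofibrancy of the two-variable diagram of discrete representables $F[-,-]\colon\Delta^{\times 2}\to\sset^{\Delta^{\op}\times\Delta^{\op}}_{\mathrm{dbl}(\infty,1)}$ and transports it to four variables by restricting along the projection $\Delta^{\times 4}\to\Delta^{\times 2}$ and then taking the objectwise product, using that the cartesian product is a Quillen bifunctor and all objects of the target are cofibrant. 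If you wish to keep your (quite pleasant) external-product decomposition, you would still need an independent proof that the one-variable diagram $F[-,0]\colon\Delta\to\sset^{\Delta^{\op}\times\Delta^{\op}}_{\mathrm{dbl}(\infty,1)}$ is projectively cofibrant; deducing it from a purported projective cofibrancy of $\Delta^\bullet$ in cosimplicial simplicial sets is the step that fails.
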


\begin{proof}
    First, recall that $F[-,-]\colon \Delta^{\times 2}\to \sset^{\Delta^{\op}\times \Delta^{\op}}_{\mathrm{dbl}(\infty,1)}$ given at $m,t\geq 0$ by the representable $F[m,t]$ is projectively cofibrant. As the projection $\pi_{1,2}\colon \Delta^{\times 4}\to \Delta^{\times 2}$ onto the first two components is left adjoint to the inclusion $\Delta^{\times 2}\to \Delta^{\times 4}$ given at $m,t\geq 0$ by $([m],[t],[0],[0])$, it induces by pre-composition a left Quillen functor between projective model structures
    \[ \pi_{1,2}^*\colon (\sset^{\Delta^{\op}\times \Delta^{\op}}_{\mathrm{dbl}(\infty,1)})^{\Delta^{\times 2}}\to (\sset^{\Delta^{\op}\times \Delta^{\op}}_{\mathrm{dbl}(\infty,1)})^{\Delta^{\times 4}}. \] 
    In particular, it preserves cofibrant objects and so the functor $F[-,-]\colon \Delta^{\times 4}\to \sset^{\Delta^{\op}\times \Delta^{\op}}_{\mathrm{dbl}(\infty,1)}$ given at $m,t,m',t'\geq 0$ by the representable $F[m,t]$ is also projectively cofibrant. Similarly, the functor $F[-,-]\colon \Delta^{\times 4}\to \sset^{\Delta^{\op}\times \Delta^{\op}}_{\mathrm{dbl}(\infty,1)}$ given at $m,t,m',t'\geq 0$ by the representable $F[m',t']$ is projectively cofibrant. 
    
    Next, since the product $-\times -\colon \sset^{\Delta^{\op}\times \Delta^{\op}}_{\mathrm{dbl}(\infty,1)}\times \sset^{\Delta^{\op}\times \Delta^{\op}}_{\mathrm{dbl}(\infty,1)}\to \sset^{\Delta^{\op}\times \Delta^{\op}}_{\mathrm{dbl}(\infty,1)}$ is a left Quillen bifunctor, it is also a left Quillen functor as all objects are cofibrant in $\sset^{\Delta^{\op}\times \Delta^{\op}}_{\mathrm{dbl}(\infty,1)}$. Therefore, it induces by postcomposition a left Quillen functor between projective model structures
    \[ (\sset^{\Delta^{\op}\times \Delta^{\op}}_{\mathrm{dbl}(\infty,1)})^{\Delta^{\times 4}}\times (\sset^{\Delta^{\op}\times \Delta^{\op}}_{\mathrm{dbl}(\infty,1)})^{\Delta^{\times 4}}\cong (\sset^{\Delta^{\op}\times \Delta^{\op}}_{\mathrm{dbl}(\infty,1)}\times \sset^{\Delta^{\op}\times \Delta^{\op}}_{\mathrm{dbl}(\infty,1)})^{\Delta^{\times 4}}\xrightarrow{(-\times -)_*}(\sset^{\Delta^{\op}\times \Delta^{\op}}_{\mathrm{dbl}(\infty,1)})^{\Delta^{\times 4}} \] 
    In particular, it preserves cofibrant objects and so the functor $F[-,-]\times F[-,-]\colon \Delta^{\times 4}\to \sset^{\Delta^{\op}\times \Delta^{\op}}_{\mathrm{dbl}(\infty,1)}$ given at $m,t,m',t'\geq 0$ by the product $F[m,t]\times F[m',t']$ is also projectively cofibrant.
\end{proof}

\begin{cor} \label{cor:projcof}
    The functor $\bC(F[-,-]\times F[-,-])\colon \Delta^{\times 4}\to Dbl\cat_\mathrm{whi}$ given at $m,t,m',t'\geq 0$ by the double category $\bC(F[m,t]\times F[m',t'])$ is projectively cofibrant. 
\end{cor}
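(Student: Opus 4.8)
The plan is to obtain \cref{cor:projcof} formally from \cref{lem:projcof}, using that $\bC$ is a left Quillen functor. First I would recall from \cref{NerveRightAdjoint}(1) that
\[ \bC\colon \sset^{\Delta^{\op}\times\Delta^{\op}}_{\mathrm{dbl}(\infty,1)}\rightleftarrows Dbl\cat_{\mathrm{whi}}\colon \bN \]
is a Quillen pair, so in particular $\bC$ preserves cofibrations and trivial cofibrations.

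Next I would invoke the standard fact that a left Quillen functor induces a left Quillen functor between the associated projective diagram model structures: postcomposition with $\bC$ yields a functor
\[ \bC_*\colon \bigl(\sset^{\Delta^{\op}\times\Delta^{\op}}_{\mathrm{dbl}(\infty,1)}\bigr)^{\Delta^{\times 4}}\longrightarrow \bigl(Dbl\cat_{\mathrm{whi}}\bigr)^{\Delta^{\times 4}} \]
which is left adjoint to postcomposition with $\bN$, and which preserves projective cofibrations. Indeed, projective cofibrations are generated by the morphisms $F_d(i)$, where $i$ ranges over the generating cofibrations and $F_d$ is the left adjoint to evaluation at an object $d\in \Delta^{\times 4}$; since $\bC_* F_d\cong F_d \bC$ and $\bC$ preserves cofibrations, $\bC_*$ sends this generating set into the projective cofibrations, and hence preserves all of them. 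In particular $\bC_*$ preserves projectively cofibrant objects.

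Finally, \cref{lem:projcof} tells us that the diagram $F[-,-]\times F[-,-]\colon \Delta^{\times 4}\to \sset^{\Delta^{\op}\times\Delta^{\op}}_{\mathrm{dbl}(\infty,1)}$ is projectively cofibrant, so its image $\bC_*\bigl(F[-,-]\times F[-,-]\bigr)=\bC\bigl(F[-,-]\times F[-,-]\bigr)$ is projectively cofibrant in $(Dbl\cat_{\mathrm{whi}})^{\Delta^{\times 4}}$, which is exactly the claim. Since this is a purely formal consequence of the preceding lemma, I do not expect a genuine obstacle; the only point requiring a moment's care is to make sure that ``projectively cofibrant'' is understood relative to the Quillen model structures $Dbl\cat_{\mathrm{whi}}$ and $\sset^{\Delta^{\op}\times\Delta^{\op}}_{\mathrm{dbl}(\infty,1)}$ featured in \cref{NerveRightAdjoint}, so that the left-Quillen transfer to diagram categories genuinely applies.
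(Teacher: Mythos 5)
Your proposal is correct and is essentially the paper's own argument: the paper's proof is the one-line observation that the claim follows from \cref{lem:projcof} together with the fact that $\bC\colon \sset^{\Delta^{\op}\times\Delta^{\op}}_{\mathrm{dbl}(\infty,1)}\to Dbl\cat_\mathrm{whi}$ is left Quillen, and you have merely spelled out the standard step (postcomposition with a left Quillen functor is left Quillen for projective model structures, via the generating cofibrations $F_d(i)$) that the paper leaves implicit.
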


\begin{proof}
    This follows directly from \cref{lem:projcof} and the fact that $\bC\colon \sset^{\Delta^{\op}\times \Delta^{\op}}_{\mathrm{dbl}(\infty,1)}\to Dbl\cat_\mathrm{whi}$ is left Quillen. 
\end{proof}

\begin{lem} \label{comparisonrep}
    For $m,t,m',t'\geq 0$, there is a double functor
    \[ \bC(F[m,t]\times F[m',t'])\to \bC  F[m,t]\boxtimes^{\mathrm{ps}} \bC  F[m',t']\]
    natural in $m,t,m',t'\geq 0$, which provides a lift in \eqref{lift}.
\end{lem}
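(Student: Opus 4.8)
The plan is to obtain the claimed double functor as a diagonal filler in the projective model structure on $(Dbl\cat_{\mathrm{whi}})^{\Delta^{\times 4}}$, following precisely the strategy outlined before the statement. First I would observe that, by \cref{grayvscart}, for each $m,t,m',t'\geq 0$ the projection double functor $\bC F[m,t]\boxtimes^{\mathrm{ps}}\bC F[m',t']\to \bC F[m,t]\times\bC F[m',t']$ is a trivial fibration in $Dbl\cat_{\mathrm{whi}}$; as these assemble into a natural transformation of functors $\Delta^{\times 4}\to Dbl\cat_{\mathrm{whi}}$ that is a trivial fibration levelwise, it is a trivial fibration in the projective model structure. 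On the other side, by \cref{cor:projcof} the functor $\bC(F[-,-]\times F[-,-])$ is projectively cofibrant.

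Given these two facts, the lifting problem
\[
\begin{tikzcd}
\varnothing \arrow[r]\arrow[d] & \bC F[-,-]\boxtimes^{\mathrm{ps}}\bC F[-,-]\arrow[d]\\
\bC(F[-,-]\times F[-,-])\arrow[r]\arrow[ru,dashed] & \bC F[-,-]\times\bC F[-,-]
\end{tikzcd}
\]
in which the bottom map is the canonical natural transformation induced by the universal property of the product, admits a solution, since the left vertical map is a projective cofibration and the right vertical map a projective trivial fibration. Evaluating the chosen filler at $(m,t,m',t')$ produces a double functor $\bC(F[m,t]\times F[m',t'])\to \bC F[m,t]\boxtimes^{\mathrm{ps}}\bC F[m',t']$, and naturality in $m,t,m',t'$ is immediate because the filler is by construction a morphism in $(Dbl\cat_{\mathrm{whi}})^{\Delta^{\times 4}}$, i.e., a natural transformation.

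I do not anticipate a genuine obstacle here: all of the substantive work has already been absorbed into \cref{lem:projcof} and \cref{cor:projcof}, whose proofs rely only on the facts that $\bC$, the cartesian product, and the reindexing functors along the projections $\Delta^{\times 4}\to\Delta^{\times 2}$ are all left Quillen. The one caveat worth recording is that the filler, and hence the comparison double functor, is not canonical; this is harmless for the purposes of \cref{cor:comparisonmap} and \cref{CpreservesProducts}, where one only needs such a map together with the fact that it is a weak equivalence, both of which hold for any choice of filler.
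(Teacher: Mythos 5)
Your proposal is correct and is essentially identical to the paper's proof: the comparison map is obtained as a lift against the levelwise trivial fibration $\bC F[-,-]\boxtimes^{\mathrm{ps}}\bC F[-,-]\to \bC F[-,-]\times\bC F[-,-]$ in the projective model structure on $(Dbl\cat_{\mathrm{whi}})^{\Delta^{\times 4}}$, using the projective cofibrancy of $\bC(F[-,-]\times F[-,-])$ from \cref{cor:projcof} and \cref{grayvscart}. Your remark that the filler is non-canonical but suffices for \cref{cor:comparisonmap} and \cref{CpreservesProducts} is accurate and consistent with how the paper uses it.
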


\begin{proof}
    This natural map is obtained as a lift in the following diagram in the projective model structure on $(Dbl\cat_\mathrm{whi})^{\Delta^{\times 4}}$
\[\begin{tikzcd}
&[10pt]{\bC  F[-,-]\boxtimes^{\mathrm{ps}} \bC  F[-,-]}\arrow[d]\\
\bC(F[-,-]\times  F[-,-]) \arrow[r]\arrow[ru,dashed]& \bC  F[-,-]\times \bC  F[-,-]
    \end{tikzcd}\]
    using \cref{cor:projcof} and the fact that the right-hand natural transformation is levelwise a trivial fibration in $Dbl\cat_\mathrm{whi}$ by \cref{grayvscart}. 
\end{proof}

As a consequence, we obtain the desired map:

\begin{prop} \label{cor:comparisonmap}
    Given bisimplicial sets $X$ and $Y$, there is a double functor 
    \[ \bC( X\times  Y)\to \bC  X\boxtimes^{\mathrm{ps}} \bC  Y \]
    natural in $X$ and $Y$.
\end{prop}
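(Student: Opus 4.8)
The plan is to deduce the general statement from the case of representables established in \cref{comparisonrep} by a routine colimit argument, using that $\bC$ is a left adjoint and that $\boxtimes^{\mathrm{ps}}$ underlies a closed monoidal structure on $Dbl\cat$.

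First I would recall that, by the co-Yoneda lemma, every bisimplicial set $X$ is canonically the colimit of the representables indexed by its category of elements: $X\cong \colim_{([m],[t])} F[m,t]$, where the colimit runs over $(\Delta\times\Delta)\downarrow X$ and the presentation is functorial in $X$. Since the presheaf category $\set^{\Delta^{\op}\times\Delta^{\op}}$ is cartesian closed, the product preserves colimits separately in each variable, so for bisimplicial sets $X$ and $Y$ we obtain, functorially in $X$ and $Y$,
\[ X\times Y\;\cong\;\colim_{([m],[t]),([m'],[t'])} F[m,t]\times F[m',t'], \]
the colimit now taken over $\big((\Delta\times\Delta)\downarrow X\big)\times\big((\Delta\times\Delta)\downarrow Y\big)$.

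Next, since $\bC$ is a left adjoint (being the left adjoint $\bC\dashv\bN$, equivalently the left Kan extension described above), it preserves colimits, so $\bC(X\times Y)\cong \colim\,\bC(F[m,t]\times F[m',t'])$ over the same index category. On the other side, by \cref{DblCatClosed} the pseudo Gray tensor product underlies a closed monoidal structure on $Dbl\cat$, so each of the functors $-\boxtimes^{\mathrm{ps}}\bD$ and $\bD\boxtimes^{\mathrm{ps}}-$ admits a right adjoint and hence preserves colimits; combining this with the colimit-preservation of $\bC$ yields $\bC X\boxtimes^{\mathrm{ps}}\bC Y\cong \colim\,\bC F[m,t]\boxtimes^{\mathrm{ps}}\bC F[m',t']$ over the same index category. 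Finally, the double functors of \cref{comparisonrep} are natural in $m,t,m',t'$, so they constitute a natural transformation between the two diagrams $\big((\Delta\times\Delta)\downarrow X\big)\times\big((\Delta\times\Delta)\downarrow Y\big)\to Dbl\cat$ whose colimits are $\bC(X\times Y)$ and $\bC X\boxtimes^{\mathrm{ps}}\bC Y$; passing to colimits produces the desired double functor $\bC(X\times Y)\to \bC X\boxtimes^{\mathrm{ps}}\bC Y$, and the functoriality in $X$ and $Y$ of all the colimit presentations makes it natural in $X$ and $Y$.

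There is no genuine obstacle here: all the real content sits in \cref{comparisonrep} and in the closedness statement \cref{DblCatClosed}. The only point deserving a little care is to fix the canonical category-of-elements colimit presentations once and for all, so that naturality in $X$ and $Y$ comes for free rather than through compatibility checks between arbitrary choices of presentation.
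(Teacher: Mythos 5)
Your proposal is correct and follows essentially the same route as the paper: write $X$ and $Y$ as colimits of representables, use that $\bC$, the cartesian product, and $\boxtimes^{\mathrm{ps}}$ all preserve the relevant colimits, and pass the natural maps of \cref{comparisonrep} to the colimit, with naturality coming from the canonical presentations. The only cosmetic difference is that you organize the colimit as a single one over the product of the categories of elements, whereas the paper uses an iterated colimit; the content is identical.
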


\begin{proof}
We write
    \[ X\cong \colim_{F[m,t]\to  X} F[m,t]\quad\text{and} \quad Y\cong \colim_{F[m',t']\to  Y} F[m',t']. \]
Then there are natural isomorphisms in $Dbl\cat$
    \begin{align*}
        \bC( X\times  Y) &\cong \bC((\colim_{F[m,t]\to  X} F[m,t])\times (\colim_{F[m',t']\to  Y} F[m',t'])) & \\
        &\cong \bC(\colim_{F[m,t]\to  X}\colim_{F[m',t']\to  Y} (F[m,t]\times F[m',t'])) & \times \text{ pres.~colimits} \\
        & \cong\colim_{F[m,t]\to  X}\colim_{F[m',t']\to  Y} \bC(F[m,t]\times F[m',t']) & \bC \text{ pres.~colimits} \\
    \end{align*}
Moreover, there are also natural isomorphisms in $Dbl\cat$
    \begin{align*}
        \bC  X\boxtimes^{\mathrm{ps}} \bC  Y &\cong \bC(\colim_{F[m,t]\to  X} F[m,t])\boxtimes^{\mathrm{ps}} \bC(\colim_{F[m',t']\to  Y} F[m',t']) & \\
        &\cong (\colim_{F[m,t]\to  X} \bC F[m,t])\boxtimes^{\mathrm{ps}} (\colim_{F[m',t']\to  Y}\bC F[m',t']) & \bC \text{ pres.~colimits} \\
        & \cong\colim_{F[m,t]\to  X}\colim_{F[m',t']\to  Y} (\bC F[m,t]\boxtimes^{\mathrm{ps}} \bC F[m',t']). & \boxtimes^{\mathrm{ps}} \text{ pres.~colimits} \\
    \end{align*}
    Then the double functor
    \[ \bC(F[m,t]\times F[m',t'])\to \bC F[m,t]\boxtimes^{\mathrm{ps}} \bC F[m',t'] \]
    from \cref{comparisonrep}, which is natural in $m,t,m',t'$, induces a double functor between colimits 
    \[ \bC( X\times  Y)\to \bC  X\boxtimes^{\mathrm{ps}} \bC  Y. \]
   Naturality in $X$ and $Y$ follows from the naturality in the indexing shapes of the colimits. 
\end{proof}

Given bisimplicial sets $X$ and $Y$, we now aim to show that the double functor 
\[ \bC( X\times  Y)\to \bC  X\boxtimes^{\mathrm{ps}} \bC  Y \]
is a weak equivalence in $Dbl\cat_{\mathrm{whi}}$. Again, we start with the case where $X=F[m,t]$ and $Y=F[m',t']$ are representables.

\begin{lem}
\label{we1}
Given $m,t,m',t'\geq 0$, the double functor from \cref{comparisonrep} is a double biequivalence
\[ \bC( F[m,t]\times F[m',t'])\stackrel{\simeq}{\longrightarrow} \bC F[m,t]\boxtimes^{\mathrm{ps}} \bC F[m',t'].\]
\end{lem}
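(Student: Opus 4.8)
The plan is to compute $\bC$ on discrete bisimplicial sets explicitly enough to see that it carries the product $F[m,t]\times F[m',t']$ to the \emph{cartesian} product $[m,t]\times[m',t']$ in $Dbl\cat$, and then to conclude via the trivial fibration of \cref{grayvscart}. As noted in the discussion preceding \cref{lem:projcof}, the restriction of $\bC$ to discrete bisimplicial sets is the ordinary cocontinuous left Kan extension of the functor $([m],[t])\mapsto[m,t]=\bH[m]\times\bV[t]$ along the Yoneda embedding; in particular $\bC F[m,t]\cong[m,t]$.

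First I would record two computations. Since products in $\set^{\Delta^{\op}\times\Delta^{\op}}$ are computed levelwise and $F[m,t]$ is the external product $\Delta[m]\boxtimes\Delta[t]$ of simplicial sets (i.e.\ $F[m,t]_{p,q}=\Delta([p],[m])\times\Delta([q],[t])$), there is a natural identification
\[ F[m,t]\times F[m',t']\;\cong\;(\Delta[m]\times\Delta[m'])\boxtimes(\Delta[t]\times\Delta[t']). \]
Next, writing $\tau_1$ for the left adjoint to the nerve functor $N\colon\cat\to\sset$, the cocontinuous left Kan extensions of $[m]\mapsto\bH[m]$ and of $[t]\mapsto\bV[t]$ along Yoneda are $\bH\tau_1$ and $\bV\tau_1$ (since $\bH$ and $\bV$ are cocontinuous). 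Expressing $A$ and $B$ as colimits of representables and using that $\bC$ is cocontinuous, that $-\times D$ is cocontinuous in the cartesian closed category $Dbl\cat$, and that $\bC(\Delta[m]\boxtimes\Delta[t])=\bC F[m,t]\cong[m,t]=\bH[m]\times\bV[t]$, one obtains a natural isomorphism
\[ \bC(A\boxtimes B)\;\cong\;\bH(\tau_1 A)\times\bV(\tau_1 B) \]
for all simplicial sets $A$ and $B$, where $\tau_1 A$ and $\tau_1 B$ are regarded as $2$-categories.

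Combining the two computations with the facts that $\Delta[m]\times\Delta[m']\cong N([m]\times[m'])$, so that $\tau_1(\Delta[m]\times\Delta[m'])\cong[m]\times[m']$, and that $\bH$ and $\bV$ preserve products, I get
\[ \bC(F[m,t]\times F[m',t'])\;\cong\;\bH([m]\times[m'])\times\bV([t]\times[t'])\;\cong\;[m,t]\times[m',t'], \]
and one checks that under this isomorphism the canonical double functor $\bC(F[m,t]\times F[m',t'])\to\bC F[m,t]\times\bC F[m',t']$ induced by the two product projections is an isomorphism. Now the comparison double functor $\Phi$ of \cref{comparisonrep} was built as a lift of this canonical double functor along the trivial fibration $P\colon\bC F[m,t]\boxtimes^{\mathrm{ps}}\bC F[m',t']\twoheadrightarrow\bC F[m,t]\times\bC F[m',t']$ of \cref{grayvscart}, so $P\circ\Phi$ is an isomorphism. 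Since $P$ is a trivial fibration in $Dbl\cat_{\mathrm{whi}}$, hence a double biequivalence, applying the functors $\bfH$ and $\bfH\llbracket\bV[1],-\rrbracket$ and using $2$-out-of-$3$ for biequivalences of $2$-categories shows that $\Phi$ is a double biequivalence, as claimed.

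The main obstacle is the isomorphism $\bC(A\boxtimes B)\cong\bH(\tau_1 A)\times\bV(\tau_1 B)$: one must carefully justify that the colimits presenting $\bC(A\boxtimes B)$ commute with the cartesian product in $Dbl\cat$, and that the comparison double functor of \cref{comparisonrep} is compatible with all of these identifications so that $P\circ\Phi$ really is the canonical map to the product. The remaining ingredients — the levelwise computation of the product of representables, the identity $\tau_1 N\cong\mathrm{id}$, and the fact that $\bH$ and $\bV$ preserve products — are routine.
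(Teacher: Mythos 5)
Your route is genuinely different from the paper's: you try to compute $\bC$ on discrete bisimplicial sets on the nose, getting isomorphisms $\bC F[m,t]\cong[m,t]$ and $\bC(F[m,t]\times F[m',t'])\cong[m,t]\times[m',t']$, so that the composite of the comparison map of \cref{comparisonrep} with the projection of \cref{grayvscart} is an isomorphism and $2$-out-of-$3$ finishes the argument. The paper instead never identifies $\bC F[m,t]$ with $[m,t]$: it only uses that the counit components $\bC\bN[m,t]\to[m,t]$ and $\bC\bN([m,t]\times[m',t'])\to[m,t]\times[m',t']$ are double biequivalences (this is \cref{NerveRightAdjoint}(2), the Quillen localization property), together with naturality of the counit, the fact that a product of double biequivalences is one, the trivial fibration of \cref{grayvscart}, and $2$-out-of-$3$.

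The gap in your argument is precisely the input you attribute to ``the discussion preceding \cref{lem:projcof}'': that discussion does \emph{not} say that the restriction of $\bC$ to discrete bisimplicial sets is the ordinary cocontinuous extension of $([m],[t])\mapsto[m,t]$, nor that $\bC F[m,t]\cong[m,t]$. It only records that $\bC$ is a \emph{simplicial} left Kan extension for a nonstandard tensoring of $Dbl\cat$ over $\sset$ (namely $K\odot\bD=\bD\boxtimes^{\mathrm{ps}}\bC K$), and nowhere in the paper is $\bC$ computed on representables; indeed the only facts the paper uses about $\bC$ on representables and their products are $\bC X\cong\colim_{F[m,t]\to X}\bC F[m,t]$ (in \cref{cor:comparisonmap}) and the weak equivalences of counit components in \cref{we1}. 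The fact that the authors' own proof carefully works only up to weak equivalence, via the derived counit, rather than invoking a strict identification, is a warning sign that the on-the-nose formula $\bC F[m,t]\cong[m,t]$ is at best something that must be extracted from the construction in \cite[\textsection5.1]{MoserNerve}, not from this paper. If you do establish that identification (and hence, by your colimit manipulation, which is otherwise correct, the isomorphism $\bC(F[m,t]\times F[m',t'])\cong[m,t]\times[m',t']$ under which the canonical map to $\bC F[m,t]\times\bC F[m',t']$ is an isomorphism --- note that $P\circ\Phi$ \emph{is} the canonical map by the very construction of the lift in \cref{comparisonrep}, so that compatibility is automatic), then your argument is complete and in fact shorter than the paper's. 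As written, however, the central step is an assertion resting on a misquotation of the paper, so the proof is not yet complete; either supply the computation of $\bC$ on discrete bisimplicial sets from the cited construction, or replace this step by the paper's homotopical argument via the counit of the Quillen localization pair.
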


\begin{proof}
First recall from \cref{NerveRightAdjoint}(2)
that the components of the counit
    \[ \varepsilon_{[m,t]} \colon \bC F[m,t]\cong \bC\bN[m,t]\stackrel\simeq\longrightarrow [m,t] \]
    and 
    \[ \varepsilon_{[m,t]\times [m',t']}\colon \bC(F[m,t]\times F[m',t'])\cong \bC\bN([m,t]\times [m',t'])\to [m,t]\times [m',t'] \]
    are double biequivalences. Hence, by \cite[Remark 7.5]{MSV1}, so is the product 
    \[ \varepsilon_{[m,t]}\times \varepsilon_{[m',t']}\colon \bC F[m,t]\times \bC F[m',t'] \stackrel\simeq\longrightarrow [m,t]\times [m',t']. \] 
    Moreover, recall from \cite[Lemma 7.3]{MSV1} that the natural projection double functor 
    \[ \bC F[m,t]\boxtimes^{\mathrm{ps}} \bC F[m',t']\stackrel\simeq\longrightarrow \bC F[m,t]\times \bC F[m',t'] \]
    is also a double biequivalence. 

Hence, we have the following commutative diagram in $Dbl\cat_\mathrm{whi}$. 
\[
\begin{tikzcd}  &[-.7cm] &[-.7cm] {\bC F[m,t]\boxtimes^{\mathrm{ps}} \bC F[m',t']} \ar[d,"\simeq"] \\
{\bC( F[m,t]\times F[m',t'])}\ar[rr]\ar[dr, "\varepsilon_{[m,t]\times [m',t']}"',"\simeq"]\ar[rru]& &\bC F[m,t]\times \bC F[m',t']\arrow[ld,"\varepsilon_{[m,t]}\times\varepsilon_{[m',t']}","\simeq" swap] \\
    &{[m,t]\times[m',t']}&
\end{tikzcd}
\]
Note that the diagram indeed commutes by naturality of the counit and by construction of the double functor from \cref{comparisonrep}. So the result follows by $2$-out-of-$3$.   
\end{proof}

We can finally prove that the desired double functor is also a weak equivalence in $Dbl\cat_{\mathrm{whi}}$:

\begin{thm}
\label{CpreservesProducts}
Given bisimplicial sets $X$ and $Y$, the map from \cref{cor:comparisonmap} is a weak equivalence in the model structure $Dbl\cat_{\mathrm{whi}}$
\[
\bC( X\times Y)\stackrel{\simeq}{\longrightarrow}\bC X\boxtimes^{\mathrm{ps}}\bC Y.
\]
\end{thm}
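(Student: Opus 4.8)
The plan is to deduce the general statement from the case of representables, which is \cref{we1}, by a two-step bootstrap; the essential input is that, once one of the two arguments is held fixed, the two constructions being compared are left Quillen functors, and a natural transformation between left Quillen functors out of $\sset^{\Delta^{\op}\times\Delta^{\op}}_{\mathrm{dbl}(\infty,1)}$ that is a weak equivalence on every representable $F[m,t]$ is a weak equivalence on every object. This last fact holds because every object of $\sset^{\Delta^{\op}\times\Delta^{\op}}_{\mathrm{dbl}(\infty,1)}$ is cofibrant and is a homotopy colimit of representables, while left Quillen functors preserve homotopy colimits and homotopy colimits preserve weak equivalences; equivalently, on underlying $\infty$-categories both constructions become colimit-preserving functors out of $DblCat_{(\infty,1)}$, which is generated under colimits by the images of the representables.

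First I would record the left Quillen statements. Every object of $\sset^{\Delta^{\op}\times\Delta^{\op}}_{\mathrm{dbl}(\infty,1)}$ is cofibrant and the cartesian product is a left Quillen bifunctor on this model structure (as already used in the proof of \cref{lem:projcof}), so for any object $A$ the functor $-\times A$ is left Quillen; composing with the left Quillen functor $\bC$ of \cref{NerveRightAdjoint}(1) shows that $X\mapsto\bC(X\times A)$ is left Quillen. On the other hand $\bC A$ is cofibrant in $Dbl\cat_{\mathrm{whi}}$ (being the image of a cofibrant object under $\bC$), so by \cref{DblCatClosed} the functor $-\boxtimes^{\mathrm{ps}}\bC A$ is left Quillen, and hence $X\mapsto\bC X\boxtimes^{\mathrm{ps}}\bC A$ is left Quillen. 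The same holds with the roles of the two variables interchanged.

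Then I would run the two-step argument. \emph{Step 1:} fix a representable $F[m,t]$; the left Quillen functors $Y\mapsto\bC(F[m,t]\times Y)$ and $Y\mapsto\bC F[m,t]\boxtimes^{\mathrm{ps}}\bC Y$ are connected by the natural transformation of \cref{cor:comparisonmap}, whose component at any representable $F[m',t']$ is a double biequivalence by \cref{we1}, hence a weak equivalence in $Dbl\cat_{\mathrm{whi}}$ by \cite[Proposition~3.25]{MSV1}; by the principle above, $\bC(F[m,t]\times Y)\to\bC F[m,t]\boxtimes^{\mathrm{ps}}\bC Y$ is a weak equivalence for every bisimplicial set $Y$. \emph{Step 2:} fix an arbitrary bisimplicial set $Y$; the left Quillen functors $X\mapsto\bC(X\times Y)$ and $X\mapsto\bC X\boxtimes^{\mathrm{ps}}\bC Y$ are connected by the natural transformation of \cref{cor:comparisonmap}, whose component at any representable $F[m,t]$ is a weak equivalence by Step 1; by the principle above, $\bC(X\times Y)\to\bC X\boxtimes^{\mathrm{ps}}\bC Y$ is a weak equivalence for every bisimplicial set $X$, which is the claim.

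The step I expect to require the most care is the justification of the ``weak equivalence on representables implies weak equivalence everywhere'' principle: the colimit formula in the proof of \cref{cor:comparisonmap} only exhibits the comparison map as a \emph{strict} colimit of the biequivalences of \cref{we1}, and a strict colimit of weak equivalences need not be a weak equivalence, so one genuinely has to pass through the left Quillen property — equivalently, through the colimit-preservation of the induced $\infty$-functors together with the fact that $DblCat_{(\infty,1)}$ is generated under colimits by the representables — and, for that reason, carry out the reduction in two steps rather than one.
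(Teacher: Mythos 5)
Your proposal is correct and follows essentially the same route as the paper: the same two-step bootstrap (first fix a representable $F[m,t]$ and extend over all $Y$, then fix an arbitrary $Y$ and extend over all $X$), resting on the observation that both $\bC((-)\times A)$ and $\bC(-)\boxtimes^{\mathrm{ps}}\bC A$ are left Quillen and that the comparison is a weak equivalence on representables by \cref{we1}. The only difference is how the ``weak equivalence on representables implies weak equivalence everywhere'' step is packaged: the paper invokes Cisinski's saturation-by-monomorphisms criterion \cite[Corollary~1.3.10]{CisinskiBook}, whereas you argue via homotopy-colimit density of representables (equivalently, colimit generation of the underlying $\infty$-category), which amounts to the same standard cell-induction argument.
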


\begin{proof}
Given $m,t\geq0$, we first show that the double functor from \cref{cor:comparisonmap} is a weak equivalence in the model structure $Dbl\cat_{\mathrm{whi}}$
\begin{equation} \label{compmap}
\bC(F[m,t]\times Y)\stackrel{\simeq}{\longrightarrow}\bC F[m,t]\boxtimes^{\mathrm{ps}}\bC Y,
\end{equation} 
for all bisimplicial sets $Y$. By \cref{{NerveRightAdjoint}}(1) and \cref{DblCatClosed,DoubleInfinityMSclosed}, the functors
\[\bC(F[m,t]\times-),~\bC F[m,t]\boxtimes^{\mathrm{ps}}\bC(-)\colon   DblCat_{(\infty,1)}\to Dbl\cat_{\mathrm{whi}}\]
are left Quillen, so the set of $(\Delta\times\Delta)$-sets $Y$ for which the double functor \eqref{compmap} is a weak equivalence in $Dbl\cat_{\mathrm{whi}}$ is saturated by monomorphisms in the sense of \cite[Definition~1.3.9]{CisinskiBook}. Furthermore, by \cref{we1}, this holds when $Y=F[m',t']$ is a representable in $\set^{\Delta^{\op}\times\Delta^{\op}}$. Hence, by \cite[Corollary~1.3.10]{CisinskiBook}, we obtain that the double functor \eqref{compmap} is a weak equivalence in $Dbl\cat_{\mathrm{whi}}$ for all bisimplicial sets $Y$.

Now, given a bisimplicial set $Y$, a similar argument using the above result shows 
that the double functor from \cref{cor:comparisonmap} is a weak equivalence in the model structure $Dbl\cat_{\mathrm{whi}}$
\[
\bC(X\times Y)\stackrel{\simeq}{\longrightarrow}\bC X\boxtimes^{\mathrm{ps}}\bC Y, \]
for all bisimplicial sets $X$, as desired.
\end{proof}

By putting this result with \cref{grayvscart} together, we get:

\begin{cor} \label{Cpresproducts}
    The left adjoint of $\bN\colon [DblCat]_\infty\to DblCat_{(\infty,1)}$ preserves products. 
\end{cor}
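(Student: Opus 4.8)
The plan is to deduce the statement from \cref{CpreservesProducts} by matching both the left adjoint of $\bN$ and the relevant products with their model-categorical counterparts. First I would recall that, by construction (\cref{constr:nerve}, \cref{NerveRightAdjoint}), the functor $\bN\colon[DblCat]_\infty\to DblCat_{(\infty,1)}$ is the map on underlying $\infty$-categories induced by the right Quillen functor $\bN\colon Dbl\cat_{\mathrm{whi}}\to\sset^{\Delta^{\op}\times\Delta^{\op}}_{\mathrm{dbl}(\infty,1)}$, so its left adjoint is induced by the left Quillen functor $\bC$; since every object of $\sset^{\Delta^{\op}\times\Delta^{\op}}_{\mathrm{dbl}(\infty,1)}$ is cofibrant, this left adjoint $\mathbb L\bC$ is computed on objects by $\bC$ with no replacement, and the product of two objects of $DblCat_{(\infty,1)}$ is modeled by the cartesian product of bisimplicial spaces. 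On the target side, by \cref{DblCatClosed} the pseudo Gray tensor product is a left Quillen bifunctor on $Dbl\cat_{\mathrm{whi}}$, so on cofibrant double categories — in particular on the values of $\bC$ — it computes the monoidal product of $[DblCat]_\infty$, and by \cref{dblcatcartclosed}, whose proof rests on \cref{grayvscart}, this monoidal product is the cartesian one. Routing the target products through $\boxtimes^{\mathrm{ps}}$ rather than through the ordinary cartesian product of double categories is essential, since the latter is not homotopical in $Dbl\cat_{\mathrm{whi}}$; reconciling the two is exactly the role of \cref{grayvscart}.

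With these dictionaries in place the core of the argument is immediate. For bisimplicial sets $X$ and $Y$, the natural comparison double functor of \cref{cor:comparisonmap} is, by \cref{CpreservesProducts}, a weak equivalence
\[
\bC(X\times Y)\stackrel{\simeq}{\longrightarrow}\bC X\boxtimes^{\mathrm{ps}}\bC Y
\]
in $Dbl\cat_{\mathrm{whi}}$, hence a natural equivalence $\mathbb L\bC(X\times Y)\simeq\mathbb L\bC X\times\mathbb L\bC Y$ in $[DblCat]_\infty$; moreover the terminal bisimplicial space is the representable $F[0,0]$, and $\bC F[0,0]\cong\bC\bN[0,0]\stackrel{\simeq}{\longrightarrow}[0,0]=\bH[0]\times\bV[0]$ is the terminal double category by \cref{NerveRightAdjoint}(2), so the empty product is preserved too. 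One then upgrades from bisimplicial sets to arbitrary objects of $DblCat_{(\infty,1)}$ by writing the latter as colimits of representables and using that $\mathbb L\bC$ and the cartesian products on both sides are cocontinuous in each variable, together with the representable case \cref{we1}.

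The only genuinely non-formal ingredient is \cref{CpreservesProducts}, already proven; everything else is bookkeeping. The points requiring some care are: that $\mathbb L\bC$ is modeled by $\bC$ (all objects being cofibrant), so that no hidden resolutions intervene; that the product in $[DblCat]_\infty$ must be computed via the pseudo Gray tensor product and not the naive cartesian product of double categories, which is precisely where \cref{grayvscart} enters; and that one genuinely invokes the naturality of the comparison maps of \cref{cor:comparisonmap}, so that one obtains a product-preserving \emph{functor} rather than merely an objectwise family of equivalences.
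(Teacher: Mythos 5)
Your proposal is correct and follows essentially the same route as the paper: the entire content is \cref{CpreservesProducts} (the comparison map $\bC(X\times Y)\to\bC X\boxtimes^{\mathrm{ps}}\bC Y$ is a weak equivalence) combined with \cref{grayvscart}, which identifies the Gray-tensor monoidal structure on $[DblCat]_\infty$ with the cartesian one. The extra bookkeeping you supply — that $\mathbb{L}\bC$ is computed by $\bC$ since all objects are cofibrant, preservation of the terminal object, and the passage from bisimplicial sets to general objects of $DblCat_{(\infty,1)}$ via colimits of representables — is sound and in fact spells out steps the paper's three-sentence proof leaves implicit.
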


\begin{proof}
    By \cref{CpreservesProducts}, we have that the left adjoint of $\bN$ sends cartesian products to pseudo Gray tensor products. Then, by \cref{grayvscart}, the monoidal structure induced by the pseudo Gray tensor product in $[DblCat]_\infty$ is equivalent to the one given by the cartesian product. Hence, the left adjoint of $\bN$ preserves products. 
\end{proof}

\bibliographystyle{amsalpha}
\bibliography{refLimits}
\end{document}